\documentclass{amsart}

\usepackage[non-sorted-cites]{amsrefs}
\usepackage{euscript}
\usepackage{amsmath}
\usepackage{amscd}
\usepackage{amssymb}
\usepackage{enumerate}
\usepackage{stmaryrd}
 \usepackage{amsfonts}
\usepackage{graphicx}
\usepackage{subfigure}
\usepackage[all]{xy}
\usepackage[margin=1.5in]{geometry}
\usepackage{setspace}
\usepackage{eufrak}
\usepackage{array}
\usepackage{tikz}
\usetikzlibrary{calc,intersections,fit}
\usepackage{textcomp}
\newcommand*{\ExtractCoordinate}[1]{\path (#1); \pgfgetlastxy{\XCoord}{\YCoord};}
\newcommand{\mw}{14.17}
\newlength{\lw}
\setlength{\lw}{.25pt}
\newcommand{\Stripdraw}[2]
{
\draw[line width=\lw] (#1, #2+1/8*\mw pt)--(#1 - \mw pt, #2 +1/8*\mw pt);
\draw[line width=\lw] (#1, #2-1/8*\mw pt)--(#1 - \mw pt, #2  -1/8*\mw pt);
}

\newcommand{\StripdrawBig}[2]
{
\draw[line width=2*\lw] (#1, #2+1/2*\mw pt)--(#1 - 4*\mw pt, #2 +1/2*\mw pt);
\draw[line width=2*\lw] (#1, #2-1/2*\mw pt)--(#1 - 4*\mw pt, #2  -1/2*\mw pt);
}

\newcommand{\Conout}[3] 
{
\begin{scope}  
\Stripdraw{#1}{#2};
\fill (#1 -1/2*\mw pt, #2) circle (2*\lw);
\end{scope}
\node at (#1-1.15*\mw pt, #2)  {$\scriptscriptstyle{#3}$};
}

\newcommand{\ConoutBig}[3] 
{
\begin{scope}  
\StripdrawBig{#1}{#2};
\fill (#1 -2*\mw pt, #2) circle (8*\lw);
\end{scope}
\node at (#1-5*\mw pt, #2)  {$#3$};
}
\newcommand{\Conin}[3] 
{
\begin{scope} 
\Stripdraw{#1}{#2};
\fill (#1 -1/2*\mw pt, #2) circle (2*\lw);
\end{scope}
\node at (#1+.15*\mw pt, #2)  {$\scriptscriptstyle{#3}$};
}
\newcommand{\Conintwo}[3] 
{
\begin{scope}
\Stripdraw{#1}{#2};
\fill (#1 -5/8*\mw pt, #2) circle (2*\lw);
\fill (#1 -3/8*\mw pt, #2) circle (2*\lw);
\end{scope}
\node at (#1+.15*\mw pt, #2)  {$\scriptscriptstyle{#3}$};
}
\newcommand{\ConinBig}[3] 
{
\begin{scope} 
\StripdrawBig{#1}{#2};
\fill (#1 -2*\mw pt, #2) circle (8*\lw);
\end{scope}
\node at (#1+1*\mw pt, #2)  {$#3$};
}
\newcommand{\Con}[4] 
{
\begin{scope}
\Stripdraw{#1}{#2};
\fill (#1 -1/2*\mw pt, #2) circle (2*\lw);
\end{scope}
\node at (#1-9/8*\mw pt, #2 pt)  {$\scriptscriptstyle{#3}$};
\node at (#1+1/8*\mw pt, #2 pt)  {$\scriptscriptstyle{#4}$};
}

\newcommand{\Contwo}[4] 
{
\begin{scope}
\Stripdraw{#1}{#2};
\fill (#1 -5/8*\mw pt, #2) circle (2*\lw);
\fill (#1 -3/8*\mw pt, #2) circle (2*\lw);
\end{scope}
\node at (#1-9/8*\mw pt, #2 pt)  {$\scriptscriptstyle{#3}$};
\node at (#1+1/8*\mw pt, #2 pt)  {$\scriptscriptstyle{#4}$};
}
\newcommand{\Conthree}[4] 
{
\begin{scope}
\Stripdraw{#1}{#2};
\fill (#1 -1/2*\mw pt, #2) circle (2*\lw);
\fill (#1 -5/8*\mw pt, #2) circle (2*\lw);
\fill (#1 -3/8*\mw pt, #2) circle (2*\lw);
\end{scope}
\node at (#1-9/8*\mw pt, #2 pt)  {$\scriptscriptstyle{#3}$};
\node at (#1+1/8*\mw pt, #2 pt)  {$\scriptscriptstyle{#4}$};
}
\newcommand{\CtoE}[4] 
{
\begin{scope}  [blue]
\Stripdraw{#1}{#2};
\draw[line width=2*\lw] (#1-.5*\mw pt, #2+3/16*\mw pt)--(#1-.5*\mw pt, #2+1/16*\mw pt);
\end{scope}
\node at (#1-9/8*\mw pt, #2 pt)  {$\scriptscriptstyle{#3}$};
\node at (#1+1/8*\mw pt, #2 pt)  {$\scriptscriptstyle{#4}$};
}

\newcommand{\CtoEBig}[4] 
{
\begin{scope}  [blue]
\StripdrawBig{#1}{#2};
\draw[line width=4*\lw] (#1-2*\mw pt, #2+6/8*\mw pt)--(#1-2*\mw pt, #2+3/8*\mw pt);
\end{scope}
\node at (#1-5*\mw pt, #2 pt)  {$#3$};
\node at (#1+1*\mw pt, #2 pt)  {$#4$};
}

\newcommand{\CtoEBigL}[6] 
{
\begin{scope}  [blue]
\StripdrawBig{#1}{#2};
\draw[line width=4*\lw] (#1-2*\mw pt, #2+6/8*\mw pt)--(#1-2*\mw pt, #2+3/8*\mw pt);
\end{scope}
\node at (#1-5*\mw pt, #2 pt)  {$#3$};
\node at (#1+1*\mw pt, #2 pt)  {$#4$};
\node at (#1-2.5*\mw pt, #2+1*\mw pt) {$#5$};
\node at (#1-1.5*\mw pt, #2+1*\mw pt) {$#6$};
}
\newcommand{\EtoCBigL}[6] 
{
\begin{scope}  [red]
\StripdrawBig{#1}{#2};
\draw[line width=4*\lw] (#1-2*\mw pt, #2+6/8*\mw pt)--(#1-2*\mw pt, #2+3/8*\mw pt);
\end{scope}
\node at (#1-5*\mw pt, #2 pt)  {$#3$};
\node at (#1+1*\mw pt, #2 pt)  {$#4$};
\node at (#1-2.5*\mw pt, #2+1*\mw pt) {$#5$};
\node at (#1-1.5*\mw pt, #2+1*\mw pt) {$#6$};
}

\newcommand{\CtoEtwosplit}[4] 
{
\CtoE{#1}{#2}{#3}{#4};
\fill (#1 -1/4*\mw pt, #2) circle (2*\lw);
\fill (#1 -3/4*\mw pt, #2) circle (2*\lw);
}
\newcommand{\CtoEtworight}[4] 
{
\CtoE{#1}{#2}{#3}{#4};
\fill (#1 -1/8*\mw pt, #2) circle (2*\lw);
\fill (#1 -3/8*\mw pt, #2) circle (2*\lw);
}
\newcommand{\CtoEtwoleft}[4] 
{
\CtoE{#1}{#2}{#3}{#4};
\fill (#1 -7/8*\mw pt, #2) circle (2*\lw);
\fill (#1 -5/8*\mw pt, #2) circle (2*\lw);
}
\newcommand{\CtoEoneright}[4] 
{
\CtoE{#1}{#2}{#3}{#4};
\fill (#1 -1/4*\mw pt, #2) circle (2*\lw);
}
\newcommand{\CtoEoneleft}[4] 
{
\CtoE{#1}{#2}{#3}{#4};
\fill (#1 -3/4*\mw pt, #2) circle (2*\lw);
}
\newcommand{\EtoC}[4] 
{
\begin{scope}  [red]
\Stripdraw{#1}{#2};
\draw[line width=2*\lw] (#1-.5*\mw pt, #2+3/16*\mw pt)--(#1-.5*\mw pt, #2+1/16*\mw pt);
\end{scope}
\node at (#1-9/8*\mw pt, #2 pt)  {$\scriptscriptstyle{#3}$};
\node at (#1+1/8*\mw pt, #2 pt)  {$\scriptscriptstyle{#4}$};
}
\newcommand{\EtoConeright}[4] 
{
\EtoC{#1}{#2}{#3}{#4};
\fill (#1 -1/4*\mw pt, #2) circle (2*\lw);
}
\newcommand{\EtoCtworight}[4] 
{
\EtoC{#1}{#2}{#3}{#4};
\fill (#1 -1/8*\mw pt, #2) circle (2*\lw);
\fill (#1 -3/8*\mw pt, #2) circle (2*\lw);
}

\newcommand{\CtoEBent}[4] 
{
\begin{scope}  [blue]
\draw[line width=\lw] (#1 pt, #2+2*\mw pt) .. controls (#1+4*\mw pt, #2+ 2*\mw pt) and  (#1+4*\mw pt, #2-2*\mw pt) .. (#1 pt, #2 -2*\mw pt);
\draw[line width=\lw] (#1 pt, #2+3*\mw pt) .. controls (#1+5.5*\mw pt, #2+ 3*\mw pt) and (#1+5.5*\mw pt, #2- 3*\mw pt) .. (#1 pt, #2-3* \mw pt);
\draw[line width=4*\lw] (#1 + 4*\mw pt, #2 pt)--(#1+4.375*\mw pt, #2 pt);
\end{scope}
\node at (#1 pt -1*\mw pt , #2 pt -2.5 * \mw pt)  {$#3$};
\node at (#1 pt -1*\mw pt, #2 pt + 2.5 * \mw pt)  {$#4$};
}

\newcommand{\EtoCBent}[2] 
{
\begin{scope}  [red]
\draw[line width=\lw] (#1 pt, #2+2*\mw pt) .. controls (#1-4*\mw pt, #2+ 2*\mw pt) and  (#1-4*\mw pt, #2-2*\mw pt) .. (#1 pt, #2 -2*\mw pt);
\draw[line width=\lw] (#1 pt, #2+3*\mw pt) .. controls (#1-5.5*\mw pt, #2+ 3*\mw pt) and (#1-5.5*\mw pt, #2- 3*\mw pt) .. (#1 pt, #2-3* \mw pt);
\draw[line width=4*\lw] (#1 - 4*\mw pt, #2 pt)--(#1-4.375*\mw pt, #2 pt);
\end{scope}
}

\newcommand{\bC}{{\mathbb C}}
\newcommand{\bP}{{\mathbb P}}
\newcommand{\bR}{{\mathbb R}}
\newcommand{\bZ}{{\mathbb Z}}

\newcommand{\cE}{\mathcal E}

\newcommand{\cM}{\mathcal M}

\newcommand{\cU}{\mathcal U}

\newcommand{\cR}{\mathcal R}

\newcommand{\Adams}{\overline{\EuScript A}}
\newcommand{\Adamsop}{\EuScript A}

\newcommand{\scrC}{\EuScript C}
\newcommand{\scrD}{\EuScript D}

\newcommand{\scrF}{\EuScript F}
\newcommand{\scrG}{\EuScript G}
\newcommand{\scrL}{\EuScript L}

\newcommand{\scrO}{\EuScript O}
\newcommand{\scrP}{\EuScript P}
\newcommand{\scrU}{\EuScript U}

\newcommand{\Q}{Q}

\newcommand{\Mbar}{\overline{\cM}}
\newcommand{\Ubar}{\overline{\cU}}
\newcommand{\Rbar}{\overline{\cR}}

\newcommand{\triv}{\tau}

\newcommand{\vI}[1][\!]{\vec{I}^{\, #1}}
\newcommand{\vJ}[1][\!]{\vec{J}^{\, #1}}
\newcommand{\vK}[1][\!]{\vec{K}^{\, #1}}

\newcommand{\vr}[1][\!]{\vec{r}^{\, #1}}
\newcommand{\ro}{{\mathrm o}}

\newcommand{\scrJ}{{\EuScript J}}
\newcommand{\scrH}{{\EuScript H}}

\newcommand{\bfk}{\mathbf{k}}

\newcommand{\coker}{\operatorname{coker}}

\newcommand{\id}{\operatorname{id}}

\newcommand{\Ann}{{\mathcal C}}

\newcommand{\Cont}{{\mathcal K}}

\newcommand{\Hom}{\operatorname{Hom}}

\newcommand{\Strip}{B}

\newcommand{\Pin}{\operatorname{Pin}}
\newcommand{\Spin}{\operatorname{Spin}}

\newcommand{\tr}{\mathrm{tr}}

\renewcommand{\dbar}{\overline{\partial}}
\renewcommand{\det}{\operatorname{det}}

\newcommand{\val}{\operatorname{val}}

\newcommand{\Diff}{\operatorname{Diff}}
\newcommand{\Id}{\operatorname{Id}}
\newcommand{\inp}{\mathrm{in}}
\newcommand{\out}{\mathrm{ou}}
\newcommand{\opnu}{\mathring{\nu}}
\renewcommand{\min}{\operatorname{min}}
\renewcommand{\max}{\operatorname{max}}
\def\co{\colon\thinspace}

\numberwithin{equation}{section}

\newtheorem{thm}{Theorem}[section]
\newtheorem{cor}[thm]{Corollary}
\newtheorem{lem}[thm]{Lemma}
\newtheorem{prop}[thm]{Proposition}
\newtheorem{defin}[thm]{Definition}
\newtheorem{def-lem}[thm]{Definition-Lemma}

\newtheorem{rem}[thm]{Remark}

\theoremstyle{remark}
\newtheorem{example}[thm]{Example}

\newcommand{\superscript}[1]{\ensuremath{^{\textrm{#1}}} }

\renewcommand{\th}[0]{\superscript{th}}

\newcommand{\comment}[1]{}
\setcounter{tocdepth}{1}

\title[The family Floer functor is faithful]{The family Floer functor is faithful}

\author[M.~Abouzaid]{Mohammed Abouzaid} \date{\today} \thanks{The author was supported by NSF grant DMS-1308179 and the Simons Collaboration on Homological Mirror Symmetry.}
\begin{document}

\begin{abstract}
Family Floer theory is used to construct a functor from the Fukaya category of a symplectic manifold admitting a Lagrangian torus fibration to a (twisted) category of perfect complexes on the mirror rigid analytic space. This functor is shown to be faithful by a degeneration argument involving moduli spaces of annuli.
\end{abstract}
\maketitle

\tableofcontents

\section{Introduction}
\label{sec:introduction}

Applications of Fukaya categories to symplectic topology require an algebraic model for these categories: this involves finding a collection of Lagrangians which \emph{generate} the category in the sense that the Fukaya category fully faithfully embeds in the category of modules over the corresponding $A_\infty$ algebra. For closed symplectic manifolds, the known strategies for understanding such categories of modules rely on realising them, in an instance of homological mirror symmetry, as modules over the endomorphism algebra of (complexes of) coherent sheaves on an algebraic variety, or a non-commutative deformation thereof. Such descriptions are possible in a limited class of examples, which include Calabi-Yau hypersurfaces in projective space \cite{seidel-quartic,sheridan} and toric varieties \cite{FOOO-sign,AFOOO}. It reasonable to expect that these methods will lead to descriptions of Fukaya categories of complete intersections in toric varieties \cite{AAK}.

The goal of the family Floer program is to both give a more compelling proof of these equivalences, and to extend the class of examples for which they can be proved. Keeping with tradition, we shall call the symplectic side the $A$-side, and the algebro-geometric side the $B$-side. The current strategies rely on matching computations on the two sides, without having a good reason for the agreement. Moreover, these computations work for a very special class of symplectic structures; in the typical case of the $K3$ surface, homological mirror symmetry is only understood for the restriction of the Fubini-Study form to the quartic hypersurface, whereas the rank of the second cohomology group is $22$.

There are essentially only two previous results on family Floer cohomology.  In \cite[Section 6]{Fukaya-family} Fukaya outlined a strategy for assigning to Lagrangians (complexes of) coherent sheaves on the mirror, under some convergence assumptions which should yield a complex analytic mirror. Passing to the rigid analytic setting in which he derived convergence by a clever use of Gromov compactness for tame almost complex structures, Fukaya  gave a very general result  in \cite{Fukaya-cyclic} constructing the local charts of the $B$-side, which were shown by Tu to admit compatible identifications over the overlaps \cite{tu}. 

In the author's ICM address \cite{A-ICM}, the strategies behind these two results were combined, and a (rigid analytic) coherent sheaf was assigned to Lagrangians on the $A$-side, assuming the existence of a Lagrangian torus fibration.  This paper extends this result by (1) constructing a map of morphism spaces from the $A$-side to the $B$-side,  (2) constructing a map of morphism spaces from the $B$-side to the $A$-side, (3) showing that the composition of these two maps is the identity on the $A$-side, leading to the main result, and (4) constructing an $A_\infty$ functor.  The bulk of Section \ref{sec:lagr-torus-fibr} contains a construction of the mirror following \cite{A-ICM}, and corrects a minor oversight in the local-to-global construction of the earlier paper (see Remark \ref{rem:careful_cover_Q}).  The formal results are then stated in Theorem \ref{thm:main}, and a summary of the proof appears thereafter in Section \ref{sec:what-remains-be}.
\begin{rem}
In order to focus on the new ideas, we restrict the setting that we consider by assuming that (1) the ambient symplectic manifold admits a Lagrangian torus fibration all of whose fibres are smooth and bound no holomorphic discs, and (2) one can choose an almost complex structure for each Lagrangian so that it bounds no holomorphic discs. The requirement that the Lagrangians bound no holomorphic disc is really only technical, and meant to avoid discussing foundations of multivalued perturbations in Lagrangian Floer theory \cite{FOOO} (and multiplying the length of the paper by a potentially large factor). The reader may consult the introduction to \cite{A-ICM} for a discussion of the more serious difficulties one would encounter in the presence of singular fibres.
\end{rem}

\begin{figure}
  \centering
  \begin{tikzpicture}
    \CtoEBent{-4*\mw}{0}{x}{y};
   \EtoCBent{-6*\mw}{0};
\node at (-2*\mw pt, 0)  {$F_{q}$};
\node at (-7.5*\mw pt, 0)  {$F_{q}$};
\node[left] at (-1*\mw pt, 3*\mw pt)  {$L$};
\node[left] at (-1*\mw pt, -3*\mw pt)  {$L$};
\node[left] at (-8*\mw pt, 3*\mw pt)  {$L$};
\node[left] at (-8*\mw pt, -3*\mw pt)  {$L$};

\draw[line width=2*\lw][blue] (5*\mw pt,-3*\mw pt) arc (-90:90:3*\mw pt) ;
\node[left] at (4*\mw pt,3*\mw pt)  {$L$};
\draw[line width=2*\lw][blue] (5*\mw pt,-2*\mw pt) arc (-90:90:2*\mw pt) ;
\draw[line width=4*\lw][blue] (8*\mw pt -1/8*\mw pt, 0*\mw pt)--(8*\mw pt+1/4*\mw pt, 0*\mw pt);

\draw[line width=2*\lw][red] (5*\mw pt,3*\mw pt) arc (90:270:3*\mw pt) ;
\node at (5*\mw pt, 0)  {$F_{q}$};
\draw[line width=2*\lw][red] (5*\mw pt,2*\mw pt) arc (90:270:2*\mw pt) ;
\draw[line width=4*\lw][red] (2*\mw pt +1/8*\mw pt, 0*\mw pt)--( 2*\mw pt-1/4*\mw pt, 0*\mw pt);

\node[left] at (10*\mw pt, 0)  { };
\draw[line width=2*\lw] (10*\mw pt,0) arc (180:-180:3*\mw pt) ;
\node[left] at (12*\mw pt, 3*\mw pt)  {$L$};
\draw[line width=4*\lw][blue] (16*\mw pt -1/8*\mw pt,0)--(16*\mw pt+1/4*\mw pt,0);
\draw[line width=4*\lw][red] (10*\mw pt +1/8*\mw pt,0)--(10*\mw pt-1/4*\mw pt,0);
\fill (13*\mw pt, 0) circle (8*\lw);
\node[above] at (13*\mw pt, 0)  {$F_{q}$};
\end{tikzpicture}
  \caption{ }
  \label{fig:heuristic}
\end{figure}

Since the construction of the homotopy from the composition of the two maps we construct to the identity uses a moduli space of annuli, faithfulness can be seen as the analogue of the generation criterion \cite{A-generate}. Heuristically, the strategy for the proof is the following: let $X$ be a symplectic manifold equipped with a Lagrangian torus fibration over a base $\Q$ (we denote the fibre over $q \in \Q$ by  $F_q$), and $L$ a Lagrangian in $X$. Consider  moduli spaces of holomorphic discs  with $3$ marked points, on which we impose Lagrangian boundary conditions given by $L$ and  a fibre. We shall consider two flavours for this moduli space (see the leftmost diagram in Figure \ref{fig:heuristic}): in the first case,  one marked point is distinguished as an input mapping to $L$, and the remaining two are outputs mapping to intersection points $x$ and $y$ of $L$ with a fibre, while in the second case, the intersections of $L$ with a fibre correspond to inputs, while the marked point on $L$ is an output.

In the classical versions of Floer theory, one would consider the subcategory of the Fukaya category of $X$ whose objects are fibres, and the Yoneda module over this subcategory associated to $L$. By allowing the addition of an arbitrary number of marked points, the first of these moduli spaces defines the map from the Floer cohomology $L$ to the endomorphism algebra of this Yoneda module, and the second moduli space defines a map which one could hope to show is a right inverse by gluing the two triangles to an annulus, and degenerating this annulus to two discs meeting at an interior point; one of the discs has Lagrangian boundary conditions on an \emph{arbitrary fibre} and the other has Lagrangian boundary conditions on $L$ and carries the two boundary marked points. Since the moduli space of discs with boundary on an arbitrary fibre gives us a copy of the ambient space $X$, the first type of disc imposes no constraint, so we are simply considering the moduli space of discs with boundary on $L$ (and two marked points). This moduli space represents the identity on Floer cohomology.

Trying to implement this strategy in this setting runs into a convergence problem: since the fibres are disjoint, they are Floer theoretically orthogonal, so the Yoneda module defined by $L$ is a direct product of the corresponding modules for all fibres. The map back to Floer theory is not well-defined because it is the sum of infinitely many terms. The correct framework for this argument is in fact family Floer cohomology, and the main difficulty that arises is due to the need to make compatible families of perturbation in defining the Floer cohomology of $L$ with every fibre; in the classical case, one can choose such perturbations independently for all pairs of objects.

\subsection*{Acknowledgments}
Discussions with Denis Auroux, Kenji Fukaya, Paul Seidel, and Ivan Smith, exploring potential applications of  family Floer cohomology, were helpful in justifying the development of these techniques. I would also like to thank Robert Young for a discussion of triangulations of manifolds. I am tremendously grateful to an anonymous referee for carefully reading the early versions of this paper and writing detailed reports which helped catch several gaps and provided guidance in producing the current version.

The author was supported by NSF grant DMS-1308179 and by the Simons Foundation through the Simons Collaboration on Homological Mirror Symmetry.

\section{Lagrangian torus fibrations and their rigid analytic dual}
\label{sec:lagr-torus-fibr}

\subsection{Flux and integral affine structure}
\label{sec:geometry-base}

Let $(X, \omega)$ be a closed symplectic manifold of dimension $2n$, and $\pi \co  X \to \Q$ a  Lagrangian torus fibration, whose fibre at a point $q \in \Q$ we denote $F_q$. We briefly recall the construction of an integral affine structure on $\Q$ induced by the symplectic structure on $X$ (see \cite{GHJ} for an extended discussion geared toward mirror symmetry).

Since $\pi$ is a fibre bundle, the cohomology groups of the fibres form a local system over the base, e.g. for any continuous path $\{ q_t \}_{t \in [0,1]}$ in $\Q$, we have a canonical isomorphism
\begin{equation} \label{eq:classical_transport}
   H^1(F_{q_0}, \bZ) \to  H^1(F_{q_1}, \bZ)
\end{equation}
which depends only on the homotopy class of the path. The fact that the fibres are Lagrangian yields in addition an element of $H^1(F_{q_1}, \bR)$  associated to every homotopy class of paths; this is the \emph{flux} which is defined as the integral of the symplectic structure on cylinders in $X$ lying over paths in $\Q$ (see \cite{Mcduff-Salamon}). As a result, we obtain an integral affine map
\begin{equation} \label{eq:symplectic_transport}
     H^1(F_{q_0}, \bR) \to  H^1(F_{q_1}, \bR)
\end{equation}
which takes the origin to the flux and whose derivative agrees with the classical isomorphism of Equation \eqref{eq:classical_transport} by passing to real coefficients.

Since points which are sufficiently close are connected by a canonical homotopy class of paths, we obtain a map from a neighbourhood of every point $q \in \Q$ to a neighbourhood of the origin in $  H^1(F_q, \bR) $ which assigns to every point the flux of the corresponding short path.   The Arnol'd-Liouville theorem implies that this is a diffeomorphism near the origin, and in particular that we have a natural isomorphism
\begin{equation} \label{eq:iso_tangent_space_flux}
  T_q \Q \cong H^1(F_q, \bR) .
\end{equation}
In particular, whenever $p$ lies in a sufficiently small neighbourhood of $q$, we write $p-q$ for the corresponding element of $T_q \Q$.

We say that a subset $P \subset \Q$ is \emph{an integral affine polygon,} if its image under the flux map is a polygonal neighbourhood of the origin defined by inequalities of the form
\begin{equation}
  \langle u , v_i \rangle \geq \lambda_i
\end{equation}
with $v_i$ an integral vector, and $\lambda_i$ real. The key reason that this notion is well-behaved is that the differential of the isomorphism in Equation \eqref{eq:symplectic_transport} preserves the integral structure on first cohomology with real coefficients defined by the integral cohomology lattice.

\subsection{Flux and the energy of holomorphic strips}
\label{sec:areas-strips-flux}
Let $\scrJ$ denote the space of  $\omega$-tame almost complex structures on $X$.  Given a point $q \in \Q$ and a closed Lagrangian $L \subset X$, pick a Hamiltonian diffeomorphism $\phi$ so that $ \phi L$ is transverse to $F_{q}$.

In order to define the moduli spaces of holomorphic strips with boundary on $L$ and $F_q$, pick a family $J = \{ J_{t} \in \scrJ \}_{t \in [0,1]}$. We obtain a holomorphic curve equation on the strip $B = \bR \times [0,1]$ with Lagrangian boundary conditions:
\begin{align} \label{eq:differential_strip_J-holo}
  u \co B \to X &  \qquad \partial_{s} u = J_{t} \partial_{t}u \\ \label{eq:differential_strip_boundary}
u(s,0) \in F_{q} & \qquad u(s,1) \in \phi L.
\end{align}

Given a pair of points $(x, y) \in \phi L \cap F_{q}$, denote by $
\cM_q(x,y)$ the moduli space of Floer trajectories connecting $x$ to $y$: this is the  quotient by translation in the $\bR$ factor of $B$ of the space of solutions to Equations \eqref{eq:differential_strip_J-holo} and \eqref{eq:differential_strip_boundary} which in addition satisfy the asymptotic conditions
\begin{equation}
 \lim_{s \to -\infty} u(s,t) = x  \qquad \textrm{and} \quad \lim_{s \to +\infty} u(s,t) = y.
\end{equation}
We denote by $\Mbar_{q}(x,y) $ the Gromov-Floer compactification of this moduli space.

Floer theory uses the moduli spaces $\Mbar_{q}(x,y)$ to define the Floer complex between $L$ and $F_q$; the goal of family Floer cohomology is to study these complexes for varying $ q \in \Q$. To this end, we denote by 
\begin{equation}
  X_P \to P  
\end{equation}
the restriction of the fibration $\pi$ to a neighbourhood $P$ of $q$. Assume that such a neighbourhood is contractible, and fix a Lagrangian section
\begin{equation}
 \tau \co P \to X_P,
\end{equation}
which determines a base point $\tau P \cap F_p $ on each fibre of $\pi$ over $P$. 
By the Arnol'd-Liouville theorem, the projection of the cotangent bundle to $P$ factors through $X_P$
\begin{equation} \label{eq:cotangent_factors}
  T^* P \to X_P \to P
\end{equation}
and the choice of the section $\tau$ determines a canonical such factorisation which maps the $0$-section of $T^*P$ to the image of $\tau$. For this reason, we call the choice of $\tau$ a \emph{$0$-section} over $P$.

We now assume that $\phi L$ is transverse to all fibres $F_p$ over points $p \in P$, which can of course be achieved by shrinking $P$ since $\phi L$ and $F_q$ were assumed to be transverse.  Since $P$ is contractible, the transversality assumption implies that the intersection of $\phi L$ with $X_P$ is a union of components each of which is a (Lagrangian) section over $P$.  For each component $x$ of $\phi L \cap X_P$, choose a function
\begin{equation}
  g_x \co P \to \bR
\end{equation}
the graph of whose differential defines a lift of $x$ to $T^*P $ under Equation \eqref{eq:cotangent_factors}.  Let us write $x(p)$ for the intersection of a component $x$ of $\phi L \cap X_P$  with the fibre $F_p$ whenever $p \in P$. The function $g_x$ determines a path $\{ t dg_x\} _{t \in [0,1]}$  from the basepoint on $F_p$ to $x(p)$.

Our goal is to compare areas of the moduli spaces $  \Mbar_{p}(x(p), y(p)) $ for varying $p \in P$. To this end, we note that these moduli spaces decompose as unions of components labelled by classes $\beta \in \pi_2(X, \phi L \cup F_p)$. The transversality assumption (and contractibility of $P$) implies that we have canonical identifications between these relative homotopy groups. In particular, we say that $ u \in  \Mbar_{q}(x(q), y(q)) $ and $ v \in \Mbar_{p}(x(p), y(p)) $ are \emph{homotopic} if the classes they represent agree under this identification.

One of the basic invariants of homotopy classes of holomorphic curves is the energy:
\begin{equation}
    \cE(u) = \int_{B} u^{*}(\omega).
\end{equation}
Another such invariant is the class of the boundary
\begin{equation}
  [ \partial u] \in H_1(F_p, \bZ),
\end{equation}
which is defined by  concatenating the restriction of $u$ to the boundary component mapping to $F_p$ with the paths from these intersection points to the $0$-section that are determined by the functions $g_x$ and $g_y$. Whenever $u$ and $v$ are homotopic maps with boundary conditions on $F_q$ and $F_p$, the classes of their boundary are identified by the analogue of Equation \eqref{eq:classical_transport} on homology.

We now state the following basic result, which is a direct consequence of Stokes's theorem, and which expresses the difference of the energy of homotopic maps:
\begin{lem} \label{lem:areas-strips-flux-1}
If $u$ and $v$ are homotopic, then 
\begin{equation} 
  \cE(v) - \cE(u) = \langle p -q , [\partial u] \rangle + g_y(q) - g_y(p) + g_x(p) - g_x(q).
\end{equation} \qed
\end{lem}
\begin{figure}[h]
  \centering
\begin{tikzpicture}
\coordinate [label=below:$F_q$] (Xq) at (1,-1);
\coordinate [label=below:$F_{p}$] (Xp) at (3,-1);
\coordinate [label=right:$\phi L$] (L) at (3.5,0);
\coordinate [label=above:$\tau(P)$] (L) at (2,1);
\coordinate [label=above:$x$] (L) at (2,.3);
\coordinate [label=above:$y$] (L) at (2,-.6);
\draw (0,0) ellipse (.1 and 1);
\begin{scope}
\clip (3,-1)--(3,1)--(4,1)--(4,-1)--(3,-1);
\draw[style=dotted] (4,0) ellipse (.1 and 1);
\end{scope}
\begin{scope}
\clip (5,-1)--(5,1)--(4,1)--(4,-1)--(5,-1);
\draw (4,0) ellipse (.1 and 1);
\end{scope}
\draw[line width=1pt]  (0,1)--(4,1);
\draw  (0,-1)--(4,-1);
\begin{scope}
\clip (0,1) arc (90:-90:.1 and 1) -- (4,-1) arc (-90:90:.1 and 1) -- (0,1); 
\draw (-1,.75) -- (1,.25) .. controls  (3,-.25) and (3.5,1.5) .. (3.5,0) .. controls (3.5,-1.5) and  (3,.25) .. (1,-.25) -- (-1,-.75); 
\end{scope}
\draw  (3,1) arc (90:-90:.1 and 1);
\draw [style=dotted] (3,1) arc (90:270:.1 and 1);
\draw  (1,1) arc (90:-90:.1 and 1);
\draw [style=dotted] (1,1) arc (90:270:.1 and 1);
\begin{scope}
\clip (1,1) arc (90:-90:.1 and 1) -- (3,-1) arc (-90:90:.1 and 1) -- (1,1); 
\fill[blue] (-1,.75) -- (1,.25) .. controls  (3,-.25) and (3.5,1.5) .. (3.5,0) .. controls (3.5,-1.5) and  (3,.25) .. (1,-.25) -- (-1,-.75); 
\end{scope}

\begin{scope}[shift={(-3,3)}]
\draw (0,0) ellipse (.1 and 1);
\coordinate [label=below:$g_x(q) - g_x(p)$] (L) at (2,-1);
\begin{scope}
\clip (1,1) arc (90:-90:.1 and 1) -- (3,-1) arc (-90:90:.1 and 1) -- (1,1); 
\fill[blue] (-1,.75) -- (1,.25) .. controls  (3,-.25) and (3.5,1.5) .. (3.5,0) -- (3.5,1) -- (-1,1) -- cycle; 
\end{scope}
\begin{scope}
\clip (3,-1)--(3,1)--(4,1)--(4,-1)--(3,-1);
\draw[style=dotted] (4,0) ellipse (.1 and 1);
\end{scope}
\begin{scope}
\clip (5,-1)--(5,1)--(4,1)--(4,-1)--(5,-1);
\draw (4,0) ellipse (.1 and 1);
\end{scope}
\draw[line width=1pt]   (0,1)--(4,1);
\draw  (0,-1)--(4,-1);
\begin{scope}
\clip (0,1) arc (90:-90:.1 and 1) -- (4,-1) arc (-90:90:.1 and 1) -- (0,1); 
\draw (-1,.75) -- (1,.25) .. controls  (3,-.25) and (3.5,1.5) .. (3.5,0) .. controls (3.5,-1.5) and  (3,.25) .. (1,-.25) -- (-1,-.75); 
\end{scope}
\draw  (3,1) arc (90:-90:.1 and 1);
\draw [style=dotted] (3,1) arc (90:270:.1 and 1);
\draw  (1,1) arc (90:-90:.1 and 1);
\draw [style=dotted] (1,1) arc (90:270:.1 and 1);
\end{scope}

\begin{scope}[shift={(3,3)}]
\coordinate [label=below:$g_y(q) - g_y(p)$] (L) at (2,-1);
\begin{scope}
\clip (1,1) arc (90:-90:.1 and 1) -- (3,-1) arc (-90:90:.1 and 1) -- (1,1); 
\fill[blue] (-1,.75) -- (-1,1) -- (3.5,1) -- (3.5,0) .. controls (3.5,-1.5) and  (3,.25) .. (1,-.25) -- (-1,-.75); 
\end{scope}
\draw (0,0) ellipse (.1 and 1);
\begin{scope}
\clip (3,-1)--(3,1)--(4,1)--(4,-1)--(3,-1);
\draw[style=dotted] (4,0) ellipse (.1 and 1);
\end{scope}
\begin{scope}
\clip (5,-1)--(5,1)--(4,1)--(4,-1)--(5,-1);
\draw (4,0) ellipse (.1 and 1);
\end{scope}
\draw[line width=1pt]   (0,1)--(4,1);
\draw  (0,-1)--(4,-1);
\begin{scope}
\clip (0,1) arc (90:-90:.1 and 1) -- (4,-1) arc (-90:90:.1 and 1) -- (0,1); 
\draw (-1,.75) -- (1,.25) .. controls  (3,-.25) and (3.5,1.5) .. (3.5,0) .. controls (3.5,-1.5) and  (3,.25) .. (1,-.25) -- (-1,-.75); 
\end{scope}
\draw  (3,1) arc (90:-90:.1 and 1);
\draw [style=dotted] (3,1) arc (90:270:.1 and 1);
\draw  (1,1) arc (90:-90:.1 and 1);
\draw [style=dotted] (1,1) arc (90:270:.1 and 1);
\end{scope}

\begin{scope}[shift={(-3,-3)}]
\draw (0,0) ellipse (.1 and 1);
\coordinate [label=below:$\cE(u)$] (L) at (2,-1);
\begin{scope}
\clip (3,1) arc (90:-90:.1 and 1) -- (4,-1) -- (4,1) -- (3,1); 
\fill[blue] (-1,.75) -- (1,.25) .. controls  (3,-.25) and (3.5,1.5) .. (3.5,0) .. controls (3.5,-1.5) and  (3,.25) .. (1,-.25) -- (-1,-.75); 
\end{scope}
\begin{scope}
\clip (3,-1)--(3,1)--(4,1)--(4,-1)--(3,-1);
\draw[style=dotted] (4,0) ellipse (.1 and 1);
\end{scope}
\begin{scope}
\clip (5,-1)--(5,1)--(4,1)--(4,-1)--(5,-1);
\draw (4,0) ellipse (.1 and 1);
\end{scope}
\draw[line width=1pt]   (0,1)--(4,1);
\draw  (0,-1)--(4,-1);
\begin{scope}
\clip (0,1) arc (90:-90:.1 and 1) -- (4,-1) arc (-90:90:.1 and 1) -- (0,1); 
\draw (-1,.75) -- (1,.25) .. controls  (3,-.25) and (3.5,1.5) .. (3.5,0) .. controls (3.5,-1.5) and  (3,.25) .. (1,-.25) -- (-1,-.75); 
\end{scope}
\draw  (3,1) arc (90:-90:.1 and 1);
\draw [style=dotted] (3,1) arc (90:270:.1 and 1);
\draw  (1,1) arc (90:-90:.1 and 1);
\draw [style=dotted] (1,1) arc (90:270:.1 and 1);
\end{scope}

\begin{scope}[shift={(3,-3)}]
\coordinate [label=below:$\cE(v)$] (L) at (2,-1);
\begin{scope}
\clip (1,1) arc (90:-90:.1 and 1) -- (4,-1) -- (4,1) -- (1,1); 
\fill[blue] (-1,.75) -- (1,.25) .. controls  (3,-.25) and (3.5,1.5) .. (3.5,0) .. controls (3.5,-1.5) and  (3,.25) .. (1,-.25) -- (-1,-.75); 
\end{scope}
\draw (0,0) ellipse (.1 and 1);
\begin{scope}
\clip (3,-1)--(3,1)--(4,1)--(4,-1)--(3,-1);
\draw[style=dotted] (4,0) ellipse (.1 and 1);
\end{scope}
\begin{scope}
\clip (5,-1)--(5,1)--(4,1)--(4,-1)--(5,-1);
\draw (4,0) ellipse (.1 and 1);
\end{scope}
\draw[line width=1pt]   (0,1)--(4,1);
\draw  (0,-1)--(4,-1);
\begin{scope}
\clip (0,1) arc (90:-90:.1 and 1) -- (4,-1) arc (-90:90:.1 and 1) -- (0,1); 
\draw (-1,.75) -- (1,.25) .. controls  (3,-.25) and (3.5,1.5) .. (3.5,0) .. controls (3.5,-1.5) and  (3,.25) .. (1,-.25) -- (-1,-.75); 
\end{scope}
\draw  (3,1) arc (90:-90:.1 and 1);
\draw [style=dotted] (3,1) arc (90:270:.1 and 1);
\draw  (1,1) arc (90:-90:.1 and 1);
\draw [style=dotted] (1,1) arc (90:270:.1 and 1);
\end{scope}
\end{tikzpicture}  
  \caption{ }
  \label{fig:change-in-area-nearby-fibre}
\end{figure}

While the proof is omitted (see \cite[Lemma 3.2]{A-ICM} for a related result),  we shall comment on the basic intuition, assuming for simplicity that $[\partial u]$ vanishes (note that given $u$ and $g_x$, the function $g_y$ may be chosen to achieve this). As  illustrated in Figure \ref{fig:change-in-area-nearby-fibre}, the expression $g_x(q) - g_x(p) $ measures the area of the region bounded by the $0$-section and $x$, together with the fibres $F_q$ and $F_{p}$, while  $g_y(q) - g_y(p) $ measures the area of the corresponding region with one boundary on the section $y$. The difference between these expressions gives the area of the region bounded by the fibres $F_q$ and $F_{p}$, together with the sections $x$ and $y$, which is the difference in areas between homotopic strips in $\Mbar_q(x(q), y(q)) $ and  $\Mbar_{p}(x(p), y(p)) $.

\subsubsection{Fukaya's trick}
\label{sec:fukayas-trick}

While Lemma \ref{lem:areas-strips-flux-1} provides the key estimate comparing areas of strips with boundaries on nearby fibres, it suffers from the following major deficiency: the moduli spaces $\Mbar_q(x(q), y(q)) $  and $\Mbar_{p}(x(p), y(p)) $ may \emph{a priori} be completely different so that the Floer theories of $L$ with the fibres $F_q$ and $F_{p}$ may be unrelated. By using Gromov compactness, one can in fact see that the situation is not as dire as we just described: assuming all moduli spaces are regular, the components of $\Mbar_q(x(q), y(q)) $  and $\Mbar_{p}(x(p), y(p)) $ of bounded energy will be in bijective correspondence whenever $q$ and $p$ are sufficiently close. Unfortunately, the minimal distance between $q$ and $p$ which is required to achieve this bijection may shrink to $0$ if we drop the bound on energy.

In \cite{Fukaya-cyclic}, Fukaya introduced an elementary trick to resolve this difficulty using the fact that the space of tame almost complex structures on a symplectic manifold is open in the space of all almost complex structures. We discuss a minor variant which is adapted to our situation (see \cite{A-ICM} for more detail).

Recall that $\{J_t\}$ is a family of tame almost complex structures used to define $\Mbar_q(x(q),y(q)) $.  Let $\psi_{p}$ be a \emph{diffeomorphism} of $X$ mapping $F_q$ to $F_p$ which is supported over a contractible subset of $\Q$. Assume that $\psi_p$  preserves the submanifold $\phi L$, and that the pushforward $(\psi_p)_{*} J_t $ of $J_{t}$ with respect to $\psi_p$ is a tame almost complex structure for $t \in [0,1]$. The first condition is easy to achieve using the fact that $\phi L$ is transverse to all fibres over $F_q$, and the second by shrinking $P$ if necessary, since the space of tame almost complex structures is open in the space of all almost complex structures.
\begin{lem} \label{lem:fukayas-trick-1}
If $\Mbar_p(x(p),y(p))$ is defined with respect to $(\psi_p)_{*} J_t  $, then composition with $\psi_p$ defines a homeomorphism
\begin{equation}
 \xymatrix{     \Mbar_q(x(q),y(q))\ar[r]^{\psi_p} & \Mbar_p(x(p),y(p))}.
\end{equation} \qed
\end{lem}

Lemma \ref{lem:fukayas-trick-1} provides the key idea for our approach to family Floer cohomology. In particular, in addition to the usual choices of auxiliary structure that enter in the construction of Floer theoretic structures (see e.g. \cite{seidel-Book}), we must keep track of various diffeomorphisms, and in fact families of diffeomorphisms, which map fibres to each other, and which preserve the tameness of certain families of almost complex structures among other requirements. This is the main reason for the technical complexity which the reader will encounter in Section \ref{sec:floer-theory-conv}.

We shall return to the construction of the family Floer complex in Section \ref{sec:floer-equation}, after discussing the mirror side.

\subsection{The rigid-analytic $T$-dual}
\label{sec:rigid-analytic-t} \label{sec:twist-sheav-perf}
In this section, we construct the space $Y$ which will be mirror to $X$ by an analogue of SYZ duality in the non-archimedean setting. Such an approach was first suggested by Kontsevich and Soibelman \cite{KS}, and is discussed in more detail in \cite{A-ICM}.

The space $Y$ will be a rigid analytic space over a Novikov field; recall that we can associate to any field $\bfk$ the  universal Novikov field
\begin{equation}
  \Lambda = \{ \sum_{i=0}^{\infty} a_i T^{\lambda_i} | a_i \in \bfk, \, \lambda_i \in \bR, \, \lim_{i \to \infty} \lambda_i = +\infty \} .
\end{equation}
This is a non-archimedean field, whose  non-zero elements, denoted $\Lambda^*$, are equipped with a valuation  that assigns to a non-zero series the exponent of its leading order term:
\begin{align}
 \val \co \Lambda^* & \to \bR \\ 
  a_0 T^{\lambda_0} + \sum_{i=1}^{\infty} a_i T^{\lambda_i} & \mapsto \lambda_0,
\end{align}
where $a_0 \neq 0$, and $\lambda_0 < \lambda_i $ for $0 < i$. The elements of vanishing valuation form are called the unitary elements
\begin{equation}
  U_\Lambda = \{a_0  + \sum_{i=1}^{\infty} a_i T^{\lambda_i} |a_0 \neq 0, \, a_i \in \bfk, \, 0 < \lambda_i, \, \lim_{i \to \infty} \lambda_i = +\infty \}
\end{equation}
and form the analogue of the unit circle in $\bC$.

As a set, the analytic space $Y$ is simply the union
\begin{equation}
  \coprod_{q \in \Q} H^1(F_q,  U_\Lambda   )   
\end{equation}
where each fibre $H^1(F_q,  U_\Lambda   )  $ should be thought of by analogy with $H^1(F_q, S^1 ) = H^1(F_q, \bR ) /H^1(F_q, \bZ )   $ as the dual torus to $F_q$. In order to exhibit the analytic structure on $Y$, we shall construct $Y$ by gluing simpler pieces called \emph{affinoid domains} \cite{tate} which are associated to integral affine polygons in $\Q$. To this end, it is necessary to introduce some combinatorics to keep track of a cover consisting of such polygons.

\subsubsection{Covers of $\Q$ and $Y$}
\label{sec:covers-q}

Let $\Sigma$ be a partially ordered set labelling the vertices of a simplicial triangulation of $\Q$, i.e. there is a bijective correspondence between totally ordered subsets of $\Sigma$ and simplices of $\Q$, which assigns to a subset of $\Sigma$ the unique simplex spanned by its vertices.  Note in particular that all maximal totally ordered subsets of $\Sigma$ have $n+1$ elements, since these correspond to top-dimensional simplices in $\Q$, which is a manifold of dimension $n$. We write $\sigma_I$ for the cell associated to a (totally ordered) subset $I$  of $\Sigma$.

For each element $i \in \Sigma$, assume that we are given an integral affine polygon $P_i$ with basepoint $q_i = \sigma_i$ such that
\begin{align} \label{eq:nested_inclusion_polygons}
  P_j & \subset P_i \textrm{ whenever }i < j \\ \label{eq:Q-obtained-by-gluing}
Q & = \bigcup_{i} P_i / \!\! \sim 
\end{align}
where the equivalence relation $\sim$ is generated by the inclusion $P_j \subset P_i$ for $i<j$. As before, we denote the image of every point $p \in P_i$ in $H^1(F_i, \bR)  $  under the flux map by $p-q_i$, where $F_i \equiv F_{q_i}$. 

\begin{lem} \label{lem:cover_P_contain_sigma}
If $\Q$ is an integral affine manifold, there is a partially ordered set $\Sigma$  indexing the vertices of a simplicial triangulation and a cover by integral affine polygons, so that Equations \eqref{eq:nested_inclusion_polygons} and \eqref{eq:Q-obtained-by-gluing} hold, and such that $\sigma_I$ is contained in the interior of $P_{\max I}$ for each totally ordered subset $I \subset \Sigma$. 
\end{lem}
\begin{proof}
Pick a triangulation $\Delta$ of $\Q$ by affine (not-necessarily integral) simplices. For example, given an auxiliary Riemannian metric on $\Q$ we may choose, for a sufficiently small $\epsilon$,  a generic finite set in $\Q$ which is $\epsilon$-dense. There is a corresponding Delaunay triangulation \cite{BDG}, which is dual to the Voronoi diagram of nearest-neighbours to the points in the given finite set. Having chosen $\epsilon$ sufficiently small, we can construct the Delaunay triangulation to have flat simplices with respect to the affine structure (i.e. define vertices to be affine segments connecting vertices, faces to be affine simplices spanned by triples, etc).

For the remainder of the proof, we shall use letters $(i,j, ...)$ to denote vertices of the triangulation $\Delta$. Recall that the barycentric subdivision $B \Delta$ has vertices given by the barycenters of the simplices of $\Delta$.  We write $B\Delta_I$ for this barycenter, and
\begin{equation}
    B\Delta_{I_1 \subset \cdots \subset I_k} \subset \Delta_{I_k} 
\end{equation}
for a higher-dimensional simplex.   The partially ordered set $\Sigma$ will correspond to the vertices of the double barycentric subdivision so that its elements are given by sequences $I_1 \subset \cdots \subset I_k$ of nested simplices of $\Delta$ (note the conflict in notation with the statement, where $I$ refers to a subset of $\Sigma$).

To construct the integral affine cover,  choose for each set $I$ labeling a simplex $\Delta_I$ of $\Delta$ an integral affine polygon $P_I$ with non-empty interior $\mathring{P}_I$. To state the desired properties, we introduce the notation
\begin{equation}
 P_{ I_1 \subset \cdots \subset I_k} \equiv P_{I_1} \cap \cdots \cap P_{I_k}. 
\end{equation}
for a sequence $I_1 \subset \cdots \subset I_k  $ which corresponds to an element of $\Sigma$.

We require the following properties to hold:
\begin{align}
  \label{eq:P_I-cover_Q}
  & \parbox{36em}{ $\Delta_I$ is contained in the union of the open sets $\mathring{P}_J$ for $J \subset I$} \\  \label{eq:P_I-good-cover-B}
& \parbox{36em}{  $P_{I}$ is contained in the union of the interior of the barycentric simplices that contain the barycenter $B \Delta_I $.} \\
 \label{eq:P_I-good-cover}
& \parbox{36em}{  $B \Delta_{I_1 \subset \cdots \subset I_k} \cap  \mathring{P}_{I_1 \subset \cdots \subset I_k} \neq \emptyset$ for each nested sequence $I_1 \subset \cdots \subset I_k$.}
\end{align}
These conditions can be readily achieved by induction on the dimension of the simplex (see Figure \ref{fig:cover_nbd_cells} for an implementation of the same idea in a different context): first pick integral affine polygonal neighbourhoods $P_i$ of all vertices which are contained in the union of the interiors of  simplices of the barycentric subdivision which are adjacent to $\Delta_i$. In the inductive step, let  $\Delta_I' \subset \Delta_I$ be a convex subset of  the interior containing the barycenter such that $\Delta_I$ is covered by the interior of $\Delta_I'$ together with the union of the previously chosen polygons $P_J$ for $J \subset I$ and
\begin{equation} \label{eq:good_cover_q'}
 \Delta'_{I} \cap  B \Delta_{I_1 \subset \cdots \subset I_{k} \subset I} \cap  \mathring{P}_{I_1 \subset \cdots \subset I_{k}}  \neq \emptyset 
\end{equation}
for every nested sequence with largest subset $I$. This second property can be achieved because $   \mathring{P}_{I_1 \subset \cdots \subset I_{k}}  $  intersects $B \Delta_{I_1 \subset \cdots \subset I_k} $ non-trivially by induction, which is contained in the boundary of  $B \Delta_{I_1 \subset \cdots \subset I_k \subset I}$, which is itself a subset of $\Delta_I$.

We then choose $P_I$ to be an integral affine polygonal neighbourhood of $\Delta_I'$, which is sufficiently small so that Condition \eqref{eq:P_I-good-cover-B} holds.  Condition \eqref{eq:P_I-cover_Q} holds automatically, while Equation \eqref{eq:P_I-good-cover} follows from from the assumption that $\Delta'_I$ contains the barycenter of $\Delta_I$ for the sequence of length one, and in general from  Equation \eqref{eq:good_cover_q'}.

Note that Condition \eqref{eq:P_I-good-cover-B} implies that
\begin{equation}
    P_I \cap P_J = \emptyset
\end{equation}
unless $I \subset J$ or $J \subset I$. This implies that the cover $\{ P_I\}$ satisfies Equation \eqref{eq:Q-obtained-by-gluing}, while Equation \eqref{eq:nested_inclusion_polygons} follows by construction.

It remains to construct the triangulation labelled by $\Sigma$ so that any simplex is contained in the corresponding polytope. We shall construct the triangulation to have affine simplices (not necessarily integral) which are the (local) convex hull of their vertices. It thus suffices to pick points
\begin{equation}
 \sigma_{I_1 \subset \cdots \subset I_k} \in  P_{ I_1 \subset \cdots \subset I_k} \cap \mathring{ B \Delta}_{ I_1 \subset \cdots \subset I_k}.
\end{equation}
By construction, it automatically follows that whenever $J_1 \subset \cdots \subset J_\ell$ is a subset of the sequence $I_1 \subset \cdots \subset I_k$ we have
 \begin{equation}
 \sigma_{J_1 \subset \cdots \subset J_\ell} \in  P_{ I_1 \subset \cdots \subset I_k},
\end{equation}
hence the edge connecting these two vertices is contained in $P_{ I_1 \subset \cdots \subset I_k}$, and more generally for any simplex. Defining the partial order on the elements of $\Sigma$ to be given by reverse inclusion, the result follows.

Having finished the proof, we return to the notation where $(i,j, \cdots)$ correspond to elements of $\Sigma$.
\end{proof}
\begin{rem}
The proof of the result gives a stronger statement: namely, there is an integral affine polyhedral cover of $\Q$ which has dimension $n$, i.e. so that there are no non-empty $n+2$-fold intersections.
\end{rem}
Given a totally ordered subset $I$ in $ \Sigma$, let
\begin{equation} \label{eq:conventions_short_notation}
    P_I = \bigcap_{i \in I} P_i = P_{\max I} \textrm{, } q_I = q_{\max I} \textrm{, and } F_I = F_{\max I}.
\end{equation}

Let $Y_I$ denote the inverse image of $P_I$ under the valuation map
from $
H^1(F_{I}, \Lambda^*) $ to $H^1(F_{I}, \bR) \cong T_{q_I} \Q$. Since $ \Lambda^*$ splits as $\bR \times U_{\Lambda}$,  the affine isomorphism in Equation \eqref{eq:symplectic_transport} yields a natural identification 
\begin{equation}
  Y_I =  \coprod_{p \in P_I} H^1(F_p,  U_\Lambda   )
\end{equation}
of sets. Moreover, we obtain an isomorphism 
\begin{equation} \label{eq:lambda_iso_nearby_points}
 H^1(F_{j}, \Lambda^*) \to  H^1(F_{i}, \Lambda^*) .
\end{equation}
defining an inclusion of $Y_j$ into $Y_i$ whenever $i<j$.

\begin{defin}
  The \emph{rigid-analytic $T$-dual} of $X$ is the quotient
  \begin{equation}
    Y = \coprod_{i \in \Sigma} Y_i / \!\! \sim
  \end{equation}
where the equivalence relation identifies points in $Y_j$ with their image in $Y_j$ under the isomorphism from Equation \eqref{eq:lambda_iso_nearby_points}.
\end{defin}
\begin{rem} \label{rem:careful_cover_Q}
Equation \eqref{eq:Q-obtained-by-gluing} was missing from the corresponding discussion in \cite{A-ICM}. Note that, in general, the fact that the sets $P_i$ cover $Q$ implies that the natural map from the right hand side in Equation \eqref{eq:Q-obtained-by-gluing} to the left hand side is a surjection, which in fact admits a splitting (by mapping  $q \in Q$ to the corresponding point in $P_j$ for $j$ maximal among those containing $q$). Passing to the rigid analytic side, we would obtain a space which contains the rigid-analytic $T$-dual as a retract. 
\end{rem}

Each $Y_i$ is an affinoid domain, equipped with the ring of regular functions consisting of Laurent series on $H_1 ( F_{i}, \bZ)$ which converge in $Y_i$:
\begin{equation} \label{eq:element_of_reg_function_i}
  \scrO_i = {\Big \{}  \sum_{A \in H_{1}( F_{i}, \bZ)}  f_{A} z^{A}_{q_i}, \quad f_{A} \in \Lambda |   \forall v \in P_i, \lim_{|A| \to +\infty} \val(f_{A}) + \langle v,A \rangle= +\infty  {\Big\} }.
\end{equation} 
By construction, the inclusion $Y_j \to Y_i$ induces a (ring) map $
 \scrO_i \to \scrO_j$. The space $Y$ is therefore a rigid analytic space in the sense of Tate \cite{tate}. Given a totally ordered subset $I = \{ i_0 < i_1 < \ldots < i_d \}$, define the $\scrO_{i_0}$ module
\begin{equation}
  \scrO_I = \scrO_{i_d}  \otimes_{\scrO_{i_d}}\cdots  \otimes_{ \scrO_{i_0}} \scrO_{i_0}.
\end{equation}

\subsubsection{The twisting cocycle}
\label{sec:twisting-cocycle}

Let $X_I$ denote the inverse image of $P_I$ in $X$. Fix  Lagrangian sections $
  \triv_{i} \co  P_i \to X_i $
Moreover, choose functions $
f_{ij} \co P_{ij} \to \bR $
such that fibrewise addition by $df_{ij}$ agrees with the transition map between the restrictions of $\triv_i$ and $\triv_j$ to $P_{ij}$. We obtain a function
\begin{equation}
f_{ij} + f_{jk} - f_{ik} \co P_{ijk} \to \bR
\end{equation}
whose differential at $q_k$ lies in $H_{1}(F_{k}, \bZ) $ and define
\begin{equation} \label{eq:cocycle-alpha}
\alpha_{ijk} =  T^{ f_{ij}(q_k) + f_{jk}(q_k) - f_{ik}(q_k)} z_{q_k}^{d(f_{ij} + f_{jk} - f_{ik})} \in \scrO^*_{ijk}
\end{equation}
where $\scrO^*_{ijk} $ is the multiplicative group of non-vanishing functions.

If $v_{ijk}$ is a \v{C}ech representative of a class  $v \in H^{2}(Q; \bZ_2)$, define
\begin{equation}
\alpha_{ijk}^{v} = (-1)^{v_{ijk}} \alpha_{ijk}.
\end{equation}
Denote the corresponding cohomology class by $\alpha^v \in H^2(Y, \scrO^*)$.  Given a pair $(I,J)$ of ordered subsets of $\Sigma$ such that $\min J = \max I$, it is convenient to introduce the notation
\begin{equation}
  \alpha^{v}_{J,I} = \alpha^{v}_{\min I, \max I, \max J}.  
\end{equation}
\subsubsection{Twisted sheaves}
\label{sec:categ-assoc-cover}
\begin{defin}
  An \emph{$(\alpha^v)^{-1}$-twisted pre-sheaf of perfect $\scrO_Y$-modules} consists of (i) a finite rank graded free $\scrO_i$-module $\scrF(i)$ for each $i \in \Sigma$ and (ii) a degree $2-|I|$ map
\begin{equation}
\scrF_{I} \co   \scrO_I \otimes_{\scrO_{i_0}} \scrF(i_0)  \to  \scrF(i_d)
\end{equation}
for each ordered subset $I$. These data are required to satisfy the equation:
\begin{equation} \label{eq:a_oo_functor_equation}
  \sum_{i \neq  \min I, \max I} (-1)^{|I^{\leq}_i|+|a|}  \scrF_{I  \setminus  i }   a - \sum_{i \in I} \alpha^{v}_{I^\geq_i, I^\leq_i }  \scrF_{I^\geq_i}  \circ  \scrF_{I^\leq_i} a = 0.
\end{equation}
\end{defin}
\begin{rem}
  The sign conventions used are those of \cite{seidel-Book}. At the cohomological level, the justification for the terminology is given in \cite[Section 2.4]{A-ICM}, though the reader should be aware that the cocycle is on the wrong side of \cite[Equation (2.33)]{A-ICM}): from an algebraic point of view, the natural definition consists of maps $\scrG_I$ satisfying
\begin{equation}
  \sum_{i \neq  \min I, \max I} (-1)^{|I^{\leq}_i|+|a|}  (\alpha^{v}_{i_-ii_+})^{-1} \scrG_{I  \setminus i}   a -  \sum_{i \in I}   \scrG_{I^\geq_i} \circ \scrG_{I^\leq_i} a = 0,
\end{equation}
where $i_\pm$ are respectively the elements of $I$ immediately preceding and succeeding $i$. These two notions are equivalent by setting $  \scrG_I =  \scrF_I \cdot \prod_{i \in I}  \alpha_{ \min I i i_+}^{v}$.
\end{rem}
If $\scrF_{ij}$ is a quasi-isomorphism, we call such an object an $(\alpha^v)^{-1}$-twisted sheaf of perfect $\scrO_Y$-modules. Henceforth, we shall call such objects sheaves, specifying $\alpha^v$ only when necessary for clarity.

Sheaves form a differential graded category, with morphisms given by
\begin{equation}
  \Hom(\scrF, \scrF') \equiv \bigoplus_{I} \Hom_{\scrO_{\min I} }( \scrF(\min I), \scrF(\max I))[1-|I|],
\end{equation}
where the direct sum is taken over totally ordered subsets $I$, and $\scrF(\max I) $ is an $\scrO_{\min I}$ module via restriction. It is convenient to denote each summand in the right hand side by
\begin{equation}
     \Hom_I(\scrF, \scrF') \equiv  \Hom_{\scrO_{\min I} }( \scrF(\min I), \scrF(\max I))[1-|I|].
\end{equation}

Decomposing every element $T$ of this direct sum as $T = \sum_{I} T_I$, the differential acts on an element $a \in  \Hom_I(\scrF, \scrF')$ according to the formula:
\begin{multline} \label{eq:differential_morphism_sheaves}
  \mu^1T( a)  = \sum_{i \in I}\alpha^{v}_{I^\geq_i, I^\leq_i } \scrF_{I^\geq_i} \circ T_{I^\leq_i}  (a) + (-1)^{|I^{\leq}_i|+(1- | T|)}\alpha^{v}_{I^\geq_i, I^\leq_i }  T_{I^\geq_i} \circ \scrF_{I^\leq_i} (a) \\
\qquad + \sum_{i \neq \min I, \max I} (-1)^{|I^{\leq}_i|-1+|a|+|T|} T_{I \setminus i} (a).
\end{multline}
For a summand $T_I $, we may rewrite the above as
\begin{multline} \label{eq:differential_morphism_sheaves_alt}
  \mu^1 T_I  (a)  = \sum_{\substack{J  \\ \min J = \max I}}\alpha^{v}_{J,I}  \scrF_{J} \circ T_{I}  (a) + \\ \sum_{\substack{J \\ \min I = \max J}}(-1)^{|I^{\leq}_i|+(1- | T_I|)}\alpha^{v}_{I,J}  T_{I} \circ \scrF_{J} (a) + \sum_{\substack{ I \cup \{ j\}  \\ \min I < j < \max I}} (-1)^{|I^{\leq}_j|-1+|a|+|T_I|} T_{I} (a),
\end{multline}
where the sums on the right hand side are over all totally ordered subsets $J$ and $I \cup \{ j\}$.

The composition of morphisms in this category is given on  $a \in \Hom_I(\scrF, \scrF') $ by the formula
\begin{equation}
 \mu^{2}(S,T)(a) = \sum_{i \in I} (-1)^{(|S|-1) |I^\leq_i|} \alpha^{v}_{I^\geq_i, I^\leq_i }   S_{I^\geq_i}  \circ T_{I^\leq_i} (a).
\end{equation}

\subsection{The local mirror functor} \label{sec:floer-equation}

We now shift our attention back to the symplectic manifold $X$, whose Fukaya category we plan to relate to the category of twisted sheaves on $Y$.  We shall assume that  $\pi_2(\Q)=0$ which excludes the presence of holomorphic spheres in $X$, or holomorphic discs with boundary on any fibre $F_q $. Moreover, we shall only consider Lagrangians which are \emph{tautologically unobstructed} in the sense that
\begin{equation} \label{eq:everything_is_good_with_curves}
\parbox{35em}{there exists  $J_{L} \in \scrJ$ so that $L$ bounds no $J_{L}$-holomorphic discs.}
\end{equation}
Returning to the setting of Section \ref{sec:areas-strips-flux}, we now impose the condition that the family of almost complex structures $\{ J_{t} \} $ used to define the moduli space $\Mbar_q(x,y) $  satisfies the condition
\begin{equation}  \label{eq:push_J_forward}
  J_{1} = \phi_{*}(J_{L}) \equiv \phi_* \circ J_L \circ \phi^{-1}_*.
\end{equation}

For generic choices of families $\{ J_{t} \} $, classical Floer-theoretic methods imply, under these assumptions, that  $\Mbar_q(x,y)$ is a manifold with boundary given by the union
\begin{equation} \label{eq:boundary_Floer_moduli_space}
\coprod_{z \in  \phi L \cap F_{q}}  \cM_{q}(x,z) \times \cM_{q}(z,y).
\end{equation}

In particular, we can define (ungraded) Floer complexes $CF^*(L, F_q)$ over a Novikov field in characteristic $2$. Since there is much interest in working with $\bZ$-graded complexes in characteristic $0$, we discuss the necessary auxiliary conditions which $L$ must satisfy.

\subsubsection{Gradings and $\Pin^+$ structures}
\label{sec:floer-theory-with}

For any $J \in \scrJ$, there is a natural isomorphism of complex vector bundles $ T X \cong \pi^{*}(T \Q) \otimes_{\bR} \bC$. In particular, there is a natural homotopy class of \emph{quadratic complex volume forms} on $TX$ obtained by complexifying a density on $\Q$; let $\eta$ be a quadratic complex volume form in this class. We then require that $L$ be  \emph{graded} with respect to $\eta$, i.e. the map $  L \to \bR \bP^{1}  = \bR / \pi \bZ $ induced by $\eta$ is null-homotopic, and that a lift of this null-homotopy to the universal cover of $\bR \bP^1$ be fixed. For a fibre $L = F_q$, the map $  F_q \to \bR \bP^{1} $ is constant, and we fix the lift to $\bR$ with value $\pi$.  This ensures that the Floer complexes are $\bZ$ graded, as explained in \cite [Section (12b)]{seidel-Book}, since we can associate to each intersection point $x \in \phi L \cap F_q$ a well-defined Maslov index $ \deg(x) \in \bZ  $. The key reason for introducing the Maslov index is that the dimension of moduli spaces of strips can be expressed in terms of it; more precisely, in the setting of Section \ref{sec:areas-strips-flux}, we have
\begin{equation}
  \label{eq:dimension_moduli_strips}
 \dim_{\bR} \Mbar(x,y) =  \deg(x)  - \deg(y) -1.
\end{equation}

In order to work over a field of arbitrary characteristic, consider a class $w \in H^2(\Q,\bZ_2)$ which is the second-Stiefel Whitney class of a vector bundle $E$. We require that 
\begin{equation} \label{eq:relative_Pin_conditions}
  \parbox{33em}{the restriction of $\pi^*w$ to $L$ agree with $w_2(L)$.}
\end{equation}
\begin{rem}
The most important cases of interest are the trivial case and $w= w_2(\Q)$. As in \cite{FOOO}, one can drop the condition that $w$ be a Stiefel-Whitney class: the restriction of $w$ to the $3$-skeleton may be represented as the second Stiefel-Whitney class of a vector bundle. Since an orientation is a discrete datum, the uniqueness up to homotopy of a retraction from a $2$-dimensional complex to the $3$-skeleton suffices to establish consistency of orientations, and no higher coherence is required, even in the parametrised setting.   
\end{rem}
Assumption \eqref{eq:relative_Pin_conditions} implies that $\pi^*E|L \oplus TL$  admits a $\Pin^+$ structure which we fix; this is the choice of a \emph{relative $\Pin^+$ structure} on $L$. Given a Hamiltonian isotopy $\phi$, we obtain a corresponding relative $\Pin^+$ structure on $\phi L$. For the fibres, we choose a $\Pin^+$ structure on $T^*_{q}\Q \oplus E_{q}$, which induces a relative  $\Pin^+$ structure on $F_q$ using the isomorphism between $ T^*_{q}\Q  $ and $TF_q$.

 Given an intersection point $x \in \phi L \cap F_q$, pick a path $\gamma_x$ of linear Lagrangian subspaces of $T_xX$  starting at $T_x \phi L$ and ending at $T_xF_q$, in the homotopy class prescribed by the graded lift of these Lagrangian subspaces (see \cite[Section (11j)]{seidel-Book}). We obtain a vector bundle over the interval with fibre $
  \gamma_x(t) \oplus \pi^* E$.  The choice of relative $\Pin^+$ structures on $L$ and $F_q$  yields $\Pin^+$ structures on the restrictions of this bundle to $0$ and $1$. Let $\nu_x$ denote the free abelian group  generated by the two choices of extensions of this $\Pin^+$ structure to the interval, with the relation that their sum vanishes. Let $\ro_x$ denote the determinant line of the Cauchy-Riemann operator on complex linear maps from the upper half plane to $TX$ with Lagrangian boundary conditions $\gamma_x(t)$ (extended by $\phi L$ and $F_q$ outside the interval), and $|\ro_x| $ its orientation line. We define
\begin{equation} \label{eq:generator_Floer_x}
  \delta_x = |\ro_x| \otimes \nu_x.
\end{equation}

\subsection{Local mirror construction} \label{sec:local-mirr-constr}

Given $i \in \Sigma$, we recall in this section the construction of an $\scrO_i$-module $\scrF(L)$ associated to the Lagrangian $L$: assume that the cover $P_i$ is sufficiently fine that there exist Hamiltonian diffeomorphisms $\phi_i$ such that $\phi_i L$ is transverse to all fibres over $P$, and that, for all $p \in P$, there exists a diffeomorphism $\psi_{p}$ mapping $F_q$ to $F_p$,  preserving the submanifold $\phi L$, and such that $(\psi_p)_{*} J_t $ is tame.

Given a pair of intersection points $x_i, y_i \in  F_{i} \cap \phi_i L$ and an element $u \in \cM_{q_i}(y_i, x_i)$, denote the orientation line of the linearised Cauchy-Riemann operator at $u$ by 
\begin{equation}
  \delta_u = | \det D_u| =  |\coker D_u  |^{\vee} \otimes | \ker D_u |,
\end{equation}
where the absolute value symbol stands for the line of orientations, and $|V|^\vee$ is the dual line.   Index theory determines a canonical isomorphism (see \cite[Remark 11.6]{seidel-Book}):
\begin{equation} 
\delta_u \otimes \delta_{x_i} \cong \delta_{y_i}.
\end{equation}
Assuming that $\deg(y_i) = \deg(x_i) +1 $, the moduli space $ \cM(y_i, x_i) $  consists only of rigid curves, and  $\ker D_u $ is $1$-dimensional, which implies that it is generated by translation in the $s$-direction. Fixing the orientation of this kernel corresponding to the positive direction, yields a map
\begin{equation} \label{eq:map_strip_differential}
  \partial_u \co \delta_{x_i} \to \delta_{y_i};
\end{equation}
we denote by $\mu^1_u$ the product of $\partial_u$ by $(-1)^{\deg x}$.

The family Floer module and the differential are given by
\begin{align}
 \scrF(L,i)   & \equiv \bigoplus_{x_i \in \phi_i L \cap F_i} \scrO_i \otimes \delta_{x_i}  \\
  \scrF_i \co   \scrF(L,i) & \to   \scrF(L,i)[1] \\
\scrF_i| \delta_{x_i} & = \bigoplus_{y_i}  \sum_{u \in \cM_{q_i}(y_i, x_i)} T^{\cE(u)} z^{[\partial u]}_{i} \otimes \mu^1_u.
\end{align}

Fukaya's fundamental observation \cite{Fukaya-cyclic} is that Gromov compactness implies that this map is well defined, i.e. the expression $\sum_{u \in \cM(y_i, x_i)} T^{\cE(u)} z^{[\partial u]} $ gives a function in $\scrO_i$ (see \cite[Proposition 3.3]{A-ICM}). Indeed,  the condition of lying in $\scrO_i $ is equivalent to $T$-adic convergence at every point $z \in Y_P$. Assuming that $z$  lies over a point $p \in P_i$, we first use Lemma \ref{lem:fukayas-trick-1} to identify $\Mbar_{q}(y_i, x_i)$ with  $\Mbar_{p}(y_i(p), x_i(p))$, and then  Lemma \ref{lem:areas-strips-flux-1} to see that the result of evaluating $z$ at such a point recovers the Floer differential for the Lagrangian Floer theory of $\phi L$ with $F_p$ (equipped with a $U_{\Lambda}$ local system).  It is now a standard fact that the Floer differential converges as a consequence of Gromov compactness which asserts that there are only finitely many rigid holomorphic curves with bounded energy.

\subsection{Statement of the main theorem, and outline of the paper}
\label{sec:what-remains-be}
The main result of this paper is the following:
\begin{thm} \label{thm:main}
Let $X \to Q$ be a Lagrangian torus fibration with $\pi_2(X) = 0$, and $L$ and $L'$ Lagrangians satisfying Condition \eqref{eq:everything_is_good_with_curves}. Given  a sufficiently fine cover of $Q$, we can associate to $L$ and $L'$ (twisted) sheaves $\scrF(L)$ and $\scrF(L)$ of perfect complexes (with respect to the induced cover of $Y$),  as well as maps
\begin{equation}
  \xymatrix{  CF^*(L,L') \ar[r]^{\scrC} &  \Hom(\scrF(L), \scrF(L'))  \ar[r]^{\scrP} & CF^*(L,L').}  
\end{equation}
whose composition is homotopic to the identity up to sign.

Given a finite collection of Lagrangians, the map $\scrC$ extends to a faithful $A_\infty$ functor from the corresponding Fukaya category to the category of twisted sheaves of perfect complexes.  
\end{thm}

We now indicate how the proof can be pieced together from the paper:

\begin{itemize}
\item Section \ref{sec:unif-small-choic} introduces the precise notion of a sufficiently fine cover.
\item The construction of the twisted sheaf $\scrF(L)$ is done in Section \ref{lem:Floer_gives_sheaf}, with Lemma \ref{lem:Floer_gives_sheaf} asserting that the necessary equations hold.
\item The chain map $\scrC$ is constructed in Section \ref{sec:from-floer-vcech}, with Lemma \ref{lem:C-is-chain-map} showing that it is a chain map.
\item The corresponding results for $\scrP$ are proved in Section \ref{sec:from-vcech-floer}, in particular Lemma \ref{lem:P-chain-map}.
\item The proof that the composition is homotopic to the identity is given in Proposition \ref{prop:homot-from-comp}. 
\item In the Appendix, we construct the $A_\infty$ functor. In fact, we give a slightly simpler description of this map in the Appendix than in the main part of the paper. The additional complexity of the paper's main construction comes from the need to see the holomorphic curves defining such a map arise as components of  the boundary of a moduli space of annuli.
\end{itemize}

We now explain how the results of Section \ref{sec:floer-coch-morph} rely on the previous sections:
\begin{itemize}
\item The construction of $\scrF$ entails the construction of maps $\scrF_{I}$ for $2 \leq |I|$  yielding a (twisted) sheaf (of perfect complexes) on $Y$. This will require the study of \emph{higher continuation maps} in Floer theory and their convergence.
\item The construction of $\scrC$ and $\scrP$ is conceptually not too different from that of higher continuation maps, but the combinatorics required to keep track of the various moduli spaces and to appropriately formulate convergence are significantly more complicated.
\end{itemize}
In order to do this, we introduce certain abstract moduli spaces in Section \ref{sec:famil-riem-surf} and the corresponding spaces of maps in Section ~\ref{sec:floer-theory}.  The convergence problems  are discussed in Section \ref{sec:floer-theory-conv} which is at the heart of the paper. The key idea it to choose a very fine triangulation of the base of the fibration, make controlled choices at the vertices of this triangulation, and associate to higher dimensional cells families of equations which interpolate between these.

The remaining sections begin the transition from moduli spaces to the algebraic structures constructed in Section \ref{sec:floer-coch-morph}:
\begin{itemize}
\item In Section \ref{sec:moduli-space-degen} we  show that the composite $CF^*(L,L') \to \Hom(\scrF(L), \scrF(L'))  \to CF^*(L,L')  $ may be interpreted as a moduli space of degenerate annuli \emph{parametrised by $\Q$.}
\item In Section \ref{sec:cardys-relation} we build a cobordism between the moduli space of degenerate annuli and a moduli space which defines the identity on Floer cohomology.  The main delicate point is that it is not possible to perform the gluing construction continuously in such a way that the annuli over every point in the base are obtained by gluing the degenerate annuli corresponding to that point. This is responsible for the notion of an annulus gluing function introduced in Definition \ref{def:annulus_gluing}.
\end{itemize}

\section{Families of Riemann surfaces} \label{sec:famil-riem-surf} \label{sec:param-moduli-spac}
\subsection{Adams's moduli space} \label{sec:adams-univ-curve}

Let $\cM_{2,d}$ denote the moduli space of discs with two boundary punctures and $d$ interior marked points $(z_1, \ldots, z_{d})$. Since the complement of two points on the boundary of a disc is biholomorphic to a strip, and the biholomorphism is unique up to translation, we obtain a subset
\begin{equation}
    \cM^{\{1/2\}, ord}_{2,d} \subset \cM_{2,d}
\end{equation}
consisting of configurations for which the marked points lie on $\bR \times \{ 1/2 \}$ after identification with the strip, and whose ordering along the real line is opposite to the ordering of the labels.  We obtain coordinates for $\cM^{\{1/2\}, ord}_{2,d}$ by taking the differences in the first coordinates of the marked points, which identifies this space with $(0,\infty)^{d-1}$. In particular, the  fibre of the universal curve over  $\cM_{2,d}$ at a point in $ \cM^{\{1/2\}, ord}_{2,d} $ with coordinates $(r_1, \ldots, r_{d-1}) $ is biholomorphic to a strip with marked points satisfying
\begin{equation}
z_{i+1} - z_i = (-2r_i, 0).
\end{equation}

\begin{figure}
\centering
\begin{tikzpicture}
  \begin{scope}
 \coordinate (0) at (0,0);
\coordinate (1) at (4,0);
\coordinate (2) at (4,4);
\coordinate (3) at (0,4);

\foreach \x in {0,...,3} \fill (\x) circle (8*\lw);

\draw[line width=2*\lw] (0)--(1)--(2)--(3)--cycle;

\ExtractCoordinate{$(0)$};
\Conthree{\XCoord+4.5*\mw}{\YCoord +4*\mw}{3}{0};
\Contwo{\XCoord - 0.5*\mw}{\YCoord +4*\mw}{3}{0};
\Con{\XCoord-.5*\mw}{\YCoord - 0.5*\mw}{3}{0};  
\ExtractCoordinate{$(0)!.5!(1)$};
\Contwo{\XCoord+.5*\mw}{\YCoord - 0.5*\mw}{3}{0};  
\ExtractCoordinate{$(1)$};
\Con{\XCoord+1.5*\mw}{\YCoord -.5*\mw}{3}{2}; 
\Conin{\XCoord+2.75*\mw}{\YCoord -.5*\mw}{0};
\Con{\XCoord+1.5*\mw}{\YCoord +4*\mw}{3}{2}; 
\Conintwo{\XCoord+2.75*\mw}{\YCoord +4*\mw}{0};
\ExtractCoordinate{$(2)$};
\Con{\XCoord+1.5*\mw}{\YCoord+0.5*\mw}{3}{2};
\Conin{\XCoord+2.75*\mw}{\YCoord+0.5*\mw}{1}; 
\Conin{\XCoord+4*\mw}{\YCoord+0.5*\mw}{0}; 
\ExtractCoordinate{$(2)!.5!(3)$};
\Contwo{\XCoord-.25*\mw}{\YCoord+0.5*\mw}{3}{1};
\Conin{\XCoord+1*\mw}{\YCoord+0.5*\mw}{0}; 
\ExtractCoordinate{$(3)$};
\Con{\XCoord-1.75*\mw}{\YCoord+0.5*\mw}{3}{1}; 
\Conin{\XCoord-.5*\mw}{\YCoord+0.5*\mw}{0}; 

  \end{scope}

\end{tikzpicture}

\caption{The moduli space $ \Adams_{3}$, with strata labelled by the fibre of the universal curve.}
\label{fig:moduli-vs-adams}
\end{figure}

The universal curve over $ \cM_{2,d}^{\{1/2\}, ord} $ naturally extends to a universal curve $\Ubar_{d}$ with marked point over the product
\begin{equation} \label{eq:Adams_moduli_product}
\Adams_{d} \equiv [0,\infty]^{d-1},
\end{equation}
with the property that setting a coordinate equal to $\infty$ increases the number of components by one, while setting it equal to $0$ does not change the number of components but reduces the number of marked points by one. We shall presently give an explicit description of this universal curve, but it is useful to note that it can be constructed more abstractly: consider the closure $  \Mbar^{\{1/2\}, ord}_{2,d}  $ of $ \cM^{\{1/2\}, ord}_{2,d}$ in $ \Mbar_{2,d}  $  and note that there is a natural projection 
\begin{equation}
    \Mbar^{\{1/2\}, ord}_{2,d} \to \Adams_{d}
\end{equation}
which is obtained by forgetting all components which are not discs. The universal curve over $\Adams_{d} $ is then obtained by taking the union of disc components of the universal curve over $ \Mbar^{\{1/2\}, ord}_{2,d}  $. For example, $\Mbar^{\{1/2\}, ord}_{2,3} $ can be obtained from $\Adams_{3}$ by replacing the stratum with coordinates $(0,0)$ with an interval. The fibre of $\Ubar_{3} $ over this point is a strip with a single interior marked point (see Figure \ref{fig:moduli-vs-adams}), whereas the fibre of the universal curve over a point in $\Mbar^{\{1/2\}, ord}_{2,3}   $  which projects to this vertex is a nodal curve which is the union of this strip with a sphere with $3$ marked points in addition to the node. The cross ratio between the $4$ points is a real number because the condition of lying in $\Mbar^{\{1/2\}, ord}_{2,3}   $ is a real colinearity condition, and the corresponding real number parametrises the fibre of $ \Mbar^{\{1/2\}, ord}_{2,3}$ over the points $(0,0)$. 
\begin{rem}
We can alternatively construct $\Adams_{d} $ as a subset of the \emph{configuration space} of points on a disc, in which points are allowed to collide (unlike in the moduli space where they bubble).  In this case, $\Ubar_{d}$ is simply the restriction of the universal curve over configuration space.
\end{rem}

 We shall now give an explicit description of this fibre over a point $\vr \in \Adams_{d} $: first, we associate to each pair of non-negative real numbers $(r,r')$ the finite strip
\begin{equation}
B_{r,r'} = [-r,r'] \times [0,1].  
\end{equation}
Whenever $r$ or $r'$ are infinite, we let the corresponding half of the interval be open:
\begin{equation}
B_{\infty, r'} = (-\infty, r'] \times [0,1]  \textrm{ and }  B_{r,\infty} = [-r , \infty) \times [0,1].
\end{equation}
Finally, we set $B_{\infty, \infty} = B$, $B_+ = B_{0, \infty} $, and $B_- = B_{\infty, 0}$. 

Given $\vr \in \Adams_{d} $, consider the union of strips
\begin{equation} \label{eq:union_disjoint_finite_strips}
     B_{\infty, r_{d-1}}  \coprod B_{r_{d-1}, r_{d-2}} \coprod \cdots \coprod B_{r_2, r_1}  \coprod B_{r_1, \infty}.
\end{equation}
The fibre $\Ubar_{\vr}  $ is the quotient of the above union by the following equivalence relation: if $r_i$ is finite, we  identify
\begin{align}
    \{ r_i \} \times [0,1]  \sim \{ -r_i \} \times [0,1] 
\end{align}
where the first interval lies in $B_{r_{i+1}, r_i}$ and the second in $B_{r_i, r_{i-1}}$. This fibre has marked points which are the images of the points $ (0, 1/2)$ in each strip $B_{r_i, r_{i-1}} $, as shown in Figure \ref{fig:gluing-strips-to-strip}.

\begin{figure}
  \centering
  \begin{tikzpicture}

\draw[line width=4*\lw,dotted] (-12*\mw pt,-1*\mw pt) -- ( -13*\mw pt, -1*\mw pt);
\draw[line width=4*\lw,dotted] (-12*\mw pt,1*\mw pt) -- ( -13*\mw pt, 1*\mw pt);
\draw[line width=4*\lw,dotted] (12*\mw pt,-1*\mw pt) -- ( 13*\mw pt, -1*\mw pt);
\draw[line width=4*\lw,dotted] (12*\mw pt,1*\mw pt) -- ( 13*\mw pt, 1*\mw pt);
\draw[line width=4*\lw] (-12.5*\mw pt,-1*\mw pt) -- ( 12.5*\mw pt, -1*\mw pt);
\draw[line width=4*\lw] (-12.5*\mw pt,1*\mw pt) -- ( 12.5*\mw pt, 1*\mw pt);

\draw[line width=4*\lw] (-4*\mw pt,-2*\mw pt) -- ( 2*\mw pt, -2*\mw pt);
\node[below] at (-4*\mw pt, -2*\mw pt)  {$-r_3$};
\draw[dotted] (-4*\mw pt, 1*\mw pt) -- (-4*\mw pt, -1*\mw pt) ;
\draw[line width=4*\lw] (-4*\mw pt,-2*\mw pt -1/8*\mw pt) -- (-4*\mw pt,-2*\mw pt +1/8*\mw pt);
\node[below] at (2*\mw pt, -2*\mw pt)  {$r_2$};
\draw[dotted] (2*\mw pt, 1*\mw pt) -- (2*\mw pt, -1*\mw pt) ;
\draw[line width=4*\lw] (2*\mw pt,-2*\mw pt -1/8*\mw pt) -- (2*\mw pt,-2*\mw pt +1/8*\mw pt);
\node[below] at (0*\mw pt, -2*\mw pt)  {$0$};
\draw[line width=4*\lw] (0*\mw pt,-2*\mw pt -1/8*\mw pt) -- (0*\mw pt,-2*\mw pt +1/8*\mw pt);
\fill (0*\mw pt, 0*\mw pt)  circle (8*\lw);

\draw[line width=4*\lw] (8*\mw pt,2*\mw pt) -- ( 2*\mw pt, 2*\mw pt);
\node[above] at (8*\mw pt, 2*\mw pt)  {$r_1$};
\draw[dotted] (8*\mw pt, 1*\mw pt) -- (8*\mw pt, -1*\mw pt) ;
\draw[line width=4*\lw] (8*\mw pt,2*\mw pt -1/8*\mw pt) -- (8*\mw pt,2*\mw pt +1/8*\mw pt);
\node[above] at (2*\mw pt, 2*\mw pt)  {$-r_2$};
\draw[line width=4*\lw] (2*\mw pt,2*\mw pt -1/8*\mw pt) -- (2*\mw pt,2*\mw pt +1/8*\mw pt);
\node[above] at (4*\mw pt, 2*\mw pt)  {$0$};
\draw[line width=4*\lw] (4*\mw pt,2*\mw pt -1/8*\mw pt) -- (4*\mw pt,2*\mw pt +1/8*\mw pt);
\fill (4*\mw pt, 0)  circle (8*\lw);

\draw[line width=4*\lw] (-10*\mw pt,2*\mw pt) -- ( -4*\mw pt, 2*\mw pt);
\node[above] at (-10*\mw pt, 2*\mw pt)  {$-r_4$};
\draw[dotted] (-10*\mw pt, 1*\mw pt) -- (-10*\mw pt, -1*\mw pt) ;
\draw[line width=4*\lw] (-10*\mw pt,2*\mw pt -1/8*\mw pt) -- (-10*\mw pt,2*\mw pt +1/8*\mw pt);
\node[above] at (-4*\mw pt, 2*\mw pt)  {$r_3$};
\draw[line width=4*\lw] (-4*\mw pt,2*\mw pt -1/8*\mw pt) -- (-4*\mw pt,2*\mw pt +1/8*\mw pt);
\node[above] at (-8*\mw pt, 2*\mw pt)  {$0$};
\draw[line width=4*\lw] (-8*\mw pt,2*\mw pt -1/8*\mw pt) -- (-8*\mw pt,2*\mw pt +1/8*\mw pt);
\fill (-8*\mw pt, 0*\mw pt) circle (8*\lw);

\draw[line width=4*\lw] (-12.5*\mw pt,-2*\mw pt) -- (-10*\mw pt, -2*\mw pt);
\node[below] at (-12*\mw pt, -2*\mw pt)  {$0$};
\node[below] at (-10*\mw pt, -2*\mw pt)  {$r_4$};
\draw[line width=4*\lw] (-10*\mw pt,-2*\mw pt -1/8*\mw pt) -- (-10*\mw pt,-2*\mw pt +1/8*\mw pt);
\draw[line width=4*\lw,dotted] (-12.5*\mw pt,-2*\mw pt) -- ( -13*\mw pt, -2*\mw pt);
\fill (-12*\mw pt, 0*\mw pt) circle (8*\lw);

\draw[line width=4*\lw] (12.5*\mw pt,-2*\mw pt) -- (8*\mw pt, -2*\mw pt);
\node[below] at (12*\mw pt, -2*\mw pt)  {$0$};
\node[below] at (8*\mw pt, -2*\mw pt)  {$-r_1$};
\draw[line width=4*\lw] (8*\mw pt,-2*\mw pt -1/8*\mw pt) -- (8*\mw pt,-2*\mw pt +1/8*\mw pt);
\draw[line width=4*\lw] (12*\mw pt,-2*\mw pt -1/8*\mw pt) -- (12*\mw pt,-2*\mw pt +1/8*\mw pt);
\draw[line width=4*\lw,dotted] (12.5*\mw pt,-2*\mw pt) -- ( 13*\mw pt, -2*\mw pt);
\fill (12*\mw pt, 0*\mw pt) circle (8*\lw);

\draw[line width=4*\lw] (-12*\mw pt,-2*\mw pt -1/8*\mw pt) -- (-12*\mw pt,-2*\mw pt +1/8*\mw pt);

\end{tikzpicture}
  \caption{Decomposition of $\Ubar_{\vr}$ into finite strips.}
  \label{fig:gluing-strips-to-strip}
\end{figure}

\begin{rem}
In \cite{adams}, Adams constructed a family  of paths in the $d$-simplex from the initial to the terminal vertex which are parametrised by the $d-1$-cube. Identifying the interval $[0,1]$ with $[0,\infty]$, we obtain the moduli spaces $\Adams_{d}$ as we defined them. We shall not explicitly need the connection between our construction and Adams's.
\end{rem}

The fact that the moduli space of discs with $d-1$ marked points is denoted $\Ubar_d$ is justified by the eventual Floer-theoretic application as explained in the next remark:
\begin{rem} \label{asi:first_informal_comment}
Assuming some familiarity with Floer theory, we give an informal description of how these moduli spaces shall be used. The informal ideas discussed here will be implemented in detail later in the text, hence can be safely skipped by those seeking only precise definitions. 

Assume that $L$ and $F$ are Lagrangian submanifolds, and that $\{J_i\}_{i=0}^{d}$ are choices of almost complex structures with respect to which one can define the Floer complexes $\{ CF^*(L,F; J_i) \}_{i=0}^{d}$. A choice of path connecting $J_i$ to $J_k$ allows one to write a \emph{continuation equation} on the strip $\bR \times [0,1]$, which defines a chain map $ CF^*(L,F; J_i) \to   CF^*(L,F; J_k) $.

We shall only be interested in such continuation maps for $i < j$; in particular, there are finitely many ways (in fact, exactly $2^{d-1}$) of composing these continuation maps to obtain a map $ CF^*(L,F; J_0) \to   CF^*(L,F; J_d) $. Each such composition corresponds to a path, along the $1$-skeleton of the simplex $\Delta_d$, with initial point $0$ and terminal point $d$, with the property that any intermediate vertices appear in increasing order along the path. This is naturally a description of the vertices of $\Adams_{d}$ as follows:  the composition of continuation maps associated to the sequence  $(J_0, J_{i_1}, \cdots, J_{i_k}, J_d)$ corresponds to the vertex of  $\Adams_{d}$ with coordinates  labelled $(i_1, \ldots, i_k)$ equal to $\infty$, and all others equal to $0$.

Recall that the continuation map $ CF^*(L,F; J_i) \to   CF^*(L,F; J_k) $ is associated to a path $J_{ik}(s)$ of almost complex structures, parametrised by $s \in (-\infty,\infty)$, which agrees with $J_i$ for $ 0 \ll s$ and with $J_k$ for $s \ll 0$. In order to see a general point in $ \Adams_{d} $ arise from Floer-theoretic considerations, we heuristically think of such a path as obtained by smoothing a discontinuous path of almost complex structures which agree with $J_i$ for $ 0 < s$ and $J_k$ for $s < 0$. In later sections, we shall choose smooth paths, but for this informal discussion, it is simpler to take discontinuous paths, which have the advantage of being canonical, even though the continuation map is strictly speaking not defined for them.

With the above in mind, we can interpret a point in $\Adamsop_{d}  $ as giving rise to a continuation map  $ CF^*(L,F; J_i) \to   CF^*(L,F; J_k) $.  Identify $[0,1)^{d-1}$ with $[0,\infty)^{d-1}$, and associate to a point $(r_1, \ldots, r_{d-1}) \in [0,\infty)^{d-1}$ the path of almost complex structures $J(s)$ given by the piece-wise conditions:
\begin{equation}
J(s) = J_k  \textrm{ if } 2( r_1 + \cdots + r_{k-1}) < s < 2( r_1 + \cdots + r_{k})
\end{equation}
Whenever all coordinates $r_k$ vanish, we obtain the continuation map $  CF^*(L,F; J_0) \to   CF^*(L,F; J_d) $, whereas, in the limit where all $r_k$ are infinite, we recover the composition of the continuation maps
\begin{equation}
   CF^*(L,F; J_0) \to   CF^*(L,F; J_1) \to \cdots \to    CF^*(L,F; J_d),
\end{equation}
by considering the Gromov-Floer limit of continuation equations. More generally, requiring that a given coordinate $r_k$ vanish corresponds to omitting it from the continuation map, i.e. considering the family of continuation equations corresponding to paths in the sub-simplex $ \Delta_{d-1} \subset \Delta_d$  obtained by omitting the $k$\th vertex. In limit $r_k \to \infty $, we obtain paths which pass through the $k$\th vertex; in Floer theoretic terms, this corresponds to factoring through $ CF^*(L,F; J_k) $.
\end{rem}

\subsection{Stratification of the Adams moduli space}

Before giving the explicit description of the boundary strata of $\Adams_{d} $ and the corresponding fibre of the universal curve, it is useful to introduce a more general choice of labels:

\begin{defin}
Let $K$ be a totally ordered set. The \emph{compactified Adams moduli space of paths}  $\Adams_{K}$ is the product
 \begin{equation}
      \Adams_{K} = [0,\infty]^{K \setminus \{ \min K, \max K \}}.
 \end{equation}
\end{defin}

We also denote by $\Adamsop_{K} \subset \Adams_{K} $ the open subset corresponding to the inclusion $[0,\infty) \subset [0,\infty]$. If $d = |K| -1$, let  $\Ubar_{K}$ denote the copy of the space $   \Ubar_{d}$  over $\Adams_{K}$, and $\Ubar_{\vr}$ denote the fibre over $\vr \in \Adams_{K}$. The fibre over each point in $\Adamsop_{K}$ is a strip, and the elements of $K \setminus \{ \min K, \max K \} $ label the intervals between the marked points (as subsets of $\bR \times \{ 1/2 \}$),  while $\min K$ labels the positive end, and $\max K$ the negative end.  The fibre over points in the complement of $\Adamsop_K$ are disjoint unions of strips.  

To keep the notation consistent, one can identify the space $ \Adams_{d} $ with that corresponding to $K = \{ 0, \ldots, d \}$ (to reduce the complexity of the notation, we often write $K=01\cdots d$ for such a set). 

The space $  \Adams_{K}  $ is naturally stratified, and the partially ordered set of strata, with ordering given by inclusion, consists of pairs of subsets $I$ and $J$ of $K$ such that
\begin{equation} \label{eq:boundary_stratum_label_adams}
\{\min K, \max K \} \subset  I \subset J.
\end{equation}
The partial ordering is such that the pair $I \subset J$ precedes $I' \subset J'$ whenever $ I \subset I' \subset J' \subset J$. We write $\Adams_{I \subset J} $ for the stratum corresponding to an element of this poset, and note the identification
\begin{equation}
  \label{eq:top_dim_stratum_adams}
 \Adams_{ \{\min K, \max K \}  \subset K} \equiv \Adams_{K}.
\end{equation}

\begin{figure}
\centering
\begin{tikzpicture}
\begin{scope}                          
\coordinate (1) at (-1.25,-1/2);
\node at (1) {${\scriptstyle 02 \subset 02}$};
\coordinate (2) at (1.25,-1/2);
\node at (2) {${\scriptstyle 012 \subset 012}$};

\draw[line width=2*\lw] (-1,0)--(1,0);
\fill (-1,0) circle (8*\lw);
\fill (1,0) circle (8*\lw);

\node at (-1,1.375) {${\scriptstyle 0}$};
\draw[line width=\lw] (-1-1/8,2-1/2 )--(-1-1/8 , 2+1/2);
\draw[line width=\lw] (-1+1/8, 2 - 1/2)--(-1+1/8, 2+1/2);
\fill (-1, 2) circle (2*\lw);
\node at (-1,2.675) {${\scriptstyle 2}$};

\node at (0,1.375) {${\scriptstyle 0}$};
\draw[line width=\lw] (0-1/8,2-1/2 )--(0-1/8 , 2+1/2);
\draw[line width=\lw] (1/8, 2 - 1/2)--(1/8, 2+1/2);
\fill (0, 2+1/8) circle (2*\lw);
\fill (0, 2-1/8) circle (2*\lw);
\node at (0,2.675) {${\scriptstyle 2}$};

\node at (1,.75) {${\scriptstyle 0}$};
\draw[line width=\lw] (1-1/8,1.5-1/8-1/2 )--(1-1/8 , 1.5-1/8+1/2);
\draw[line width=\lw] (1+1/8, 1.5-1/8 - 1/2)--(1+1/8, 1.5-1/8+1/2);
\fill (1, 1.5-1/8) circle (2*\lw);
\node at (1,2) {${\scriptstyle 1}$};
\draw[line width=\lw] (1-1/8,2.5+1/8-1/2 )--(1-1/8 , 2.5+1/8+1/2);
\draw[line width=\lw] (1+1/8, 2.5+1/8 - 1/2)--(1+1/8, 2.5+1/8+1/2);
\fill (1, 2.5+1/8) circle (2*\lw);
\node at (1,3.25) {${\scriptstyle 2}$};
\end{scope}
\end{tikzpicture}

\caption{The moduli space $\Ubar_{012}$ over $\Adams_{012}$.}
\label{fig:continuation_family_basic_case}
\end{figure}

The geometric description of strata of $ \Adams_{K} $ is recovered as follows (see Figure \ref{fig:continuation_family_basic_case}): the pair $I \subset J$ labels the stratum of $  \Adams_{K} = [0,\infty]^{K \setminus \{ \min K, \max K \}} $  for which the coordinates in $I \setminus   \{ \min K, \max K \}$ equal $\infty$, and the coordinates in $K \setminus J$ vanish. 
\begin{rem}
Continuing the discussion of Remark \ref{asi:first_informal_comment}, recall that the vanishing of a coordinate $r_k$ corresponds to omitting a given choice of almost complex structure $J_k$ from the construction of families of continuation equations, whereas requiring that it  equal $\infty$ corresponds to continuation maps which factor through the Floer complex for $J_k$. In particular, the stratum labelled by the pair $I \subset J$ corresponds to continuation equations constructed from the almost complex structures $J_k$ with $k \in J$, with the additional constraint that all maps factor through the Floer complexes for $J_i$ with $i \in I$. 
\end{rem}

There is an alternate description of the boundary strata which is often more useful: introduce the notation
\begin{equation} \label{eq:smaller_larger_sets}
  K^\geq_{i}  = \{ j \in K | j \geq i \}  \textrm{ and } K^\leq_{i}  = \{ j \in K | j \leq i \}.   
  \end{equation}
 \begin{lem} \label{lem:boundary_strata_adams}
  The boundary stratum corresponding to $I \subset J$ admits a natural product decomposition
  \begin{equation}
 \Adams_{I \subset J} \cong  \Adams_{J^\geq_{i_d}} \times \Adams_{J^\leq_{i_d}  \cap J^\geq_{i_{d-1}}} \times \cdots \times \Adams_{ J^\leq_{i_1} \cap J^\geq_{i_0} } \times \Adams_{J^\leq_{i_0}},
  \end{equation}
  where $I = \{\min K ,  i_0, i_1, \ldots, i_d , \max K\}$.  \qed
 \end{lem}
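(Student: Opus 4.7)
The plan is to recognise $\Adams_{I \subset J}$ as an explicit face of the cube $\Adams_K$ and then to read off the product decomposition directly from the partition of the free coordinates induced by $I$.

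First, I would unpack the definition: $\Adams_K = [0,1]^{K \setminus \{\min K, \max K\}}$ is the cube whose coordinates are indexed by the interior elements of $K$. A face of such a cube is specified by fixing each coordinate to $0$, to $1$, or leaving it free. To the datum $I \subset J$ with $\{\min K, \max K\} \subset I \subset J$ I attach the face on which each coordinate indexed by $k \in K \setminus J$ is set to $0$, each coordinate indexed by $k \in I \setminus \{\min K, \max K\}$ is set to $1$, and each coordinate indexed by $k \in J \setminus I$ is free. The partial order in \eqref{eq:boundary_stratum_label_adams}---namely, $(I \subset J)$ precedes $(I' \subset J')$ whenever $I \subset I' \subset J' \subset J$---is compatible with face inclusion under this assignment, since enlarging $I$ fixes more coordinates to $1$ and shrinking $J$ fixes more coordinates to $0$.

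Second, write $I = \{\min K, i_0, i_1, \ldots, i_d, \max K\}$ and, for notational convenience, set $i_{-1} := \min K$ and $i_{d+1} := \max K$. The free coordinates $J \setminus I$ are naturally partitioned according to which consecutive pair $(i_{k-1}, i_k)$ of elements of $I$ they fall strictly between. Since the face in question is just the product $[0,1]^{J \setminus I}$, this partition yields a product decomposition with one factor for each of the $d+2$ slots.

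Third, I would identify the $k$-th slot with the asserted factor using \eqref{eq:smaller_larger_sets}. The ordered set $J^{\leq}_{i_k} \cap J^{\geq}_{i_{k-1}}$ has minimum $i_{k-1}$ and maximum $i_k$, so by definition $\Adams_{J^{\leq}_{i_k} \cap J^{\geq}_{i_{k-1}}}$ is the cube on the elements of $J$ strictly between $i_{k-1}$ and $i_k$, which is precisely the free-coordinate cube attached to this slot. The two extremal slots reproduce $\Adams_{J^{\leq}_{i_0}}$ and $\Adams_{J^{\geq}_{i_d}}$ in the same manner.

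The argument is essentially a tautology once the cube structure of $\Adams_K$ is made explicit; the only step requiring care is purely notational, namely keeping track of which element of $K$ plays the role of minimum, maximum, or interior point within each factor. This bookkeeping is the sole obstacle, and it is benign.
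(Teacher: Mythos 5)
Your argument is correct and is precisely the bookkeeping the paper leaves to the reader (the lemma is stated with \qed, with no proof given). You correctly identify the face convention implicit in Equations \eqref{eq:subtrajectories_skip_i} and \eqref{eq:subtrajectories_factor_i}: coordinates labelled by $K \setminus J$ are set to $0$ and those labelled by $I \setminus \{\min K, \max K\}$ are set to $1$, leaving $[0,1]^{J \setminus I}$; partitioning $J \setminus I$ by the consecutive pair of elements of $I$ in between, and noting that $J^{\leq}_{i_k} \cap J^{\geq}_{i_{k-1}}$ has minimum $i_{k-1}$ and maximum $i_k$ (both lying in $I \subset J$), matches each slot with the corresponding Adams cube. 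This is the intended proof.
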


Given $i \in K \setminus \{ \max K , \min K \}$, and using Equation \eqref{eq:top_dim_stratum_adams}, the above result for facets yields a natural identification
\begin{align} \label{eq:subtrajectories_skip_i}
 \Adams_{\{\min K, \max K \} \subset K \setminus \{i \}} &  \cong \Adams_{K\setminus \{i\} }   \\  \label{eq:subtrajectories_factor_i}
\Adams_{\{\min K, i, \max K \} \subset K} &  \cong \Adams_{K^\geq_{i}} \times \Adams_{K^\leq_{i}} 
\end{align}
corresponding to the locus where the $i$\th coordinate vanishes or equals $\infty$.  The union of the images of these inclusions over all $i \in K \setminus \{ \max K, \min K \} $ covers the boundary of $\Adams_{K}$. 

 The identification in Equation \eqref{eq:subtrajectories_skip_i} induces a natural inclusion
\begin{align}  \label{eq:boundary_stratum_skip}
\Ubar_{K\setminus \{i\} } & \to \Ubar_{K},
\end{align}
which on the top stratum can be described in terms of the marked points $(z_1, \ldots, z_{d-1})$ giving rise to $ (z_1, \ldots, z_{i-1}, z_{i-1} , z_{i}, \ldots, z_{d-1})$. On the other hand, Equation \eqref{eq:subtrajectories_factor_i} induces a map
\begin{align}
 \label{eq:boundary_stratum_stop}
\Ubar_{K^\geq_{i}} \times \Adams_{K^\leq_{i}} \cup \Adams_{K^\geq_{i}} \times   \Ubar_{K^\leq_{i}}  & \to \Ubar_{K}.
\end{align}
If we restrict attention to the boundary strata of top dimension, this corresponds to the fact that  the fibre of $ \Ubar_{K}  $ over a point in $ \Adamsop_{K^\geq_{i}} \times \Adamsop_{K^\leq_{i}}  $ consists of a union of two curves, and that the marked points which occur to the left of  the interval labelled $i$ lie on one of these curves, with the other points lying on the other.
\begin{rem} \label{rem:convention-nors}
The case $|K|=1$ is a degenerate case of the above discussion. Whenever it appears, we shall fix the convention that $\Adams_{K}$ is a point.
\end{rem}

Denote by
\begin{equation}
   \Ubar_{K}^{\{0\}} \subset    \Ubar_{K} \supset   \Ubar_{K}^{\{1\}}
\end{equation}
the two subsets of $\Ubar_{K}$ which are represented by points of the form $(t,0)$ (respectively $(t,1)$) in a constituent finite strip of a fibre $\Ubar_{\vr}$.

\subsubsection{Gluing strips}
Consider the projection map 
\begin{equation} \label{eq:project_map_adams}
f_{I \subset J} \co \Adams_{K} \cong [0, \infty]^{K \setminus \{\min K, \max K \} } \to [0,\infty]^{J \setminus I} \cong \Adams_{I \subset J},
\end{equation}
and note that composition with the inclusion $  \Adams_{I \subset J} \to  \Adams_{K}$ is obtained by setting all coordinates labelled by $I \setminus \{ \min K, \max K\}$ equal to infinity, and all coordinates in $K \setminus J$ equal to $0$.

Expressing the boundary stratum $\Adams_{I \subset J}$ as a product of Adams moduli spaces yields a projection $   \Adams_{I \subset J} \to \Adams_{I' \subset J'} $
whenever $I \subset I' \subset J' \subset J$, so that the following  diagram commutes:
\begin{equation} \label{eq:project_boundary_Adams_commute}
  \xymatrix{  \Adams_{K} \ar[r] \ar[dr] &    \Adams_{I \subset J} \ar[d] \\
&  \Adams_{I' \subset J'}. }
\end{equation}

In order to define the gluing map, let us fix the maps
\begin{equation} \label{eq:maps_infinite-finite-strip}
B_{r,\infty} \to B_{r,r'} \textrm{ and } B_{\infty,r'} \to B_{r,r'} 
\end{equation}
which split the inclusion $  B_{r,r'} \to B_{r,\infty}$ and $B_{r,r'} \to B_{\infty,r'}$ and are given outside these regions by the projections
\begin{align}
[r', \infty) \times [0,1] & \to \{ r' \} \times [0,1] \\
(-\infty,-r] \times [0,1] & \to \{  -r \} \times [0,1] .
\end{align}

Returning to the description of the fibre $\Ubar_{\vr}$ in Equation \eqref{eq:union_disjoint_finite_strips}, we see that Equation \eqref{eq:maps_infinite-finite-strip} induces a surjective (continuous) map
\begin{equation}
 \Ubar_{f_{I \subset J}( \vr)}  \to \Ubar_{\vr}
\end{equation}
which is a diffeomorphism outside the infinite ends of $  \Ubar_{f_{I \subset J} (\vr)} $ that are labelled by elements of $ K \setminus J $. Letting $\vr$ vary over a neighbourhood $  \nu \Adams_{I \subset J} $  of $  \Adams_{I \subset J} $ in $\Adams_K$, we obtain a gluing map
\begin{equation} \label{eq:gluing_maps_universal_curve_paths}
G_{I \subset J} \co  f_{I \subset J}^{*} \left( \Ubar_K | \Adams_{I \subset J}  \right)| \nu \Adams_{I \subset J} \to \Ubar_K | \nu \Adams_{I \subset J} .
\end{equation}

Let $F \co \Ubar_{K} \to Z$ be a map from $\Ubar_{K}$ to a topological space $Z$. Such a map is \emph{constant along the positive (respectively negative) end} if there is a map $  f \co [0,1] \to Z $ such that the restriction of $F$ to each fibre $  \Ubar_{\vr} $  agrees with $f$ near $s=+\infty$ on the last component  in Equation \eqref{eq:union_disjoint_finite_strips} (respectively near $-\infty$ on the first component).

\begin{defin} \label{def:obtained_by_gluing}
The map $F$ is \emph{obtained by gluing} if its restriction to neighbourhoods of all boundary strata yields a commutative diagram
\begin{equation}
\xymatrix{f_{I \subset J}^{*} \left(  \Ubar_K | \Adams_{I \subset J}  \right) \ar[r]^{G_{I \subset J}} \ar[d]^{f_{I \subset J}} &  \Ubar_K | \nu \Adams_{I \subset J} \ar[d]^{F} \\
\Ubar_K |  \Adams_{I \subset J} \ar[r]^{F} & Z.} 
\end{equation}
\end{defin}
Since the gluing map $G_{I \subset J}$ is surjective, $F$ is determined, on a neighbourhood of a boundary stratum, by its restriction to the stratum and the gluing map. In addition, continuity implies that its restriction to the boundary is constant along each glued end.

To achieve transversality of moduli spaces of maps, it is convenient to introduce a notion which is less rigid than the above gluing construction. To this end, we introduce the thick part of the fibre:
\begin{defin} \label{def:R-thick-strip}
If $R < r_i$ for all $i \in I$,  the \emph{$R$-thick} part of $\Ubar_{\vr}$ (relative to $I \subset K$) is the union:
\begin{enumerate}
\item for $i \in I$ of the strips:
\begin{equation} \label{eq:thick-part-in-I}
B_{0, R} \subset B_{r_{i_+}, r_i} \textrm{ and }  B_{R , 0 } \subset B_{r_i, r_{i_-}}
\end{equation}
where $i_- < i < i_+$ are successive elements of $K$.
\item for $j \notin I$ of the strips:
\begin{equation} \label{eq:thick-part-not-in-I}
B_{0, r_j} \subset B_{r_{j_+}, r_j} \textrm{ and }  B_{r_j, 0 } \subset B_{r_j, r_{j_-}}
\end{equation}
where $j_- < j < j_+$ are successive elements of $K$.
\end{enumerate}
\end{defin}

\begin{figure}
  \centering
  \begin{tikzpicture}

\draw[line width=4*\lw,dotted] (-12*\mw pt,-1*\mw pt) -- ( -13*\mw pt, -1*\mw pt);
\draw[line width=4*\lw,dotted] (-12*\mw pt,1*\mw pt) -- ( -13*\mw pt, 1*\mw pt);
\draw[line width=4*\lw,dotted] (12*\mw pt,-1*\mw pt) -- ( 13*\mw pt, -1*\mw pt);
\draw[line width=4*\lw,dotted] (12*\mw pt,1*\mw pt) -- ( 13*\mw pt, 1*\mw pt);
\draw[line width=4*\lw] (-12.5*\mw pt,-1*\mw pt) -- ( 12.5*\mw pt, -1*\mw pt);
\draw[line width=4*\lw] (-12.5*\mw pt,1*\mw pt) -- ( 12.5*\mw pt, 1*\mw pt);

\draw[line width=4*\lw] (-4*\mw pt,-2*\mw pt) -- ( 2*\mw pt, -2*\mw pt);
\node[below] at (-4*\mw pt, -2*\mw pt)  {$-r_2$};
\draw[dotted] (-4*\mw pt, 1*\mw pt) -- (-4*\mw pt, -1*\mw pt) ;
\draw[line width=4*\lw] (-4*\mw pt,-2*\mw pt -1/8*\mw pt) -- (-4*\mw pt,-2*\mw pt +1/8*\mw pt);
\node[below] at (2*\mw pt, -2*\mw pt)  {$r_1$};
\draw[line width=4*\lw] (1*\mw pt,-2*\mw pt -1/8*\mw pt) -- (1*\mw pt,-2*\mw pt +1/8*\mw pt);
\node[below] at (1*\mw pt, -2*\mw pt)  {$R$};
\draw[dotted] (2*\mw pt, 1*\mw pt) -- (2*\mw pt, -1*\mw pt) ;
\draw[line width=4*\lw] (2*\mw pt,-2*\mw pt -1/8*\mw pt) -- (2*\mw pt,-2*\mw pt +1/8*\mw pt);
\node[below] at (0*\mw pt, -2*\mw pt)  {$0$};
\draw[line width=4*\lw] (0*\mw pt,-2*\mw pt -1/8*\mw pt) -- (0*\mw pt,-2*\mw pt +1/8*\mw pt);
\fill (0*\mw pt, 0*\mw pt)  circle (8*\lw);

\draw[line width=4*\lw] (12.5*\mw pt,2*\mw pt) -- ( 2*\mw pt, 2*\mw pt);
\node[above] at (1.5*\mw pt, 2*\mw pt)  {$-r_1$};
\draw[line width=4*\lw] (2*\mw pt,2*\mw pt -1/8*\mw pt) -- (2*\mw pt,2*\mw pt +1/8*\mw pt);
\node[above] at (3*\mw pt, 2*\mw pt)  {$-R$};
\draw[line width=4*\lw] (3*\mw pt,2*\mw pt -1/8*\mw pt) -- (3*\mw pt,2*\mw pt +1/8*\mw pt);
\node[above] at (4*\mw pt, 2*\mw pt)  {$0$};
\draw[line width=4*\lw] (4*\mw pt,2*\mw pt -1/8*\mw pt) -- (4*\mw pt,2*\mw pt +1/8*\mw pt);
\node[above] at (5*\mw pt, 2*\mw pt)  {$R$};
\draw[line width=4*\lw] (5*\mw pt,2*\mw pt -1/8*\mw pt) -- (5*\mw pt,2*\mw pt +1/8*\mw pt);

\draw[line width=4*\lw,dotted] (12.5*\mw pt,2*\mw pt) -- ( 13*\mw pt, 2*\mw pt);
\fill (4*\mw pt, 0)  circle (8*\lw);

\draw[line width=4*\lw] (-12.5*\mw pt,2*\mw pt) -- ( -4*\mw pt, 2*\mw pt);
\draw[line width=4*\lw,dotted] (-12.5*\mw pt,2*\mw pt) -- ( -13*\mw pt, 2*\mw pt);
\node[above] at (-4*\mw pt, 2*\mw pt)  {$r_2$};
\draw[line width=4*\lw] (-4*\mw pt,2*\mw pt -1/8*\mw pt) -- (-4*\mw pt,2*\mw pt +1/8*\mw pt);
\node[above] at (-8*\mw pt, 2*\mw pt)  {$0$};
\draw[line width=4*\lw] (-8*\mw pt,2*\mw pt -1/8*\mw pt) -- (-8*\mw pt,2*\mw pt +1/8*\mw pt);
\node[above] at (-9*\mw pt, 2*\mw pt)  {$-R$};
\draw[line width=4*\lw] (-9*\mw pt,2*\mw pt -1/8*\mw pt) -- (-9*\mw pt,2*\mw pt +1/8*\mw pt);
\fill (-8*\mw pt, 0*\mw pt) circle (8*\lw);

\fill[gray, opacity=.25]  (-9*\mw pt,-1*\mw pt) rectangle (1*\mw pt,1*\mw pt);
\fill[gray, opacity=.25]  (3*\mw pt,-1*\mw pt) rectangle (5*\mw pt,1*\mw pt);

\node at (13*\mw pt, 0*\mw pt)  {$0$};
\node at (2*\mw pt, 0*\mw pt)  {$1$};
\node at (-4*\mw pt, 0*\mw pt)  {$2$};
\node at (-13*\mw pt, 0*\mw pt)  {$3$};

\end{tikzpicture}
  \caption{The $R$-thick part of $\Ubar_{0123}$ relative to $I = 013$.  }
  \label{fig:R-thickpart-0123}
\end{figure}

Let  $F_{I \subset J} \co  \Ubar_K | \nu \Adams_{I \subset J} \to Z$ be a function obtained by gluing, and assume $Z$ is a Fr\'echet manifold whose tangent space is equipped with a fixed collection of semi-norms.  We shall say that a section  of $F_{I \subset J}^* TZ  $ is consistent if the following properties hold: (i)  there is a constant $R$ so that the support is contained in the interior of the $R$-thick part and (ii) the seminorms of the sections at a point $\vr \in \nu \Adams_{I \subset J} $ are bounded by a constant multiple of $\sum_{i \in I \setminus \{\min K, \max K\} } e^{-r_i} $. A \emph{consistent perturbation} of $F_{I \subset J} $ is the image under exponentiation of a consistent section.

\begin{defin} \label{def:obtained_by_perturbed_gluing}
A map $F \co \Ubar_{K} \to Z$  is obtained by \emph{perturbed gluing} if its restriction to $ \nu \Adams_{I \subset J} $ for all pairs $I \subset J$ agrees with a consistent perturbation of a function obtained by gluing.
\end{defin}

In addition, we fix maps
\begin{align}
  \epsilon_{\min(K)} \co \Adams_K \times B_{+} & \to \Ubar_{K} \\  
\epsilon_{\max(K)} \co \Adams_K \times B_{-} & \to \Ubar_{K} 
\end{align}
whose restrictions to $\vr \in \Adams_{K}$ gives positive (resp. negative) strip like ends on the fibre $\Ubar_{\vr}$, which agree up to translation with the natural ones coming from the last (respectively first) factor in Equation \eqref{eq:union_disjoint_finite_strips}, and which are compatible with the gluing maps near every boundary stratum.

\subsection{Adams spaces with a distinguished marker}
\label{sec:adams-paths-prism}
Equip $K \times \{ -, + \}$  with the total ordering obtained by extending the ordering on $K$ via $
  (+, i ) < (-,j) $ for all $i, j \in K$. Write every subset of $ K \times \{ -, + \}$ as a union $
 K_- \times \{-\} \cup K_+ \times \{ + \}$. The constructions of the previous section, applied to this ordered set, yield a family of Riemann surfaces 
\begin{equation} \label{eq:diagram_paths_in_prism}
  \xymatrix{ \Ubar_{K_-,K_+} \ar[r] & \Adams_{K_-,K_+}, }
\end{equation}
and an open subset $\Adamsop_{K_-,K_+} \subset \Adams_{K_-,K_+}$ over which the fibres of $\Ubar_{K_-,K_+} $  are connected. There are inclusions of the boundaries of each fibre $ \Ubar^{\{ i\}}_{K_-,K_+}  \subset  \Ubar_{K_-,K_+}$
for $i=0,1$.

\begin{figure}
  \centering
  \begin{tikzpicture}

\draw[line width=4*\lw,dotted] (-12*\mw pt,-1*\mw pt) -- ( -13*\mw pt, -1*\mw pt);
\draw[line width=4*\lw,dotted] (-12*\mw pt,1*\mw pt) -- ( -13*\mw pt, 1*\mw pt);
\draw[line width=4*\lw,dotted] (12*\mw pt,-1*\mw pt) -- ( 13*\mw pt, -1*\mw pt);
\draw[line width=4*\lw,dotted] (12*\mw pt,1*\mw pt) -- ( 13*\mw pt, 1*\mw pt);
\draw[line width=4*\lw] (-12.5*\mw pt,-1*\mw pt) -- ( 12.5*\mw pt, -1*\mw pt);
\draw[line width=4*\lw] (-12.5*\mw pt,1*\mw pt) -- ( 12.5*\mw pt, 1*\mw pt);

\node at (-12*\mw pt, 0*\mw pt) {$k^1_-$};
\fill (-8*\mw pt, 0*\mw pt) circle (8*\lw);
\node at (-4*\mw pt, 0*\mw pt)  {$k^0_-$};;
\node[above] at (0*\mw pt, 1*\mw pt)  {$w$};
\draw[line width=4*\lw] (0*\mw pt,1*\mw pt -1/8*\mw pt) -- (0*\mw pt,1*\mw pt +1/8*\mw pt);
\node at (2*\mw pt, 0*\mw pt) {$k^2_+$} ;
\fill (4*\mw pt, 0)  circle (8*\lw);
\node at (7*\mw pt, 0*\mw pt) {$k^1_+$} ;
\fill (10*\mw pt, 0*\mw pt) circle (8*\lw);
\node at (12*\mw pt, 0*\mw pt) {$k^0_+$};



\end{tikzpicture}
  \caption{A fibre of $\Ubar_{K_-, K_+}$, with $K_+ =\{ k_+^0 < k_+^1 < k_+^2 \} $, and $K_- = \{ k_-^0 < k_-^1  \} $ .}
  \label{fig:fibre_adams_-+}
\end{figure}

The additional data of the decomposition distinguishes the finite strip $B_{r_{(-,\min K_-)},r_{(+,\max K_+)}}$  in each fibre of  $\Ubar_{K_-,K_+}$, and hence the corresponding component. We let $w$ denote the image of $(0,1)$ under this embedding; we represent elements of $ \Adams_{K_-,K_+} $ by drawing this marked point, and dropping the interior marked point which is the image of $(0,1/2)$ under the above embedding, as in Figure \ref{fig:fibre_adams_-+}.

To state the compatibility of the distinguished marker with the boundary decomposition of $ \Adams_{K_-,K_+}$, note that such a stratum is labelled by pairs $I_\pm$ and $J_\pm$ such that
\begin{align}
 \min K_+ \in I_+ \subset & J_+ \subset K_+ \\
 \max K_- \in I_- \subset & J_- \subset K_-.
\end{align}
Whenever $I_\pm$ are singletons (i.e. respectively consist only of $\min K_+$ and $\max K_-$), the corresponding boundary stratum is naturally identified with the moduli space $  \Adams_{J_-, J_+}$.

\begin{lem}
The codimension $1$ boundary strata of $  \Adams_{K_-,K_+} $  are:
\begin{align} \label{eq:boundary_adams_prism_0} 
  &   \coprod_{i \in K_- \setminus  \max K_-} \Adams_{K_- \setminus i  ,  K_+}  \\ \label{eq:boundary_adams_prism_1}
  &   \coprod_{i \in K_+ \setminus  \min K_+} \Adams_{K_-  ,   K_+  \setminus i}  \\  \label{eq:stop_K-U}
& \coprod_{i \in K_- \setminus  \max K_-} \Adams_{K^\geq_{-,i}} \times  \Adams_{K^\leq_{-,i}  ,  K_+} \\   \label{eq:boundary_adams_prism_4} 
& \coprod_{i \in K_+ \setminus  \min K_+} \Adams_{K_-  ,   K^\geq_{+,i}} \times  \Adams_{K^\leq_{+,i}}
\end{align} 
The restriction of $ \Ubar_{K_-,K_+}$ to these strata is naturally isomorphic to the union of pullbacks of the universal curves on each factor.

\qed
\end{lem}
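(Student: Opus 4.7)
The plan is to apply the general boundary analysis of Adams moduli spaces from \S\ref{sec:adams-univ-curve} to the specific totally ordered set
\[
 L \;=\; K_+ \times \{+\} \,\cup\, K_- \times \{-\}
\]
equipped with the extended order in which every $+$-element precedes every $-$-element. Under this order one has $\min L = (+,\min K_+)$ and $\max L = (-,\max K_-)$, so the elements of $L$ that can label a codimension-one facet of $\Adams_{K_-,K_+}=\Adams_L$ are exactly those of $(K_+ \setminus \min K_+) \times \{+\}$ together with $(K_-\setminus \max K_-) \times \{-\}$.

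Next I would invoke the two types of codimension-one facets identified in Equations \eqref{eq:subtrajectories_skip_i} and \eqref{eq:subtrajectories_factor_i}: for each admissible element $\ell \in L$ either the corresponding cube coordinate vanishes, yielding the facet $\Adams_{L\setminus\{\ell\}}$, or it equals $1$, yielding the product $\Adams_{L^\geq_\ell}\times \Adams_{L^\leq_\ell}$. Splitting these two possibilities according to whether $\ell$ comes from $K_+$ or $K_-$ produces the four lists \eqref{eq:boundary_adams_prism_0}--\eqref{eq:boundary_adams_prism_4}. The only nontrivial observation is the product decomposition in the ``coordinate $=1$'' cases: because all $+$-elements precede all $-$-elements, for $\ell=(i,-)$ with $i\in K_-$ the upper piece $L^\geq_{(i,-)}$ is purely $-$, hence equals $K^\geq_{-,i}$ as an ordered set, while the lower piece $L^\leq_{(i,-)}$ retains the full prism structure $K^\leq_{-,i}\sqcup K_+$; the symmetric statement handles $\ell \in K_+$. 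This gives the factorisations recorded in \eqref{eq:stop_K-U} and \eqref{eq:boundary_adams_prism_4}.

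For the assertion about the universal curve, I would simply pull back the general statements \eqref{eq:boundary_stratum_skip} and \eqref{eq:boundary_stratum_stop} along the identifications above. In the ``skip'' facets the fibres of $\Ubar_L$ restrict to fibres of $\Ubar_{L\setminus \ell}$, which is exactly the universal curve on the single factor appearing in \eqref{eq:boundary_adams_prism_0} and \eqref{eq:boundary_adams_prism_1}. In the ``split'' facets the inclusion \eqref{eq:boundary_stratum_stop} says that the restriction of $\Ubar_L$ is the disjoint union of $\Ubar_{L^\geq_\ell}\times \Adams_{L^\leq_\ell}$ and $\Adams_{L^\geq_\ell}\times \Ubar_{L^\leq_\ell}$; under the identifications of the previous paragraph this is precisely the union of pullbacks of the universal curves from each of the two factors.

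The only real bookkeeping point is the combinatorial one, namely that the hypothesis $(+,i)<(-,j)$ makes one of the two factors in each ``split'' case collapse from a prism moduli space to an ordinary Adams moduli space; once this is noted, everything else is a direct translation of the facts already established for $\Adams_K$ and $\Ubar_K$, so I expect no genuine obstacle beyond carefully matching the four cases to the four displayed lines.
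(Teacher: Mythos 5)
Your proposal is correct, and it is precisely the argument the paper leaves implicit (the lemma is stated with \(\qed\), meaning the author regards it as an immediate specialisation of the boundary description in \S\ref{sec:adams-univ-curve}, namely Equations \eqref{eq:subtrajectories_skip_i}, \eqref{eq:subtrajectories_factor_i}, \eqref{eq:boundary_stratum_skip}, and \eqref{eq:boundary_stratum_stop}, to the ordered set \(L = K_+\times\{+\}\cup K_-\times\{-\}\)). The key bookkeeping point you highlight — that one factor in each ``coordinate equals \(1\)'' facet collapses to an ordinary Adams space because all \(+\)-elements precede all \(-\)-elements — is exactly the observation the paper relies on.
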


Using the above description of the boundary strata, inductively choose families of positive (respectively negative) strip like ends
\begin{align} \label{eq:positive_end_moduli_prism}
\epsilon_{\pm} \co \Adams_{K_-,K_+} \times B_\pm & \to \Ubar_{K_-,K_+} 
\end{align}
whose restrictions to the boundary strata are compatible with the inductive choices and those made in Section \ref{sec:adams-univ-curve}.

Assume now that $\max K_+ \leq \min K_-$, with respect to the ordering on $K$. Let $z_\inp$ denote the boundary marked point $w$. For each  $\vr \in \Delta_{K_-,K_+}$  fix the following  positive strip like ends near $z_{\inp}$:
\begin{align}
\epsilon_{\inp} \co B_{+} & \to B \subset \Ubar_{\vr}  \\ \label{eq:positive_strip_like_end}
(s,t) & \mapsto \sqrt{-1} - e^{- s - t\sqrt{-1} \pi },
\end{align}
where the complex coordinates on $B$ are given by its embedding in $\bC$. By construction, these strip-like ends are compatible with gluing. Since $ \Ubar^{\{1\}}_{K_-,K_+} $ is naturally ordered via its identification with a union of real lines,  the  points preceding  or succeeding $z_{\inp}$ define subsets of the boundary:
\begin{align} \label{eq:subset_boundary_before_marked_point}
 \Ubar_{K_-,K_+}^{ z_{\inp}>} & = \{ z \in \Ubar^{\{1\}}_{K_-,K_+}  | z < z_{\inp} \} \\ \label{eq:subset_boundary_after_marked_point}
     \Ubar_{K_-,K_+}^{z_{\inp} <} & = \{ z \in \Ubar^{\{1\}}_{K_-,K_+} |  z_{\inp} < z  \}.
\end{align}

Assume instead that $\max K_- \leq \min K_+ $ with respect to the ordering on $K$, and repeat the same procedure to obtain a marked point $z_{\out}$ on each fibre of $\Ubar_{K_-,K_+}$.  Pick negative strip-like ends
\begin{align}
  \epsilon_{\out} \co B_{-} & \to B \subset \Ubar_{\vr} 
\end{align}
whenever $\vr \in \Delta_{K_-,K_+}$ which are compatible with gluing. The points preceding  or succeeding  $z_{\out}$ yield subsets $
     \Ubar_{K_-,K_+}^{ z_{\out}>}   \subset  \Ubar^{\{1\}}_{K_-,K_+} \supset    \Ubar_{K_-,K_+}^{z_{\out} <}$.

Given  $I \subset K$, introduce the notation
\begin{equation}
 K^\geq_I  \equiv  K^\geq_{\max I}  \textrm{ and }
  K^\leq_I  \equiv  K^\leq_{\min I}
\end{equation}
where the sets $K^\geq_i$ and $K^\leq_i$ are as in Equation \eqref{eq:smaller_larger_sets}. Given a nested pair $ I \subset K$, define the following subsets of $K \times \{ +, -\}$:
\begin{align} \label{eq:K_I_inp}
K^{\inp}_I & \equiv (K^\geq_I\times \{ -\} , K^\leq_I \times \{ +\}) \\ \label{eq:K_I_out}
K^{\out}_I &  \equiv (K^\leq_I\times \{ -\}, K^\geq_I  \times \{ + \}).
\end{align}

\begin{lem}
The minimal element of $K^\inp_I$ is $(\min K,+)$, and the maximal element is $(\max K, -)$. The minimal element of $K^\out_I$ is $(\max I, +)$ and the maximal element is $(\min I, -)$. \qed
\end{lem}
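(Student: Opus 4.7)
The plan is to unpack the definitions and apply the ordering conventions directly; no machinery beyond the two preceding formulas is needed. The key observation is that in the total order on $K \times \{-,+\}$ extending that on $K$, the relation $(+,i) < (-,j)$ for all $i,j \in K$ forces every element labelled $+$ to precede every element labelled $-$. Consequently, for any nonempty subset of $K \times \{-,+\}$ its minimum lies in the $+$-part (ordered by the restriction of the $K$-order), and its maximum lies in the $-$-part.

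I would then read off each endpoint after rewriting $K^\geq_I$ as $K^\geq_{\max I}$ and $K^\leq_I$ as $K^\leq_{\min I}$ per the notation introduced just before the lemma. For $K^\inp_I$ the $+$-part is $K^\leq_{\min I} \times \{+\}$ and the $-$-part is $K^\geq_{\max I} \times \{-\}$. Since $I \subset K$ gives $\min K \leq \min I \leq \max I \leq \max K$, the element $\min K$ belongs to the first set and $\max K$ belongs to the second; being the extremes of all of $K$ they are automatically the minimum and maximum within any subset containing them, which yields the endpoints $(\min K, +)$ and $(\max K, -)$.

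For $K^\out_I$ the $+$-part is $K^\geq_{\max I} \times \{+\}$, whose smallest element is $\max I$ itself, while the $-$-part is $K^\leq_{\min I} \times \{-\}$, whose largest element is $\min I$. This produces the endpoints $(\max I, +)$ and $(\min I, -)$.

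The statement is essentially bookkeeping, so there is no serious obstacle; the only point worth flagging explicitly is that the four sets appearing in the two definitions are nonempty, which follows from the assumption that $I$ (hence $K$) is nonempty, with $\min I$ and $\max I$ witnessing nonemptiness of $K^\leq_{\min I}$ and $K^\geq_{\max I}$ respectively.
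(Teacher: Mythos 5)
Your proof is correct and matches the argument the paper leaves implicit behind the $\qed$: the lemma is pure bookkeeping, and you have unpacked the definitions exactly as intended, using the ordering convention $(+,i)<(-,j)$ and the notation $K^\geq_I = K^\geq_{\max I}$, $K^\leq_I = K^\leq_{\min I}$ from Equation~\eqref{eq:smaller_larger_sets} and the surrounding text. The remark about nonemptiness, while not strictly needed in the paper's context (where $I$ is always a nonempty totally ordered subset), is a sensible sanity check.
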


\subsection{Strips with one input marked point} \label{sec:strips-with-one-positive}
Let $\vK$ be a nested sequence of totally ordered sets whose maximal element is $K$ and minimal element is $K_0$.

\begin{defin}
 The \emph{Adams moduli space with one input} $ \Adams_{\vK;\inp}$ is the product
 \begin{equation}
  \Adams_{\vK;\inp}  \equiv [0,\infty]^{\vK \setminus K}. 
 \end{equation}
\end{defin}

The cells of $   \Adams_{\vK;\inp} $ are given by pairs of subsets $  K \in \vI \subset \vJ \subset \vK$. Write $\Adams_{\vI \subset \vJ; \inp  } $ for the corresponding stratum.  Define the open subset $ \Adamsop_{\vK;\inp}  \subset  \Adams_{\vK;\inp}   $ corresponding to the inclusion $[0,\infty) \subset [0,\infty]$. It is the union of the strata for which $\{ K  \}= \vI$.

We shall build a universal curve on $ \Adams_{\vK;\inp}  $ by pulling back the universal curve on $\Adams_{K^{\inp}_{K_0}}$ as follows: let $\min \vK \subset K$ denote the set of minimal elements of subsets of $K$ which lie in $\vK$, and $\max \vK$ the set of maximal elements. Assign to a sequence $\vJ \subset \vK$ a subset of $ K^{\inp}_{K_0}$
\begin{equation}
\mu_\inp \vJ  \equiv \max \vJ  \times \{- \} \coprod \min \vJ \times \{ + \}.
\end{equation}

There is a natural map
\begin{equation} \label{eq:barycentric_to_inp}
\mu_\inp:    \Adams_{\vK;\inp} \to \Adams_{\mu_\inp \vK } \subset \Adams_{K^\inp_{K_0}}
\end{equation}
inducing the map of posets $  \vI \subset \vJ \mapsto \mu_\inp \vI \subset \mu_\inp \vJ$. In coordinates, we simply set the coordinate of $ \mu_\inp(\vr)$ labelled by $(j,+)$ to equal  $\sum_{ \max J = j} r_{j}  $, while the coordinate  labelled by $(j,-)$ is given by  $\sum_{ \min J = j} r_{j}$, with both sums taken over $J \in \vK$.

\begin{defin}
The \emph{universal curve over $\Adams_{\vK; \inp}$} is the projection map  
\begin{equation}
  \Ubar_{\vK;\inp} \equiv \mu^{*}_\inp( \Ubar_{K^\inp_{K_0}} ) \to \Adams_{\vK, \inp}.
\end{equation}
\end{defin}
\begin{figure}
\centering
\begin{tikzpicture}
\coordinate (0) at (0,0);
\coordinate (1) at (4,0);
\coordinate (2) at (4,4);
\coordinate (3) at (0,4);

\foreach \x in {0,...,3} \fill (\x) circle (8*\lw);

\draw[line width=2*\lw] (0)--(1)--(2)--(3)--cycle;

\ExtractCoordinate{$(0)$};
\CtoEtworight{\XCoord+4.5*\mw}{\YCoord +4*\mw}{2}{0};
\CtoEoneright{\XCoord-.5*\mw}{\YCoord +4*\mw}{2}{0};
\CtoE{\XCoord-.5*\mw}{\YCoord - 0.5*\mw}{2}{0};  

\ExtractCoordinate{$(0)!.5!(1)$};
\CtoEoneright{\XCoord+.5*\mw}{\YCoord - 0.5*\mw}{2}{0};

\ExtractCoordinate{$(1)$};
\CtoE{\XCoord+1.5*\mw}{\YCoord -.5*\mw}{2}{2}; 
\Conin{\XCoord+2.75*\mw}{\YCoord -.5*\mw}{0};
\CtoE{\XCoord+1.5*\mw}{\YCoord +4*\mw}{0}{2}; 
\Conintwo{\XCoord+2.75*\mw}{\YCoord +4*\mw}{0};

\ExtractCoordinate{$(2)$};
\CtoE{\XCoord+1.5*\mw}{\YCoord+0.5*\mw}{2}{2};
\Conin{\XCoord+2.75*\mw}{\YCoord+0.5*\mw}{1}; 
\Conin{\XCoord+4*\mw}{\YCoord+0.5*\mw}{0}; 

\ExtractCoordinate{$(2)!.5!(3)$};
\CtoEoneright{\XCoord+.5*\mw}{\YCoord+ 0.5*\mw}{2}{1};
\Conin{\XCoord+1.75*\mw}{\YCoord+0.5*\mw}{0}; 

\ExtractCoordinate{$(3)$};
\CtoE{\XCoord-1.75*\mw}{\YCoord+ 0.5*\mw}{2}{1};
\Conin{\XCoord-.5*\mw}{\YCoord+0.5*\mw}{0}; 
\end{tikzpicture}

\caption{The moduli space $\Adams_{\vK;\inp}$, with vertices labelled by fibres of $\Ubar_{\vK;\inp}$ ($\vK = 2\subset 12 \subset 012$).}
\label{fig:CtoE012}
\end{figure}

We give some examples of these universal curves for $K= 012$.

\begin{example}
If $\vK = \{ 2 \subset 12 \subset 012 \}$, then $  \mu_\inp \vK = \{ (0,+)  < (1,+) < (2,+) < (2,-) \} $. In particular, $\Adams_{\vK;\inp}$ and $\Adams_{\mu_\inp \vK }$ are both $2$-dimensional. In order to describe the image of the vertices, note that they canonically correspond to subsets of $\vK$ containing the maximal element (because $\vI = \vJ$ if and only if the corresponding stratum is $0$-dimensional). Using this convention to simplify the notation, we can write the map  $\mu_\inp$ on vertices as:
\begin{align}
  012  & \mapsto \{ (0,+),(2,-) \}  \\
  2 \subset 012 & \mapsto  \{  (0,+), (2,+), (2,-) \} \\
12 \subset 012 & \mapsto  \{  (0,+), (1,+), (2,-) \} \\
2 \subset 12 \subset 012 & \mapsto  \{  (0,+),(1,+) (2,+), (2,-) \}. 
\end{align}
It is therefore easy to see that $\mu_\inp$ is an isomorphism. The fibres of $\Ubar_{\vK;\inp}  $ are shown in Figure \ref{fig:CtoE012}.
\end{example}
It is easy to find a map $\mu_\inp$ which cannot be injective because the dimension of the source is larger than that of the target:
\begin{example}
 If $ \vK = \{ 0 \subset 02 \subset 012 \} $, then $  \mu_\inp \vK = \{ (0,+) < (0,-)  < (2,-) \} $, so $\Adams_{\vK;\inp}$ is $2$-dimensional while $\Adams_{\mu_\inp \vK }$ has dimension $1$.
\end{example}

There are more interesting examples of maps $\mu_\inp$, which are neither injective nor surjective, despite the source and target having the same dimension:
\begin{figure}
\centering
\begin{tikzpicture}
\coordinate (0) at (0,0);
\coordinate (1) at (4,0);
\coordinate (2) at (4,4);
\coordinate (3) at (0,4);

\foreach \x in {0,...,3} \fill (\x) circle (8*\lw);

\draw[line width=2*\lw] (0)--(1)--(2)--(3)--cycle;

\ExtractCoordinate{$(0)$};
\CtoEtwosplit{\XCoord+4.5*\mw}{\YCoord +4*\mw}{2}{0};
\CtoEoneleft{\XCoord-.5*\mw}{\YCoord +4*\mw}{2}{0};
\CtoE{\XCoord-.5*\mw}{\YCoord - 0.5*\mw}{2}{0};  

\ExtractCoordinate{$(0)!.5!(1)$};
\CtoEtwosplit{\XCoord+.5*\mw}{\YCoord - 0.5*\mw}{2}{0};

\ExtractCoordinate{$(1)$};
\Conout{\XCoord+1.5*\mw}{\YCoord -.5*\mw}{2};
\CtoE{\XCoord+2.75*\mw}{\YCoord -.5*\mw}{1}{1}; 
\Conin{\XCoord+4*\mw}{\YCoord -.5*\mw}{0};

\Conout{\XCoord+1.5*\mw}{\YCoord +4*\mw}{2};
\CtoE{\XCoord+2.75*\mw}{\YCoord +4*\mw}{1}{1}; 
\Conin{\XCoord+4*\mw}{\YCoord +4*\mw}{0};
\ExtractCoordinate{$(2)$};
\Conout{\XCoord+1.5*\mw}{\YCoord +.5*\mw}{2};
\CtoE{\XCoord+2.75*\mw}{\YCoord +.5*\mw}{1}{1}; 
\Conin{\XCoord+4*\mw}{\YCoord +.5*\mw}{0};

\ExtractCoordinate{$(2)!.5!(3)$};
\CtoEoneright{\XCoord+1.75*\mw}{\YCoord+ 0.5*\mw}{1}{0};
\Conout{\XCoord+.5*\mw}{\YCoord+0.5*\mw}{2}; 

\ExtractCoordinate{$(3)$};
\Conout{\XCoord-1.75*\mw}{\YCoord+0.5*\mw}{2};
\CtoE{\XCoord-.5*\mw}{\YCoord+ 0.5*\mw}{1}{0};
\end{tikzpicture}

\caption{The moduli space $\Adams_{\vK;\inp}$, with vertices labelled by fibres of $\Ubar_{\vK;\inp}$ ($\vK = 1 \subset 01 \subset 012$).}
\label{fig:CtoE012-inj-not-equal}
\end{figure}

\begin{example}
If $ \vK = \{ 1 \subset 01 \subset 012 \} $, then $  \mu_\inp \vK = \{ (0,+) < (1,+) < (1,-) < (2,-) \} $, so $\Adams_{\vK;\inp}$ and $\Adams_{\mu_\inp \vK }$ both again have dimension $2$. On vertices, we have
\begin{align}
  012  & \mapsto \{ (0,+),(2,-) \}  \\
  1 \subset 012 & \mapsto  \{  (0,+), (1,+), (1,-), (2,-) \} \\
01 \subset 012 & \mapsto  \{  (0,+), (1,-), (2,-) \} \\
1 \subset 01 \subset 012 & \mapsto  \{  (0,+),(1,+), (1,-), (2,-) \}. 
\end{align}
Since the second and the last vertex above have the same image in $\Adams_{\mu_\inp \vK }$, this map cannot be injective. In fact, we can easily compute that it is given, in coordinates, by
\begin{equation}
 (r_1, r_{01}) \mapsto (r_1, r_1 + r_{01}),
\end{equation}
so the map is in fact injective on $\Adamsop_{\vK;\inp}$, but not on the closure. 

\end{example}

\subsubsection{Maps induced by minimal and maximal elements}
\label{sec:maps-induces-minimal}

Before proceeding with a more convenient combinatorial description of the  boundary strata of $\Adams_{\vK, \inp} $, we introduce maps associated to minimal and maximal elements: let $\vJ$ be a sequence of nested subsets with maximal element $J$ that is ordered, and minimal element $I$. Since the minimal (respectively maximal) element of $J' \in \vJ[\geq]_I $ lies between the minimal (resp. maximal) elements of $I$ and $J$, we have maps:
\begin{align}\label{eq:projection_min-0}
  \min \co \Adams_{ \vJ} & \to \Adams_{J^{\leq}_{I}} \\ \label{eq:projection_max-0}
    \max \co \Adams_{ \vJ} & \to \Adams_{J^{\geq}_{I}}.
\end{align}
Given $i \in  J^{\leq}_{I} \setminus \{ \min I, \min J \}$ the $i$ coordinate of $\min (\vr) $ (resp. $\max (\vr)$)  is the sum of the coordinates $r_{J'}$  for elements $ J' \in \vJ \setminus \{J,I\}$ whose minimum  equals $i$ (if there are no such elements, the corresponding coordinate vanishes).   In the same way, given $i \in J^{\geq}_{I} \setminus \{ \max I, \max J \}$, the $i$-coordinate of  $\max (\vr)$  is the sum of the coordinates $r_{J'}$  for elements $ J' \in \vJ \setminus \{J,I\}$ whose maximum  equals $i$.  On the associated partially ordered sets,  the maps are simply given by
\begin{align}
  \vJ_1 \subset \vJ_2 & \mapsto \min \vJ_1 \subset \min \vJ_2 \\
  \vJ_1 \subset \vJ_2 & \mapsto \max \vJ_1 \subset \max \vJ_2.
\end{align}
\begin{table}
  \centering
  \begin{tabular}{|c|c|c|c|c|c|}
\hline
$\vJ$  &  $\dim  \Adams_{J^{\geq}_{I}}$  & $\dim  \Adams_{J^{\leq}_{I}}$ & Injective & Surjective \\
\hline $01  \subset 012 \subset 0123 \subset 01234$  &  $2$ & $0$ & Yes & Yes \\
$34  \subset 234 \subset 1234 \subset 01234$  &  $0$ & $2$ & Yes & Yes \\
$34  \subset 134 \subset 1234 \subset 01234$&  $0$ & $1$ & No & Yes \\
$3  \subset 134 \subset 1234 \subset 01234$&  $0$ & $2$ & No & No \\
$2 \subset 12 \subset 123 \subset 01234$ & $1$ & $1$ & No & No \\ 
$2 \subset 124 \subset 1234 \subset 01234$ & $1$ & $1$ & No & No \\ 
\hline
  \end{tabular}
\caption{Properties of the map $\min \times \max$ on $\Adams_{\vJ}$ in some examples.}
 \label{tab:min_max_properties}
\end{table}
Some examples of sequences $\vJ$ of length $4$, i.e. such that the corresponding moduli spaces $ \Adams_{ \vJ}  $ have dimension $2$ are given in Table  \ref{tab:min_max_properties}. We describe two of the cases in more detail:
\begin{example}
  Consider the sequence $\vJ = \{ 2 \subset 12 \subset 123 \subset 01234 \}$.  The moduli space $ \Adams_{ \vJ}  $ is $2$-dimensional, with coordinates given by $r_{12}$ and $r_{123}$, and both  $ \Adams_{J^{\leq}_{I}}$ and $ \Adams_{J^{\geq}_{I}} $ are $1$-dimensional, with coordinates $r_1$ and $r_3$. In these coordinates, the map $(\max, \min) $ is $( r_{123}, r_{12} + r_{123})$. This map is injective on $ \Adamsop_{ \vJ} $, but not on the closure.  
\end{example}
\begin{example}
Consider the sequence $\vJ = \{ 2 \subset 124 \subset 1234 \subset 01234 \}$.  The moduli space $ \Adams_{ \vJ} $ has coordinates given by $r_{124}$ and $r_{1234}$, while the targets of $\min$ and $\max$ have coordinates $r_1$ and $r_3$. In these coordinates, the map $(\max, \min) $ factors through the inclusion of the diagonal in the square $ \Adams_{J^{\geq}_{I}} \times \Adams_{J^{\leq}_{I}} $.
\end{example}

With the above examples in mind, we characterise the sequences $\vJ$ of maximal length for which the map $(\max, \min)$ is injective:

\begin{lem} \label{lem:min_max_maps_deg}
Assume that  $\vJ$ consists of $|J| - |I| + 1$ elements. We have
\begin{equation}
    \dim \Adams_{\vJ} \geq \dim \left( \Adams_{J^\geq_{I}} \times  \Adams_{J^\leq_{I}} \right)
\end{equation}
with equality if and only if one of the following three conditions holds: (i)  $\max I = \max J$ and $\min$ is injective on $\vJ $ (ii)  $\min I = \min J$ and $\max$ is  injective on $\vJ$ or (iii)  $I = J \setminus \{ i\}$, for $i \neq \min J, \max J$.

Moreover, if equality holds, the map $(\min, \max)$ is an isomorphism.
\end{lem}
\begin{proof}
By assumption, successive subsets of $J$ appearing in $\vJ $ differ by exactly one element, hence
\begin{equation} \label{eq:compare_number_of_elements}
|J| - |I| + 2 \geq  |J^\leq_{I}| + | J^\geq_{I} |,
\end{equation}
with $I$ contributing two elements to the union of $J^\leq_{I} $ and $  J^\geq_{I}  $, and each subsequent element of $\vJ$ at most one. Note that this inequality is strict if and only if each subset of $J$ appearing in  $\vJ$ is obtained  by adding an element  which is either larger than the maximum of the preceding subset  or smaller than the minimum.

This inequality allows us the compare the dimensions of the source and target in Equations \eqref{eq:projection_min-0} and  \eqref{eq:projection_max-0}.  We begin by noting that the conventions fixed in Remark \ref{rem:convention-nors}  imply that the dimensions of the moduli spaces we are considering are
\begin{align}
\dim \Adams_{\vJ}  & = |J| - |I| - 1 \\
  \dim \Adams_{J^\geq_{I}} & = \max( |J^\geq_{I}| -2, 0) \\
  \dim \Adams_{J^\leq_{I}} & = \max( |J^\leq_{I}| -2, 0) .
\end{align}
We now re-write Equation \eqref{eq:compare_number_of_elements} as:
\begin{equation} \label{eq:compare_number_of_elements_dim}
\dim \Adams_{\vJ}   \geq   |J^\leq_{I}| + | J^\geq_{I} | - 3,
\end{equation}
from which we conclude that equality  of dimensions can only hold if one of $J^\geq_{I}$ or $J^\leq_{I}$ is a singleton. We now consider the three cases in reverse order:

{\bf Case (iii):} If both $J^\geq_{I}$ or $J^\leq_{I}$ are singletons, then the dimension of the corresponding moduli spaces are both $0$, so equality of dimension requires that $\Adams_{\vJ}$ be $0$-dimensional, hence a point, which corresponds to $I = J \setminus \{ i \}$. Surjectivity is obvious.

{\bf Case (ii):} If $J^\leq_{I}$ is a singleton, but $J^\geq_{I}$ is not, then Equation \eqref{eq:compare_number_of_elements_dim} becomes:
\begin{equation}
  \dim \Adams_{\vJ}   \geq   \dim \Adams_{ J^\geq_{I}}
\end{equation}
with equality holding whenever the inequality in Equation \eqref{eq:compare_number_of_elements} is strict. This corresponds to $\max$ being injective, so that surjectivity follows.

{\bf Case (i):} Entirely analogous to the previous case.
\end{proof}

\subsubsection{Stratification of the boundary of $\Adams_{\vK, \inp}$ }
\label{sec:strat-bound-adams_vk}
Given a sequence $\vJ$ of nested subsets of an ordered set $K$, with maximal element $J$ and minimal element $J_0$, and an  element $I \in \vJ$, we first introduce the notation $\vJ[\geq]_I$  and  $\vJ[\leq]_I$ as before for the nested collection of sets preceding and succeeding $I$.  By the construction of the previous section, we have maps:
\begin{align}\label{eq:projection_min}
  \min \co \Adams_{ \vJ[\geq]_{I}} & \to \Adams_{J^{\leq}_{I}} \\ \label{eq:projection_max}
    \max \co \Adams_{ \vJ[\geq]_{I} } & \to \Adams_{J^{\geq}_{I}}.
\end{align}
We now have the necessary notation to describe the restriction of the universal curve to the boundary strata of $\Adams_{\vK, \inp} $: denoting by $I_0$ the minimal element of $\vI$, the boundary stratum  of $ \Adams_{\vK; \inp}  $ labelled by  $K \in \vI \subset \vJ$ is
\begin{equation} \label{eq:decomposition_boundary_adams_vK}
\Adams_{\vI \subset  \vJ; \inp}  \cong \Adams_{\vI \subset  \vJ[\geq]_{I_0}}  \times  \Adams_{\vJ[\leq]_{I_0};\inp},
\end{equation}
where $\Adams_{\vI \subset  \vJ[\geq]_{I_0}} $  is the cube on $\vJ[\geq]_{I_0} \setminus  \vI$. When  $\vI = \{ I\}$ is a singleton, the pair $\{ I \} \subset  \vJ[\geq]_{I} $ labels the top dimensional stratum of $ \Adams_{\vJ[\geq]_{I}} $.  Figure \ref{fig:continuation_family_homotopy} illustrates the fibres of the universal curve over this stratum; we shall presently explain how to describe the components of this restricted universal curve as pullbacks, after introducing the relevant maps.
\begin{figure}
\centering
\begin{tikzpicture}
\begin{scope}                          
\ConoutBig{-6*\mw pt}{0}{\max K};
\CtoEBig{0}{0}{\max I}{\min I};
\ConinBig{6*\mw pt}{0}{\min K};
\end{scope}
\end{tikzpicture}

\caption{A fibre of the universal curve over  $ \Adams_{\vK;\inp}$ restricted to $ \Adams_{\vJ[\leq]_I;\inp}  \times \Adams_{\vJ[\geq]_I}$. The curve in the middle is $\Ubar_{\vJ[\leq]_I;\inp}$, while the curves on the left and the right are respectively the pullback of the universal curves over $ \Adams_{\min \vJ[\geq]_I}$  and $\Adams_{\max \vJ[\geq]_I}$.}
\label{fig:continuation_family_homotopy}
\end{figure}

On the first factor of the right hand side in Equation \eqref{eq:decomposition_boundary_adams_vK}, consider the product map
\begin{equation} \label{eq:boundary_stratum_inp_map_0}
\xymatrix{  \Adams_{\vI \subset  \vJ[\geq]_{I_0}}   \ar[rr]^-{(\min, \max)} & & \Adams_{\min \vI \subset  \min \vJ[\geq]_{I_0}} \times  \Adams_{\max \vI \subset  \max \vJ[\geq]_{I_0}} \subset  \Adams_{K^{\leq}_{I_0}} \times \Adams_{K^{\geq}_{I_0}} ,}
\end{equation}
using the fact that $K \in \vJ$ to derive the second inclusion.  On the other hand, the second factor in Equation \eqref{eq:decomposition_boundary_adams_vK} maps by $\mu_{\inp}$ to
\begin{equation}\label{eq:boundary_stratum_inp_map_2}
  \Adams_{\max \vJ[\leq]_{I_0}, \min \vJ[\leq]_{I_0}} \subset \Adams_{{I_0}^{\geq}_{J_0},{I_0}^{\leq}_{J_0} } \equiv \Adams_{{I_0}^{\inp}_{J_0}}.
\end{equation}

The product of the right hand sides in Equations \eqref{eq:boundary_stratum_inp_map_0} and \eqref{eq:boundary_stratum_inp_map_2} is a stratum of $\Adams_{K^\inp_{K_0}}$, and these map fit in a commutative diagram:
\begin{equation}
  \label{eq:inp_moduli_project_boundary_compatible}
  \xymatrix{  \Adams_{\vI \subset  \vJ[\geq]_{I_0}}  \times  \Adams_{\vJ[\leq]_{I_0};\inp}  \ar[r]^{\cong} \ar[d] & \Adams_{\vI \subset \vJ; \inp} \ar[r]
& \Adams_{\vK; \inp} \ar[d] \\
 \Adams_{\min \vI \subset  \min \vJ[\geq]_{I_0}} \times  \Adams_{\max \vI \subset  \max \vJ[\geq]_{I_0}}  \times  \Adams_{{I_0}^{\inp}_{J_0}} \ar[r] &  \Adams_{K^{\geq}_{I_0}} \times \Adams_{{I_0}^{\inp}_{J_0}} \times \Adams_{K^{\leq}_{I_0}} \ar[r]  & \Adams_{K^\inp_{K_0}}. }
\end{equation}

We now specialise the above discussion to the codimension $1$ strata of $\Adams_{\vK;\inp} $. These come in two types, both labelled by an element $I \in \vK \setminus \{ K\}$. The  first corresponds to  $\vI = \{ K \} $ and $\vJ = \vK \setminus \{ I \}$, and in this case Diagram \eqref{eq:inp_moduli_project_boundary_compatible} reduces to
 \begin{equation}
  \xymatrix{ \Adams_{\vJ ;\inp}  \cong  \Adams_{\{ K\}  \subset \vK \setminus \{ I\}; \inp} \ar[r] \ar[d]
& \Adams_{\vK; \inp} \ar[d] \\
  \Adams_{{K}^{\inp}_{J_0}} \ar[r]  & \Adams_{K^\inp_{K_0}}. }
\end{equation} 

The second case corresponds to $\vI = \{ I \subset K \}$ and $\vJ = \vK$, and in this case, we find that the projection $  \Adams_{\{ I \subset K\}  \subset \vK; \inp}  \to  \Adams_{K^\inp_{K_0}}$ factors through the map
\begin{equation} \label{eq:projection_min_max}
  \Adams_{\vK[\geq]_{I}}  \to  \Adams_{\min \vK[\geq]_{I}} \times  \Adams_{\max \vK[\geq]_{I}},
\end{equation}
which is derived from Equation \eqref{eq:boundary_stratum_inp_map_0} by using the fact that $\{ I,K\} \subset \vK[\geq]_{I}$ labels the top dimensional stratum of $ \Adams_{\vK[\geq]_{I}}$.

Using Equations \eqref{eq:boundary_adams_prism_0}-\eqref{eq:boundary_adams_prism_4} we obtain an explicit description of the fibres over boundary strata of $\Adams_{\vK; \inp} $ (see Figure \ref{fig:continuation_family_homotopy} for an example):
\begin{lem}
The boundary of  $ \Adams_{\vK;\inp}$ is covered by the following codimension $1$ strata:
\begin{align} \label{eq:inp_adams_skip}
&  \bigcup_{I \in \vK \setminus\{K\}}  \Adams_{\vK \setminus\{I\};\inp} \\ \label{eq:inp_adams_stop}
& \bigcup_{I \in \vK \setminus\{K\}}  \Adams_{\vK[\geq]_I}  \times \Adams_{\vK[\leq]_I;\inp} .
\end{align} 
The restriction of $\Ubar_{\vK;\inp} $ to the first stratum is naturally isomorphic to $\Ubar_{\vK \setminus \{I\};\inp} $, and the restriction to the second is given by
\begin{align} \label{eq:inp_adams_stop_U}
&   \left(  \mathrm{max}^* \Ubar_{\max \vK[\geq]_I} \coprod \mathrm{min}^* \Ubar_{\min \vK[\geq]_I} \right)  \times \Adams_{\vK[\leq]_I;\inp} \coprod  \Adams_{\vK[\geq]_I}  \times \Ubar_{\vK[\leq]_I;\inp} .
\end{align}
\end{lem}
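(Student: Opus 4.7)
The plan is to unpack the cube structure of $\Adams_{\vK;\inp}=[0,1]^{\vK\setminus K}$ directly. A codimension 1 face corresponds to fixing one coordinate, indexed by some $I\in\vK\setminus\{K\}$, at either $0$ or $1$. Translating to the $(\vI,\vJ)$-labeling, in which $\vJ\setminus\vI$ records the free coordinates, $\vI\setminus\{K\}$ the coordinates frozen at $1$, and $\vK\setminus\vJ$ the coordinates frozen at $0$, fixing the $I$-th coordinate at $0$ gives $\vI=\{K\}$, $\vJ=\vK\setminus\{I\}$, whereas fixing it at $1$ gives $\vI=\{K,I\}$, $\vJ=\vK$.

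Next I would invoke the product decomposition \eqref{eq:decomposition_boundary_adams_vK} in each case. For the $0$-face, $I_0=K$, so $(\vK\setminus\{I\})[\geq]_K=\{K\}$, the first factor $\Adams_{\{K\}\subset\{K\}}$ collapses to a point, and the face is identified with $\Adams_{\vK\setminus\{I\};\inp}$, matching \eqref{eq:inp_adams_skip}. For the $1$-face, $I_0=I$, and the decomposition yields $\Adams_{\{K,I\}\subset\vK[\geq]_I}\times\Adams_{\vK[\leq]_I;\inp}$; the first factor is the cube on $\vK[\geq]_I\setminus\{K,I\}$, which is by definition $\Adams_{\vK[\geq]_I}$, matching \eqref{eq:inp_adams_stop}.

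For the restriction of the universal curve, I would use $\Ubar_{\vK;\inp}=\mu_\inp^*\Ubar_{K^\inp_{K_0}}$ together with the commuting diagram \eqref{eq:inp_moduli_project_boundary_compatible}, which reduces the restriction of $\Ubar_{\vK;\inp}$ to a boundary face to pulling back the restriction of $\Ubar_{K^\inp_{K_0}}$ to the image stratum. That image is one of the codim 1 strata listed in \eqref{eq:boundary_adams_prism_0}--\eqref{eq:boundary_adams_prism_4}, whose universal curve has already been computed. For a $0$-face, $\mu_\inp$ lands in a ``skip'' stratum of type \eqref{eq:boundary_adams_prism_0} or \eqref{eq:boundary_adams_prism_1}, whose universal curve is just $\Ubar$ of the smaller sequence, yielding $\Ubar_{\vK\setminus\{I\};\inp}$. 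For a $1$-face, $\mu_\inp$ lands in a ``stop'' stratum of type \eqref{eq:stop_K-U} or \eqref{eq:boundary_adams_prism_4}, whose universal curve splits as a disjoint union of pullbacks from each of the two factors, reproducing the two summands in \eqref{eq:inp_adams_stop_U}.

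The main obstacle is the combinatorial bookkeeping required to track how $\min\vJ$ and $\max\vJ$ behave under removing or freezing $I$: namely, which side (positive or negative) of $K^\inp_{K_0}$ receives the skip or stop degeneration under $\mu_\inp$. Since $\vK$ is nested and $I\in\vK\setminus\{K\}$ is intermediate, at most one of $\min\vJ$ and $\max\vJ$ can change when $I$ is removed, so each $I$ produces a single codim 1 stratum of $\Adams_{K^\inp_{K_0}}$. Summing over $I\in\vK\setminus\{K\}$ and the two possible values $\{0,1\}$ of the frozen coordinate, and invoking the already-known boundary description of $\Ubar_{K^\inp_{K_0}}$, recovers exactly the list \eqref{eq:inp_adams_skip}--\eqref{eq:inp_adams_stop_U}.
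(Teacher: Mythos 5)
Your identification of the boundary faces is correct: the cube $\Adams_{\vK;\inp}=[0,1]^{\vK\setminus K}$ has codimension-one faces indexed by $I\in\vK\setminus\{K\}$ and a choice of $0$ or $1$, with the $(\vI,\vJ)$-labeling as you describe, and the product decomposition \eqref{eq:decomposition_boundary_adams_vK} does collapse to $\Adams_{\vK\setminus\{I\};\inp}$ for the $0$-face and to $\Adams_{\vK[\geq]_I}\times\Adams_{\vK[\leq]_I;\inp}$ for the $1$-face. That is essentially what the paper's proof observes.

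However, the step identifying the universal curve has a genuine gap. You assert that the image of each boundary face under $\mu_\inp$ is ``one of the codim $1$ strata listed in \eqref{eq:boundary_adams_prism_0}--\eqref{eq:boundary_adams_prism_4}'', and you support this with the claim that ``at most one of $\min\vJ$ and $\max\vJ$ can change when $I$ is removed.'' Both assertions are false. For the $1$-face, $\vI=\{K,I\}$ and $\vJ=\vK$, so $\mu_\inp\vI=\{(\max K,-),(\max I,-),(\min I,+),(\min K,+)\}$, which generically has four distinct elements; the image is then a codimension-$2$ stratum of $\Adams_{K^\inp_{K_0}}$, a triple (not double) product. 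A double product would give only two disjoint-union components for the universal curve, whereas \eqref{eq:inp_adams_stop_U} genuinely has three: $\max^*\Ubar_{\max\vK[\geq]_I}$, $\min^*\Ubar_{\min\vK[\geq]_I}$, and $\Ubar_{\vK[\leq]_I;\inp}$. For the $0$-face the image can even be the interior (when $\min\vJ=\min\vK$ and $\max\vJ=\max\vK$), or a codimension-$2$ stratum (when $I=K_0$, or when removing a middle $I$ deletes one element from each of $\min\vK$ and $\max\vK$, as happens for example with $\vK=\{1,2\}\subset\{0,1,2,3\}\subset\{0,1,2,3,4\}$ and $I=\{0,1,2,3\}$). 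The correct argument does not pass through the codimension-$1$ boundary of $\Adams_{K^\inp_{K_0}}$ at all: one should apply the commuting square \eqref{eq:inp_moduli_project_boundary_compatible} directly, whose lower-left corner is already the triple product $\Adams_{K^\geq_{I_0}}\times\Adams_{I_0^\inp_{J_0}}\times\Adams_{K^\leq_{I_0}}$, and use the fact that $\Ubar_{\vK;\inp}$ is by definition a pullback, so its restriction to a face is the pullback of $\Ubar_{K^\inp_{K_0}}$ along the restricted $\mu_\inp$ regardless of what stratum the image lies in; for the $0$-face this gives $\Ubar_{\vK\setminus\{I\};\inp}$ because both are pullbacks of the same $\Ubar_{K^\inp_{K_0}}$ along the same map, and for the $1$-face the triple product yields the three summands. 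The remaining identification of $\max\vK[\geq]_I\times\{-\}$ with $\max\vK[\geq]_I$ via the projection $K^\inp_{K_0}\to K$, which the paper's proof records explicitly, is also needed to read off \eqref{eq:inp_adams_stop_U}.
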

\begin{proof}
Equation \eqref{eq:inp_adams_skip} corresponds the case when the coordinate labelled by $I$ vanishes, and the other type of boundary stratum to the case this coordinate is $\infty$. In Equation \eqref{eq:inp_adams_stop_U}, the projection from $K^\inp_{K_0}$ to $K$ can be used in order to identify $\max \vK[\geq]_I \times \{- \}$, as a subset of $K^\inp_{K_0}$ with $ \max \vK[\geq]_I$, and similarly for $ \min \vK[\geq]_I$.
\end{proof}

\subsection{Strips with one output marked point} \label{sec:strips-with-one}

The construction of moduli spaces with outputs is entirely analogous to that of  moduli spaces with inputs, replacing the partially ordered set $K^\inp_{K_0}$ by $K^\out_{K_0} $ in the construction, and a few other minor changes. 
\begin{figure}
\centering
\begin{tikzpicture}
\coordinate (0) at (0,0);
\coordinate (1) at (4,0);
\coordinate (2) at (4,4);
\coordinate (3) at (0,4);

\foreach \x in {0,...,3} \fill (\x) circle (8*\lw);

\draw[line width=2*\lw] (0)--(1)--(2)--(3)--cycle;

\ExtractCoordinate{$(0)$};
\EtoCtworight{\XCoord+4.5*\mw}{\YCoord +4*\mw}{0}{0};
\EtoConeright{\XCoord - 0.5*\mw}{\YCoord +4*\mw}{0}{0};
\EtoC{\XCoord-.5*\mw}{\YCoord - 0.5*\mw}{0}{0};  
\ExtractCoordinate{$(1)$};
\EtoC{\XCoord+1.5*\mw}{\YCoord -.5*\mw}{0}{2}; 
\Conin{\XCoord+2.75*\mw}{\YCoord -.5*\mw}{0};
\EtoC{\XCoord+1.5*\mw}{\YCoord +4*\mw}{0}{2}; 
\Conintwo{\XCoord+2.75*\mw}{\YCoord +4*\mw}{0};
\ExtractCoordinate{$(2)$};
\EtoC{\XCoord+1.5*\mw}{\YCoord+0.5*\mw}{0}{2};
\Conin{\XCoord+2.75*\mw}{\YCoord+0.5*\mw}{1}; 
\Conin{\XCoord+4*\mw}{\YCoord+0.5*\mw}{0}; 
\ExtractCoordinate{$(2)!.5!(3)$};
\EtoConeright{\XCoord-.75*\mw}{\YCoord+0.5*\mw}{0}{1};
\Conin{\XCoord+.5*\mw}{\YCoord+0.5*\mw}{0}; 
\ExtractCoordinate{$(3)$};
\EtoC{\XCoord-1.75*\mw}{\YCoord+0.5*\mw}{0}{1};
\Conin{\XCoord-.5*\mw}{\YCoord+0.5*\mw}{0}; 

\ExtractCoordinate{$(0)!.5!(1)$};
\EtoConeright{\XCoord+.5*\mw}{\YCoord - 0.5*\mw}{0}{0};
\end{tikzpicture}

\caption{The moduli space $\Adams_{\vK;\out}$, with vertices labelled by fibres of $\Ubar_{\vK;\out}$ ($\vK = 0\subset 01 \subset 012$).}
\label{fig:EtoC012}
\end{figure}

\begin{defin}
 The \emph{Adams moduli space with one output}, $ \Adams_{\vK;\out}$ is the product
 \begin{equation}
  \Adams_{\vK;\out}  \equiv [0,\infty]^{\vK \setminus K_0 }. 
 \end{equation}
\end{defin}

The cells of $   \Adams_{\vK;\out} $ are given by pairs of subsets $
  K_0 \in \vI \subset \vJ \subset \vK$,
and the union of those strata for which $\vI = K_0$ correspond to the open subset $ \Adamsop_{\vK;\out}  $.

There is a  natural map
\begin{equation} \label{eq:barycentric_to_out}
\mu_\out \co   \Adams_{\vK;\out} \to \Adams_{K^\out_{K_0}} 
\end{equation}
which assigns to  a sequence $\vJ \in \vK$  the subset $
 \min \vJ \times \{ - \} \coprod \max \vJ  \times \{+ \}$ of $K^\out_{K_0} $.
\begin{defin}
The \emph{universal curve over $\Adams_{\vK; \out}$} is the projection map  \begin{equation}
  \Ubar_{\vK;\out} \equiv \mu^{*}_{\out}( \Ubar_{K^\out_{K_0}}) \to \Adams_{\vK, \out}.
\end{equation}
\end{defin}

By pullback, Equation \eqref{eq:positive_end_moduli_prism} determines families of strip like ends $\epsilon_-$ and $\epsilon_+$, and a family of distinguished components which are trivialised by a map we still denote $\iota$. 

As in Equation \eqref{eq:inp_moduli_project_boundary_compatible},  the boundary strata of $\Adams_{\vK; \out}  $ and their images under $\mu_\out$ fit in a commutative diagram:
\begin{equation} \label{eq:out_moduli_project_boundary_compatible}
  \xymatrix{ \Adams_{\vI \subset \vJ; \out} \cong   \Adams_{\vJ[\geq]_I;\out}  \times \Adams_{\vI \subset  \vJ[\leq]_I}  \ar[r] \ar[d] & 
 \Adams_{\vK; \out} \ar[d] \\
 \Adams_{I^{\leq}_{K_0}} \times \Adams_{{J}^{\out}_{I}} \times \Adams_{I^{\geq}_{K_0}} \ar[r]  & \Adams_{K^\out_{K_0}} .}
\end{equation}

The boundary of $ \Adams_{\vK;\out}$ admits a natural decomposition as a union of codimension $1$ strata
\begin{align} \label{eq:out_adams_skip}
&  \bigcup_{I \in \vK \setminus {K_0}}  \Adams_{\vK \setminus{ I};\out} \\ \label{eq:out_adams_stop}
& \bigcup_{I \in \vK \setminus {K_0}} \Adams_{\vK[\geq]_I;\out}  \times \Adams_{\vK[\leq]_I}.
\end{align}
The fibres of $\Ubar_{\vK;\out} $ are given by $ \Ubar_{\vK \setminus{I};\out} $ over the first type of stratum, and
\begin{align} \label{eq:out_adams_skip_U}
& \Adams_{\vK[\geq]_I;\out}  \times \left(  \mathrm{max}^* \Ubar_{\max \vK[\leq]_I} \coprod \mathrm{min}^* \Ubar_{\min \vK[\leq]_I} \right) \coprod \Ubar_{\vK[\geq]_I;\out}  \times \Adams_{\vK[\leq]_I}
\end{align}
over the second kind.

\section{Lagrangian Floer theory} \label{sec:floer-theory}
We now return to the setting of Sections \ref{sec:areas-strips-flux} and \ref{sec:floer-equation}. In particular, we recall that  $\scrJ$ denotes the space of tame almost complex structures, and that all Lagrangians $L$ that we consider are equipped with an almost complex structure $J_L$ for which they bound no holomorphic disc, and are graded with respect to the quadratic complex volume form on $X$ induced by a density on $\Q$. In particular, whenever $L$ and $L'$ are both graded Lagrangians, and $x \in L \cap L'$ is a transverse intersection point, there is a well-defined Maslov index $
\deg(x) \in \bZ  $ as explained in \cite [Section (12b)]{seidel-Book}.

If $L$ and $L'$ are graded Lagrangians which are transverse, and which both satisfy Condition \eqref{eq:everything_is_good_with_curves}, pick a family $J_{t}$ of almost complex structures such that $J_{0} = J_L $ and $J_{1} = J_{L'} $, and which is constant in a neighbourhood of the point $t= 1/2$. To each pair $x, y \in L \cap L'$, there corresponds a moduli space $\Mbar(x,y) $ of holomorphic strips with boundary conditions given by $L$ along $\bR \times \{ 0 \}$ and $L'$ along $\bR \times \{ 1 \}$.  Equation \eqref{eq:dimension_moduli_strips} holds in this case as does the analogue of the decomposition of the boundary of the moduli space given by  Equation \eqref{eq:boundary_Floer_moduli_space}.

\subsection{Continuation maps} \label{sec:continuation-maps}

Let $K$ be a totally ordered set. 
\begin{defin} \label{def:compatibly_fam_continuation}
  A \emph{consistent family of continuation data} parametrised by $  \Ubar_{K} $ is a map
  \begin{equation}
    \Phi_K = (\phi_K, J_K, \psi_K) \co    \Ubar_{K} \to \scrH \times \scrJ \times \scrD
  \end{equation}
such that (i) $\Phi_K$ is constant along each end of a fibre of $\Ubar_K$,  (ii) the maps $\phi_K$ and $\psi_K$  are obtained by gluing, and $J_K$  by perturbed gluing, (iii) $\psi_k(z)$ preserves the image of $L$ under $\phi_k(z) $ and (iv) for each $ z  \in \Ubar^{\{1\}}_{K} $
\begin{equation}
J_K(z) = \left( \psi_K(z) \circ \phi_K(z) \right)_* J_L.
\end{equation}
\end{defin}
\begin{rem}
Only the restrictions of $\phi_K$ and $\psi_K$ to $\Ubar^{\{1\}}_{K}  $ shall be used.
\end{rem}
The assumption that $\phi_K$ is obtained by gluing implies that we have well-defined Hamiltonian diffeomorphisms $\{ \phi_i \}_{i \in K}$. To simplify the notation, we let $L_i = \phi_i(L)$.

Given a consistent family one can define, for each $\vr \in \Adams_{K} $, a holomorphic curve equation with moving Lagrangian boundary conditions:
\begin{align} \label{eq:map_fibre_universal_continuation_space}
u \co  \Ubar_{\vr}  \to X & \qquad \partial_{s} u(z) = J_K(z) \partial_{t}u(z) \\  \label{eq:map_fibre_universal_continuation_space-4}
u(z)  \in F_{q}\textrm{ if } z \in \Ubar_{\vr}^{\{0\}}  & \qquad u(z)  \in  \phi_K(z) L \textrm{ if } z \in  \Ubar_{\vr}^{\{1\}}.
\end{align}
\begin{figure}
\centering
\begin{tikzpicture}
\draw[line width=4*\lw] (0, 1*\mw pt)--(- 8*\mw pt, 1*\mw pt);
\draw[line width=4*\lw] (0, -1*\mw pt)--(- 8*\mw pt, -1*\mw pt);
\fill (-4*\mw pt, 0) circle (8*\lw);
\node[left] at (-8*\mw pt, 0)  {$x$};
\node[right] at (0, 0)  {$y$};
\node[below]  at (-4*\mw pt,-1*\mw pt )  {$F_q$};
\node[above]  at (-4*\mw pt,1*\mw pt )  {$  \phi_K(z) L$};
\node[above]  at (-8*\mw pt,1*\mw pt )  {$  L_{\max K}$};
\node[above]  at (0,1*\mw pt )  {$  L_{\min K}$};
\end{tikzpicture}
\caption{The moduli space $\cM_{q,K}(x,y)$. The presence of one interior marked point is shorthand that allows us to distinguish moduli spaces of continuation maps from solutions to Floer's equation. A more precise figure would show one fewer  marked point than $|K|$.}
\label{fig:moduli_cont}
\end{figure}

Given a pair of points $y \in L_{\min K} \cap F_{q}$ and $x \in L_{\max K} \cap F_{q}$ , denote by 
\begin{equation}
\cM_{q,K}(x,y) \to \Adams_{K}
\end{equation}
the moduli space of solutions to Equations \eqref{eq:map_fibre_universal_continuation_space}-\eqref{eq:map_fibre_universal_continuation_space-4}, which in addition satisfy the asymptotic conditions:
\begin{equation}
\lim_{s \to -\infty} u \circ \epsilon_-(s,t) = x  \qquad \lim_{s \to +\infty} u \circ \epsilon_+(s,t) = y  .
\end{equation}

For a generic family $J_{K}$, the Gromov-Floer compactification $\Mbar_{q,K}(x,y)$ is a manifold  with boundary such that
\begin{align}
\dim_{\bR} \Mbar_{q,K}(x,y) & = \deg(x) - \deg(y)+ \dim(\Adams_{K}) \\
& = \deg(x) - \deg(y)+ |K| -2.
\end{align}
The generalisation of Equation \eqref{eq:boundary_Floer_moduli_space} is that the codimension $1$ strata of the boundary come in two families:
\begin{align}  \label{eq:boundary_continuation_strip_out}
& \coprod_{i \in K  } \coprod_{z \in L_{i} \cap F_{q}}  \Mbar_{q,K^\geq_{i}}(x,z) \times \Mbar_{q,K^\leq_{i}}(z,y) \\   \label{eq:boundary_continuation_skip}
& \coprod_{i \in  K \setminus \{\min K, \max K \}}  \Mbar_{q,K \setminus \{ i \}}(x,y) .
\end{align}
Breaking of Floer strips is incorporated in the first case, corresponding to $i = \max K$ or $ i = \min K$. So the first strata project to the interior of $\Adams_{K}$ or to the boundary stratum  where the coordinate corresponding to $i$ equals $\infty$, and the second to the case where this coordinate vanishes.

\begin{rem}
Since the choice of data in Definition \ref{def:compatibly_fam_continuation} takes place for a fixed $K$, there is a slight abuse of notation in Equations \eqref{eq:boundary_continuation_strip_out} and \eqref{eq:boundary_continuation_skip}, given that the moduli spaces which appear are defined with respect to the Floer data restricted from $\Ubar_{K} $. This issue will be addressed in Section \ref{sec:floer-theory-conv}, where such data will be chosen inductively so that there is no ambiguity in the description of the boundary strata. 
\end{rem}

\subsection{Continuation with a distinguished marker}
\label{sec:continuation-prism}

Let $L^+$ and $L^-$ be graded Lagrangians which are transverse, and which both satisfy Condition \eqref{eq:everything_is_good_with_curves} for almost complex structures $J_{L^+}$ and $J_{L^-}$.  Let $K_-,K_+$ be subsets of $K \times \{ - , + \}$ as in Section \ref{sec:adams-paths-prism}. 

\begin{defin} \label{def:compatibly_fam_continuation_prism}
  A \emph{consistent family of continuation data with a distinguished marker} parametrised by $  \Ubar_{K_-, K_+} $ is a map
  \begin{equation}
    \Phi_{K_-,K_+} = (\phi_{K_-,K_+}, J_{K_-,K_+}, \psi_{K_-,K_+}) \co    \Ubar_{K_-,K_+} \to \scrH \times \scrJ \times \scrD
  \end{equation}
such that (i) the restriction to every end is constant, (ii) $J_{K_-,K_+}$ is obtained by perturbed gluing and the other maps by gluing in a neighbourhood of each  boundary stratum, (iii) $ \psi_{K_-,K_+}(z) $ preserves the image of $L^{+}$ (respectively $L^-$) under $\phi_{K_-,K_+}(z)  $ whenever $z \in \Ubar_{K_-,K_+}^{ z_{1} <}$ as defined in Equation \eqref{eq:subset_boundary_after_marked_point} (respectively $z \in \Ubar_{K_-,K_+}^{ z_{1} >} $),    and (iv) 
\begin{equation}
J_{K_-,K_+}(z) = \begin{cases} \left( \psi_{K_-,K_+}(z) \circ \phi_{K_-,K_+}(z) \right)_* J_{L^+} & \textrm{ if }  z \in     \Ubar_{K_-,K_+}^{ z_{1} <} \\
\left( \psi_{K_-,K_+}(z) \circ \phi_{K_-,K_+}(z) \right)_* J_{L^-} & \textrm{ if }  z \in     \Ubar_{K_-,K_+}^{ z_{1} >}.
\end{cases}
\end{equation}
\end{defin}

We obtain, for each $\vr \in \Adams_{K_-,K_+} $, a holomorphic curve equation $ \partial_{s} u(z)  = J(z) \partial_{t}u(z)$ with moving Lagrangian boundary conditions
\begin{align} \label{eq:map_to_X_inp-0}
& u \co  \Ubar_{\vr}  \to X  && \qquad u(z)  \in F_{q}\textrm{ if } z \in \Ubar_{\vr}^{\{0\}} \\ \label{eq:map_to_X_inp-5}
&u(z)  \in  \phi(z) L^- \textrm{ if } z \in  \Ubar_{\vr}^{z_{1}>} && \qquad 
u(z)  \in  \phi(z) L^+ \textrm{ if } z \in  \Ubar_{\vr}^{z_{1}<}.
\end{align}

\subsection{Continuation with one input}
\label{sec:homotopies-with-one}

Let $K_-$ and $K_+$ be subsets of $K$ such that $
  \max K_+ \leq \min K_-$, and let $L$ and $L'$ be graded Lagrangians satisfying Condition \eqref{eq:everything_is_good_with_curves} for almost complex structures $J_L$ and $J_{L'}$. In the setting of the previous section, set $L^+ = L$ and $L^- = L'$.
\begin{figure}
\centering
\begin{tikzpicture}
\draw[line width=4*\lw] (0, 1*\mw pt)--(- 12*\mw pt, 1*\mw pt);
\draw[line width=4*\lw] (0, -1*\mw pt)--(- 12*\mw pt, -1*\mw pt);
\draw[line width=4*\lw] (-6*\mw pt, 1*\mw pt+1/8*\mw pt)--(-6*\mw pt, 1*\mw pt-1/8*\mw pt);
\node[left] at (-12*\mw pt, 0)  {$x'$};
\node[right] at (0, 0)  {$x$};
\node[below]  at (-6*\mw pt,-1*\mw pt )  {$F_q$};
\node[above]  at (-6*\mw pt,1*\mw pt )  {$  x_\inp$};
\node[above]  at (-12*\mw pt,1*\mw pt )  {$  L'_{\max K_-}$};
\node[above]  at (0,1*\mw pt )  {$  L_{\min K_+}$};
\end{tikzpicture}
\caption{The moduli space $\cM_{q,K_-,K_+}(x'; x_\inp, x)$}
\label{fig:moduli_cont}
\end{figure}
 Given points $x' \in L'_{\max K_-} \cap F_{q}$,  $x \in L_{ \min K_+} \cap F_{q}$ and $x_{\inp} \in L \cap L'$ , define
\begin{equation} 
\cM_{q, K_-, K_+}(x'; x_{\inp} , x) \to \Adams_{K_-,K_+}
\end{equation} 
to be the moduli space of solutions to Equations \eqref{eq:map_to_X_inp-0}-\eqref{eq:map_to_X_inp-5}, which in addition satisfy the conditions:
\begin{equation}
 \lim_{s \to -\infty} u \circ \epsilon_-(s,t) = x'  \qquad  u(z_{\inp}) = x_{\inp} \qquad \lim_{s \to +\infty} u \circ \epsilon_+(s,t) = x.
\end{equation}

For a generic family of parametrised Floer data, the Gromov-Floer compactification $\Mbar_{q,  K_-, K_+} ( x'; x_{\inp}, x)$ is a manifold  with boundary such that
\begin{align}
\dim_{\bR} \Mbar_{q, K_-, K_+}(x'; x_{\inp}, x) & =  \deg(x')  - \deg(x_{\inp}) - \deg(x)+ |K_- | + |K_+ | -2.
\end{align}

If $\vK$ is a nested sequence of subsets of $K$, with minimal element $K_0$, let 
\begin{equation} 
\Mbar_{q,\vK;\inp}(x'; x_{\inp} , x) \to \Adams_{\vK;\inp}
\end{equation} 
be the pullback of $ \Mbar_{q, K^\inp_{K_0}}(x'; x_{\inp}, x) $ by the map from $\Adams_{\vK;\inp} $ to $\Adams_{K^\inp_{K_0}}$ (see  Equation \eqref{eq:K_I_inp}). For generic Floer data, this is a manifold  with boundary such that
\begin{align} \label{eq:dimension_moduli_space_inp}
\dim_{\bR} \Mbar_{q, \vK;\inp}(x'; x_{\inp}, x) & = \deg(x')  - \deg(x_{\inp}) - \deg(x)+ |\vK| -1.
\end{align}

\begin{figure}
\centering
\begin{tikzpicture}
\begin{scope}                          
\ConoutBig{-6*\mw pt}{0}{\max K};
\CtoEBigL{0}{0}{\max J}{\min J}{L'}{L};
\ConinBig{6*\mw pt}{0}{\min K};
\end{scope}
\end{tikzpicture}

\caption{A schematic picture of the holomorphic curve problem for curves over $ \Adams_{\vK[\leq]_J;\inp}  \times \Adams_{\vK[\geq]_J}$.}
\label{fig:boundary_moduli_input}
\end{figure}
The boundary decomposition of $\Adams_{\vK;\inp}$ leads to the following decomposition of the boundary of $\Mbar_{q, \vK;\inp}(x'; x_{\inp}, x)$:
\begin{align}
& \coprod_{y' \in  L'_{\max K} \cap F_{q}} \Mbar_{q}(x',y') \times  \Mbar_{q, \vK;\inp}(y'; x_{\inp}, x) \\
& \coprod_{y \in  L_{\min K} \cap F_{q}}   \Mbar_{q,\vK;\inp}( x'; x_{\inp}, y) \times \Mbar_{q}(y,x) \\
& \coprod_{y_{\inp} \in  L \cap L'} \Mbar_{q, \vK;\inp}(x'; y_{\inp}, x) \times \Mbar(y_{\inp},x_{\inp}) \\ \label{eq:stratum_boundary_input_skip}
& \coprod_{I \in \vK \setminus  K }  \Mbar_{q, \vK \setminus I;\inp}(x'; x_{\inp}, x)\\ \label{eq:stratum_boundary_input_pullback_max_min_non_compatible}
& \coprod_{I \in \vK \setminus  K } \coprod_{y \in L_{\min I} \cap F_{q}} \coprod_{y' \in L'_{\max I} \cap F_{q}} \mathrm{max}^* \Mbar_{q,\max \vK[\geq]_I}(x',y') \times_{ \Adams_{\vK[\geq]_I}} \mathrm{min}^* \Mbar_{q,\min \vK[\geq]_I}(y,x) \\ \notag
& \qquad \times \Mbar_{q,\vK[\leq]_I;\inp}( y'; x_{\inp}, y). 
\end{align}

\begin{lem} \label{lem:good_boundary_in} 
Assume that $\vK$ consists of $|K|$ elements, and that $\dim_{\bR} \Mbar_{q, \vK;\inp}(x'; x_{\inp}, x) = 1$. If all Floer data are regular, the only contributions to the stratum in Equation \eqref{eq:stratum_boundary_input_pullback_max_min_non_compatible} are given by $I$ such that (i) $\max I = \max K$ and $\min$ is injective on $\vK[\geq]_I $, in which case the boundary contribution is: 
\begin{equation} \label{eq:boundary_moduli_0_input_max}
 \coprod_{y \in L_{\min  I} \cap F_{q}}   \Mbar_{q,\vK[\leq]_I;\inp}(x'; x_{\inp}, y) \times \Mbar_{q,\min \vK[\geq]_I}(y,x).
\end{equation}
or (ii)  $\min I = \min K$ and $\max$ is  injective on $\vK[\geq]_I $:
\begin{equation} \label{eq:boundary_moduli_0_input_min}
 \coprod_{y' \in L'_{\max  I} \cap F_{q}} \Mbar_{q,\max \vK[\geq]_I}(x',y') \times \Mbar_{q,\vK[\leq]_I;\inp}( y'; x_{\inp}, x), 
\end{equation}
or (iii)  $I = K \setminus \{ i\}$, for $i \neq \min K, \max K$, in which case the corresponding stratum is
\begin{equation}
  \label{eq:boundary_moduli_0_input_skip}
   \Mbar_{q,\vK[\leq]_I ;\inp}( x'; x_{\inp}, x).
\end{equation}
\end{lem}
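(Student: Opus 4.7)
The plan is a dimension count on the stratum \eqref{eq:stratum_boundary_input_pullback_max_min_non_compatible} for each $I \in \vK \setminus K$, identifying those that produce a non-empty codimension-$1$ contribution. Set $p = |\vK[\geq]_I|$, $\alpha = |\max \vK[\geq]_I|$, $\beta = |\min \vK[\geq]_I|$, so $\alpha, \beta \leq p$ with equality when $\max$, resp.\ $\min$, is injective on $\vK[\geq]_I$. The maps $\max$ and $\min$ from $\Adams_{\vK[\geq]_I}$ are cube coordinate projections, hence submersions, and in the regime $\alpha, \beta \geq 2$ a direct calculation using the dimension formulas of Section~\ref{sec:continuation-maps} shows that the transverse fiber product of the two pullbacks over $\Adams_{\vK[\geq]_I}$ (of dimension $p-2$), combined with $\dim \Mbar_{q,\vK[\leq]_I;\inp}(y'; x_\inp, y)$, produces a total stratum dimension of exactly $0$ under the hypothesis $\dim \Mbar_{q,\vK;\inp}(x';x_\inp,x) = 1$ (equivalently $\deg(x') - \deg(x_\inp) - \deg(x) = 2 - |\vK|$).

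Non-emptiness is the substantive question. The three formal moduli dimensions sum to $\alpha + \beta - p - 2$, so generic non-emptiness in the regime $\alpha, \beta \geq 2$ requires $\alpha + \beta \geq p + 2$. The hypothesis $|\vK| = |K|$ forces $|K_0| = 1$ and each strict inclusion in $\vK$ to add exactly one element; the same holds for the subchain $\vK[\geq]_I$, and at each of its $p-1$ steps at most one of $\min, \max$ can change. This gives $(\alpha - 1) + (\beta - 1) \leq p - 1$, i.e.\ $\alpha + \beta \leq p + 1$, which excludes this regime.

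The remaining contributions come from the degenerate regimes in which one or both of $\Mbar_{q,\max \vK[\geq]_I}$, $\Mbar_{q,\min \vK[\geq]_I}$ reduces to the ordinary Floer strip. When $\alpha = 1$ (equivalently $\max I = \max K$), the space $\Mbar_{q,\{\max K\}}(x',y')$ is the bare strip moduli, for which the constant solution $y' = x'$ is a zero-dimensional contribution even though its formal expected dimension is $-1$; symmetrically for $\beta = 1$. Re-running the dimension count in each degenerate regime yields the three cases of the lemma: $\alpha = 1, \beta = p$ forces $y' = x'$ and makes the $\min$-pullback an isomorphism, reducing the stratum to \eqref{eq:boundary_moduli_0_input_max} (case (i)); $\beta = 1, \alpha = p$ is symmetric, giving \eqref{eq:boundary_moduli_0_input_min} (case (ii)); and $\alpha = \beta = 1$ forces $\dim \Mbar_{q,\vK[\leq]_I;\inp}(x';x_\inp,x) = 2 - p$, hence $p = 2$, which together with $|K \setminus I| = p - 1 = 1$ and $\min I = \min K, \max I = \max K$ gives $I = K \setminus \{i\}$ for some $i \notin \{\min K, \max K\}$ and the stratum $\Mbar_{q,\vK[\leq]_I;\inp}(x'; x_\inp, x)$ (case (iii)).

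The main obstacle is the careful bookkeeping of the identity contributions: the constant Floer strip at $y = x$ is an isolated point of the compactified moduli although its formal expected dimension is $-1$, effectively raising the dimension budget by one in each degenerate factor. One must verify that this exceptional correction occurs in exactly the combinations needed to balance the dimension count in cases (i)--(iii), and in no other choice of $I$.
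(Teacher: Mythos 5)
Your proof is correct and uses essentially the same dimension-counting strategy as the paper's, so I'll only note the small difference in emphasis. The paper phrases the exclusion of the regime $\alpha, \beta \ge 2$ by observing that the map $\max \times \min$ on $\Adams_{\vK[\geq]_I}$ is a submersion with positive-dimensional fibres (so the fibre product has excess dimension unless one of $\max$, $\min$ is constant and the other injective), while you encode the same fact in the combinatorial inequality $\alpha+\beta \le p+1$ coming from the ``one element added per step'' structure of $\vK$, and then compare this to the requirement $\alpha+\beta \ge p+2$ forced by nonnegativity of the three indices. Both proofs then handle the degenerate singleton factors the same way: the moduli space $\Mbar_{q,\{j\}}$ over a point consists of constant solutions, which contributes a rigid point despite the formal index formula giving $-1$, and this bumps the budget exactly enough to leave cases (i), (ii) with $\beta=p$ (resp.\ $\alpha=p$), and case (iii) with $p=2$. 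Your write-up is a slightly more explicit version of the paper's argument rather than a genuinely different route.
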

\begin{proof}
Under the assumption on $\vK$, successive elements of this sequence differ by exactly one element of $K$, hence the same property holds for $\vK[\geq]_I$. The result now follows from Lemma \ref{lem:min_max_maps_deg}, which asserts that, whenever the conditions above do not hold,  the stratum in Equation \eqref{eq:stratum_boundary_input_pullback_max_min_non_compatible} is obtained by pulling back a moduli space of curves parametrised by a manifold of dimension lower than the dimension of the stratum. Assuming regularity, this moduli space has negative virtual dimension, hence is empty.
\end{proof}

\subsection{Continuation with one output}
\label{sec:homotopies-with-one-out}
Let $K = K_- \cup K_+$ be decomposition of $K$ such that  $  \min K_+ = \max K_-$. Set $L^- = L$ and $L^+ = L'$.

Given  points $x \in L_{\max K_-} \cap F_{q}$,  $x'\in L'_{ \min K_+} \cap F_{q}$, and $x_{\out} \in L \cap L'$ , denote by
\begin{equation} 
\Mbar_{q,K_-,K_+}(x, x_{\out} ; x') \to \Adams_{K_-,K_+}
\end{equation} 
the compactified moduli space of solutions to the Cauchy Riemann equation determined by such data, with asymptotic conditions
\begin{equation}
 \lim_{s \to -\infty} u \circ \epsilon_-(s,t) = x  \qquad  u(z_{\out}) = x_{\out} \qquad \lim_{s \to +\infty} u \circ \epsilon_+(s,t) = x'.
\end{equation}

If $\vK$ is a sequence of nested subsets with minimal element $K_0$ and maximal element $K$, let $\Mbar_{q,\vK;\out}( x, x_{\out}; x')$ be the pullback of $\Mbar_{q,K^\out_{K_0}}(x, x_{\out} ; x')  $ under the projection map from $\Adams_{\vK;\out}$ to  $\Adams_{K^\out_{K_0}} $. Assuming that all Floer data are chosen generically, this  is a manifold  with boundary such that
\begin{align}
\dim_{\bR} \Mbar_{q, \vK;\out}(x, x_{\out}; x') & = \deg(x)  + \deg(x_{\out}) - \deg(x')+ |\vK| - n-1.
\end{align}
\begin{figure}
\centering
\begin{tikzpicture}
\begin{scope}                          
\ConoutBig{-6*\mw pt}{0}{\min K_0};
\EtoCBigL{0}{0}{\min I}{\max I}{L}{L'};
\ConinBig{6*\mw pt}{0}{\max K_0};
\end{scope}
\end{tikzpicture}

\caption{A schematic picture of the holomorphic curve problem for curves over $\Adams_{\vK[\geq]_I;\out}  \times \Adams_{\vK[\leq]_I} $.}
\label{fig:boundary_moduli_output}
\end{figure}
The boundary decomposition of $\Adams_{\vK;\out}$ leads to the following decomposition of the boundary of $\Mbar_{q, \vK;\out}(x, x_{\out}; x')$ 
\begin{align}
& \coprod_{y' \in  L'_{\max K_0} \cap F_{q}}  \Mbar_{q,\vK;\out}( x, x_{\out}; y') \times \Mbar_{q}(y',x') \\
& \coprod_{y \in  L_{\min K_0} \cap F_{q}}  \Mbar_{q}(x,y) \times  \Mbar_{q, \vK;\out}(y, x_{\out}; x') \\
& \coprod_{y_{\out} \in  L \cap L'} \Mbar(x_{\out},y_{\out}) \times \Mbar_{q, \vK;\out}(x, y_{\out}; x') \\
& \coprod_{I \in \vK \setminus  K_0 }  \Mbar_{q, \vK \setminus I;\out}(x, x_{\out}; x')\\ \label{eq:stratum_boundary_output_pullback_max_min}
& \coprod_{I \in \vK \setminus  K_0 } \coprod_{y \in L_{\min I} \cap F_{q}} \coprod_{y' \in L'_{\max I} \cap F_{q}} \mathrm{min}^* \Mbar_{q,\min \vK[\leq]_I}(y',x') \times_{\Adams_{\vK[\leq]_I}} \mathrm{max}^* \Mbar_{q,\max \vK[\leq]_I}(x,y)  \\ \notag
& \qquad \times \Mbar_{q,\vK[\geq]_I;\out}( y, x_{\out}; y'). 
\end{align}
We also have the analogue of Lemma \ref{lem:good_boundary_in}:
\begin{lem}
Assume that $\vK$ consists of $|K|$ elements, and that $\dim_{\bR} \Mbar_{q, \vK;\out}(x, x_{\out}; x') = 0$. The only contribution to the stratum in Equation \eqref{eq:stratum_boundary_output_pullback_max_min} is given by $I$ such that (i) $I = K_0 \cup \{ i\}$, for $i \neq \min K, \max K$, in which case the corresponding stratum is
\begin{equation}
  \label{eq:boundary_moduli_0_output_skip}
 \Mbar_{q,\vK[\geq]_I;\out}(x, x_{\out}; x'),
\end{equation} 
or (ii) $\max I = \max K_0$, and $\min$ is injective on $\vK[\leq]_I$ :
\begin{equation} \label{eq:boundary_moduli_0_output_min}
 \coprod_{y \in L_{\min I} \cap F_{q}} \Mbar_{q,\min \vK[\leq]_I}(x,y) \times \Mbar_{q,\vK[\geq]_I;\out}( y, x_{\out}; x'), 
\end{equation}
or (iii) $\min I = \min K_0$, and $\max$ is injective on $\vK[\leq]_I$ 
\begin{equation} \label{eq:boundary_moduli_0_output_max}
 \coprod_{y' \in L'_{\max I} \cap F_{q}}   \Mbar_{q,\vK[\geq]_I;\out}(x, x_{\out}; y') \times \Mbar_{q,\max \vK[\leq]_I}(y',x').
\end{equation}\qed
\end{lem}

\section{Floer theory and convergence} \label{sec:floer-theory-conv}

\subsection{Uniformly small choices of perturbations} \label{sec:unif-small-choic}

Let $L$ and $L'$ be Lagrangians which are tautologically unobstructed with respect to a pair of tame almost complex structures $J_L$ and $J_{L'}$. Fix an embedded path $\{J_{t}\}_{t=0}^{1}$ from $J_L$ to $J_{L'}$. Pick a sequence of contractible neighbourhoods in $\scrJ$ of this path
\begin{equation}
  \scrJ^{0} \subset \scrJ^{1} \subset \cdots \subset \scrJ^{n+3}
\end{equation}
so that the closure of $\scrJ^{i}$ lies in the interior of $\scrJ^{i+1}$.

Let $\scrH^0$ be a contractible neighbourhood of the identity in the space of Hamiltonian diffeomorphisms such that
\begin{equation}
  \phi_{*}(J_t)  \in \scrJ^0
\end{equation}
for all $\phi \in \scrH^0$. Let $\scrL$ and $\scrL'$ denote the families of Lagrangians obtained by applying elements of $\scrH^0$ to $L$ and $L'$.

Let $\scrD$ be a neighbourhood of the identity in $\Diff(X)$ such that $\scrD(\scrJ^i) \subset \scrJ^{i+1}$, where the action is by pushforward as in Equation \eqref{eq:push_J_forward}. Given a subset $W \in \Q^2$,  define
\begin{equation}
  \scrD_{W}(\scrL) \subset W \times \scrL \times \scrD
\end{equation}
to be the subset consisting of triples $((q,p),\phi L, \psi)$ such that 
\begin{align}
  \psi(F_q) & = F_p &  \psi( \phi L) &= \phi L,
\end{align}
and similarly for $\scrL'$. Dropping the first condition yields a family $\scrD(\scrL)$ over $\scrL$, and dropping the second,  a family $\scrD_W $ over $W$. For a fixed element of $\scrL$,  write 
\begin{align}
\scrD(\phi L)  & = \{ \psi \in \scrD | \psi(\phi L) = \phi L \} \\
\scrD_{W}(\phi L) & = \{ \psi \in \scrD_W | \psi(\phi L) = \phi L \} 
\end{align}
for the fibres of $\scrD(\scrL)$ and $   \scrD_{W}(\scrL) $ over $\phi L \in \scrL$.

For all $q \in \Q$, pick $\phi_q \in \scrH^0$ so that $\phi_{q}L$ is transverse to $F_{q}$. Since $\Q$ is compact, there is  a finite cover $\scrU$ of $\Q$ and maps $ \phi_{U}, \phi'_U \in \scrH^0$ for all $U \in \scrU$, so that $\phi_{U}(L)$ and $\phi'_{U}(L') $ are transverse to $F_{q}$ if $q \in U$. For a pair $(p,q) \in U^2$, this implies the existence of diffeomorphisms mapping $q$ to $p$ and preserving $\phi_U(L)$ or  $\phi'_{U}(L') $. Such diffeomorphisms may not act appropriately on the spaces $\scrJ^k$ of almost complex structure, but,  if the elements of $\scrU$ are sufficiently small, there is a sequence of neighbourhoods 
\begin{equation} \label{eq:nested_diffeo}
  \Id = \scrD^{0} \subset \scrD^{1} \subset \cdots \subset \scrD^{2^{n+1}} \subset \scrD
\end{equation}
which are invariant under inversion so that 
\begin{equation} \label{eq:composition_of_diffeo_nested}
  \scrD^{i} \circ \scrD^{j} \subset \scrD^{i+j}
\end{equation}
and we have acyclic fibrations $  \scrD^{k}(\scrL) \to \scrL $,  $  \scrD^{k}(\scrL') \to \scrL' $, and
\begin{equation}
    \xymatrix{\scrD^k_{U^2}(\phi_{U}L) \ar[dr]& \scrD^k_{U^2} \ar[d] & \scrD^k_{U^2}(\phi_{U}L') \ar[dl] \\
& U^2. & }
\end{equation}

We now apply the construction of Section \ref{sec:covers-q} with the above cover in mind: 
\begin{lem}  \label{cor:cover_including_open_stars}
There is a partially ordered set $\Sigma$, labelling a simplicial triangulation of $\Q$ and a cover $\{ P_i \}_{i \in \Sigma}$ of $\Q$ by integral affine polygons which refine $\scrU$, such that  Equations \eqref{eq:nested_inclusion_polygons} and \eqref{eq:Q-obtained-by-gluing} hold, and such that the open star of each cell $\sigma_I$ for $I \subset \Sigma$ is contained in  $ P_I$.
\end{lem}
 \begin{proof}
 Lemma \ref{lem:cover_P_contain_sigma} provides a cover satisfying all the properties except the inclusion of the open star. To remedy this, let $B \Sigma$ denote the barycentric subdivision of $\Sigma$, which is a partially ordered set whose elements are the totally ordered subsets of $\Sigma$; we set the ordering on $B \Sigma$ to be given by reverse inclusion. For $I \in B \Sigma$ define $BP_{I}$ to be $P_{\min I}$. To reduce the clutter, we write
 \begin{equation}
   BP_{i_0\cdots i_k} \equiv BP_{\{i_0, \ldots, i_k\}}.
 \end{equation}
The polyhedra $BP_{I}$ satisfy Equation \eqref{eq:nested_inclusion_polygons} since $\min J \leq \min I$ whenever $I \subset J$ (this justifies the ordering by reverse inclusion). To check condition \eqref{eq:Q-obtained-by-gluing}, consider the natural map
\begin{equation} \label{eq:barycentric_mod_equivalence}
\bigcup_{I \in B \Sigma} BP_I / \!\!\sim \; \to \bigcup_{i \in \Sigma} P_i /\!\!\sim   \; = \Q
\end{equation}
associated to the assignment $I \to \min I$. Observe that, for each $I \in B \Sigma$, the above  map restricts to an isomorphism 
\begin{equation}
\bigcup_{J \subset I} BP_J /\!\!\sim \; \to  \bigcup_{\min I \leq i} P_i /\!\!\sim  \; = P_{\min I},
\end{equation}
hence the subsets $BP_{i} = P_{i}$ cover the left hand side of Equation \eqref{eq:barycentric_mod_equivalence}. For a pair $i < j$, the inclusions 
\begin{equation}
  BP_{i} \subset BP_{ij} \supset BP_{j}  
\end{equation}
imply that Equation \eqref{eq:barycentric_mod_equivalence} is injective.  We conclude as desired that it is therefore a homeomorphism.

Iterating the barycentric subdivision construction yields open stars whose diameter goes to $0$, while the corresponding cover does not shrink because it consists of repetitions of the original cover. There is therefore a finite iteration with the property that the open star of every vertex is contained in the corresponding polygon.
 \end{proof}

For each $i$, pick $\phi_{i}, \phi'_i \in \scrH^0$ arbitrarily among those Hamiltonian diffeomorphisms $\phi_{U}$ and $\phi'_U$ where $P_i \subset U$.  It is immediate that $\scrD^{i}_{P_i^2}$, $\scrD^{i}_{P_i^2}(\phi_i L)$ and   $\scrD^{i}_{P_i^2}(\phi_i L')$ are acyclic fibrations over $P_i^2$. In addition, choose generic families $J_{i} , J'_i\co [0,1] \to \scrJ^{1} $ so that
\begin{equation} \label{eq:almost_complex_structure_vertex}
J_{i,0} = (\phi_i)_{*} J_L \textrm{ and } J'_{i,0} = (\phi'_i)_{*} J_{L'}
\end{equation}
and all moduli spaces of Floer trajectories $\Mbar_{q_i}(x,y)$ and $\Mbar_{q_i}(x',y')$ defined with respect to these families of almost complex structures are regular if $x,y \in L_i \cap F_{i}$ and $x',y' \in L'_i \cap F_{i}$. We write
\begin{equation} \label{eq:Floer_data_element_Sigma}
  \Phi_{i} = (\phi_i, J_{i}, \Id  ) \co [0,1] \to \scrH^0 \times \scrJ \times \scrD,
\end{equation}
where the first and last maps are constant, and similarly for $\Phi'_i$. Moreover, pick sections
\begin{equation}
  \psi_{i} \co P_{i} \to \scrD^{1}_{ \{q_{i} \} \times P_{i}}(L_i).
\end{equation}

\begin{rem}
In order not to increase the awkwardness of the notation, we shall adopt the following conventions: let $Z$ be a topological space.  Given maps
\begin{equation}
  (\Phi, \Psi) \co Z \to \scrD \times \scrH \times \scrJ \times \scrD,  
\end{equation}
with $\Phi = (\phi,J,\psi)$, use pushforward and composition (pointwise in $Z$)  to define
\begin{equation}
 \Psi_*(\Phi) \equiv ( \phi, \Psi_* J, \Psi \circ \psi) \co Z \to  \scrH \times \scrJ \times \scrD.
\end{equation}
Also, write $\Psi^{-1}$ for the pointwise inverse of $\Psi$.
\end{rem}

\subsection{Locally constant families of continuation maps} \label{sec:locally-const-famil} \label{sec:continuation-maps-at-vertex}
Let $I$ be a totally ordered subset of $\Sigma$,  and recall the notation from Equation \eqref{eq:conventions_short_notation} for the intersection $P_I$ and element $q_I \in P_I$. Pick  a map
\begin{equation} \label{eq:diffeo_pair_cont}
\Psi_{I} \co P_{I} \times \Ubar_{I} \to \scrD^{2^{|I|-1}}
\end{equation}
which is the identity on $\{ q_I \} \times \Ubar_I$, and continuation data
\begin{equation}
  \Phi_I  = (\phi_I, J_I, \psi_I) \co \Ubar_{I} \to \scrH^0 \times \scrJ^{|I|} \times \scrD^{2^{|I|-1}}.
\end{equation}

\begin{rem} \label{rem:explain_construction_continuation}
The  remainder of this section is likely to be less incomprehensible if the reader keeps in mind that the holomorphic curve problem on the moduli space $\Ubar_{I}$ is defined with Lagrangian boundary conditions $F_{{I}}$ over the boundary labelled $0$. The map $\Psi_I$ is introduced to transport this holomorphic curve problem to nearby fibres.

A potentially helpful reference is  Section 3.5 of \cite{A-ICM} which essentially covers the case where the set $I$ consists of two elements. In this case, $\Ubar_{I} $ is a strip, so that Equation \eqref{eq:diffeo_pair_cont} is the choice of a family of diffeomorphisms of $X$ parametrised by the product of $P_I$ with a strip. As a warning to the reader, we note that we required in Equation (3.50) of \cite{A-ICM} that $\Psi_I$ be constant in a certain region of the strip, which ensured that it preserved the image of $L$ under a moving Hamiltonian isotopy. In the present account, this is replaced by Equation  \eqref{eq:condition_diffeo_I_to_K_along_1} which is more flexible.
\end{rem}
In addition to the conditions imposed on $\Phi_I$ in Definition \ref{def:compatibly_fam_continuation}, assume that the restrictions of $(\Phi_I, \Psi_{I})$ agree with
\begingroup
\allowdisplaybreaks
  \begin{align} \label{eq:cont_data_end_+}
& ({\psi_{\min I}(q_I)}_*\Phi_{\min I}, \psi_{\min I} \circ \psi^{-1}_{\min I}(q_I)) && \textrm{ along the end }  \epsilon_+ \\ \label{eq:cont_data_end_-}
& (\Phi_{\max I}, \psi_{\max I})  & &\textrm{ along the end }  \epsilon_- \\ \label{eq:drop_element_same_data}
& (\Phi_{I \setminus i} ,\Psi_{I  \setminus i }) &&\textrm{ on } \Ubar_{I \setminus i}  \\ \label{eq:cont_data_stop_i_minus}
& ( \Phi_{I^\geq_{i}} , \Psi_{I^\geq_{i} }) &&\textrm{ on }  \Ubar_{I^\geq_{i}} \times \Adams_{I^\leq_{i}}  \\ \label{eq:cont_data_stop_i_plus}
& \left({\Psi_{I^\leq_{i} }(q_I)}_{*}  \Phi_{I^\leq_{i}}  , \Psi_{I^\leq_{i}} \circ\Psi_{I^\leq_{i} }^{-1}(q_I)  \right)  &&\textrm{ on } \Adams_{I^\geq_{i}} \times   \Ubar_{I^\leq_{i}}.
  \end{align}
\begin{rem} \label{rem:interpret_funny_composition}
The last factor in Equation \eqref{eq:cont_data_end_+} should be interpreted as follows: $\Psi_I$  is a map which depends on $q \in P_I = P_{\max I}$, whereas $\psi_{\min I}$ depends on $q \in P_{\min I}$. By assumption, $P_{\max I} \subset P_{\min I}$, so it makes sense to require that
\begin{equation*}
\Psi_I(q, z) =  \psi_{\min I}(q) \circ \psi^{-1}_{\min I}(q_I)
\end{equation*}
whenever $z$ lies along the end $\epsilon_+$ of $\Ubar_{I} $. Equation \eqref{eq:cont_data_stop_i_plus} is to be interpreted in the same way.
\end{rem}
Moreover, on the boundary of each fibre, 
  \begin{align} \label{eq:condition_diffeo_I_to_K_along_1}
\Psi_{I }\left( q,z \right) & \in  \scrD^{2^{|I|-1}}(\phi_{I,z}(L)) \textrm{ if } z \in \Ubar^{\{1\}}_{I} \\ \label{eq:condition_diffeo_I_to_K_along_0}
\Psi_{I}\left(q,z  \right) & \in  \scrD^{2^{|I|-1}}_{q_{I}, q} \textrm{ if } z \in \Ubar^{\{0\}}_{I} .
  \end{align}
\endgroup

\begin{lem} \label{lem:construction_by_induction}
There are choices of maps $(\Phi_I, \Psi_{I})$  which are obtained by perturbed gluing in a neighbourhood of every boundary stratum so that Equations \eqref{eq:cont_data_end_+}-\eqref{eq:condition_diffeo_I_to_K_along_0} are satisfied.
\end{lem}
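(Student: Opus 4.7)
The plan is to proceed by induction on $|I|$. For the base case $|I| = 1$, the data is provided directly by the choices made in Section \ref{sec:unif-small-choic}: $\Phi_i$ as in \eqref{eq:almost_complex_structure_vertex}, $\psi_i$ as constructed there, and $\Psi_{\{i\}}(q,z) = \psi_i(q)$. In that case $\Ubar_{\{i\}}$ is empty and conditions \eqref{eq:cont_data_end_+}-\eqref{eq:condition_diffeo_I_to_K_along_0} are either vacuous or follow by definition. Assume inductively that $(\Phi_{I'}, \Psi_{I'})$ has been constructed for every proper subset $I' \subsetneq I$, satisfying all the listed compatibilities and valued in $\scrJ^{2^{|I'|-1}} \times \scrD^{2^{|I'|-1}}$.

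For the inductive step, conditions \eqref{eq:cont_data_end_+}-\eqref{eq:cont_data_stop_i_plus}, together with the gluing maps \eqref{eq:gluing_maps_universal_curve_paths}, prescribe $(\Phi_I, \Psi_I)$ on a neighborhood of every codimension $1$ boundary stratum of $\Ubar_I$. The first task is to verify that these prescriptions agree on codimension $2$ overlaps: the intersection of two ``skip'' strata $\Ubar_{I \setminus i}$ and $\Ubar_{I \setminus j}$ reduces to $\Ubar_{I \setminus \{i,j\}}$ by applying \eqref{eq:drop_element_same_data} twice; the intersection of a ``skip'' stratum with a ``stop'' stratum (for $i \neq j$) is handled by the commutativity of the boundary projection diagram \eqref{eq:project_boundary_Adams_commute} combined with \eqref{eq:cont_data_stop_i_minus}-\eqref{eq:cont_data_stop_i_plus}; and the intersection of two ``stop'' strata at $i < i'$ factors through a triple product whose consistency is exactly the content of the gluing identification diagram \eqref{eq:family_path_gluing_identifications}, applied to the inductively constructed data on the three (shorter) factors. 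The strip-like end conditions \eqref{eq:cont_data_end_+}-\eqref{eq:cont_data_end_-} are compatible with this by construction because $\epsilon_\pm$ are themselves compatible with gluing.

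Given the consistent boundary datum, extend to the interior of $\Ubar_I$ as follows. Use the collar produced by the gluing maps to define $\Phi_I, \Psi_I$ canonically on a neighborhood of $\partial \Ubar_I$; this already yields a map obtained by gluing in the sense of Definition \ref{def:obtained_by_gluing}, landing in $\scrJ^{2^{|I|-1}} \times \scrD^{2^{|I|-1}}$ by \eqref{eq:composition_of_diffeo_nested}. For $\phi_I$ and $\psi_I$, extend continuously from this collar to the rest of $\Ubar_I$ using contractibility of $\scrH^0$ and the acyclicity of the fibration $\scrD^{2^{|I|-1}}(\phi_I L) \to \scrL$; for $\Psi_I$, extend subject to the fibrewise constraints \eqref{eq:condition_diffeo_I_to_K_along_1}-\eqref{eq:condition_diffeo_I_to_K_along_0} using the acyclicity of $\scrD^{2^{|I|-1}}_{P_I^2} \to P_I^2$ and $\scrD^{2^{|I|-1}}(\phi_I L) \to \scrL$ pointwise over the relevant boundary components of each fibre. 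Finally, extend $J_I$ into the interior by gluing and then modify by a consistent perturbation, as in Definition \ref{def:obtained_by_perturbed_gluing}, chosen generically so that every moduli space $\Mbar_{q,J}(x,y)$ entering the inductive boundary description becomes regular; this is possible since the perturbation is supported in the interior of the $R$-thick region and vanishes to infinite order along strata with infinite gluing parameters, so compatibility with the prescribed boundary data is preserved.

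The principal obstacle is the bookkeeping of ``size'': the boundary condition \eqref{eq:cont_data_stop_i_plus} requires the pushforward $\Psi_{I^\leq_i}(q_I)_* \Phi_{I^\leq_i}$, which, by the inductive hypothesis, is valued in $\scrD^{2^{|I|-2}}_* \scrJ^{2^{|I|-2}} \subset \scrJ^{2^{|I|-1}}$, and similarly for the composition $\Psi_{I^\leq_i} \circ \Psi_{I^\leq_i}^{-1}(q_I)$, which lies in $\scrD^{2^{|I|-1}}$ by \eqref{eq:composition_of_diffeo_nested}. Thus the exponential nesting in \eqref{eq:nested_diffeo} is precisely what is needed for the induction to close, provided one verifies at each step that each pushforward and composition consumes only one layer of the $2^{|I|-1}$ buffer. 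With this accounting in place, the extension above lands in the correct neighborhoods, completing the inductive step.
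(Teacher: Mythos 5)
Your proof is correct and follows essentially the same inductive strategy as the paper's: induct on $|I|$, prescribe the data on the boundary of $\Ubar_I$ via Equations \eqref{eq:cont_data_end_+}--\eqref{eq:cont_data_stop_i_plus}, verify consistency on codimension-two overlaps using the inductive hypothesis and the compatibility of the gluing identifications, and then extend to the interior using the contractibility of $\scrH^0$, $\scrJ^{2^{|I|-1}}$ and the acyclicity of the fibrations $\scrD^{2^{|I|-1}}(\scrL) \to \scrL$ and $\scrD^{2^{|I|-1}}_{P_I^2} \to P_I^2$, while tracking the exponential nesting of the $\scrD^i$ and $\scrJ^i$. The only cosmetic difference is that you place the base case at $|I|=1$, where $\Ubar_{\{i\}}$ is empty so nothing need be constructed, whereas the paper treats $|I|=2$ as the explicit base case (there $\Adams_I$ is a point and $\Ubar_I$ is a single strip, so the ``extension from the boundary strata'' framework degenerates to choosing data on the strip interpolating between the prescribed end conditions, which the paper spells out).
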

\begin{proof}
Proceed by induction on the number of elements of $I$. In the base case, $|I|=2$ so $\Adams_{I}$ is a point, $\Ubar_{I} = \Strip$, and $\Ubar^{\{1\}}_{I} \cong \bR$ . Pick a map $\phi_{I}$ which agrees with $\phi_{\min I}$ and $\phi_{\max I}$ at the two ends. Then pick $\psi_{I}$ agreeing with $\Id$ near the negative end and with $ \psi_{\min I}(q_I)$ near the positive end, and so that $
  \psi_{I} \left( \phi_{I}(z) L \right) = \phi_{I}(z) L$ for all $z \in \Ubar^{\{1\}}_{I}$. Note that both values at the endpoints lie in $\scrD^{1}$, so such a path may be chosen in $\scrD^{1}$ by the assumption that the forgetful map from $\scrD^{k}(\scrL)$ to $\scrL$ is an acylic fibration. Also choose a family of almost complex structures $J_{I}$ whose restriction to the negative end is $ J_{\max I}$ and to the positive end is $ \left( \psi_{\min I} (q_I)  \right)_{*} J_{\min I} $, and which agrees with $(\psi_{I} \circ  \phi_{I})_{*} J$ along $\Ubar^{\{1\}}_{I}$. Since all these almost complex structures lie in $\scrJ^{2}$, the image of $ J_{I}$ may be required to also lie in $\scrJ^{2}$ since this space was assumed to be contractible. This completes the construction of $\Phi_I$ for $2$-element subsets $I$.

Pick  a map $\Psi_{I }$ on $P_I \times \Ubar_{I}$ subject to the condition that along the ends, Equations \eqref{eq:cont_data_end_+} and \eqref{eq:cont_data_end_-} hold.  These constraints imply that the image of $\Psi_{I}$ along the ends lies in $\scrD^{2}$; extend it to a map from $P_I \times \Ubar_{I}$ to $\scrD^{2} $, with the requirement that Equations \eqref{eq:condition_diffeo_I_to_K_along_1} and \eqref{eq:condition_diffeo_I_to_K_along_0} hold along the two boundaries of the strip.  This completes the base case.

Given an ordered subset $K$ of $\Sigma$, assume by induction that  continuation data and families of diffeomorphisms for all subsets $I$ of $K$ which satisfy Equations \eqref{eq:cont_data_end_+}-\eqref{eq:condition_diffeo_I_to_K_along_0} have been chosen. Define $(\Phi_{K}, \Psi_K)$ along the boundary strata of $\Ubar_{K}$ using Equation \eqref{eq:drop_element_same_data}-\eqref{eq:cont_data_stop_i_plus}. The inductive hypothesis for $(\Phi_I, \Psi_I)$ implies that this construction, which is a priori defined only on each separate condimension $1$ cell of $\Ubar_K$, restricts to the same map on each codimension $2$ cell, hence defines a map from the boundary of $\Ubar_{K}$.  Moreover, the conditions imposed on the ends in Equations \eqref{eq:cont_data_end_+}-\eqref{eq:cont_data_end_-} show that the restriction of these data to an end of a component of a fibre in $\Ubar_{K}$  which is labelled by $i \in K$  is given by $ \left( \psi_{i} (q_K) _{*}   \Phi_{i}, \psi_{i} \circ \psi_i^{-1}(q_K) \right)$. 
 
One can therefore use gluing to extend these data from the boundary of $\Ubar_{K}$ to its interior as follows: first pick the extension $\phi_K$ by gluing with image in $\scrH^0$. Then pick the extension $\psi_{K}$ by gluing which preserves the Lagrangian boundary conditions, and whose restrictions to the positive and negative ends are respectively given by $\psi_{\min K}(q_K) $ and the identity.  By induction, the images of $\psi_{I}$ and   $\Psi_{I } $ both lie in $\scrD^{2^{|I|-1}} \subset  \scrD^{2^{|K|-2}}$, so such an extension may be chosen to have image in $\scrD^{2^{|K|-1}}$.  The essential condition here is Equation \eqref{eq:cont_data_stop_i_plus} in which the two maps, which by induction lie in $\scrD^{2^{|K|-2}}  $,  are composed.  

The next step is to extend $ J_K$ by perturbed gluing subject to the condition that it agree with the pushforward of $J$ under  $\psi_{K} \circ \phi_K$ on  $\Ubar^{\{ 1 \}}_{K} $, with the pushforward of  $J_{\min K}$ under $\psi_{\min K}(q_K)$ along the positive end, and with  $J_{\max(K)}$ along the positive end. The same argument as above shows that the image of $ J_{K}$ may be chosen to lie in $\scrJ^{|K|}$:  Equation \eqref{eq:cont_data_stop_i_plus} again imposes the main constraint, since the almost complex structure in this region is obtained pushing $J_{I_i^{\leq}}$ forward by ${\Psi_{I_i^{\leq}}}_{*}$. By induction, this pair lies in $\scrJ^{|K|-1} \times \scrD^{2^{|K|-2}}$, which is contained in $\scrJ^{|K|-1} \times \scrD  $, so the pushforward lies in $\scrJ^{|K|} $.

Finally, by the inductive hypothesis, the image of $ \Psi_{I}   $ lies in $\scrD^{2^{|I|-1}} \subset \scrD^{2^{|K|-2}}$. By Equation \eqref{eq:drop_element_same_data}-\eqref{eq:cont_data_stop_i_plus} the map  defined on the boundary strata of $\Ubar_{K}$ has image in $ \scrD^{2^{|K|-1}}$. Since this space is contractible, choose an extension to a map  $\Psi_{K}$  defined on $\Ubar_{K}$.  Construct this map by gluing in a neighbourhood of the boundary strata,  requiring in addition that  Equations \eqref{eq:cont_data_end_+} and \eqref{eq:cont_data_end_-}  hold along the ends and Equations \eqref{eq:condition_diffeo_I_to_K_along_1} and \eqref{eq:condition_diffeo_I_to_K_along_0} along the boundary; these properties can be achieved given the assumptions on the acyclicity of $  \scrD^{2^{|K|-1}}(\scrL)$ and   $  \scrD^{2^{|K|-1}}_{P^2_K}$. This completes the construction of the data $(\Phi_I , \Psi_{I }) $ by induction on $|I|$.
\end{proof}

Denote the pushforward $ {\Psi_K}_*\Phi_K$ by:
\begin{align} \label{eq:locally_constant_continuation}
\Phi^P_K  \co P_K \times \Ubar_{K} & \to\scrH  \times \scrJ^{|K|+1}  \times \scrD^{2^{|K|}}.
\end{align}

The conditions imposed on $\phi_{K}$ in the previous section imply that, for each $q \in P_K$,  $\Phi^P_K(q)$ is a compatible family of continuation data in the sense of Definition \ref{def:compatibly_fam_continuation}.  For each $k \in K$, the diffeomorphism $ \psi_{k}(q) $ maps $F_{k}$ to $F_{q}$, and preserves $\phi_{k} L$, hence maps a point $x_{k} \in F_{k} \cap \phi_{k} L$ to 
\begin{equation} \label{eq:push_intersect_diffeo}
  x_{k}(q) \equiv   \psi_{k}(q) x_{k} \in F_{q} \cap  \phi_{k} L.
\end{equation}

\begin{lem}
Given $y \in F_{{\min K}} \cap \phi_{\min K} L$ and $x \in F_{{\max K}} \cap \phi_{\max K} L$, composition with $\Psi_K(q)$ yields a homeomorphism
\begin{equation}
    \xymatrix{    \Mbar_{q,K}(x(q), y(q)) \ar[r]^{\cong} & \Mbar_{q_K,K}(x(q_K), y(q_K)) }.
\end{equation} \qed
\end{lem}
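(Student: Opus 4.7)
The natural candidate is the pointwise map $u \mapsto \tilde u$ with $\tilde u(z) := \Psi_K(q,z)(u(z))$ for $z$ in the fibre $\Ubar_\vr$ of the universal curve. The plan is to verify in turn that this takes Lagrangian boundary conditions to Lagrangian boundary conditions, intertwines the two Cauchy--Riemann equations, respects the asymptotic conditions at the strip-like ends, and extends continuously to the Gromov--Floer compactifications.

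For the boundary conditions, if $z \in \Ubar_\vr^{\{0\}}$ then $u(z) \in F_{q_K}$, and condition \eqref{eq:condition_diffeo_I_to_K_along_0} says $\Psi_K(q,z) \in \scrD^{2^{|K|-1}}_{q_K,q}$, so it maps $F_{q_K}$ to $F_q$; hence $\tilde u(z) \in F_q$. If $z \in \Ubar_\vr^{\{1\}}$, then $u(z) \in \phi_K(z)L$, and condition \eqref{eq:condition_diffeo_I_to_K_along_1} says $\Psi_K(q,z)$ preserves $\phi_K(z)L$; since the pushforward does not modify the $\phi$-component, $\phi^P_K = \phi_K$, so this is precisely the correct boundary datum for a solution at $q$.

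For the Cauchy--Riemann equation, the pushforward formula gives $\Phi^P_K(q,z) = (\phi_K(z),\,\Psi_K(q,z)_*J_K(z),\,\Psi_K(q,z)\circ\psi_K(z))$, so the almost complex structure at the point $\tilde u(z)$ is exactly $\Psi_K(q,z)_*J_K(z)$; naturality of the nonlinear Cauchy--Riemann operator under target diffeomorphisms then shows that $\tilde u$ satisfies the Floer equation for $\Phi^P_K(q)$ if and only if $u$ does for $\Phi_K$. For the asymptotics, Equation \eqref{eq:cont_data_end_-} makes $\Psi_K$ agree along $\epsilon_-$ with the map $\psi_{\max K}(q)$; since $q_K = q_{\max K}$ and $\psi_{\max K}(q_K)=\Id$, this forces $\lim_{s\to-\infty}\tilde u = \psi_{\max K}(q)\cdot x = x(q)$, and a symmetric computation along $\epsilon_+$ using Equation \eqref{eq:cont_data_end_+} gives $\lim_{s\to+\infty}\tilde u = \psi_{\min K}(q)\cdot y = y(q)$. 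Bijectivity then follows by taking the inverse to be pointwise composition with $\Psi_K(q,\,\cdot\,)^{-1}$, and continuity in the Gromov topology propagates to broken configurations since $\Psi_K$ is constant along every end of every fibre and its boundary restrictions are compatible with gluing by Lemma~\ref{lem:construction_by_induction}, so conjugation acts componentwise on nodal limits compatibly with the decomposition \eqref{eq:boundary_continuation_strip_out}--\eqref{eq:boundary_continuation_skip}.

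The main obstacle I would expect is the Cauchy--Riemann step: because $\Psi_K$ genuinely depends on $z$, one has to interpret ``pushforward of the parametrised operator" with some care. This is exactly what the compatibility conditions \eqref{eq:cont_data_end_+}--\eqref{eq:condition_diffeo_I_to_K_along_0} are engineered to handle, and one observes that $\Phi^P_K(q)$ is itself a consistent family of continuation data in the sense of Definition~\ref{def:compatibly_fam_continuation}. Granted that, the lemma becomes essentially tautological, and is indeed the raison d'\^etre for incorporating the diffeomorphism factor into the Floer data in the first place (cf.\ Remark~\ref{rem:explain_construction_continuation}).
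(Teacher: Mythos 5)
Your overall structure — define $\tilde u(z):=\Psi_K(q,z)(u(z))$ and check boundary conditions, the Cauchy--Riemann equation, asymptotics, and extension to the compactification — is the correct plan, and the boundary-condition and asymptotics steps are carried out properly using conditions \eqref{eq:condition_diffeo_I_to_K_along_0}, \eqref{eq:condition_diffeo_I_to_K_along_1}, \eqref{eq:cont_data_end_+}, and \eqref{eq:cont_data_end_-}. (The paper itself gives no proof, so filling in these verifications is reasonable.) However, the Cauchy--Riemann step is where the argument breaks down, and your invocation of ``naturality of the nonlinear Cauchy--Riemann operator under target diffeomorphisms'' is not valid in this setting. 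Because $\Psi_K(q,z)$ genuinely depends on the domain variable $z$, the chain rule yields
\begin{equation*}
\partial_s\tilde u - \bigl(\Psi_K(q,z)_*J_K(z)\bigr)\,\partial_t\tilde u \;=\; \bigl(\partial_s\Psi_K\bigr)\big|_{\tilde u(z)} \;-\; \bigl(\Psi_K(q,z)_*J_K(z)\bigr)\bigl(\partial_t\Psi_K\bigr)\big|_{\tilde u(z)},
\end{equation*}
and the right-hand side is a nonzero $0$\textsuperscript{th}-order inhomogeneity whenever $\Psi_K$ is not constant in $z$. Since $\Psi_K$ is forced to vary in $z$ (its restrictions at the two ends, $\psi_{\max K}$ and $\psi_{\min K}\circ\psi_{\min K}^{-1}(q_K)$, are generally distinct diffeomorphisms, and on $\Ubar^{\{1\}}_K$ it must stabilize the moving Lagrangian $\phi_K(z)L$), this term does not vanish, so $\tilde u$ does not satisfy the unperturbed equation $\partial_s v = (\Psi_K(q)_*J_K)(z)\,\partial_t v$ literally.

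You flag exactly this issue in your final paragraph, which shows good instincts, but asserting that ``the compatibility conditions are engineered to handle'' it, and that $\Phi^P_K(q)$ being a consistent family of continuation data settles the matter, does not make the inhomogeneous term disappear. What would close the gap is the following observation, which you should make explicit: in light of Remark~\ref{rem:explain_construction_continuation}, the holomorphic curve problem at $q$ is to be understood as the \emph{transport} via $\Psi_K(q)$ of the problem at $q_K$, i.e.\ as the equation
\begin{equation*}
\bigl(d\Psi_K(q,z)\bigr)\Bigl(\partial_s\bigl(\Psi_K(q,z)^{-1}\circ v\bigr) - J_K(z)\,\partial_t\bigl(\Psi_K(q,z)^{-1}\circ v\bigr)\Bigr) = 0,
\end{equation*}
which has the same principal symbol and boundary data as $\partial_s v = (\Psi_K(q)_*J_K)\partial_t v$ but carries the $0$\textsuperscript{th}-order correction above. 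With that interpretation the lemma is indeed tautological: composition with $\Psi_K(q)$ is a set-theoretic bijection on solutions by construction, it respects the asymptotic markers since $\Psi_K$ is constant along each end, and it is continuous in the Gromov topology and compatible with the gluing structure on the compactification, as you note. Without making this reinterpretation explicit, however, the proposal claims an identity of homogeneous Cauchy--Riemann equations that does not hold.
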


When there is no ambiguity, we shall write $\Mbar_{K}(x, y) $ for this moduli space. Fix generic choices of continuation data $\Phi_K$, for which all such moduli spaces are regular.

\subsection{Locally constant continuation maps with a distinguished marker: input}
\label{sec:locally-const-cont-inp} \label{sec:cont-with-one}

Start by assuming that data $(\Phi_I, \Psi_{I })$ and $(\Phi'_I, \Psi'_{I })$ for the Lagrangians $L$ and $L'$ have been chosen for all totally ordered subsets $I$ of $\Sigma$. As in Section \ref{sec:homotopies-with-one-out}, pick a decomposition $I = I_- \cup I_+ $ with the property that  that $\max I_+ = \min I_-$.

\begin{rem}
To make sense of this section, the reader should consult Figure \ref{fig:boundary_moduli_input}. The holomorphic curve problem in the presence of an input along the boundary has Lagrangian boundary conditions $F_{\max I}$ along the $t= 0$ boundary (using the fact that $\max I = \max I_-$).
\end{rem}
\begingroup
\allowdisplaybreaks
Pick a continuation datum with inputs $\Phi_{I_-,I_+} $, and a family of diffeomorphisms $\Psi_{I_-,I_+ }  $ which agree with the identity on  $\{q _I \} \times  \Ubar_{I_-,I_+}$ and are obtained by gluing 
\begin{align} \label{eq:Hamiltonian_cont_in}
\Phi_{I_-,I_+} \co \Ubar_{I_-,I_+} & \to \scrH^0  \times \scrJ^{|I_-|+|I_+|-1}  \times \scrD^{2^{|I_-|+|I_+|-2}} \\
 \Psi_{I_-,I_+ } \co P_{I}  \times \Ubar_{I_-,I_+} & \to \scrD^{2^{|I_-|+|I_+|-2}} .
\end{align}
The restrictions of these maps to subsets of $\Ubar_{I_-,I_+} $ are required to agree with
\begin{align} \label{eq:continuation_inp_end}
& ({\psi_{\min I}(q_I)}_*\Phi_{\min I}, \psi_{\min I} \circ \psi^{-1}_{\min I}(q_I)) && \textrm{along the end } \epsilon_+  \\
& ( \Phi'_{\max I} , \psi'_{\max I})&& \textrm{along the end }  \epsilon_- \\ \label{eq:continuation_inp_skip_-}
&  (  \Phi_{I_- \setminus \{ i\},I_+  }, \Psi_{I_- \setminus \{ i\}, I_+}) && \textrm{on } \Ubar_{I_- \setminus \{ i\},I_+  }   \\
& (  \Phi_{I_-,I_+  \setminus \{ i\}  }, \Psi_{I_-,I_+  \setminus \{ i\}} ) && \textrm{on }  \Ubar_{I_- ,I_+ \setminus \{ i\}  } \\ \label{eq:continuation_inp_something}
&  \left( \Phi^{P}_{ I_{+,i}^\leq }(q_I),  \Psi_{ I_{+,i}^\leq } \circ  \left(\Psi_{ I_{+,i}^\leq }(q_I) \right)^{-1} \right) && \textrm{on } \Adams_{I_-,I_{+,i}^\geq} \times  \Ubar_{I_{+,i}^\leq}   \\
& (\Phi_{I_-,I_{+,i}^\geq} , \Psi_{(I_-,I_{+,i}^\geq) })   && \textrm{on }\Ubar_{I_-,I_{+,i}^\geq} \times  \Adams_{I_{+,i}^\leq} \\
&   (\Phi'_{  I_{-,i}^\geq } ,   \Psi'_{ I_{-,i}^\geq } ) && \textrm{on }\Ubar_{I_{-,i}^\geq} \times \Adams_{I_{-,i}^\leq , I_+ }  \\ \label{eq:restrict_J_in_stop}
&  \left( \left( \Psi_{(I_{-,i}^\leq , I_+) }(q_I)\right)_* \Phi_{I_{-,i}^\leq , I_+ } , \Psi_{(I_{-,i}^\leq , I_+) } \circ \left( \Psi_{(I_{-,i}^\leq , I_+) }(q_I) \right)^{-1} \right) && \textrm{on } \Adams_{I_{-,i}^\geq} \times \Ubar_{I_{-,i}^\leq , I_+ } ,
  \end{align}
where we interpret Equation \eqref{eq:continuation_inp_end}, \eqref{eq:continuation_inp_something}, and  \eqref{eq:restrict_J_in_stop} in accordance with Remark \ref{rem:interpret_funny_composition}. Moreover, on the boundary of each fibre, the following conditions hold: 
  \begin{align}
\Psi_{I_-,I_+ }\left(q, z \right) & \in  \scrD^{2^{|I_-|+|I_+|-2}}(\phi_{I_-,I_+,z}(L)) \textrm{ if } z \in \Ubar^{z_{\inp}<}_{I_-,I_+} \\ 
\Psi_{I_-,I_+ }\left( q,z \right) & \in  \scrD^{2^{|I_-|+|I_+|-2}}(\phi_{I_-,I_+,z}(L')) \textrm{ if } z \in \Ubar^{z_{\inp}> }_{I_-,I_+} \\ 
\Psi_{I_-,I_+ }\left(q, z  \right) & \in  \scrD^{2^{|I_-|+|I_+|-2}}_{q_{\max I_-}, q} \textrm{ if } z \in \Ubar^{\{0\}}_{I_-,I_+} .
\end{align}
The existence of such data again follows from an inductive argument on the number of elements of $I_-$ and $I_+$, as in the proof of Lemma \ref{lem:construction_by_induction}. 
\endgroup

The pushforward of $\Phi_{I_-,I_+}$ by $\Psi_{ I_-,I_+}$ defines a map
\begin{align} \label{eq:J_inp_polygon}  \Phi^P_{I_-,I_+ } \co P_{\max I_-} \times \Ubar_{I_-,I_+} & \to \scrH^0 \times \scrJ^{n+2} \times \scrD^{2^{n+1}}.
\end{align}

As in Section \ref{sec:locally-const-famil}, given $x \in \phi_{\min I_+} L \cap F_{{\min I_+}}$, $x_{\inp} \in L \cap L'$, and $x' \in \phi_{\max I_-} L' \cap F_{{\max I_-}}$, evaluation of $\Phi^P_{I_-,I_+} $ at $q$  defines a moduli space $
\Mbar_{q,I_-,I_+ }( x'(q); x_{\inp}, x(q))$, where $x'(q)$ and $x(q)$ are the images of $x'$ and $x$ under $\psi'_{\max I_-}(q)$ and $ \psi_{\min I_+}(q)  $.

By construction, composition with $\Psi_{I_-,I_+}(q)  $  yields a homeomorphism 
\begin{equation} \label{eq:moduli_space_independent_basepoint_inp}
    \Mbar_{q_{\max I_-},I_-,I_+}(x'(q_{\max I_-}); x_{\inp}, x(q_{\max I_-}))  \cong \Mbar_{q,I_-,I_+}(x'(q); x_{\inp}, x(q)).
\end{equation}

We choose the continuation data generically so that all such moduli spaces are regular.

\subsection{Locally constant continuation maps with a distinguished marker: output} \label{sec:locally-const-famil-output}\label{sec:cont-with-one-out}

As in Section \ref{sec:cont-with-one-out}, assume that $I_-$ and $I_+$ are totally ordered subsets of $\Sigma$ such that $  \min I_+ = \max I_-$, with $I = I_- \cup I_+$ also totally ordered.
\begin{rem}
  Figure \ref{fig:boundary_moduli_output}, and the fact that the holomorphic curve problem on the moduli space $\Ubar_{I_-,I_+}$ in the presence of an output along the boundary is defined with Lagrangian boundary conditions $F_{{\min I_+}}$ may be helpful in understanding the construction.
\end{rem}
\begingroup
\allowdisplaybreaks
Pick a continuation datum with outputs $\Phi_{I_-,I_+} $, and a family of diffeomorphisms $\Psi_{I_-,I_+ }  $ which are obtained by gluing and agree with the identity on $\{q _{\min I_+} \} \times  \Ubar_{I_-,I_+}$: 
\begin{align} \label{eq:Hamiltonian_cont_out}
\Phi_{I_-,I_+} \co \Ubar_{I_-,I_+} & \to \scrH^0  \times \scrJ^{|I_-|+|I_+|-1} \times \scrD^{2^{|I_-|+|I_+|-2}}   \\
 \Psi_{I_-,I_+ } \co P_{ \min I_+}  \times \Ubar_{I_-,I_+} & \to \scrD^{2^{|I_-|+|I_+|-2}} 
\end{align}
The restrictions of these maps to subsets of $\Ubar_{I_-,I_+} $ are required to be
\begin{align}
& {\left( \Phi'_{\min I_+}, \psi'_{\min I_+} \right)}&& \textrm{along the end } \epsilon_+  \\
&( \left( \psi_{\max I_-} (q_{ \min I_+}) \right)_{*} \Phi_{\max I_-} ,\psi_{\max I_-} \circ \psi_{\max I_-}^{-1} (q_{\min I_+}))&& \textrm{along the end }  \epsilon_-  \\ \label{eq:cont_out_skip_-}
&   (  \Phi_{I_- \setminus \{ i\},I_+  }, \Psi_{I_- \setminus \{ i\},I_+})   && \textrm{on }    \Ubar_{I_- \setminus \{ i\},I_+  }  \\
&(  \Phi_{I_-,I_+  \setminus \{ i\}  }, \Psi_{I_-,I_+  \setminus \{ i\}} ) && \textrm{on }   \Ubar_{I_- ,I_+ \setminus \{ i\}  }  &&\\ \label{eq:restrict_data_out_+}
&   \left( \Phi'^P_{I_{+,i}^\leq}(q_{\min I_+}) ,  \Psi'_{I_{+,i}^\leq} \circ \left( \Psi'_{I_{+,i}^\leq}(q_{\min I_+}) \right)^{-1} \right)  && \textrm{on }  \Adams_{I_-,I_{+,i}^\geq} \times  \Ubar_{I_{+,i}^\leq}  \\
&{\scriptstyle (( \Psi_{(I_-,I_{+,i}^\geq ) }(q_{\min I_+}) )_* \Phi_{I_-,I_{+,i}^\geq}, \Psi_{(I_-,I_{+,i}^\geq) }\circ  \Psi^{-1}_{(I_-,I_{+,i}^\geq) }( q_{\min I_+}) ) }  && \textrm{on }   \Ubar_{I_-,I_{+,i}^\geq} \times  \Adams_{I_{+,i}^\leq}  \\
&   (\Phi^P_{I_{-,i}^\geq }(q_{\min I_+}) ,  \Psi_{I_{-,i}^\geq } \circ  \Psi^{-1}_{I_{-,i}^\geq} ( q_{\min I_+} )  )  && \textrm{on }  \Ubar_{I_{-,i}^\geq} \times \Adams_{I_{-,i}^\leq , I_+ }  &&\\  \label{eq:restrict_J_out_stop}
&   \left(  \Phi_{I_{-,i}^\leq , I_+ }, \Psi_{(I_{-,i}^\leq , I_+) }\right)   && \textrm{on }     \Adams_{I_{-,i}^\geq} \times \Ubar_{I_{-,i}^\leq , I_+ }
  \end{align}
On the boundary of each fibre:
  \begin{align}
\Psi_{I_-,I_+ }\left(q, z \right) & \in  \scrD^{2^{|I_-|+|I_+|-2}}(\phi_{I_-,I_+,z}(L')) \textrm{ if } z \in \Ubar^{z_{\out}<}_{I_-,I_+} \\ 
\Psi_{I_-,I_+ }\left(q, z \right) & \in  \scrD^{2^{|I_-|+|I_+|-2}}(\phi_{I_-,I_+,z}(L)) \textrm{ if } z \in \Ubar^{z_{\out}> }_{I_-,I_+} \\ 
\Psi_{I_-,I_+ }\left(q, z  \right) & \in  \scrD^{2^{|I_-|+|I_+|-2}}_{q_{\min I_+}, q} \textrm{ if } z \in \Ubar^{\{0\}}_{I_-,I_+}.
  \end{align}
\endgroup
It is crucial at this stage that $P_{I}$ was required to include the closure of the open star of the cell corresponding to $I$, as expressions of the form $\Psi_I(q_{i}) $, with $i$ an arbitrary element of $I$, must be defined.

The pushforward of  $\Phi_{I_-,I_+}$ by $\Psi_{ I_-,I_+}$ defines a map
\begin{align} \label{eq:J_out_polygon}
\Phi^P_{I_-,I_+ } \co P_{\min  I_+} \times \Ubar_{I_-,I_+} & \to \scrH^0 \times \scrJ^{n+2} \times \scrD^{2^{n+1}}.
\end{align}

Given $x \in \phi_{\max I_-} L \cap F_{{\max I_-}}$, $x_{\out} \in L \cap L'$, and $x' \in \phi_{\min I_+} L' \cap F_{{\min I_+}}$, the data  $\Phi^P_{I_-,I_+ }  (q)$ yield a moduli space $
 \Mbar_{q,I_-,I_+}( x(q) ,x_{\out}; x'(q))$, 
for each point $q \in P_{\max I_+} $, where $x(q)$ and $x'(q)$ are the images of $x$ and $x'$ under $ \psi_{\max I_-}(q) $  and $\psi'_{\min I_+}(q)$. Composition with $ \Psi^P_{I_-,I_+}(q)  $  yields a homeomorphism
\begin{equation} \label{eq:diffeo_moduli_varying_points}
    \Mbar_{q_{\min I_+},I_-,I_+ }(x (q_{\min I}), x_{\out}; x'(q_{\min I}))  \cong \Mbar_{q,I_-,I_+ }(x(q), x_{\out}; x'(q)) .
\end{equation}

\subsection{Moduli spaces associated to cells of $\Sigma$}
\label{sec:moduli-spac-assoc}

Combining the discussions of Section \ref{sec:strips-with-one-positive} and \ref{sec:homotopies-with-one}, yields a moduli space
\begin{equation}
   \Mbar_{q,\vK; \inp}(x'(q); x_{\inp}, x(q)),
\end{equation}
whenever $q \in P_{\max \vK}$, by taking the fibre product of $  \Mbar_{q,K^\inp_{K_0}}(x'(q); x_{\inp}, x(q))$ with $\Adams_{\vK; \inp}$ over $\Adams_{K^\inp_{K_0}} $.  Given a totally ordered subset $K$ of $\Sigma$, define
\begin{equation} \label{eq:definition_moduli_in_cells}
   \Mbar_{q,K; \inp}(x'(q); x_{\inp}, x(q)) = \bigcup_{\substack{|\vK| = |K| \\  K=\max \vK}}  \Mbar_{q,\vK; \inp}(x'(q); x_{\inp}, x(q)),
\end{equation}
where the moduli spaces on the right are glued along the boundary strata obtained by omitting an element of $\vK$.  This moduli space is again independent of $q$ by applying the homeomorphism in Equation \eqref{eq:moduli_space_independent_basepoint_inp}, so  it is denoted
$  \Mbar_{K; \inp}(x'; x_{\inp}, x)$.

This moduli space is parametrised by a manifold of dimension $|K| -1 $, obtained by taking the union of the spaces $\Adams_{\vK; \inp}$, for $\vK$ a maximal length sequence with largest element $K$, glued along the boundary strata in Equation \eqref{eq:inp_adams_skip}. Assuming that all its strata are defined using regular continuation data, the dimension of the interior is
\begin{equation}
  \deg(x')  - \deg(x_{\inp}) - \deg(x)+ |K| -1.
\end{equation}

\begin{lem}
If the dimension is $1$, then $\Mbar_{K; \inp}(x'; x_{\inp}, x) $ is a $1$-dimensional manifold with boundary
\begin{align} \label{eq:boundary_moduli_inp_Floer_diff}
& \coprod_{y_{\inp} \in  L \cap L'} \Mbar_{K;\inp}(x'; y_{\inp}, x) \times \Mbar(y_{\inp},x_{\inp})  \\ \label{eq:boundary_moduli_inp_cont_input}
& \coprod_{ i \in K  } \coprod_{y \in \phi_i L \cap F_i}  \Mbar_{K^{\geq}_i;\inp}( x'; x_{\inp}, y) \times  \Mbar_{K^\leq_i}(y,x)\\
& \coprod_{i \in K } \coprod_{y' \in \phi_i L' \cap F_i}  \Mbar_{K^{\geq}_i}(x',y') \times  \Mbar_{K^\leq_i;\inp}( y'; x_{\inp}, y)  \\ \label{eq:stratum_boundary_input_pullback_max_min}
& \coprod_{i \in K \setminus \{\min K, \max K \} } \Mbar_{K \setminus \{i\}; \inp}(x'; x_{\inp}, x)
\end{align}
\end{lem}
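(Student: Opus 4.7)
The plan is to assemble the boundary of $\Mbar_{K; \inp}(x'; x_{\inp}, x)$ from the boundary decompositions of its constituents $\Mbar_{\vK; \inp}(x'; x_{\inp}, x)$, where $\vK$ runs over maximal chains with $\max \vK = K$ (so that $|\vK| = |K|$), and then track which strata survive the gluings that define the union. For each such $\vK$, the dimension formula \eqref{eq:dimension_moduli_space_inp} gives $\dim \Mbar_{\vK; \inp} = 1$, and its boundary was enumerated in Section \ref{sec:homotopies-with-one} as three Floer strip-breaking strata, the skip stratum \eqref{eq:stratum_boundary_input_skip}, and the stop stratum \eqref{eq:stratum_boundary_input_pullback_max_min_non_compatible}; the last of these is controlled in this $1$-dimensional regime by Lemma \ref{lem:good_boundary_in}.

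First I would dispose of the Floer strip-breakings. The breaking at the interior marked point yields $\Mbar_{\vK; \inp}(x'; y_{\inp}, x) \times \Mbar(y_{\inp}, x_{\inp})$; summing over $\vK$ and collecting the first factor by definition of $\Mbar_{K;\inp}$, this assembles into \eqref{eq:boundary_moduli_inp_Floer_diff}. The two end-breakings (with factors $\Mbar_q(x',y')$ and $\Mbar_q(y,x)$) produce the extreme-index contributions $i = \min K$ and $i = \max K$ of the sum \eqref{eq:boundary_moduli_inp_cont_input} and of its $L'$-analogue: at those extremes one of the halves $K^{\geq}_i$, $K^{\leq}_i$ reduces to a singleton, so the associated continuation moduli space degenerates to an ordinary Floer differential strip, a degeneration ensured by the matching identities on continuation data at the ends imposed in Sections \ref{sec:continuation-maps-at-vertex} and \ref{sec:cont-with-one}.

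Next I would verify that the skip strata \eqref{eq:stratum_boundary_input_skip} cancel pairwise. Since $\vK$ is a maximal chain, the two neighbours of $I \in \vK \setminus \{K\}$ in $\vK$ have sizes differing by exactly two, so there is precisely one other subset $I'$ of the same cardinality as $I$ fitting strictly between them. Substituting $I'$ for $I$ produces a second maximal chain $\vK'$ for which $\vK \setminus \{I\} = \vK' \setminus \{I'\}$, and the gluing built into the definition of $\Mbar_{K; \inp}$ identifies the two copies of $\Mbar_{\vK \setminus \{I\}; \inp}$ appearing in the boundaries of $\Mbar_{\vK; \inp}$ and $\Mbar_{\vK'; \inp}$. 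These faces therefore become interior in $\Mbar_{K; \inp}$ and contribute nothing to the final boundary.

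Finally, the stop strata are treated via Lemma \ref{lem:good_boundary_in}. Case (i), with $\max I = \max K$ and $\min$ injective on $\vK[\geq]_I$, produces $\Mbar_{\vK[\leq]_I; \inp}(x'; x_{\inp}, y) \times \Mbar_{\min \vK[\geq]_I}(y, x)$. Fixing $i = \min I$ and summing over admissible pairs $(\vK, I)$ with this value of $i$, the second factor assembles, via the analogous union-over-chains description of the continuation moduli space from Section \ref{sec:continuation-maps}, into $\Mbar_{K^{\leq}_i}(y, x)$, while the first factor assembles into $\Mbar_{K^{\geq}_i; \inp}(x'; x_{\inp}, y)$, yielding \eqref{eq:boundary_moduli_inp_cont_input}. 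Case (ii) yields the $L'$-analogue by the symmetric argument. In case (iii), where $I = K \setminus \{i\}$ is the penultimate element of $\vK$ with $i \notin \{\min K, \max K\}$, the contribution is $\Mbar_{\vK[\leq]_I; \inp}(x'; x_{\inp}, x)$; summing over all maximal chains in $K \setminus \{i\}$ reconstructs $\Mbar_{K \setminus \{i\}; \inp}(x'; x_{\inp}, x)$, giving \eqref{eq:stratum_boundary_input_pullback_max_min}.

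The principal obstacle will be the factor-by-factor assembly in cases (i) and (ii): one must verify that the parametrised continuation data on $\Ubar_{\vK; \inp}$ restrict, on the stop stratum indexed by $I$, to a genuine product of the continuation data governing $\Mbar_{\vK[\leq]_I; \inp}$ and $\Mbar_{\min \vK[\geq]_I}$, after conjugation by the auxiliary family of diffeomorphisms $\Psi$. This compatibility is precisely what the inductive restriction identities in Section \ref{sec:cont-with-one}, together with their $L$-only counterparts from Section \ref{sec:continuation-maps-at-vertex}, are engineered to enforce, so the verification reduces to unpacking those identities in a neighbourhood of each stop stratum.
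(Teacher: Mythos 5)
Your proof is correct and takes the same route as the paper's (which is stated in only two sentences): glue the spaces $\Mbar_{\vK;\inp}$ along the skip strata of Equation \eqref{eq:stratum_boundary_input_skip}, invoke Lemma \ref{lem:good_boundary_in} to see which stop strata survive in dimension $1$, and match end-breakings and marked-point breakings with the remaining terms. Since the paper's proof is so terse, your filled-in version is a useful gloss; let me record two small imprecisions so they do not mislead you elsewhere.

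First, your pairwise-cancellation argument for the skip strata (``the two neighbours of $I$ in $\vK$ have sizes differing by exactly two'') does not directly apply when $I = K_0$ is the \emph{minimal} element of $\vK$, since $I$ then has only one neighbour. The cancellation still holds: if $K_0 = \{k_0\}$ and $I_2$ is the next element of $\vK$, there is a second maximal chain $\vK'$ obtained by replacing $K_0$ with the other singleton $K_0' \subset I_2$, and $\vK\setminus K_0 = \vK'\setminus K_0'$; the consistency conditions \eqref{eq:continuation_inp_skip_-} and its companions are what guarantee that the Floer data on the two copies of this face coincide, so the identification is of moduli spaces and not just of abstract cubes.

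Second, in treating case (i) of Lemma \ref{lem:good_boundary_in}, you say the second factor ``assembles, via the analogous union-over-chains description of the continuation moduli space, into $\Mbar_{K^\leq_i}(y,x)$.'' In fact no assembly occurs there: the continuation moduli space $\Mbar_{q,K}(y,x)$ from Section \ref{sec:continuation-maps} is defined directly over the cube $\Adams_K$, not as a union over chains. What actually happens is that the two hypotheses $\max I = \max K$ and $\min$ injective on $\vK[\geq]_I$ already force $I = K^\geq_i$ (with $i = \min I$), force $\vK[\geq]_I$ to be the unique chain adding the elements below $i$ in decreasing order, and force $\min\vK[\geq]_I = K^\leq_i$. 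So the second factor is $\Mbar_{K^\leq_i}(y,x)$ on the nose, and the only genuine union is over $\vK[\leq]_I$ in the first factor, which produces $\Mbar_{K^\geq_i;\inp}$. The extremes $i = \max K$ in \eqref{eq:boundary_moduli_inp_cont_input} and $i = \min K$ in the $L'$-analogue arise from this same stop analysis (with $I$ being the minimal element of the relevant $\vK$), while $i=\min K$ and $i=\max K$ in the opposite sums come from the end breakings as you say; it is worth stating this split explicitly to confirm that every $i\in K$ is accounted for exactly once.
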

\begin{proof}
The description of the boundary of the moduli space follows from Lemma \ref{lem:good_boundary_in}. The key point is that $\Mbar_{K; \inp}(x'; x_{\inp}, x) $  is obtained by gluing the moduli spaces $ \Mbar_{\vK;\inp}(x'; x_{\inp} , x) $ along the strata given in Equation \eqref{eq:stratum_boundary_input_skip}, so these correspond to the boundary. The strata in Equations \eqref{eq:boundary_moduli_0_input_max}-\eqref{eq:boundary_moduli_0_input_skip} correspond to Equations \eqref{eq:boundary_moduli_inp_cont_input}-\eqref{eq:stratum_boundary_input_pullback_max_min}.
\end{proof}

There are parallel results for moduli spaces with outputs:
\begin{equation}
   \Mbar_{q, \vK; \out}(x(q) ,x_{\out} ; x'(q))\end{equation}
is the fibre product of $  \Mbar_{q,K^\out_{K_0}}(x'(q); x_{\out}, x(q))$ with $\Adams_{\vK; \out}$ over $\Adams_{K^\out_{K_0}} $, and is defined whenever $q \in P_{\min \vK}$. Given a totally ordered subset  $I$ of $\Sigma$,  form the union
\begin{equation} \label{eq:moduli_out_cells}
   \Mbar_{q,I; \out}(x(q) ,x_{\out} ; x'(q)) = \bigcup_{I=\min \vK} \Mbar_{q, \vK; \out}(x(q) ,x_{\out} ; x'(q)).
\end{equation}
As this moduli space is independent of $q$,  denote it $
   \Mbar_{I; \out}(x ,x_{\out} ; x')$.

For generic data, the interior is a manifold of dimension
\begin{equation}
 \deg(x)  + \deg(x_{\out}) - \deg(x')+ n + 1-|I|.
\end{equation}
\begin{lem}
If this dimension is $1$, then $ \Mbar_{I; \out}(x ,x_{\out} ; x') $ is a $1$-dimensional manifold with boundary
\begin{align} \label{eq:boundary_moduli_out_strip}
& \coprod_{y_{\out} \in  L \cap L'} \Mbar(x_{\out},y_{\out}) \times \Mbar_{I;\out}(x, y_{\out}; x') \\
& \coprod_{\max J = \min I} \coprod_{y \in L_{\min J} \cap F_{{\min J} }}  \Mbar_{J}(x,y) \times \Mbar_{J \cup I;\out}( y, x_{\out}; x')\\ 
& \coprod_{\min J = \max I } \coprod_{y'  \in L_{\max J} \cap F_{{\max J} }}  \Mbar_{I \cup J;\out}(x, x_{\out}; y') \times \Mbar_{J}(y',x') \\ \label{eq:boundary_moduli_out_skip} 
& \coprod_{\substack{ I \cup \{ j\} \\ \min I < j < \max I, j \notin I } }   \Mbar_{I \cup \{ j\} ;\out}(x, x_{\out}, x'). \qed
\end{align} 
\end{lem}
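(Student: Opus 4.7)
The plan is to mimic the argument used for the analogous input lemma. By construction, $\Mbar_{I;\out}(x,x_{\out};x')$ is the union of the parametrised moduli spaces $\Mbar_{\vK;\out}(x,x_{\out};x')$ as $\vK$ ranges over maximal length nested sequences with $\min\vK = I$, glued along the codimension-one strata described in Equation \eqref{eq:out_adams_skip}, which correspond to omitting a non-minimal element of $\vK$. These "skip" strata become interior to the glued space, so identifying the boundary of $\Mbar_{I;\out}$ reduces to collecting the remaining codimension-one contributions arising from the individual $\Mbar_{\vK;\out}$.

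These remaining contributions fall into two classes. The first consists of the "stop" strata of Equation \eqref{eq:out_adams_stop}, where a cube coordinate attains the value $1$; these are controlled by the dimension analysis in the output analogue of Lemma \ref{lem:good_boundary_in} that immediately precedes the present statement. Under the hypothesis $\dim_{\bR} \Mbar_{I;\out}(x,x_{\out};x')=1$, that analysis cuts down Equation \eqref{eq:stratum_boundary_output_pullback_max_min} to three cases: (a) $\max I' = \max I$ with $\min$ injective on $\vK[\leq]_{I'}$, which produces Equation \eqref{eq:boundary_moduli_0_output_min}; (b) $\min I' = \min I$ with $\max$ injective on $\vK[\leq]_{I'}$, which produces Equation \eqref{eq:boundary_moduli_0_output_max}; and (c) $I' = I \cup \{j\}$ for some $j$ strictly between $\min I$ and $\max I$, which produces Equation \eqref{eq:boundary_moduli_0_output_skip}. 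Summed over all admissible $\vK$ with $\min\vK = I$, cases (a) and (b) assemble into the second and third lines of the asserted boundary, while case (c) assembles into Equation \eqref{eq:boundary_moduli_out_skip} after taking the union over $\vK$.

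The second class consists of Gromov-Floer breaking at the interior marked point and at the asymptotic ends. Breaking at the interior input carrying $x_{\out}$ produces a Floer strip $\Mbar(x_{\out},y_{\out})$ for some $y_{\out} \in L \cap L'$ paired with $\Mbar_{I;\out}(x,y_{\out};x')$, giving Equation \eqref{eq:boundary_moduli_out_strip}. Breaking of Floer strips at the asymptotic ends $x$ and $x'$ has already been absorbed into cases (a) and (b), via the identification of strip breaking with the boundary strata of $\Adams_{K^\out_{K_0}}$ where the extremal coordinate reaches $1$; this identification is built into the end-compatibility conditions imposed in Section \ref{sec:locally-const-famil-output}.

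The step I expect to require the most care is verifying that the inductive identifications of Floer data in Equations \eqref{eq:cont_out_skip_-}--\eqref{eq:restrict_J_out_stop} match across codimension-two corners, so that the "skip" gluings used to form $\Mbar_{I;\out}$ from its pieces $\Mbar_{\vK;\out}$ are consistent and the resulting space is a genuine $1$-manifold with boundary; once this consistency is in place, the enumeration above exhausts the codimension-one strata and the lemma follows.
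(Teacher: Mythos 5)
Your proof is correct and reproduces exactly the argument the paper gives (explicitly) for the input analogue and leaves implicit here: gluing the $\Mbar_{\vK;\out}$ along skip strata, identifying the remaining codimension-one strata via the output analogue of Lemma \ref{lem:good_boundary_in}, and accounting separately for interior Floer breaking. The codimension-two consistency you flag is genuine but is already supplied by the inductive construction in Lemma \ref{lem:construction_by_induction} and Section \ref{sec:locally-const-famil-output}, so it is not an open issue within this lemma.
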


\section{Moduli space of degenerate annuli}
\label{sec:moduli-space-degen}

\subsection{Dual and pair subdivisions}

As in Sections \ref{sec:covers-q} and  \ref{sec:unif-small-choic}, let $\Sigma$ denote a partially ordered set labelling a simplicial triangulation of $Q$. In particular, every maximal totally ordered subset of $\Sigma$ consists of $n+1$ elements. Given a totally ordered subset $J$ of $\Sigma$, denote by $\sigma_{J}$ the corresponding simplex included in $Q$. The barycenters of the top dimensional simplices are the vertices of the dual polyhedral subdivision (which is not in general a triangulation), corresponding to the partially order set $\Sigma^{\vee}$ which is obtained from  $\Sigma$ by reversing the order; write $\check{\sigma}_{J}$ for the cell dual to $\sigma_{J}$. This cell can be realised as the polyhedron associated to the partially ordered set $  \{ K | J \subset K \} $ equipped with the ordering which reverses inclusions (i.e. $J$ is the unique maximal element of this ordering, and corresponds to the top dimensional face). In particular,
\begin{equation}
  \partial \check{\sigma}_{I} = \bigcup_{\substack{I \subset J \\ |J| = |I|+1}}  \check{\sigma}_{J}.
\end{equation}

\begin{figure}
\begin{tikzpicture}[scale=0.50]
\coordinate (0) at (-4,0);
\coordinate (1) at (4,0);
\coordinate (2) at (0,6.9282);

\foreach \x in {0,1,2} \fill (\x) circle (2*\lw);
\draw[line width=2*\lw] (0)--(1)--(2)--cycle;

\coordinate (01) at ($(0) !.5! (1)$);
\coordinate (12) at ($(2) !.5! (1)$);
\coordinate (02) at ($(0) !.5! (2)$);
\coordinate (012) at ($1/3*(0)+1/3*(1)+1/3*(2)$);

\coordinate (0-01) at ($(0) !.66! (01)$);
\coordinate (0-02) at ($(0) !.66! (02)$);
\coordinate (1-01) at ($(1) !.66! (01)$);
\coordinate (2-02) at ($(2) !.66! (02)$);
\coordinate (1-12) at ($(1) !.66! (12)$);
\coordinate (2-12) at ($(2) !.66! (12)$);
\coordinate (0-01-012) at ($1/3*(0)+1/3*(01)+1/3*(012)$);
\coordinate (0-02-012) at ($1/3*(0)+1/3*(02)+1/3*(012)$);
\coordinate (1-01-012) at ($1/3*(1)+1/3*(01)+1/3*(012)$);
\coordinate (2-02-012) at ($1/3*(2)+1/3*(02)+1/3*(012)$);
\coordinate (1-12-012) at ($1/3*(1)+1/3*(12)+1/3*(012)$);
\coordinate (2-12-012) at ($1/3*(2)+1/3*(12)+1/3*(012)$);


\draw[line width=4*\lw] (0-01-012)--(0-02-012)--(2-02-012)--(2-12-012)--(1-12-012)--(1-01-012)--cycle;
\draw[line width=4*\lw] (0-01-012)--(0-01);
\draw[line width=4*\lw] (0-02-012)--(0-02);
\draw[line width=4*\lw] (1-01-012)--(1-01);
\draw[line width=4*\lw] (2-02-012)--(2-02);
\draw[line width=4*\lw] (1-12-012)--(1-12);
\draw[line width=4*\lw] (2-12-012)--(2-12);

\coordinate (P0-012) at ($1/2*(0-01-012)+1/2*(0-02-012)$);
\coordinate (P1-012) at ($1/2*(1-01-012)+1/2*(1-12-012)$);
\coordinate (P2-012) at ($1/2*(2-12-012)+1/2*(2-02-012)$);
\coordinate (P01-012) at ($1/2*(0-01-012)+1/2*(1-01-012)$);
\coordinate (P12-012) at ($1/2*(1-12-012)+1/2*(2-12-012)$);
\coordinate (P02-012) at ($1/2*(0-02-012)+1/2*(2-02-012)$);

\draw[line width=4*\lw] (0)--(012);
\draw[line width=4*\lw] (01)--(012);
\draw[line width=4*\lw] (02)--(012);
\draw[line width=4*\lw] (1)--(012);
\draw[line width=4*\lw] (12)--(012);
\draw[line width=4*\lw] (2)--(012);
\end{tikzpicture}

\caption{The pairs barycentric subdivision of a simplex.}
\label{fig:dual_pairs_barycentric}
\end{figure}

The minimal common refinement of a triangulation and its dual subdivision is called the pairs subdivision (as far as the author knows, the terminology is due to Denis Sullivan, though the concept has appeared earlier \cite{PS}). Combinatorially,  the pairs subdivision $P\Sigma$ of $\Sigma$ is the partially ordered set whose elements are nested pairs $I \subset K $ of totally ordered subsets of $\Sigma$. The cells associated to totally ordered subsets of $P \Sigma$ correspond to pairs of intersecting cells associated to totally ordered subsets of $\Sigma$ and $\Sigma^{\vee}$. Moreover, these cells are cubes: this can be seen algebraically by noting that the maximal elements of $P \Sigma$ are pairs $\{ i\} \subset K$, where $K$ has length $n+1$: the corresponding cell of $Q$ is associated to the set of elements preceding this pair, which consists of all pairs $I \subset J$ contained in $K$ and containing $i$. This is exactly the totally ordered set associated to the cube on the set $K \setminus \{ i\}$ as such a pair $ I \subset J  $ with $ i \in I$ and $J \subset K$ corresponds to the face in which all coordinates labelled by $I \setminus \{ i\}$ equal $1$, while those corresponding to $K \setminus J$ vanish.

There are also geometric proofs that the intersection of dual cells is a cube: embed the $n$-simplex $\Delta_n$ in $\bR^{n}$ as the convex hull of $(-1, \ldots, -1)$, and the set of points with one coordinate equal to $n$, and all others equal to $-1$. Let $\Pi \subset \bR^n$ denote the simplicial complex whose unique vertex is the origin, whose $1$-skeleton consists of the negative coordinate axes and the ray spanned by $(1, \ldots, 1)$, and whose top-dimensional cells are the octants spanned by any choice of $n$ of these rays. As subcomplexes of $\bR^{n}$, $\Delta_n$ and $\Pi $ are dual, and the intersection of the simplex with the octant spanned by the coordinate axes is obviously a cube. Another proof appears in Shtan{\cprime}ko and Shtrogrin \cite[Proof of main theorem]{SS-cubic}.

\subsection{The pairs barycentric subdivision}
One motivation for introducing the pairs subdivision is the following: assume that one has moduli spaces $Z_{I; \inp}$ parametrised by cells of a triangulation of $\Q$, and $Z_{I, \out}$ parametrised by cells of the dual subdivision, and one wanted to interpret the products $ Z_{I; \inp} \times Z_{I; \out}$ as moduli spaces parametrised by cells of a subdivision of $\Q$. The \emph{dual pairs subdivision} is then the natural choice, as its top-dimensional simplices are naturally products of cells and their duals (this fact is mentioned only for motivation, and will not be used). In Section \ref{sec:moduli-spac-assoc}, we constructed moduli spaces $ \Mbar_{I; \inp} $ and $ \Mbar_{I; \out}$ labelled by cells of the triangulation of $\Q$. It turns out that these spaces do not quite fit within this framework, but this can be remedied by a further refinement.

Let $B \Sigma$ denote the barycentric subdivision of $\Sigma$ whose elements are totally ordered subsets, and whose partial ordering is given by inclusion. Denote a totally ordered subset of $B \Sigma$ by $\vI$, and the corresponding simplex by $\sigma_{\vI}$.

Let $P B \Sigma$ denote the pairs subdivision of the barycentric subdivision, which will henceforth be called the \emph{pairs barycentric subdivison}. Cells correspond to pairs $
  \vI \subset \vJ $ where $\vJ$ is a totally ordered subset of $\Sigma$. Write $\sigma_{\vI \subset \vJ}$ for the corresponding cube embedded in $\Q$. We shall presently see that the cells of $P B \Sigma $ naturally parametrise the products of moduli spaces $ \Mbar_{\vK; \inp}  $ and $ \Mbar_{\vK; \out} $ and their boundary strata.

Let $I_0$ and $J_0$ (respectively $I$ and $J$) denote the minimal (respectively maximal) elements of $\vI$ and $\vJ$.  Recalling the definition of the moduli space $\Adams_K$ and its variants as a product of copies of $[0,+\infty]$ (see e.g. Equation \eqref{eq:Adams_moduli_product}), an identification of the interval $[0,1]$ with $[0, \infty]$ yields natural maps
\begin{equation} \label{eq:inp_pairs}
  \xymatrix{ \Adams_{\vJ[\geq]_{I}; \out}& \sigma_{\vI \subset \vJ} \ar[r] \ar[l] &\Adams_{\vJ[\leq]_{I_0}; \inp} }
\end{equation}
where the ordered sets $\vJ[\leq]_{I_0}$ and  $\vJ[\geq]_{I}$ respectively consist of elements of $\vJ$ which are contained in $I_0$ and which contain $I$.   At the level of partially ordered sets the first map takes a pair of subsets $\vI[1] \subset \vJ[1] $ to their intersections with $\vJ[\geq]_{I}$, and the second to their intersection with $\vJ[\leq]_{I_0}$:
\begin{equation}
 \xymatrix{ \vI[1,\geq]_{I} \subset \vJ[1,\geq]_{I} & \vI[1] \subset \vJ[1] \ar[l] \ar[r] & \vI[1,\leq]_{I_0} \subset \vJ[1,\leq]_{I_0} .}
\end{equation}

\begin{lem}
  If $\vI = \{ I\}$  the product map is a bijection:
\begin{equation} \label{eq:map_pairs_barycentric_moduli}
  \sigma_{\vI \subset \vJ} \to \Adams_{\vJ[\geq]_{I}; \out}  \times  \Adams_{\vJ[\leq]_{I}; \inp}.
\end{equation}
\end{lem}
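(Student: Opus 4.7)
The plan is to match both sides as cubes of the same dimension and then verify that the combinatorial map from (5.2) realises the canonical product identification.

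First, I would observe that the cube $\sigma_{\vI\subset\vJ}$ of the pairs barycentric subdivision has dimension $|\vJ|-|\vI|$, with face lattice given by pairs $\vI\subset\vI[1]\subset\vJ[1]\subset\vJ$. This follows by the same reasoning already given in the text for the pairs subdivision of a top simplex in $P\Sigma$: in natural coordinates $\sigma_{\vI\subset\vJ}=[0,1]^{\vJ\setminus\vI}$, with a face corresponding to the pair $\vI[1]\subset\vJ[1]$ being cut out by setting the coordinates indexed by $\vI[1]\setminus\vI$ to $0$ and those indexed by $\vJ\setminus\vJ[1]$ to $1$. When $\vI=\{I\}$ (so $I_0=I$) and $\vJ$ is totally ordered by inclusion with $I\in\vJ$, one has the disjoint decomposition
\begin{equation*}
\vJ\setminus\{I\}=(\vJ[\geq]_I\setminus\{I\})\sqcup(\vJ[\leq]_I\setminus\{I\}),
\end{equation*}
which yields a corresponding product decomposition of $[0,1]^{\vJ\setminus\{I\}}$.

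Second, I would identify the two factors with the claimed target by unfolding definitions. The definition $\Adams_{\vK;\out}=[0,1]^{\vK\setminus K_0}$ with $K_0=\min\vK$, applied to $\vK=\vJ[\geq]_I$ (whose minimal element is $I$), produces exactly $[0,1]^{\vJ[\geq]_I\setminus\{I\}}$. Symmetrically, $\Adams_{\vK;\inp}=[0,1]^{\vK\setminus K}$ with $K=\max\vK$, applied to $\vK=\vJ[\leq]_{I_0}$ (whose maximal element is $I_0=I$), produces $[0,1]^{\vJ[\leq]_I\setminus\{I\}}$. A one-line dimension count, $(|\vJ[\geq]_I|-1)+(|\vJ[\leq]_I|-1)=|\vJ|-1$, confirms agreement with $\dim\sigma_{\{I\}\subset\vJ}$.

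Third, I would check that the product of the maps in (5.2) coincides with this canonical product identification, working at the level of face posets. A face of $\sigma_{\vI\subset\vJ}$ indexed by $\vI[1]\subset\vJ[1]$ is mapped to $(\vI[1,\geq]_I\subset\vJ[1,\geq]_I)$ on the output side and $(\vI[1,\leq]_{I_0}\subset\vJ[1,\leq]_{I_0})$ on the input side. When $\vI=\{I\}$, the disjoint decomposition above provides a two-sided inverse: a pair $(\vI_+\subset\vJ_+)$ with $I\in\vI_+\subset\vJ_+\subset\vJ[\geq]_I$ together with a pair $(\vI_-\subset\vJ_-)$ with $I\in\vI_-\subset\vJ_-\subset\vJ[\leq]_I$ recovers the unique face $(\vI_+\cup\vI_-)\subset(\vJ_+\cup\vJ_-)$ of $\sigma_{\{I\}\subset\vJ}$, since the intersections with $\vJ[\geq]_I$ and $\vJ[\leq]_I$ on the nose return $\vI_\pm$ and $\vJ_\pm$ (using $I\in\vI_\pm$). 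This exhibits the map as an isomorphism of face posets, hence as a homeomorphism (indeed diffeomorphism) of cubes.

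No step is really an obstacle; the only point that needs attention is the asymmetry of the role of $I$, which serves simultaneously as the minimal element of the output side $\vJ[\geq]_I$ and the maximal element of the input side $\vJ[\leq]_{I_0}$. The hypothesis $\vI=\{I\}$ is precisely what collapses the general ambiguity between $I_0$ and $I$ and makes the disjoint-union decomposition of $\vJ\setminus\{I\}$ cover everything, which is essential: for larger $\vI$ the elements of $\vJ$ strictly between $I_0$ and $I$ would not be accounted for, and the product map would only be an inclusion of a face.
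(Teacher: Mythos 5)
Your proof is correct and takes essentially the same approach as the paper: both rest on the observation that when $\vI=\{I\}$ the index set $\vJ\setminus\{I\}$ decomposes as the disjoint union $(\vJ[\geq]_I\setminus\{I\})\sqcup(\vJ[\leq]_I\setminus\{I\})$, inducing the product decomposition of cubes $[0,1]^{\vJ\setminus I}\cong[0,1]^{\vJ[\leq]_{I_0}\setminus I}\times[0,1]^{\vJ[\geq]_I\setminus I}$. Your explicit verification at the level of face posets is a bit more detailed than the paper's two-sentence argument but amounts to the same computation.
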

\begin{proof}
By construction, $ \sigma_{\vI \subset \vJ} $ is the cube on $\vJ \setminus I$, whereas $ \Adams_{\vJ[\leq]_{I}; \inp}$ and  $\Adams_{\vJ[\geq]_{I}; \out}$ are respectively the cubes on $ \vJ[\leq]_{I} \setminus I$ and $\vJ[\geq]_{I} \setminus I$ (we have used the fact that $I_0=I$,). By assumption, every element of $\vJ$ may be uniquely written as the union of an element of $\vJ[\leq]_{I}$ and $ \vJ[\geq]_{I}  $. Equation  \eqref{eq:map_pairs_barycentric_moduli}  realises the induced product decomposition
\begin{equation}
  [0,1]^{\vJ \setminus I  } \cong  [0,1]^{ \vJ[\leq]_{I} \setminus I  } \times [0,1]^{\vJ[\geq]_{I} \setminus I }.
\end{equation}
\end{proof}
\begin{figure}
\centering
  \begin{tikzpicture}
\node at (5*\mw pt, 0)  {$z_\inp$};
    \CtoEBent{0}{0}{\min I}{\max I};
\node[blue] at (1*\mw pt, 0)  {$\Adams_{I^{\inp}_{J_0}}$};   
\node[red] at (-3*\mw pt, 0)  {$ \Adams_{J^{\out}_{I}}  $};
   \EtoCBent{-2*\mw}{0};
\node at (-7*\mw pt, 0)  {$z_\out$};
  \end{tikzpicture}
\caption{A fibre of $\Ann^{\infty}(\sigma_{\{ I\} \subset \vJ}) $, with curves labelled by the moduli space from which they are obtained by pullback.}
\label{fig:fibre_infinite_annulus_broken-top-dim}
\end{figure}
Composing Equation \eqref{eq:inp_pairs} with the maps defined in Equations \eqref{eq:barycentric_to_inp} and \eqref{eq:barycentric_to_out} yields maps
\begin{equation} \label{eq:cell_pair_mu_in_out}
\xymatrix{ \Adams_{J^{\out}_{I}} & \ar[l]_{ \mu_{\out} } \ar[r]^{\mu_{\inp}}  \sigma_{\vI \subset \vJ} & \Adams_{{I_0}^{\inp}_{J_0}} .}
\end{equation}
Taking the union of the pullbacks of the universal curves on these spaces, we obtain a moduli space of Riemann surfaces over $\sigma_{\{ I \} \subset \vJ} $ which we denote $ \Ann^{\infty}(\sigma_{\{ I\} \subset \vJ}) $, and call the \emph{moduli space of degenerate annuli} parametrised by $ \sigma_{\{ I \} \subset \vJ} $; see Figure \ref{fig:fibre_infinite_annulus_broken-top-dim}.    

Note that all top-dimensional strata of the pairs barycentric subdivision are of this form. In the next section, we shall show that this construction is appropriately compatible across boundary strata, i.e. that we obtain a \emph{moduli space of degenerate annuli} parametrised by $ \Q $.
\subsection{Degenerate annuli over a cell of the pairs barycentric subdivision}
Given a pair $\vI \subset \vJ$, define
\begin{align}
  \min_{\vI}(\vJ) & = \{ \min J_1 | I_0 \subset J_1 \subset I, \textrm{ and } J_1 \in \vJ \} \\
\max_{\vI}(\vJ) & = \{ \max J_1 | I_0 \subset J_1 \subset I, \textrm{ and } J_1 \in \vJ \}.
\end{align}

For each cell of the pairs barycentric subdivision, this defines maps
\begin{equation} \label{eq:min_pairs}
\xymatrix{ \Adams_{I^{\geq}_{I_0}} & \sigma_{\vI \subset \vJ}  \ar[r]^{\min} \ar[l]_{\max}& \Adams_{I^{\leq}_{I_0}} }\\ 
\end{equation}
which, at the level of partially ordered sets, are given by
\begin{equation}
\xymatrix{  \max_{\vI}(\vI_1) \subset  \max_{\vI}(\vJ_1)  & \ar@{|->}[r] \ar@{|->}[l]  \vI_1 \subset \vJ_1 & \min_{\vI}(\vI_1) \subset  \min_{\vI}(\vJ_1)}.
\end{equation}
Define  the universal curve $ \Ann^{\infty}_{\vI \subset \vJ} $ over
 \begin{equation}
\Adams^{\infty}_{\vI \subset \vJ} \equiv    \Adams_{J^{\out}_{I}} \times \Adams_{I^{\geq}_{I_0}} \times \Adams_{I^{\leq}_{I_0}} \times \Adams_{{I_0}^{\inp}_{J_0}}
 \end{equation}
to be the union of the pullbacks of the universal curves of each factor.

\begin{defin} \label{def:universal_deg_annuli_pairs_subdiv}
 The \emph{universal family of degenerate annuli} $\Ann^{\infty}(\sigma_{\vI \subset \vJ})$ over $\sigma_{\vI \subset \vJ}  $  is the pullback of $\Ann^{\infty}_{\vI \subset \vJ} $  by the product of $\mu_\out$, $\max$, $\min$, and $\mu_\inp$.
\end{defin}
The reader should consult Figure \ref{fig:fibre_infinite_annulus_broken} which shows how to interpret the fibre of $\Ann^{\infty}(\sigma_{\vI \subset \vJ}) $ as a degenerate annulus for an example with $\vI$ consisting of four nested sets. 

Given a pair of strata $ \sigma_{\vI[1] \subset \vJ[1]} \subset \sigma_{\vI \subset \vJ} $, there is a commutative diagram for the restrictions of $\mu_\inp$ and $\mu_\out$:
\begin{equation} \label{eq:diagram_moduli_space_inp_out_cubes}
  \xymatrix{\Adams_{I^{1,\inp}_{0,J^1_0}}  & \ar@{->>}[l] \Adams_{I^{\geq}_{0,I^1_0}} \times \Adams_{I^{1,\inp}_{0,J^1_0}}  \times \Adams_{I^{\leq}_{0,I^1_0}} \ar@{^{(}->}[r] & \Adams_{I^{\inp}_{0,J_0}} \\
& \sigma_{\vI[1] \subset \vJ[1]} \ar@{^{(}->}[r] \ar[d] \ar[u] \ar[ul]^{\mu_\inp} \ar[dl]_{\mu_\out} &  \sigma_{\vI \subset \vJ}  \ar[u]^{\mu_\inp} \ar[d]_{\mu_\out} \\
\Adams_{J^{1,\out}_{I^1}}  & \Adams_{I^{1,\leq}_{I}} \times \Adams_{J^{1,\out}_{I^1}}  \times \Adams_{I^{1,\geq}_{I}} \ar@{^{(}->}[r] \ar@{->>}[l] &  \Adams_{J^{\out}_{I}}.
}
\end{equation}
To help the reader navigate the above diagram, note that the upward pointing arrow labelled $\mu_\inp$ is the map appearing in Equation \eqref{eq:cell_pair_mu_in_out}. Since the middle and top horizontal maps are inclusions, the unlabelled upward pointing arrow is the restriction of $\mu_\inp $ to the boundary stratum $\sigma_{\vI \subset \vJ} $. This stratum is equipped with its own map $\mu_\inp$ coming from Equation \eqref{eq:cell_pair_mu_in_out}, which is the left-upward pointing arrow. The bottom half of the diagram is determined in the same way, \emph{mutatis mutandis.}

On the other hand, the maps $\min$ and $\max$ fit into the diagram
\begin{equation} \label{eq:diagram_moduli_space_max_min_cubes}
  \xymatrix{\Adams_{I^{1,\geq}_{I^1_0}} & \Adams_{I^{\geq}_{0,I^1_0}}  \times   \Adams_{I^{\geq}_{I_0}}  \times \Adams_{I^{1,\geq}_{I}} \ar@{->>}[r] \ar@{_{(}->}[l] & \Adams_{I^{\geq}_{I_0}} \\
& \sigma_{\vI[1] \subset \vJ[1]} \ar@{^{(}->}[r] \ar[d] \ar[u]  \ar[ul]^{\max} \ar[dl]_{\min} &  \sigma_{\vI \subset \vJ}  \ar[u]^{\max} \ar[d]_{\min} \\
\Adams_{I^{1,\leq}_{I^1_0}}  &   \Adams_{I^{1,\leq}_{I}}  \times \Adams_{I^{\leq}_{I_0}}   \times  \ar@{->>}[r] \ar@{_{(}->}[l] \Adams_{I^{\leq}_{0,I^1_0}}  & \Adams_{I^{\leq}_{I_0}} .
}
\end{equation}
One can navigate this diagram in essentially the same way as the previous one: the left-upward pointing map is the map $\max$ from Equation \eqref{eq:min_pairs}, applied to the boundary stratum $ \sigma_{\vI[1] \subset \vJ[1]} $. Since the top left pointing arrow is an inclusion, the commutativity of this diagram is equivalent to the claim that this map factors through the substratum $ \Adams_{I^{\geq}_{0,I^1_0}}  \times   \Adams_{I^{\geq}_{I_0}}  \times \Adams_{I^{1,\geq}_{I}}  $ of the target. This stratum projects to its middle factor, which is the image of the map $\max$ defined on $ \sigma_{\vI \subset \vJ}$. The commutativity of the top square should then be interpreted as the assertion that the map $\max$ defined on $ \sigma_{\vI \subset \vJ} $, when restricted to its boundary stratum $\sigma_{\vI[1] \subset \vJ[1]} $, factors through the image of the map $\max$ defined on this stratum. Similar considerations apply to the bottom part of the diagram.
\begin{figure}
\centering
  \begin{tikzpicture}
\node at (5*\mw pt, 0)  {$z_\inp$};
    \CtoEBent{0}{0}{\min I^1_0}{\max I^1_0};
\node[blue] at (1*\mw pt, 0)  {$\Adams_{I^{1,\inp}_{0,J^1_0}}$};
\ConoutBig{-2*\mw pt }{2.5*\mw pt}{\max I_0};
\node at (-4*\mw pt, 4*\mw pt) {$ \Adams_{I^{\geq}_{0,I^1_0}} $}; 
\ConoutBig{-2*\mw pt }{-2.5*\mw pt}{\min I_0};
\node at (-4*\mw pt, -4*\mw pt) {$ \Adams_{I^{\leq}_{0,I^1_0}} $}; 
\ConoutBig{-8*\mw pt }{2.5*\mw pt}{\max I};
\node at (-10*\mw pt, 4*\mw pt) {$ \Adams_{I^{\geq}_{I_0}} $};
\ConoutBig{-8*\mw pt }{-2.5*\mw pt}{\min I};
\node at (-10*\mw pt, -4*\mw pt) {$ \Adams_{I^{\leq}_{I_0}} $};
\ConoutBig{-14*\mw pt }{2.5*\mw pt}{\max I^1};
\node at (-16*\mw pt, 4*\mw pt) {$   \Adams_{I^{1,\geq}_{I}} $};
\ConoutBig{-14*\mw pt }{-2.5*\mw pt}{\min I^1};
\node at (-16*\mw pt, -4*\mw pt) {$\Adams_{I^{1,\leq}_{I}}  $};
\node[red] at (-21*\mw pt, 0)  {$ \Adams_{J^{1,\out}_{I^1}}  $};
   \EtoCBent{-20*\mw}{0};
\node at (-25*\mw pt, 0)  {$z_\out$};
  \end{tikzpicture}
\caption{A fibre of $\Ann^{\infty}(\sigma_{\vI[1] \subset \vJ[1]}) $ for $\vI[1] = \{I^1_0 \subset I_0 \subset I \subset I^1 \}$, with curves labelled by the moduli space from which they are obtained by pullback.}
\label{fig:fibre_infinite_annulus_broken}
\end{figure}
To complete our analysis of the comparison between the fibres of $\Ann^{\infty}(\sigma_{\vI[1] \subset \vJ[1]}) $ over different strata, define
\begin{equation}
 \partial_{\vI[1] \subset \vJ[1]} \Adams^{\infty}_{\vI \subset \vJ} \equiv  \Adams_{J^{1,\out}_{I^1}}  \times \Adams_{I^{1,\geq}_{I}} \times \Adams_{I^{\geq}_{I_0}} \times \Adams_{I^{\geq}_{0,I^1_0}}\times  \Adams_{I^{\leq}_{0,I^1_0}} \times\Adams_{I^{\leq}_{I_0}} \times   \Adams_{I^{1,\leq}_{I}} \times \Adams_{I^{1,\inp}_{0,J^1_0}};
\end{equation}
inspection of Figure \ref{fig:fibre_infinite_annulus_broken} may help in decoding the notation.
\begin{lem}
The products of the maps  $\mu_\out$, $\max$, $\min$, and $\mu_\inp$ fit in a commutative diagram:
\begin{equation} \label{eq:decomposition_annuli_broken_over_boundary_cells}
  \xymatrix{ 
& \sigma_{\vI[1] \subset \vJ[1]} \ar@{^{(}->}[r] \ar[d] \ar[dl]  &   \sigma_{\vI \subset \vJ}   \ar[d]  \\
\Adams^{\infty}_{\vI[1] \subset \vJ[1]}   &  \partial_{\vI[1] \subset \vJ[1]} \Adams^{\infty}_{\vI \subset \vJ}  \ar@{^{(}->}[r] \ar@{_{(}->}[l] & \Adams^{\infty}_{\vI \subset \vJ}. 
}
\end{equation}
\end{lem}
\begin{proof}
The vertical maps are those arising in Definition \ref{def:universal_deg_annuli_pairs_subdiv}. The bottom horizontal maps are natural inclusions, which we make explicit for the benefit of the reader: for the left pointing arrow, we factor $ \partial_{\vI[1] \subset \vJ[1]} \Adams^{\infty}_{\vI \subset \vJ}  $ as
\begin{equation}
    \Adams_{J^{1,\out}_{I^1}}  \times \left( \Adams_{I^{1,\geq}_{I}} \times \Adams_{I^{\geq}_{I_0}} \times \Adams_{I^{\geq}_{0,I^1_0}} \right) \times  \left( \Adams_{I^{\leq}_{0,I^1_0}} \times\Adams_{I^{\leq}_{I_0}} \times   \Adams_{I^{1,\leq}_{I}} \right) \times \Adams_{I^{1,\inp}_{0,J^1_0}}.
\end{equation}
The terms at the two ends appear in Diagram \eqref{eq:diagram_moduli_space_inp_out_cubes}, and are factors of $\Adams^{\infty}_{\vI[1] \subset \vJ[1]}  $. Up to reordering, the two ``blocks'' of three terms appear in Diagram \eqref{eq:diagram_moduli_space_max_min_cubes}, and include in the remaining two factors of $\Adams^{\infty}_{\vI[1] \subset \vJ[1]}$.

For the right pointing arrow,   we factor $ \partial_{\vI[1] \subset \vJ[1]} \Adams^{\infty}_{\vI \subset \vJ}  $ as
\begin{equation} \label{eq:factorisation_boundary_annuli_into_bigger}
       \underbrace{ \Adams_{J^{1,\out}_{I^1}}  \times  \Adams_{I^{1,\geq}_{I}}}  \times \Adams_{I^{\geq}_{I_0}} \times  \overbrace{ \Adams_{I^{\geq}_{0,I^1_0}}  \times \Adams_{I^{\leq}_{0,I^1_0}} } \times  \Adams_{I^{\leq}_{I_0}} \times  \underbrace{  \Adams_{I^{1,\leq}_{I}} } \times \overbrace{ \Adams_{I^{1,\inp}_{0,J^1_0}}}.
\end{equation}
The three terms above underbraces, as well as those below overbraces, appear in  Diagram \eqref{eq:diagram_moduli_space_inp_out_cubes} and include into two of the factors of $\Adams^{\infty}_{\vI \subset \vJ} $. The remaining factors of $\Adams^{\infty}_{\vI \subset \vJ} $ are those which are not labelled at all in Equation \eqref{eq:factorisation_boundary_annuli_into_bigger}. 

Having described all the maps in Diagram \eqref{eq:factorisation_boundary_annuli_into_bigger}, its commutativity is an immediate consequence of the commutativity of Diagrams \eqref{eq:diagram_moduli_space_inp_out_cubes} and \eqref{eq:diagram_moduli_space_max_min_cubes}.
\end{proof}

By construction, the universal curves over the left and right products of Adams path spaces in the bottom row of Diagram \eqref{eq:decomposition_annuli_broken_over_boundary_cells}  restrict to the natural universal curve in the product in the middle. In particular:
\begin{lem} \label{lem:deg_annuli_agree}
  There is a natural identification
  \begin{equation}
    \Ann^{\infty}(\sigma_{\vI[1] \subset \vJ[1]}) \cong  \Ann^{\infty}(\sigma_{\vI \subset \vJ})| \sigma_{\vI[1] \subset \vJ[1]}. \qed
  \end{equation} 
\end{lem}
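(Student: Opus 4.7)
The plan is to show both sides of the claimed identification are pullbacks of the same universal curve along the same composite map, so that the identification follows by naturality of pullback. By Definition \ref{def:universal_deg_annuli_pairs_subdiv}, the restriction $\Ann^{\infty}(\sigma_{\vI \subset \vJ})|\sigma_{\vI[1] \subset \vJ[1]}$ is the pullback of $\Ann^{\infty}_{\vI \subset \vJ}$ along the restriction to $\sigma_{\vI[1] \subset \vJ[1]}$ of the product map $\mu_\out \times \max \times \min \times \mu_\inp$, while $\Ann^{\infty}(\sigma_{\vI[1] \subset \vJ[1]})$ is the pullback of $\Ann^{\infty}_{\vI[1] \subset \vJ[1]}$ along the analogous product map for the smaller cell.

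The first step is to combine the two commutative diagrams appearing just before the lemma to conclude that the restricted product map factors through $\partial_{\vI[1] \subset \vJ[1]} \Adams^{\infty}_{\vI \subset \vJ}$, i.e.\ through a specific boundary stratum of each of the four Adams moduli space factors of $\Adams^{\infty}_{\vI \subset \vJ}$. Concretely, each of $\mu_\out$, $\max$, $\min$, $\mu_\inp$ maps $\sigma_{\vI[1]\subset\vJ[1]}$ into a boundary stratum of the corresponding Adams cube, with explicit product decompositions: these are read off the two diagrams.

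The second step is a direct application of Lemma \ref{lem:boundary_strata_adams}: each of these four boundary strata of Adams moduli spaces comes equipped with a natural product decomposition into smaller Adams moduli spaces, over which the restriction of the universal curve (by the discussion in Section \ref{sec:adams-univ-curve}, in particular the compatibility imposed via Equations \eqref{eq:boundary_stratum_skip}--\eqref{eq:boundary_stratum_stop}) is the union of pullbacks of the universal curves on the factors. Collating the eight resulting Adams factors of $\partial_{\vI[1] \subset \vJ[1]} \Adams^{\infty}_{\vI \subset \vJ}$ and regrouping them according to the four-fold product defining $\Adams^{\infty}_{\vI[1] \subset \vJ[1]}$, one identifies $\partial_{\vI[1] \subset \vJ[1]} \Adams^{\infty}_{\vI \subset \vJ}$ with $\Adams^{\infty}_{\vI[1] \subset \vJ[1]}$ as a product of Adams cubes, and under this identification the restricted universal curve becomes $\Ann^{\infty}_{\vI[1] \subset \vJ[1]}$. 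Together with the factorisation from the first step, this exhibits both sides of the claimed identification as pullbacks of $\Ann^{\infty}_{\vI[1] \subset \vJ[1]}$ along the same map $\sigma_{\vI[1] \subset \vJ[1]} \to \Adams^{\infty}_{\vI[1] \subset \vJ[1]}$.

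The main obstacle is purely bookkeeping: one must check that the $8$ Adams factors appearing on the boundary stratum regroup correctly into the $4$ Adams factors defining $\Adams^{\infty}_{\vI[1] \subset \vJ[1]}$, using the fact that $I_0^1 \subset I_0 \subset I \subset I^1$ so that sets like $I^{1,\geq}_I$, $I^\geq_{I_0}$, $I^\geq_{0,I_0^1}$ concatenate (along common endpoints) to $I^{1,\geq}_{I_0^1}$, and similarly on the "less than" side and for $\mu_\out$, $\mu_\inp$; this is where one uses that $\vI[1] \subset \vJ[1] \subset \vJ$ and the compatibility of the product decompositions of Lemma \ref{lem:boundary_strata_adams} with further iterations (Diagram \eqref{eq:family_path_gluing_identifications}). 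Since all identifications are canonical, naturality in $\sigma_{\vI[1] \subset \vJ[1]}$ is automatic.
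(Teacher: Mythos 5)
Your overall strategy is the paper's: observe that both sides are pullbacks, and use the two commutative diagrams preceding the lemma to factor the restricted product map $\sigma_{\vI[1] \subset \vJ[1]} \to \Adams^{\infty}_{\vI \subset \vJ}$ through the intermediate space $\partial_{\vI[1] \subset \vJ[1]} \Adams^{\infty}_{\vI \subset \vJ}$, then invoke compatibility of universal curves. That is exactly what the paper does (and why it feels free to write ``\qed'' immediately after the displayed diagram).

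However, there is a genuine error in your second step. You assert that by regrouping the eight Adams factors of $\partial_{\vI[1] \subset \vJ[1]} \Adams^{\infty}_{\vI \subset \vJ}$ into four groups one ``identifies $\partial_{\vI[1] \subset \vJ[1]} \Adams^{\infty}_{\vI \subset \vJ}$ with $\Adams^{\infty}_{\vI[1] \subset \vJ[1]}$ as a product of Adams cubes.'' That identification does not exist: by Equation \eqref{eq:subtrajectories_factor_i} (which is what Lemma \ref{lem:boundary_strata_adams} specialises to for a single break), a concatenation like $\Adams_{I^{1,\geq}_{I}} \times \Adams_{I^{\geq}_{I_0}} \times \Adams_{I^{\geq}_{0,I^1_0}}$ is a \emph{proper} boundary stratum of $\Adams_{I^{1,\geq}_{I^1_0}}$ — it is the face where the cube coordinates labelled by $\max I$ and $\max I_0$ equal $1$ — not the whole cube. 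A dimension count confirms this: the threefold product has dimension $|I^{1,\geq}_{I^1_0}| - 4$, while $\Adams_{I^{1,\geq}_{I^1_0}}$ has dimension $|I^{1,\geq}_{I^1_0}| - 2$. The paper's combined diagram is explicit about this: the arrows out of $\partial_{\vI[1] \subset \vJ[1]} \Adams^{\infty}_{\vI \subset \vJ}$ to both $\Adams^{\infty}_{\vI[1] \subset \vJ[1]}$ and $\Adams^{\infty}_{\vI \subset \vJ}$ are drawn as hooked inclusions, not isomorphisms. The argument you want is therefore not ``identify and pull back'' but rather: $\partial_{\vI[1] \subset \vJ[1]} \Adams^{\infty}_{\vI \subset \vJ}$ is simultaneously a boundary stratum of both $\Adams^{\infty}_{\vI[1] \subset \vJ[1]}$ and $\Adams^{\infty}_{\vI \subset \vJ}$; the universal curves on both ambient spaces restrict, by the compatibility built into the gluing data of Section \ref{sec:adams-univ-curve}, to the same universal curve on this common boundary stratum; and both maps from $\sigma_{\vI[1] \subset \vJ[1]}$ factor through this stratum via the same map. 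The pullbacks then agree, giving the lemma. Your final sentence is in fact consistent with this corrected reading, but the intermediate ``identification'' claim as written is false and should be replaced by an inclusion.
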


The above result implies the existence of a family of degenerate annuli over $\Q$
\begin{equation}
   \Ann^{\infty}(\Q) \to \Q
\end{equation}
whose restriction to each top-dimensional cell of the pairs barycentric subdivision labelled by $\{ I  \} \subset \vJ  $ admits a natural isomorphism
\begin{equation}
  \label{eq:annuli_over_top_cells}
  \Ann^{\infty}(\sigma_{\{ I  \} \subset \vJ} ) \cong \mu^*_\out    \Ubar_{J^{\out}_{I}} \cup \mu^*_\inp  \Ubar_{{I}^{\inp}_{J_0}}.
\end{equation}
The boundaries of the fibres will be denoted
\begin{align}
   \Ann^{\{ 0 \} , \infty}_{\Q}  =\mu^*_\out    \Ubar^{\{ 0 \} }_{J^{\out}_{I}} \cup \mu^*_\inp  \Ubar^{\{ 0 \} }_{{I}^{\inp}_{J_0}}  & \textrm{ and }
   \Ann^{\{ 1 \} , \infty}_{\Q}  =\mu_\out^*   \Ubar^{\{ 1 \} }_{J^{\out}_{I}}    \cup \mu_\inp^* \Ubar^{\{1\}}_{{I}^{\inp}_{J_0}} ,
\end{align}
with the restriction to $q \in \Q$ denoted $  \Ann^{\{ 0 \} , \infty}_{q} $ and $\Ann^{\{ 1 \} , \infty}_{q}   $.

The marked points $z_{\inp}$ and $z_{\out}$ on the fibres of $ \Ubar^{\{1\}}_{{I}^{\inp}_{J_0}}$ and $ \Ubar^{\{ 1 \} }_{J^{\out}_{I}} $ over $\Adams_{{I}^{\inp}_{J_0}}  $ and  $  \Adams_{J^{\out}_{I}} $ yield marked points on $  \Ann^{\{ 1 \} , \infty}_{q}  $ for every $q \in \Q$ which induce a decomposition
\begin{equation}
   \Ann^{\{ 1 \} , \infty}_{\Q} =  \Ann^{z_{\inp} < z_{\out} , \infty}_{\Q} \cup \Ann^{z_{\out} < z_{\inp} , \infty}_{\Q}
\end{equation}
with the first component being the union of the pullbacks of $  \Ubar^{z_\inp < }_{{I}^{\inp}_{J_0}} $ and  $ \Ubar^{z_{\out} > }_{J^{\out}_{I}}  $ while the second component is the union of the pullbacks of $ \Ubar^{z_\inp > }_{{I}^{\inp}_{J_0}}  $ and  $ \Ubar^{z_{\out} < }_{J^{\out}_{I}}  $.

\subsection{Floer data on degenerate annuli} \label{sec:floer-data-degen} On each top-dimensional cell of the pairs barycentric subdivision, consider the map $\Phi^{\infty}_{ \{ I\} \subset \vJ }   $ defined as the composition
\begin{equation} \label{eq:map_degenerate_annuli_floer}
  \xymatrix{  \Ann^{ \infty}(\sigma_{\vI \subset \vJ}) = \mu_\inp^*  \Ubar_{{I}^{\inp}_{J_0}}  \cup \mu_\out^* \Ubar_{J^{\out}_{I}}   \ar@{^{(}->}[d] &   \scrH^0 \times \scrJ^{n+3} \times \scrD^{2^{n+2}} \\
\sigma_{\{I\} \subset \vJ} \times \Ubar_{{I}^{\inp}_{J_0}}   \cup \sigma_{\{I\} \subset \vJ} \times \Ubar_{J^{\out}_{I}}  \ar@{^{(}->}[r]  & P_{\max I} \times \Ubar_{{I}^{\inp}_{J_0}}    \cup   P_{\max J} \times  \Ubar_{J^{\out}_{I}}  \ar[u]^{ (\Phi^{P}_{{I}^{\inp}_{J_0} }, \Phi^{P}_{J^{\out}_{I} )}  }.  }
\end{equation}
The horizontal arrow is induced by the inclusions of $ \sigma_{\{I\} \subset \vJ} $  in $ P_{\max I}$ and $  P_{\max J}  $, arising from the fact that $\sigma_{\{I\} \subset \vJ} $, as a cell of the pairs barycentric subdivision is contained in the open star of the vertices labelled by $ \max I $ and $\max J$, and our assumption in Section \ref{sec:floer-theory-conv} that the open star of every vertex be contained in the corresponding polygon. To describe the bottom pointing arrow, recall that $ \mu_\inp^*  \Ubar_{{I}^{\inp}_{J_0}} $ is the space over $\sigma_{\{I\} \subset \vJ}  $ obtained by pulling back $  \Ubar_{{I}^{\inp}_{J_0}}  \to  \Adams_{{I}^{\inp}_{J_0}} $.  The restriction of the left vertical map to the first component is then the product of the projection to the base and the natural map to $  \Ubar_{{I}^{\inp}_{J_0}}  $. The second component of the left vertical map is given in the same way.

Over a codimension $1$ stratum, the restrictions of the maps $ \Phi^{\infty}_{ \{ I\} \subset \vJ } $  agree under the natural identifications; on the first component, this follows from Equations \eqref{eq:continuation_inp_skip_-}-\eqref{eq:restrict_J_in_stop}, while on the second this follows from the analogous conditions imposed in Equations \eqref{eq:cont_out_skip_-}-\eqref{eq:restrict_J_out_stop}. Taking the union over all top-dimensional cells, we obtain a map
\begin{equation}
  \Phi^{\infty}_{\Q} = (\phi^{\infty}_{\Q}, J^{\infty}_{\Q}, \psi^{\infty}_\Q)  \co \Ann^{\infty}(\Q)  \to \scrH^0 \times  \scrJ^{n+3} \times \scrD^{2^{n+2}}.
\end{equation}

Given intersection points $x_\inp ,x_\out \in L \cap L'$, define
\begin{equation}
  \cM^{\infty}_{\vI \subset \vJ }(x_\out; x_\inp)
\end{equation}
to be the moduli space of finite energy maps from a fibre of $\Ann^{\infty}(\sigma_{\vI \subset \vJ })$ to $X$ such that
\begin{align} 
 \partial_{s} u(z) & = J^{\infty}_{\Q}(z) \partial_{t}u(z) & u(z) & \in  \phi^{\infty}_{\Q}(z) (L) \textrm{ if } z \in  \Ann^{z_{\inp} < z_{\out}, \infty}(\sigma_{\vI \subset \vJ } ) \\ 
u(z) & \in F_{q}\textrm{ if } z \in \Ann^{\{0\}, \infty}(q)  & u(z) & \in  \phi^{\infty}_{\Q}(z) (L') \textrm{ if } z \in  \Ann^{z_{\out} < z_{\inp}, \infty }( \sigma_{\vI \subset \vJ }),
\end{align}
and which converge to $x_\inp$ and $x_\out$ at the ends, and with compatible convergence to intersection points along ends (i.e. each node of the corresponding nodal Riemann surface is labelled by an intersection point of Lagrangians).

\begin{rem}
The above moduli space represents a significant change in perspective from previous constructions wherein the labels on any moduli space determined a basepoint in $\Q$  whose corresponding fibre served as the boundary condition for all elements of the moduli space. The results of Section \ref{sec:floer-theory-conv} ensure that we can choose any fibre in a fixed neighbourhood of the basepoint, and obtain a bijective correspondence between the moduli spaces defined for different fibres. The new point of view is that, using the fact that the moduli spaces we consider are \emph{parametrised moduli spaces,} we can choose any map from the parameter space to the space of fibres over which the given Floer data are defined, and introduce a new moduli space, with elements in bijective correspondence to the old as a fibre product.

For specificity, let us consider the case of the moduli space $\Mbar_{I}(x, y) $. The maps $ \Psi_I $ and $\Phi_I$ chosen in Section \ref{sec:locally-const-famil} allow us to define Floer data $  { \Psi_I} _*(\Phi_I) $ parametrised by  $  P_{I} \times \Ubar_{I} $. If we fix a point  $q \in P_I$, then these data, together with the Lagrangian boundary condition $F_q$ along $   \Ubar^{\{0\}}_{I} $, yield moduli spaces $\Mbar_{q,I}(x, y)  $, which are intertwined by homeomorphisms induced by $\Psi_I$. In this example, the new point of view is to instead pick a map  $ \Adams_I \to P_I$, and consider the composition of $ { \Psi_I} _*(\Phi_I)  $ with the map
\begin{equation}
   \Ubar_{I} \to  \Adams_I \times \Ubar_I \to P_I \times \Ubar_I.
\end{equation}
This gives a new family of Floer data on $\Ubar_I$, which again yields a moduli space  homeomorphic to $\Mbar_{I}(x, y) $, but now with  the boundary  along $   \Ubar^{\{0\}}_{I} $ prescribed according to the map $  \Adams_I \to P_I $ (i.e. over the line $  \Ubar^{\{0\}}_{\vr} $ over a point $\vr \in \Adams_I $, the Lagrangian boundary condition  is the fibre at the image of $\vr$ in $P_I$).
\end{rem}

\begin{lem} \label{lem:moduli_space_deg_annuli_product}
There is a natural homeomorphism
\begin{equation}
    \cM^{\infty}_{\{I\} \subset \vJ }(x_\out; x_\inp) \cong  \coprod_{\substack{ x \in F_I \cap L_I \\ x' \in F_I \cap L'_I}}  \cM_{q_I, \vJ[\geq]_{I}; \out} (x, x_\out,  x') \times  \cM_{q_I, \vJ[\leq]_{I}; \inp} (x', x_\out,  x).
\end{equation}
\end{lem}
\begin{proof}
By Equation \eqref{eq:map_pairs_barycentric_moduli}, $  \sigma_{\vI \subset \vJ} $  splits as a product of the spaces parametrising $  \cM_{q_I, \vJ[\geq]_{I}; \out}(x, x_\out,  x') $ and $\cM_{q_I, \vJ[\leq]_{I}; \inp}(x', x_\out,  x)$, and the universal curve $ \Ann^{\infty}(\sigma_{\{ I  \} \subset \vJ} )$ is obtained by pulling back the domains of these moduli spaces. Since the Cauchy-Riemann equations defining $ \cM^{\infty}_{\{I\} \subset \vJ } (x_\out; x_\inp)  $ are also obtained by pullback, we obtain an identification of moduli spaces of solutions after applying the family of homeomorphisms from Equation \eqref{eq:diffeo_moduli_varying_points} to the constituent moduli spaces of continuation maps.
\end{proof}

The virtual dimension of $  \cM^{\infty}_{\vI \subset \vJ }(x_\out; x_\inp)  $ is 
\begin{equation}
  \dim \sigma_{\vI \subset \vJ } - n +  \deg x_\out - \deg x_\inp.
\end{equation} 
Since the Floer data on each component of a fibre of $\Ann^{\infty}(\sigma_{\vI \subset \vJ }) $ were chosen generically in Sections \ref{sec:locally-const-famil}, \ref{sec:locally-const-cont-inp}, and \ref{sec:locally-const-famil-output}, this moduli space is empty whenever the virtual dimension is negative.  Assuming that $\deg x_\out = \deg x_\inp $, all moduli spaces are therefore empty, except those for which $\dim  \sigma_{\vI \subset \vJ } = n$, i.e. $\vI = \{ I\}$, and  $\vJ$ has length $n+1$. Define
\begin{equation}
   \cM^{\infty}_{I}(x_\out; x_\inp) \equiv \bigcup_{ I \in \vI  \subset \vJ }   \cM^{\infty}_{\vI \subset \vJ }(x_\out; x_\inp).
\end{equation}
Our regularity assumptions imply that this space has virtual dimension $\deg x_\out - \deg x_\inp$, and that it is stratified by the submanifolds  $\cM^{\infty}_{\vI \subset \vJ }(x_\out; x_\inp)$ whose codimension is equal to the codimension of $\sigma_{\vI \subset \vJ }$. Whenever  the virtual dimension vanishes, we conclude that the only contributions to this moduli space come from cells of the form $ \sigma_{\{ I \} \subset \vK}$, since the moduli spaces parametrised by all other cells have negative virtual dimension. In this case, there is a straightforward description of the moduli space:
\begin{lem} \label{lem:decomposition_degenerate_annuli_0-dim}
  If $\deg x_\out = \deg x_\inp $, there is a natural homeomorphism 
  \begin{equation}  \cM^{\infty}_{I}(x_\out; x_\inp) \cong \coprod_{\substack{x \in F_{I} \cap L \\ x' \in F_{I}  \cap L'}}  \coprod_{\gamma \in H_1(F_{q_I}, \bZ) }\Mbar_{q_I,   I; \out}(x, x_\out; x') \times \Mbar_{q_I, I; \inp}(x'; x_\inp, x).
\end{equation}
\end{lem}
\begin{proof}
  We apply Lemma \ref{lem:moduli_space_deg_annuli_product} to each such cell, and note that, according to Equations \eqref{eq:definition_moduli_in_cells} and \eqref{eq:moduli_out_cells}, we have
  \begin{equation}
        \Mbar_{q_I,   I; \out}(x, x_\out; x') \times \Mbar_{q_I, I; \inp}(x'; x_\inp, x) = \coprod_{  \substack{ I=\max \vI  \\ I = \min \vJ}} \Mbar_{q,\vI; \inp}(x'(q); x_{\inp}, x(q)) \times  \Mbar_{q, \vJ; \out}(x(q) ,x_{\out} ; x'(q)).
  \end{equation}
The result follows from the fact that the assignment $\vK = \vI \cup \vJ$ yields a bijective correspondence between pairs $(\vI,\vJ)$ such that $\max \vI = I = \min \vJ$ and sequences $\vK$ containing $I$.\end{proof}

We now restrict attention to the space of annuli whose restriction to the boundary fibre is a null-homologous circle:
\begin{equation}
  \cM^{\infty}_{[0],I}(x_\out ; x_\inp) \subset  \cM^{\infty}_{I}(x_\out; x_\inp).
\end{equation}
Denote by $\Mbar^{\infty}_{[0],I}(x_\out ; x_\inp) $ the Gromov-Floer compactification.  

In order to describe this decomposition in terms of the constituent curves, consider the natural decomposition 
\begin{align}
  \cM_{q_I,  I ; \out}(x, x_\out,  x') & \equiv \coprod_{\gamma \in H_1(F_{q_I}, \bZ)  }  \cM_{q_I,    I, \gamma; \out}(x, x_\out,  x')   \\
 \cM_{q_I,I; \inp}(x'; x_\inp, x) & \equiv \coprod_{\gamma \in H_1(F_{q_I}, \bZ)  } \cM_{q_I, I, \gamma; \inp}(x'; x_\inp, x),
\end{align}
where the components of the right hand side labelled by $\gamma$ consist of curves $u$ whose boundary along $q_I$ yields a circle in $F_{q_I}$ representing the homology class $\gamma$ after concatenation with the paths from $x$ and $x'$ to the basepoint on $F_I$, which we fix as in Section \ref{sec:areas-strips-flux}.

The following result is now an immediate consequence of Lemma \ref{lem:decomposition_degenerate_annuli_0-dim}: 
\begin{lem}
  If $\deg x_\out = \deg x_\inp $,  $\cM^{\infty}_{[0],I}(x_\out; x_\inp) $  is a $0$-dimensional manifold, which agrees with its Gromov-Floer compactification, and which is naturally homeomorphic to
  \begin{equation} \coprod_{\substack{x \in F_{I} \cap L \\ x' \in F_{I}  \cap L'}}  \coprod_{\gamma \in H_1(F_{q_I}, \bZ) }\Mbar_{q_I,   I, \gamma; \out}(x, x_\out; x') \times \Mbar_{q_I, I, \gamma; \inp}(x'; x_\inp, x).
  \end{equation} \qed
\end{lem}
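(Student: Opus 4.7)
The plan has three steps: reduce via dimension count to top-dimensional cells of the pairs barycentric subdivision, identify the moduli on such a cell as a product of the input and output moduli from Section~\ref{sec:moduli-spac-assoc}, and then rule out the remaining potential boundary degenerations.

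The first step is a dimension count. The virtual dimension of $\cM^\infty_{\vI \subset \vJ}(x_\out;x_\inp)$ is $\dim\sigma_{\vI \subset \vJ} - n + \deg x_\out - \deg x_\inp$; under the hypothesis $\deg x_\out = \deg x_\inp$, this is non-negative only when $\dim\sigma_{\vI \subset \vJ} = n$, i.e.\ $\vI = \{I\}$ and $|\vJ| = n+1$. For the generic Floer data fixed in Sections~\ref{sec:locally-const-famil}--\ref{sec:locally-const-famil-output}, lower-dimensional cells carry only empty moduli spaces.

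For the second step, on a top-dimensional cell $\sigma_{\{I\} \subset \vJ}$, Definition~\ref{def:universal_deg_annuli_pairs_subdiv} exhibits $\Ann^\infty(\sigma_{\{I\} \subset \vJ})$ as a disjoint union $\mu_\inp^*\Ubar_{I^\inp_{J_0}} \cup \mu_\out^*\Ubar_{J^\out_I}$, and Equation~\eqref{eq:map_degenerate_annuli_floer} equips the two pieces independently with the Floer data $\Phi^P_{I^\inp_{J_0}}$ and $\Phi^P_{J^\out_I}$. A solution $u$ thus decomposes as a pair $(u_\out, u_\inp)$ with matching asymptotic values $x \in L \cap F_{q_I}$ and $x' \in L' \cap F_{q_I}$ at the node, yielding
\begin{equation}
\cM^\infty_{\{I\} \subset \vJ}(x_\out;x_\inp) \cong \coprod_{x,x'} \cM_{q_I,\vJ[\geq]_I;\out}(x,x_\out;x') \times \cM_{q_I,\vJ[\leq]_I;\inp}(x';x_\inp,x).
\end{equation}
Taking the union over all $\vJ$ with $I \in \vJ$ and $|\vJ|=n+1$, the subsequences $\vJ[\geq]_I$ and $\vJ[\leq]_I$ range independently over maximal chains, and their gluings along codimension-one faces (where an element of $\vJ$ is dropped) reassemble precisely to the spaces $\cM_{q_I,I;\out}$ and $\cM_{q_I,I;\inp}$ defined in Section~\ref{sec:moduli-spac-assoc}. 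The null-homology condition on the $F_{q_I}$-component of $\partial u$ translates to $[\partial u_\inp]=[\partial u_\out]=:\gamma$ in $H_1(F_{q_I};\bZ)$, producing the stated decomposition indexed by $\gamma$.

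The step I expect to need the most care is the final one: compactness. Sphere and disc bubbles are ruled out by $\pi_2(\Q)=0$ and the tautological unobstructedness assumption. What remains are Floer strip breakings at the input or output. A breaking at $x_\inp$ would pair a strip in $\Mbar(y_\inp,x_\inp)$ of non-negative dimension $\deg y_\inp-\deg x_\inp-1$ with a degenerate annulus of virtual dimension $\deg x_\out-\deg y_\inp\leq -1$, which is empty for generic data; the analogous bound handles $x_\out$. Internal Floer breakings at intermediate fibres $F_{q_i}$ ($i\in I$), and degenerations to codimension-one cube faces of $\sigma_{\{I\}\subset\vJ}$, are already absorbed in the union-and-gluing definitions of $\cM_{q_I,I;\inp}$ and $\cM_{q_I,I;\out}$ as unions of $\Mbar_{\vK;\inp/\out}$, and do not produce new limit points. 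Thus $\cM^\infty_{[0],I}(x_\out;x_\inp)$ is compact and $0$-dimensional, agreeing with its Gromov--Floer compactification.
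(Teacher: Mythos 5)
Your proof is correct and captures what the paper leaves implicit (the paper states this lemma with no proof). The three-step structure — dimension count ruling out all but the top-dimensional cells $\sigma_{\{I\}\subset\vJ}$, the product decomposition of the parameter cube via $\sigma_{\{I\}\subset\vJ}\cong\Adams_{\vJ[\geq]_I;\out}\times\Adams_{\vJ[\leq]_I;\inp}$ together with the corresponding split of the Floer data in Equation~\eqref{eq:map_degenerate_annuli_floer}, and the elimination of breakings by the same dimension count — is the intended reasoning. Your translation of the null-homology condition into $[\partial u_\inp]=[\partial u_\out]=\gamma$ also matches the convention the paper sets up just above the lemma.

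One small point worth tightening: you treat ``agrees with its Gromov--Floer compactification'' purely as a statement about ruling out broken/bubbled limits, which is fine, but a compact $0$-manifold is finite, and finiteness of the disjoint union over $\gamma\in H_1(F_{q_I},\bZ)$ is not immediate from Gromov compactness of the two factors separately (each $\Mbar_{q_I,I,\gamma;\out}\times\Mbar_{q_I,I,\gamma;\inp}$ is finite for fixed $\gamma$, but a priori infinitely many $\gamma$ could contribute). This is where the null-homology constraint $[0]$ actually does work: it forces the total topological energy to be determined up to bounded ambiguity by $(x_\inp,x_\out)$, so only finitely many $\gamma$ give nonnegative energy to both pieces simultaneously. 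The paper addresses this implicitly later (Section~\ref{sec:other-auxil-choic}, where $\cE$ is shown to be locally constant on $\Adams^{[0,\infty]}_{[0]}$), so your omission is consistent with the paper's level of detail, but if you want a self-contained argument you should flag the energy bound as the reason the coproduct over $\gamma$ is finite.
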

Write $\cM^{\infty}_{[0]}(x_\out; x_\inp)  $ for the union of the moduli spaces $ \cM^{\infty}_{[0],I}(x_\out; x_\inp) $ over all cells $I$ of $\Sigma$.

\subsection{Gluing description of Floer data on degenerate annuli}  \label{sec:gluing-descr-floer} 
As in Lemma \ref{lem:deg_annuli_agree}, consider an inclusion of cells of the pairs barycentric subdvisision $
\sigma_{\vI[1] \subset \vJ[1]} \subset \sigma_{\vI \subset \vJ}$.
Composing the maps $\mu_\inp$ and $\mu_\out$ with projection to the factors labelled by elements which do not lie in  $ \min \vI[1]$ and $\max \vI[1]$  yields a map
\begin{equation} \label{eq:gluing_parameters_pairs_barycentric}
\sigma_{\vI \subset \vJ} \to [0,\infty]^{\min \vI[1] \setminus \min \vI} \times [0,\infty]^{\max \vI[1] \setminus \max \vI}.
\end{equation}
This map provides a gluing description of the restriction of $\Ann^{\infty}(\sigma_{\vI \subset \vJ})$ to a neighbourhood of $\sigma_{\vI[1] \subset \vJ[1]}$. Indeed, composing the map from $ \sigma_{\vI \subset \vJ} $ to $\Adams^{\infty}_{\vI \subset \vJ}$ with the maps in Equations \eqref{eq:project_map_adams} for each factor of the target defines a map $
\sigma_{\vI \subset \vJ} \to  \Adams^{\infty}_{\vI[1] \subset \vJ[1]}$.  After restricting to a neighbourhood $\nu_{\vI \subset \vJ} \sigma_{\vI[1] \subset \vJ[1]} $ of $  \sigma_{\vI[1] \subset \vJ[1]} $ in $ \sigma_{\vI \subset \vJ} $, we  denote the pullback of the universal curve under this map by
\begin{equation}
  \Ann^{\infty}_{\vI[1] \subset \vJ[1]}( \nu_{\vI \subset \vJ} \sigma_{\vI[1] \subset \vJ[1]} ) \to \nu_{\vI \subset \vJ} \sigma_{\vI[1] \subset \vJ[1]}.
\end{equation}

The gluing map in Equation \eqref{eq:gluing_maps_universal_curve_paths} yields a map of families of surfaces over $\nu_{\vI \subset \vJ} \sigma_{\vI[1] \subset \vJ[1]}  $ 
\begin{equation} \label{eq:gluing_map_for_annuli}
  \Ann^{\infty}_{ \vI[1] \subset \vJ[1] }(\nu_{\vI \subset \vJ} \sigma_{\vI[1] \subset \vJ[1]}) \to \Ann^{\infty}(\nu_{\vI \subset \vJ} \sigma_{\vI[1] \subset \vJ[1]}).
\end{equation}
Define the continuation data
\begin{equation} \label{eq:continuation_data_annuli_nbd_substratum}
 \Phi^{\infty}_{\vI[1] \subset \vJ[1] } = (\phi^{\infty}_{\vI[1] \subset \vJ[1] }, J^{\infty}_{\vI[1] \subset \vJ[1] }, \psi^{\infty}_{\vI[1] \subset \vJ[1]})  \co \Ann^{\infty}_{\vI[1] \subset \vJ[1]}(\sigma_{\vI \subset \vJ} ) \to \scrH^0 \times \scrJ^{n+3} \times \scrD^{2^{n+2}}
\end{equation}
to be the composition of $\Phi^{\infty}_\Q$ with the gluing map of Equation \eqref{eq:gluing_map_for_annuli}.

If $V_{\vI[1] \subset \vJ[1]}$ is a neighbourhood of $ \sigma_{\vI[1] \subset \vJ[1]} $ in $\Q$  whose intersection with every cell $ \sigma_{\vI \subset \vJ}$ lies in $\nu_{\vI \subset \vJ} \sigma_{\vI[1] \subset \vJ[1]} $, we define
\begin{equation} \label{eq:moduli_degenerate_annuli_nbd}
 \Ann^{\infty}_{\vI[1] \subset \vJ[1]  }(V_{\vI[1] \subset \vJ[1]}) \to V_{\vI[1] \subset \vJ[1]}
\end{equation}
to be the union of the restrictions
\begin{equation} 
   \Ann^{\infty}_{\vI[1] \subset \vJ[1]  }(\nu_{\vI \subset \vJ} \sigma_{\vI[1] \subset \vJ[1]}) | \nu_{\vI \subset \vJ} \sigma_{\vI[1] \subset \vJ[1]} \cap   V_{\vI[1] \subset \vJ[1]},
\end{equation}
glued along the natural identifications across strata of the pairs barycentric subdivision. The compatibility of the Floer data across strata implies that the maps in Equation \eqref{eq:continuation_data_annuli_nbd_substratum} yield a continuous family of Floer data
\begin{equation}
  \Phi^{\infty}_{\vI[1] \subset \vJ[1]} \co \Ann^{\infty}_{\vI[1] \subset \vJ[1]  }(V_{\vI[1] \subset \vJ[1]}) \to \scrH^0 \times \scrJ^{2^{n+2}} \times \scrD^{2^{n+2}}.
\end{equation}

\section{Cardy's relation}
\label{sec:cardys-relation}

The reader familiar with the use of Cardy's relation in the generation criterion for Fukaya categories \cite{A-generate} is likely already aware of what will happen in the rest of the paper: having constructed a moduli space of degenerate annuli parametrised by $q \in Q$, we shall realise this moduli space as a boundary of a moduli space of annuli of arbitrary modular parameter. This moduli space will have another boundary component, which will be shown to give rise to the identity map on Floer cohomology.

There is one remaining technical difficulty ahead, arising from the need to define compatible gluings of the moduli spaces of degenerate annuli defined over different cells of the pairs barycentric subdivision associated to $\Sigma$.  In the next few paragraphs, which have no content subsequently required in the paper, we attempt to give some intuition for the problem we shall face. The reader who would like to continue with the logical development can safely skip to Section \ref{sec:moduli-smooth-annuli}.

In essence, the difficulty has nothing to do with annuli, and can be illustrated more succinctly by considering a moduli space of continuation maps parametrised by a  $1$-dimensional manifold $\Q$, which is stratified as two closed intervals meeting at a point $q$. The reader should have in mind the example in 
which the  moduli spaces we shall next describe arise as unions of components in the moduli space of degenerate annuli.

\begin{figure}
\centering
\begin{tikzpicture}
\draw[line width=2*\lw] (-1,0)--(3,0);
\fill (1,0) circle (8*\lw);
\node[below] at (1,0) {$q$};

\begin{scope}[shift={(-.5,0)}]
\draw[line width=\lw] (-1/8,2.5+1/8-1/2 )--(-1/8 , 3.75+1/8+1/2);
\draw[line width=\lw] (1/8, 2.5+1/8 - 1/2)--(1/8, 3.75+1/8+1/2);
\fill (0, 2.5+1/8) circle (2*\lw);
\fill (0, 3.75+1/8) circle (2*\lw);
\node at (0,4.5) {${\scriptstyle l}$};

\node at (0,.75) {${\scriptstyle i}$};
\draw[line width=\lw] (-1/8,1.5-1/8-1/2 )--(-1/8 , 1.5-1/8+1/2);
\draw[line width=\lw] (1/8, 1.5-1/8 - 1/2)--(1/8, 1.5-1/8+1/2);
\fill (0, 1.5-1/8) circle (2*\lw);
\node at (0,2) {${\scriptstyle j}$};
\node at (0,3.25) {${\scriptstyle S}$};
\end{scope}

\node at (1,.75) {${\scriptstyle i}$};
\draw[line width=\lw] (1-1/8,1.5-1/8-1/2 )--(1-1/8 , 1.5-1/8+1/2);
\draw[line width=\lw] (1+1/8, 1.5-1/8 - 1/2)--(1+1/8, 1.5-1/8+1/2);
\fill (1, 1.5-1/8) circle (2*\lw);
\node at (1,2) {${\scriptstyle j}$};
\draw[line width=\lw] (1-1/8,2.5+1/8-1/2 )--(1-1/8 , 2.5+1/8+1/2);
\draw[line width=\lw] (1+1/8, 2.5+1/8 - 1/2)--(1+1/8, 2.5+1/8+1/2);
\fill (1, 2.5+1/8) circle (2*\lw);
\node at (1,3.25) {${\scriptstyle k}$};
\draw[line width=\lw] (1-1/8,3.75+1/8-1/2 )--(1-1/8 , 3.75+1/8+1/2);
\draw[line width=\lw] (1+1/8, 3.75+1/8 - 1/2)--(1+1/8, 3.75+1/8+1/2);
\fill (1, 3.75+1/8) circle (2*\lw);
\node at (1,4.5) {${\scriptstyle l}$};

\begin{scope}[shift={(.5,0)}]
\node at (2,.75) {${\scriptstyle i}$};
\draw[line width=\lw] (2-1/8,1.5-1/8-1/2 )--(2-1/8 , 2.5+1/8+1/2);
\draw[line width=\lw] (2+1/8, 1.5-1/8 - 1/2)--(2+1/8, 2.5+1/8+1/2);
\fill (2, 1.5-1/8) circle (2*\lw);
\fill (2, 2.5+1/8) circle (2*\lw);
\node at (2,2) {${\scriptstyle S}$};
\node at (2,3.25) {${\scriptstyle k}$};
\draw[line width=\lw] (2-1/8,3.75+1/8-1/2 )--(2-1/8 , 3.75+1/8+1/2);
\draw[line width=\lw] (2+1/8, 3.75+1/8 - 1/2)--(2+1/8, 3.75+1/8+1/2);
\fill (2, 3.75+1/8) circle (2*\lw);
\node at (2,4.5) {${\scriptstyle l}$};
\end{scope}
\begin{scope}[shift={(5,0)}]
\draw[line width=2*\lw] (-1,0)--(3,0);
\fill (1,0) circle (8*\lw);
\node[below] at (1,0) {$q$};

\begin{scope}[shift={(-.5,0)}]
\fill (0, 2.5+1/8) circle (2*\lw);
\fill (0, 3.75+1/8) circle (2*\lw);
\node at (0,4.5) {${\scriptstyle l}$};

\node at (0,.75) {${\scriptstyle i}$};
\draw[line width=\lw] (-1/8,1.5-1/8-1/2 )--(-1/8 , 3.75+1/8+1/2);
\draw[line width=\lw] (1/8, 1.5-1/8 - 1/2)--(1/8, 3.75+1/8+1/2);
\fill (0, 1.5-1/8) circle (2*\lw);
\node at (0,2) {${\scriptstyle R}$};
\node at (0,3.25) {${\scriptstyle S}$};
\end{scope}

\begin{scope}[shift={(.5,0)}]
\node at (2,.75) {${\scriptstyle i}$};
\draw[line width=\lw] (2-1/8,1.5-1/8-1/2 )--(2-1/8 , 3.75+1/8+1/2);
\draw[line width=\lw] (2+1/8, 1.5-1/8 - 1/2)--(2+1/8, 3.75+1/8+1/2);
\fill (2, 1.5-1/8) circle (2*\lw);
\fill (2, 2.5+1/8) circle (2*\lw);
\node at (2,2) {${\scriptstyle S}$};
\node at (2,3.25) {${\scriptstyle R}$};
\fill (2, 3.75+1/8) circle (2*\lw);
\node at (2,4.5) {${\scriptstyle l}$};
\end{scope}

\end{scope}

\begin{scope}[shift={(3.75,0)}]
\node at (2,.75) {${\scriptstyle i}$};
\draw[line width=\lw] (2-1/8,1.5-1/8-1/2 )--(2-1/8 , 2.5+1/8+1/2);
\draw[line width=\lw] (2+1/8, 1.5-1/8 - 1/2)--(2+1/8, 2.5+1/8+1/2);
\fill (2, 1.5-1/8) circle (2*\lw);
\fill (2, 2.5+1/8) circle (2*\lw);
\node at (2,2) {${\scriptstyle R}$};
\node at (2,3.25) {${\scriptstyle k}$};
\draw[line width=\lw] (2-1/8,3.75+1/8-1/2 )--(2-1/8 , 3.75+1/8+1/2);
\draw[line width=\lw] (2+1/8, 3.75+1/8 - 1/2)--(2+1/8, 3.75+1/8+1/2);
\fill (2, 3.75+1/8) circle (2*\lw);
\node at (2,4.5) {${\scriptstyle l}$};
\end{scope}

\node at (6,2.5+1/8) {$ \neq $};

\begin{scope}[shift={(6.25,0)}]
\draw[line width=\lw] (-1/8,2.5+1/8-1/2 )--(-1/8 , 3.75+1/8+1/2);
\draw[line width=\lw] (1/8, 2.5+1/8 - 1/2)--(1/8, 3.75+1/8+1/2);
\fill (0, 2.5+1/8) circle (2*\lw);
\fill (0, 3.75+1/8) circle (2*\lw);
\node at (0,4.5) {${\scriptstyle l}$};

\node at (0,.75) {${\scriptstyle i}$};
\draw[line width=\lw] (-1/8,1.5-1/8-1/2 )--(-1/8 , 1.5-1/8+1/2);
\draw[line width=\lw] (1/8, 1.5-1/8 - 1/2)--(1/8, 1.5-1/8+1/2);
\fill (0, 1.5-1/8) circle (2*\lw);
\node at (0,2) {${\scriptstyle j}$};
\node at (0,3.25) {${\scriptstyle R}$};
\end{scope}

\end{tikzpicture}

\caption{On the left is a continuous family of nodal Riemann surfaces with interior marked points over the interval. On the right is a naive  attempt to produce a family of smooth nodal surfaces by gluing, resulting in a discontinuous family.}
\label{fig:continuation_family_discontinuous}
\end{figure}

Assume that we have a totally ordered set $ i < j < k < l$, and that moduli space corresponding to $q$ is the product $\Adams_{kl} \times \Adams_{jk} \times \Adams_{ij}  $, which is simply a point, with universal curve shown in Figure \ref{fig:continuation_family_discontinuous}. We parametrise each of the intervals comprising $\Q$ by $[0, \infty]$, with $\infty$ corresponding to the common point. We identify the first interval with $\Adams_{jkl} \times \Adams_{ij}$ and the second with $\Adams_{kl} \times \Adams_{ijk}$. Some readers may be wondering at this point about which smooth structure we use on $[0, \infty]$; this is completely irrelevant, since the issue we shall encounter is one of continuity, not smoothness.  It should be clear at this stage that we have a continuous family of moduli spaces over $I$ as shown to the left of Figure \ref{fig:continuation_family_discontinuous}.

Our goal is to define a universal curve over $I \times [R,\infty]$ for some (positive) real number $R$, which restricts to the moduli space just described over $I \times \{ \infty \}$. Over each top dimensional stratum, there is a straightforward construction: there are natural embeddings
\begin{align}
\Adams_{kl} \times \Adams_{ijk} \times [R, \infty] & \to \Adams_{ijkl} \\
\Adams_{jkl} \times \Adams_{ij}  \times [R, \infty] & \to \Adams_{ijkl}
\end{align}
and the moduli space can then simply be defined by pullback. The problem is that this construction is not consistent at $q$ as shown in Figure \ref{fig:continuation_family_discontinuous}: the curve associated to the point $(q,S)$ lies in $ \Adams_{jkl} \times \Adams_{ij} $ for the first stratum, and in $ \Adams_{kl} \times \Adams_{ijl} $  for the second stratum. 

It is of course not too surprising that this naive attempt to construct an extension fails: we start with a stratified map $\Q \to \partial \Adams_{ijkl}$, and the gluing coordinates define embeddings into $ \Adams_{ijkl}$ of the product of $[R, \infty]$ with the two top-dimensional strata of $  \partial \Adams_{ijkl} $ which we are considering. However, these embeddings do not restrict to the same embedding at the intersection of these two strata, which is a point; indeed, we have described $\Adams_{ijkl}  $ as the product $[R, \infty] \times [R, \infty] $, and the two gluing coordinates correspond to the two factors. The problem can be summarised by noting that $\Q \times [R, \infty] $ is a manifold with boundary, whereas the gluing coordinates make $\Adams_{ijkl}$ into a manifold with corners. 

The solution to this problem is rather straightforward: for any real number $R$, there is certainly \emph{some} topological embedding $ \partial \Adams_{ijkl} \times [R, \infty] \to  \Adams_{ijkl} $, which can moreover be chosen to be given by the gluing coordinates away from a neighboorhood of the corner stratum (the size of the neighbourhood depends on $R$). The embedding can be given by a (varying) combination of the two gluing functions near the corner. For example, one can start by setting the embedding of the product of the corner with $ [R, \infty]  $ into $\Adams_{ijkl} $ to be the diagonal map into the pair of gluing coordinates, then cutoff one of the two factors as one moves towards the interior of the two strata. This is the basic idea behind Definition \ref{def:annulus_gluing}, which will be given in Section \ref{sec:contr-choic-gluing}.

\subsection{Moduli of smooth annuli over $\Q$}
\label{sec:moduli-smooth-annuli}

Let $\Ann \to [0, \infty)$ be the family of curves over the positive real line whose fibre at $S$ is the annulus
\begin{equation}
  \bR/ 4S \bZ \times [0,1] \cong \{ z \in \bC | e^{-\pi/2S} \leq |z| \leq 1\},  
\end{equation}
where the left hand side is equipped with coordinates $(s,t)$ and complex structure $j \partial_s = \partial_t$, and the identification is via the coordinates $e^{-2\pi \frac{1-t-i s}{4S}}$ (see Figure \ref{fig:two_models_for_annuli}).
\begin{figure}
  \centering
  \begin{tikzpicture}

\draw[line width=4*\lw] (-8*\mw pt,-1*\mw pt) -- ( 0*\mw pt, -1*\mw pt);
\draw[line width=4*\lw] (-8*\mw pt,1*\mw pt) -- ( 0*\mw pt, 1*\mw pt);
\draw[line width=4*\lw] (-8*\mw pt,-2*\mw pt) -- ( 0*\mw pt, -2*\mw pt);
\fill[blue, opacity=.5]  (-6*\mw pt,-1*\mw pt) rectangle (-2*\mw pt,1*\mw pt);
\fill[red, opacity=.25]  (-2*\mw pt,-1*\mw pt) rectangle (0*\mw pt,1*\mw pt);
\fill[red, opacity=.25]  (-8*\mw pt,-1*\mw pt) rectangle (-6*\mw pt,1*\mw pt);
\draw[line width=4*\lw] (-4*\mw pt,-2*\mw pt -1/8*\mw pt) -- (-4*\mw pt,-2*\mw pt +1/8*\mw pt);
\draw[line width=4*\lw][blue] (-4*\mw pt,1*\mw pt -1/8*\mw pt) -- (-4*\mw pt,1*\mw pt +1/8*\mw pt);
\draw[line width=4*\lw] (-8*\mw pt,-2*\mw pt -1/8*\mw pt) -- (-8*\mw pt,-2*\mw pt +1/8*\mw pt);
\draw[line width=4*\lw][red] (-8*\mw pt,1*\mw pt -1/8*\mw pt) -- (-8*\mw pt,1*\mw pt +1/8*\mw pt);
\draw[line width=4*\lw] (0*\mw pt,-2*\mw pt -1/8*\mw pt) -- (0*\mw pt,-2*\mw pt +1/8*\mw pt);
\node[below] at (-8*\mw pt, -2*\mw pt)  {$-2S$};
\node[below] at (-0*\mw pt, -2*\mw pt)  {$2S$};
\node[below] at (-4*\mw pt, -2*\mw pt)  {$0$};
\node[above] at (-4*\mw pt, 1*\mw pt)  {$z_\inp$};
\node[above] at (-8*\mw pt, 1*\mw pt)  {$z_\out$};

\fill[blue!50]  (5*\mw pt,-3*\mw pt) arc (-90:90:3*\mw pt) -- (5*\mw pt,2*\mw pt) arc (90:-90:2*\mw pt);
\draw[line width=4*\lw] (5*\mw pt,-3*\mw pt) arc (-90:90:3*\mw pt) ;
\draw[line width=4*\lw] (5*\mw pt,-2*\mw pt) arc (-90:90:2*\mw pt) ;
\draw[line width=4*\lw][blue] (8*\mw pt -1/8*\mw pt, 0*\mw pt)--(8*\mw pt+1/4*\mw pt, 0*\mw pt);
\node[right] at (8*\mw pt, 0*\mw pt) {$z_\inp$};

\fill[red!25]  (5*\mw pt,3*\mw pt) arc (90:270:3*\mw pt) -- (5*\mw pt,-2*\mw pt) arc (270:90:2*\mw pt);
\draw[line width=4*\lw] (5*\mw pt,3*\mw pt) arc (90:270:3*\mw pt) ;
\draw[line width=4*\lw] (5*\mw pt,2*\mw pt) arc (90:270:2*\mw pt) ;
\draw[line width=4*\lw][red] (2*\mw pt +1/8*\mw pt, 0*\mw pt)--( 2*\mw pt-1/4*\mw pt, 0*\mw pt);

\node[left] at ( 2*\mw pt, 0*\mw pt)  {$z_\out$} ;
\node at (5*\mw pt, 0)  {$|z| = e^{-\pi/2S}$};
\node[left] at (4*\mw pt,3*\mw pt)  {$|z|=1$};

\end{tikzpicture}
  \caption{ }
  \label{fig:two_models_for_annuli}
\end{figure}

For $S=0$, the fibre is the unit disc. Let $ \Ann^S$ denote the fibre over $S$. Given any subset $U \subset \Q$, let
\begin{equation}
 \Ann(U) \to U \times [0,\infty) 
\end{equation}
denote the pullback of this family of annuli via projection to the first factor, with $\Ann^S(U)$ the fibre over $S$. Fix the holomorphic embeddings
\begin{align}
  \iota_{\inp} , \iota_\out \co [-S,S] \times [0,1] & \to   \bR/ 4S \bZ \times [0,1]
\end{align}
which are the identity on the $[0,1]$ factor, and whose images are respectively the natural inclusions of $[-S,S] \times [0,1] $ and $[S,3S] \times [0,1] $. Note in particular that the image of $(0,1)$ under the first map is $(0,1)$, and under the second map is $(2S,1)$; in the model of the annulus embedded in the unit disc, these marked points are $\pm 1$.  Denote the corresponding
 sections of $\Ann(\Q) $  by
\begin{align}
z_{\inp}, z_\out \co \Q & \to \Ann(\Q) .
\end{align}
We write $\Ann^{\{0\}}(\Q)$ for the family of circles over $\Ann(\Q) $  corresponding to $\bR/4S \bZ \times \{0\}$, $\Ann^{z_{\inp} < z_{\out}}(\Q)$ for the family of intervals $ (0,2S) \times \{ 1 \}  $, and $\Ann^{z_{\out} < z_{\inp} }(\Q) $  for the family of intervals $(-2S,0) \times \{ 1 \} $. Let  $\Ann^{\{1\}}(\Q)$ denote the union of these intervals.

Fixing an identification of the complement of $\pm 1$ in the unit disc with the strip $B$, mapping $\pm 1$ to $\pm \infty$, defines positive and negative strip-like ends on neighbourhoods of $\{  z_\inp, z_\out \} $. Choose a family of strip-like ends
\begin{align}
  \epsilon_\pm \co \Q \times (0,\infty] \times B_\pm & \to \Ann(Q)
\end{align}
covering neighbourhoods of $z_\inp$ and $z_\out$, which agree with these strip-like ends for $S$ close to $0$, and agree with the ends obtain by gluing whenever $S$ is sufficiently large.

\subsection{Contractible choices of gluing}
\label{sec:contr-choic-gluing}
Let $\sigma_{\vI[1] \subset \vJ[1]} \subset \sigma_{\vI \subset \vJ}$ be  cells of the pairs barycentric subdivision, $\nu_{\vI \subset \vJ}  \sigma_{\vI[1] \subset \vJ[1]}$ the  neighbourhood fixed in Section \ref{sec:gluing-descr-floer}, and $\opnu_{\vI \subset \vJ}  \sigma_{\vI[1] \subset \vJ[1]} $ its interior.

Fix a cover $\{ V_{\vI \subset \vJ} \}$ of $\Q$ by open subsets whose closures $\overline{V}_{\vI \subset \vJ}$ satisfy 
\begin{align}
\overline{V}_{\vI[1] \subset \vJ[1]} & \subset \bigcup_{\vI \subset \vI[1] \subset \vJ[1] \subset \vJ} \opnu_{\vI \subset \vJ}  \sigma_{\vI[1] \subset \vJ[1]} \\
\overline{V}_{\vI[1] \subset \vJ[1]} \cap  \overline{V}_{\vI \subset \vJ} & = \emptyset \textrm{ unless the corresponding cells are nested.} 
\end{align}
As illustrated in Figure \ref{fig:cover_nbd_cells}, the intuition is that $  V_{\vI \subset \vJ} $ is an open neighbourhood in $\Q$ of the complement in $\sigma_{\vI \subset \vJ}$ of a neighbourhood of the boundary. In particular, the second condition precludes $  \sigma_{\vI \subset \vJ} $  being contained in $\overline{V}_{\vI \subset \vJ}$ for all cells, since this would imply the existence of intersections for elements of the cover labelled by cells which are adjacent but not nested (i.e. share a common cell in their boundary).

\begin{figure}
  \centering
    \begin{tikzpicture}
\begin{scope}[shift={(-10,0)}]
    \clip (-2,-2) rectangle (2,2);

\coordinate  (0) at (0,0);
\coordinate  (1) at (0,-4);
\coordinate   (2) at (-4,0);
\coordinate (3) at (4,4);

\draw [draw=black, fill=green, fill opacity=0.8]  (0) circle (1);

\foreach \x in {1,2,3} \draw[line width=2*\lw] (0)--(\x);
\node at (-.5, 1.25)  {$V_{\vJ \subset \vJ } $};
\end{scope}
\begin{scope}[shift={(-5,0)}]
    \clip (-2,-2) rectangle (2,2);

\coordinate  (0) at (0,0);
\coordinate  (1) at (0,-4);
\coordinate   (2) at (-4,0);
\coordinate (3) at (4,4);

\draw [draw=black, fill=green, fill opacity=0.8]  (0) circle (1);
\node at (0, 1.75)  {$V_{\{  i \subset ijk \} \subset \vJ } $};
\draw [draw=black, fill=blue, fill opacity=0.4]  (-2,1) .. controls (0,1) and (0,-1) .. (-2,-1);
\node at (-1.1, 1.25)  {$V_{\{ ij \subset ijk\} \subset \vJ } $};
\draw [draw=black, fill=blue, fill opacity=0.4]  (1,-2) .. controls (1,0) and (-1,0) .. (-1,-2);
\node at (-1.25, -1.5)  {$V_{\{ i \subset ij\} \subset \vJ } $};
\draw [draw=black, fill=blue, fill opacity=0.4]  (1.293,2.707) .. controls (-.707,.707) and (.707,-.707) .. (2.707,1.293);

\foreach \x in {1,2,3} \draw[line width=2*\lw] (0)--(\x);
\end{scope}
\begin{scope}
    \clip (-2,-2) rectangle (2,2);

\coordinate  (0) at (0,0);
\coordinate  (1) at (0,-4);
\coordinate   (2) at (-4,0);
\coordinate (3) at (4,4);

\draw [draw=black, fill=green, fill opacity=0.8]  (0) circle (1);
\draw [draw=black, fill=blue, fill opacity=0.4]  (-2,1) .. controls (0,1) and (0,-1) .. (-2,-1);
\draw [draw=black, fill=blue, fill opacity=0.4]  (1,-2) .. controls (1,0) and (-1,0) .. (-1,-2);
\draw [draw=black, fill=blue, fill opacity=0.4]  (1.293,2.707) .. controls (-.707,.707) and (.707,-.707) .. (2.707,1.293);
\draw [draw=black, fill=red, fill opacity=0.2]  (-3,3) -- (-2,.5) .. controls (0,0) .. (2,2.5);
\draw [draw=black, fill=red, fill opacity=0.2]  (3,-3) -- (.5,-2) .. controls (0,0) .. (2.5,2);
\draw [draw=black, fill=red, fill opacity=0.2]  (-3,-3) -- (-.5,-2) .. controls (0,0) .. (-2, -.5);
\node at (-1.25, 1.5)  {$V_{\{ ijk \} \subset \vJ } $};
\node at (-1.35, -1.5)  {$V_{\{ ij \} \subset \vJ } $};
\node at (1.3, -.85)  {$V_{\{ i \} \subset \vJ } $};
\foreach \x in {1,2,3} \draw[line width=2*\lw] (0)--(\x);
\end{scope}
\end{tikzpicture} 
 \caption{Constructing the cover $\{ V_{\vI \subset \vJ} \}$ by induction near the barycenter of the simplex labelled by $\vJ = \{ i \subset ij \subset ijk \}$. }
  \label{fig:cover_nbd_cells}
\end{figure}

\begin{defin} \label{def:annulus_gluing}
An \emph{annulus gluing function} on $ V_{\vI \subset \vJ}$ is a map
\begin{equation} \label{eq:gluing_map_nbd_annulus}
g^{\Ann} \co  V_{\vI \subset \vJ} \times (0,+\infty] \to (0,\infty]^{\min \vI} \times (0,\infty]^{\max \vI}
\end{equation}
which is smooth on $V_{\vI \subset \vJ} \times (0,+\infty)$ , and such that the sum of the coordinates in each of the factors of the right agrees with the projection to $ (0,+\infty]$. 
\end{defin}

Since addition of coordinates defines a smooth fibre bundle with contractible fibres $
  (0,\infty)^{d} \to (0,\infty)$,
which extends to an acylic fibration $  (0,\infty]^{d} \to (0,\infty]$,
the space of choices of $g^{\Ann}$ extending any given choice on a subset of the domain is contractible.

Restricting the domain of Equation \eqref{eq:gluing_map_nbd_annulus} to $\infty$ in the second factor, Equation \eqref{eq:gluing_parameters_pairs_barycentric} yields a map on the intersection of $ V_{\vI[1] \subset \vJ[1]} $ with each stratum
\begin{multline}
   V_{\vI[1] \subset \vJ[1]} \cap  \nu_{\vI \subset \vJ}  \sigma_{\vI[1] \subset \vJ[1]} \substack{g_{\vI \subset \vI[1]} \\ \longrightarrow}  (0,\infty]^{\min \vI[1] \setminus \min \vI} \times (0,\infty]^{\max \vI[1] \setminus \max \vI} \\  \to  (0,\infty]^{\min  \vI[1] \setminus \min \vI} \times \{ \infty \}^{ \min \vI} \times (0,\infty]^{\max \vI[1] \setminus \max \vI} \times \{ \infty \}^{ \max \vI},
 \end{multline}
where we remind the reader that $\min \vI$ is the subset of $\Sigma$ consisting of all minimal elements of the nested subsets which comprise $\vI$, and $\max \vI$ is the set of maximal elements. 

We obtain a continuous map
\begin{equation}
 g^{\Ann^\infty} \co  V_{\vI \subset \vJ} \times \{ \infty \}  \to (0,\infty]^{\min \vI} \times (0,\infty]^{\max \vI}.
\end{equation}

If $ \sigma_{\vI[1] \subset \vJ[1]} \subset \sigma_{\vI \subset \vJ} $, the product of  Equation \eqref{eq:gluing_parameters_pairs_barycentric} with the choice of a map $g^{\Ann}$ on $ V_{\vI \subset \vJ} $  yields a map
\begin{equation} \label{eq:nested_pair_gluing_induced}
  V_{\vI \subset \vJ} \cap  V_{\vI[1] \subset \vJ[1]} \times (0,+\infty]  \to  (0,\infty]^{\min \vI[1]} \times (0,\infty]^{\max \vI[1]}.
\end{equation}
Since the space of choices is contractible, such a map may be extended to a map on $V_{\vI[1] \subset \vJ[1]} $. Proceeding by descending induction on the dimension of $\sigma_{\vI \subset \vJ}  $, one shows:
\begin{lem}
  There is a choice of annulus gluing maps $g^{\Ann}  $ restricting to $g^{\Ann^\infty}  $ on the boundary, and which extends the map in Equation \eqref{eq:nested_pair_gluing_induced} for every nested pair. \qed
\end{lem}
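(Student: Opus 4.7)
My plan is to follow the hint in the statement and proceed by descending induction on $d = \dim \sigma_{\vI \subset \vJ}$. The enabling technical fact throughout is that the sum-of-coordinates map $(0,\infty]^{k} \to (0,\infty]$ is an acyclic fibration, so that any continuous section prescribed on a closed subset of a paracompact base extends (and the space of extensions is contractible); this is exactly the tool the authors used just before the lemma to show that the space of choices of $g^{\Ann}$ on any piece is contractible.

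The base case is $d = n$, i.e.\ the top-dimensional cells, corresponding to $\vI = \{I\}$ a singleton. Here $\sigma_{\vI \subset \vJ}$ is not contained in any strictly larger cell, so the only prescribed data on $V_{\vI \subset \vJ} \times (0,+\infty]$ is the boundary condition at infinity, $g^{\Ann}|_{V_{\vI \subset \vJ} \times \{\infty\}} = g^{\Ann^\infty}$. Since $V_{\vI \subset \vJ} \times (0,+\infty]$ deformation retracts onto $V_{\vI \subset \vJ} \times \{\infty\}$, the required $g^{\Ann}$ exists by acyclicity.

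For the inductive step at dimension $k$, assume the $g^{\Ann}$ have been chosen on every cell of dimension $> k$, compatibly with $g^{\Ann^\infty}$ and with Equation \eqref{eq:nested_pair_gluing_induced}. Fix a cell $\sigma_{\vI[1] \subset \vJ[1]}$ of dimension $k$. The desired $g^{\Ann}$ on $V_{\vI[1] \subset \vJ[1]} \times (0,+\infty]$ is then prescribed on the closed subset
\begin{equation*}
\bigl(V_{\vI[1] \subset \vJ[1]} \times \{\infty\}\bigr) \;\cup\; \bigcup_{\sigma_{\vI[1] \subset \vJ[1]} \subsetneq \sigma_{\vI \subset \vJ}} \bigl(V_{\vI[1] \subset \vJ[1]} \cap V_{\vI \subset \vJ}\bigr) \times (0,+\infty],
\end{equation*}
by $g^{\Ann^\infty}$ on the first piece and by Equation \eqref{eq:nested_pair_gluing_induced} on each overlap piece. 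Acyclicity then produces the extension to all of $V_{\vI[1] \subset \vJ[1]} \times (0,+\infty]$.

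The main obstacle is verifying that these prescriptions are mutually compatible on pairwise intersections of the pieces, so that they do define a continuous section on a closed subset. Three compatibilities have to be checked: (i) between the $\{\infty\}$ locus and each overlap piece, which follows by applying Diagram \eqref{eq:normal_boundary_Adams_commute} factor-by-factor to $\Ann^{\infty}_{\vI \subset \vJ}$ and using the inductive compatibility of $g^{\Ann}$ on $V_{\vI \subset \vJ}$ with $g^{\Ann^\infty}$; (ii) between two overlap pieces coming from a nested pair of strictly larger cells $\sigma_{\vI[2] \subset \vJ[2]} \subsetneq \sigma_{\vI \subset \vJ}$, which reduces to the transitivity of the maps $g_{\vI \subset \vI[1]}$, i.e.\ to the same commuting diagram; and (iii) between two overlap pieces coming from non-nested strictly larger cells, which is vacuous by the disjointness condition $\overline{V}_{\vI \subset \vJ} \cap \overline{V}_{\vI[2] \subset \vJ[2]} = \emptyset$ built into the cover when the two cells are not nested.
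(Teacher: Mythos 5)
Your proof is correct and takes essentially the same route as the paper's: descending induction on $\dim \sigma_{\vI \subset \vJ}$, using the acyclicity of the sum-of-coordinates fibration $(0,\infty]^k \to (0,\infty]$ to extend the prescribed section from a closed subset. The paper's "proof" is a single sentence preceding the lemma; you have done the useful work of spelling out the base case (where the map is in fact uniquely determined, since $\min\vI$ and $\max\vI$ are singletons when $\vI=\{I\}$) and, more importantly, verifying that the prescriptions are actually coherent on overlaps. The three compatibilities you identify — matching with $g^{\Ann^\infty}$ at $S=\infty$ via Diagram~\eqref{eq:normal_boundary_Adams_commute}, cocycle/transitivity of the $g_{\vI \subset \vI[1]}$'s for nested triples (together with the inductive hypothesis relating $g^{\Ann}$ on nested higher-dimensional cells, which you invoke implicitly), and vacuousness for non-nested cells by the disjointness built into the cover $\{V_{\vI\subset\vJ}\}$ — are exactly the checks the paper leaves implicit. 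No gap.
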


\subsection{Gluing maps from degenerate to smooth annuli}

\begin{figure}
\centering
  \begin{tikzpicture}
\node at (5*\mw pt, 0)  {$z_\inp$};
\CtoEBent{0}{0}{\min I_0}{\max I_0};
\begin{scope}
\clip (0,0) -- (1,2) -- (3,2) -- (2,-1.5) --cycle;
\fill[blue, opacity=.5] (0, 2*\mw pt) .. controls (4*\mw pt, 2*\mw pt) and  (4*\mw pt, -2*\mw pt) .. (0, -2*\mw pt) -- (0, -3* \mw pt) .. controls (5.5*\mw pt,- 3*\mw pt) and (5.5*\mw pt, 3*\mw pt) .. (0, 3*\mw pt);
\end{scope}
\ConoutBig{-2*\mw pt }{2.5*\mw pt}{\max I};
\fill[gray,opacity=.125] (-2*\mw pt - .25 *\mw pt ,2.5*\mw pt+1/2*\mw pt) rectangle (-2*\mw pt - 2.75*\mw pt, 2.5*\mw pt  -1/2*\mw pt);
\ConoutBig{-2*\mw pt }{-2.5*\mw pt}{\min I};
\fill[gray,opacity=.125] (-2*\mw pt - .75 *\mw pt ,-2.5*\mw pt+1/2*\mw pt) rectangle (-2*\mw pt - 3.5*\mw pt, -2.5*\mw pt  -1/2*\mw pt);
   \EtoCBent{-8*\mw}{0};
\begin{scope}
\clip (-8*\mw pt,0) -- (-5*\mw pt-8*\mw pt,2*\mw pt) -- (-5*\mw pt-8*\mw pt,-2*\mw pt) -- (-10.5*\mw pt,-3.5*\mw pt) --cycle;
\fill[red, opacity=.25] (-8*\mw pt, 2*\mw pt) .. controls (-8*\mw pt-4*\mw pt, 2*\mw pt) and  (-8*\mw pt-4*\mw pt, -2*\mw pt) .. (-8*\mw pt, -2*\mw pt) -- (-8*\mw pt, -3* \mw pt) .. controls (-8*\mw pt-5.5*\mw pt,- 3*\mw pt) and (-8*\mw pt-5.5*\mw pt, 3*\mw pt) .. (-8*\mw pt, 3*\mw pt);
\end{scope}
\node at (-13*\mw pt, 0)  {$z_\out$};
  \end{tikzpicture}
\caption{A fibre of $\Ann^{\infty}(\sigma_{\vI \subset \vJ}) $ for $\vI = \{I_0 \subset I \}$, with shaded regions determined by the function $ g^{\Ann} $.}
\label{fig:annulus_gluing_regions-0}
\end{figure}

Let $ V_{\vI \subset \vJ} $ be the open set associated to $\sigma_{\vI \subset \vJ} $ in the previous section. Recall that the components of the fibres of $\Ann^{\infty}_{\vI \subset \vJ}(V_{\vI \subset \vJ}) $ over a point in $V_{\vI \subset \vJ} $ are identified with strips, and have ends labelled by the pairs $(\min I_0, \max I_0)$, $(\min I, \max I)$, or successive elements of either $\min \vI$ or $\max \vI$.

Given an element $i \in \max \vI \cup \max \vJ$, we write $g^{\Ann}_{i} $ for the corresponding component of the gluing function $g^{\Ann} $.  Since the closure of $V_{\vI \subset \vJ} $ does not intersect any cell which does not include $\sigma_{\vI \subset \vJ}$,  the Floer data are constant outside the union of the finite strips
\begin{align}
&[-g^{\Ann}_{\max I_0}(v,S) ,g^{\Ann}_{\min I_0}(v,S)] \times [0,1]  \\
&[-g^{\Ann}_{\min I}(v,S) ,g^{\Ann}_{\max I}(v,S)] \times [0,1] \\
&[- g^{\Ann}_{j}(v,S), g^{\Ann}_{i}(v,S)] \times [0,1] 
\end{align}
whenever $S$ is large enough, where  $i<j$  are successive elements of $ \min \vI$  or $\max \vI $  in the last equation (see Figure \ref{fig:annulus_gluing_regions-0}).

Restricting the embeddings $\iota_\inp$ and $\iota_\out$ defines maps
\begin{align} \label{eq:inp-interval-ann-glue}
[-g^{\Ann}_{\max I_0}(v,S) ,g^{\Ann}_{\min I_0}(v,S)] \times [0,1] & \to  \bR/4S\bZ \times [0,1] \\ \label{eq:out-interval-ann-glue}
[-g^{\Ann}_{\min I}(v,S) ,g^{\Ann}_{\max I}(v,S)] \times [0,1]  &  \to \bR/4S\bZ \times [0,1], 
\end{align}
where the image of the second map is $ [2S-g^{\Ann}_{\min I}(v,S) ,2S+g^{\Ann}_{\max I}(v,S)] \times [0,1]  $.

To define the embeddings corresponding to the other components of $ \Ann^{\infty}_{\vI \subset \vJ}(V_{\vI \subset \vJ}) $, let
\begin{equation}
  R_j(v,S) = 2 \sum_{\substack{ k \in \min \vI \\ j \leq k} } g^{\Ann}_{k}(v,S)
\end{equation}
for $j \in \min \vI$ and define the embedding in $ \bR/4S\bZ \times [0,1]$
\begin{multline} \label{eq:min-intervals-ann-glue}
  [- g^{\Ann}_{j}(v,S), g^{\Ann}_{i}(v,S)] \times [0,1] \to   [R_j(v,S) - g^{\Ann}_{j}(v,S),R_j(v,S)+ g^{\Ann}_{j}(v,S)] \times [0,1] 
\end{multline}
for successive elements $ \min I_0 \neq i <j$ of $\min \vI$. Since $R_{\min I}(v,S) =2 S  $  by assumption, these embeddings intersect only on the boundary, and their images cover the finite strip
\begin{equation} \label{eq:min-intervals-union-ann-glue}
  [ g^{\Ann}_{\min I_0}(v,S), 2S-g^{\Ann}_{\min I}(v,S)] \times [0,1] \subset \bR/4S\bZ \times [0,1].
\end{equation}

\begin{figure}
  \centering
  \begin{tikzpicture}

\draw[line width=4*\lw] (-12*\mw pt,-1*\mw pt) -- ( 12*\mw pt, -1*\mw pt);
\draw[line width=4*\lw] (-12*\mw pt,1*\mw pt) -- ( 12*\mw pt, 1*\mw pt);
\draw[line width=4*\lw] (-12*\mw pt,-2*\mw pt) -- ( 12*\mw pt, -2*\mw pt);
\fill[blue, opacity=.5]  (-4*\mw pt,-1*\mw pt) rectangle (2*\mw pt,1*\mw pt);
\fill[red, opacity=.25]  (-12*\mw pt,-1*\mw pt) rectangle (-10*\mw pt,1*\mw pt);
\fill[red, opacity=.25]  (12*\mw pt,-1*\mw pt) rectangle (8*\mw pt,1*\mw pt);
\fill[gray, opacity=.125]  (-10*\mw pt,-1*\mw pt) rectangle (-4*\mw pt,1*\mw pt);
\fill[gray, opacity=.125]  (2*\mw pt,-1*\mw pt) rectangle (8*\mw pt,1*\mw pt);
\draw[line width=4*\lw] (-12*\mw pt,-2*\mw pt -1/8*\mw pt) -- (-12*\mw pt,-2*\mw pt +1/8*\mw pt);
\draw[line width=4*\lw] (-8*\mw pt,-2*\mw pt -1/8*\mw pt) -- (-8*\mw pt,-2*\mw pt +1/8*\mw pt);
\draw[line width=4*\lw] (0*\mw pt,-2*\mw pt -1/8*\mw pt) -- (0*\mw pt,-2*\mw pt +1/8*\mw pt);
\draw[line width=4*\lw] (4*\mw pt,-2*\mw pt -1/8*\mw pt) -- (4*\mw pt,-2*\mw pt +1/8*\mw pt);
\draw[line width=4*\lw] (12*\mw pt,-2*\mw pt -1/8*\mw pt) -- (12*\mw pt,-2*\mw pt +1/8*\mw pt);
\draw[line width=4*\lw][blue] (0*\mw pt,1*\mw pt -1/8*\mw pt) -- (0*\mw pt,1*\mw pt +1/8*\mw pt);
\draw[line width=4*\lw][red] (-12*\mw pt,1*\mw pt -1/8*\mw pt) -- (-12*\mw pt,1*\mw pt +1/8*\mw pt);

\node at (-11*\mw pt, 0*\mw pt)  {$\iota_\out$};
\node at (0*\mw pt, 0*\mw pt)  {$\iota_\inp$};
\node at (11*\mw pt, 0*\mw pt)  {$\iota_\out$};

\node[below] at (-12*\mw pt, -2*\mw pt)  {$-2S$};
\node[below] at (-8*\mw pt, -2*\mw pt)  {$-2g^{\Ann^\infty}_{\max I_0}(v,S)$};
\node[below] at (4*\mw pt, -2*\mw pt)  {$2g^{\Ann^\infty}_{\min I_0}(v,S)$};
\node[below] at (0*\mw pt, -2*\mw pt)  {$0$};
\node[below] at (12*\mw pt, -2*\mw pt)  {$2S$};
\node[above] at (0*\mw pt, 1*\mw pt)  {$z_\inp$};
\node[above] at (-12*\mw pt, 1*\mw pt)  {$z_\out$};

\end{tikzpicture}
  \caption{Decomposition of the annulus for gluing parameter $S$ into finite strips.}
  \label{fig:gluing-strips-to-annuli-2}
\end{figure}

Similarly, if $j \in \max \vI$, define
\begin{equation}
  R_j(v,S) = 2 \sum_{\substack{ k \in \max \vI \\ j \geq k} } g^{\Ann}_{k}(v,S),
\end{equation}
and consider the embedding
\begin{multline} \label{eq:max-intervals-ann-glue}
  [- g^{\Ann}_{j}(v,S), g^{\Ann}_{i}(v,S)] \times [0,1] \to   [-R_i(v,S) - g^{\Ann}_{j}(v,S),-R_i(v,S)+ g^{\Ann}_{i}(v,S)] \times [0,1]
\end{multline}
The union of these strips is
\begin{equation} \label{eq:max-intervals-union-ann-glue}
  [ -2S+g^{\Ann}_{\max I}(v,S), -g^{\Ann}_{\max I_0}(v,S)] \times [0,1] \subset \bR/4S\bZ \times [0,1].
\end{equation}
Note that the annulus is covered by the images of Equations \eqref{eq:inp-interval-ann-glue}, \eqref{eq:out-interval-ann-glue}, \eqref{eq:min-intervals-union-ann-glue}, and \eqref{eq:max-intervals-union-ann-glue}, see Figure \ref{fig:gluing-strips-to-annuli-2}. Collapsing the infinite ends of each strip containing the domains of Equations \eqref{eq:inp-interval-ann-glue}, \eqref{eq:out-interval-ann-glue}, \eqref{eq:min-intervals-ann-glue}, and \eqref{eq:max-intervals-ann-glue} to the corresponding boundary interval, yields a surjective map
\begin{equation}
  G^{\Ann}_{ \vI \subset \vJ } \co  \Ann^{\infty}_{\vI \subset \vJ  }(V_{\vI \subset \vJ}) \to \Ann^{S}(V_{\vI \subset \vJ}).
\end{equation}

\subsection{Floer data on annuli}

Floer data on annuli are given by a map
\begin{align}
\Phi_\Q = (  \phi_{\Q}, J_\Q, \psi_\Q)  \co \Ann(\Q) & \to \scrH^0 \times \scrJ \times  \scrD
\end{align}
such that (i) the pullback of $\Phi_{\Q}$  under $\epsilon_{\inp} $ and $\epsilon_{\out}$ agrees with $(\Id, J_t,\Id)$, (ii)  the restriction of $J_{\Q}$ to $\Ann^{z_{\inp} < z_{\out} }(\Q)$ agrees with the pushforward of $J$ under $\psi_\Q$, and (iii)  the restriction of $J_{\Q}$ to $\Ann^{z_{\out} < z_{\inp} }(\Q)$ agrees with the pushforward of $J'$ under $\psi_\Q$.

By gluing, $\Phi^{\infty}_{\vI \subset \vJ} $  induces Floer data $\Phi_{\vI \subset \vJ} $ on a neighbourhood of $S = \infty$ in  $\Ann(V_{\vI \subset \vJ}) $ by requiring the commutativity of the diagram
  \begin{equation}
    \xymatrix{ \Ann^{\infty}_{\vI \subset \vJ  }(V_{\vI \subset \vJ}) \ar[r] \ar[dr]_{\Phi^{\infty}_{\vI \subset \vJ}} & \Ann^{S}(V_{\vI \subset \vJ}) \ar[d]^{ \Phi_{\vI \subset \vJ} } \\
      & \scrH^0 \times \scrJ \times \scrD}
  \end{equation}
whenever $S$ is sufficiently large.  The maps $(\phi_{\Q},  \psi_\Q )$ are said to be \emph{obtained by  gluing} if their restrictions to $ \Ann^{S}(V_{\vI \subset \vJ}) $ agree with $ (\phi_{\vI \subset \vJ}, \psi_{\vI \subset \vJ} )  $ whenever $S$ is sufficiently large.

As in the case of strips, a more general class of almost complex structures is needed. Define the \emph{$R$-thick} subset of $\Ann(V_{\vI \subset \vJ}) $ to be the union of the images of $[-R,R] \times [0,1]$ under the maps in Equations \eqref{eq:inp-interval-ann-glue}, \eqref{eq:out-interval-ann-glue}, \eqref{eq:min-intervals-ann-glue}, and \eqref{eq:max-intervals-ann-glue}. A section of the pullback of $T \scrJ $ by  $ J_{\vI \subset \vJ}  $ is consistent if it is supported in the interior of the $R$-thick part, and  vanishes to infinite order at $S = \infty$.
\begin{defin} \label{def:data_annulus_glued}
The  data $ \Phi_{\Q} $ are \emph{obtained by perturbed gluing} if $\phi_\Q $ and $\psi_\Q$ are obtained by gluing, and the restriction of $J_{\Q}$ to a neighbourhood of $S= \infty$ in $ \Ann(V_{\vI \subset \vJ})$ agrees with a consistent perturbation of $J_{\vI \subset \vJ}  $. 
\end{defin}

\subsection{Moduli spaces of annuli}
The identification of the complement of $z_\inp$ and $z_\out$ in $\Ann^0$ with a strip yields data
\begin{equation}
( \phi^{0}_{\Q}, J^{0}_{\Q}, \psi^{0}_{\Q}) \equiv (\Id, J_t, \Id)  \co \Ann^{0}(Q) \to  \scrH^0 \times \scrJ \times \scrD.
\end{equation}

There is a natural embedding $\Ann^{S}(Q) \subset \Ann^0(Q)$. Say that Floer data  $( \phi_{\Q}, J_{\Q}, \psi_{\Q}) $ are obtained by gluing near $S=0$ if they agree with the restriction of the above data for $S$ sufficiently small. This agrees with the usual notion of gluing under the assumption, made in Section \ref{sec:floer-equation}, that $J_t$ is constant in a neighbourhood of $t=1/2$. Such data are obtained by perturbed gluing if there is a compact subset of the interior of the punctured disc where the almost complex structure agrees with the restriction of the almost complex structure $J^{0}_\Q$ up to a perturbation which vanishes to infinite order at $S=0$.

Choose Floer data  $( \phi_{\Q}, J_{\Q}, \psi_{\Q}) $ which are obtained by perturbed gluing near $S=0$ and $S=\infty$, and whose pullbacks under the strip-like ends $\epsilon_\pm$ agree with the restrictions of $( \phi^{0}_{\Q}, J^{0}_{\Q}, \psi^{0}_{\Q}) $. Given intersection points $x_\inp ,x_\out \in L \cap L'$, define $
  \cM^{S}(x_\out; x_\inp) $
to be the union over $q \in Q$ of the space of finite-energy maps from fibres of $\Ann^{S}(\Q)$ to $X$ with boundary conditions
\begin{align} 
 \partial_{s} u(z) & = J_{\Q}(z) \partial_{t}u(z) &u(z) & \in F_{q}\textrm{ if } z \in \Ann^{\{0\}}(q)  \\ 
u(z) & \in  \phi_{Q}(z) (L) \textrm{ if } z \in  \Ann^{z_{\inp} < z_{\out} }(\Q) & u(z) & \in  \phi_{Q}(z) (L') \textrm{ if } z \in  \Ann^{z_{\out} < z_{\inp} }(\Q),
\end{align}
and which converge to $x_\inp$ and $x_\out$ at the ends. Let
\begin{equation}
  \cM^{(0,\infty)}( x_\out ; x_\inp ) \equiv \coprod_{S \in (0,\infty)} \cM^{S}(x_\out ; x_\inp),
\end{equation}
which is topologised as a parametrised moduli space over $\Q \times (0,\infty) $.

\begin{lem}
  For generic Floer data  $ \Phi_{\Q} $, $  \cM^{(0,\infty)}( x_\out ; x_\inp ) $  is a smooth manifold and
  \begin{equation}
    \dim \cM^{(0,\infty)}( x_\out ; x_\inp ) = 1 + \deg x_\out - \deg x_\inp.
  \end{equation}
\end{lem}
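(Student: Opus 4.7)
The plan is to apply the standard parametric Sard-Smale transversality argument. First, I would identify the Banach space $\mathcal{P}$ of admissible perturbations of the Floer data: by Definition \ref{def:data_annulus_glued}, perturbations of $J_\Q$ are supported in the interior of the $R$-thick part of each $\Ann^S(V_{\vI \subset \vJ})$ and vanish to infinite order at $S=\infty$, while $\phi_\Q$ and $\psi_\Q$ may vary subject only to the Lagrangian boundary conditions and the gluing constraints near $S=0$ and $S=\infty$. Equipping this space with a Floer-type $C^\epsilon$-norm makes $\mathcal{P}$ into a separable Banach manifold of perturbations of a fixed reference datum.

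Second, I would form the universal parametric moduli space
\[
\cM^{\mathrm{univ}}(x_\out; x_\inp) \subset \mathcal{P} \times \Q \times (0,\infty) \times \mathcal{B}^{k,p}
\]
and verify that the universal linearised Cauchy-Riemann operator is surjective at every point. The classical somewhere-injectivity argument applies: given a nonzero element of $\coker D_u$, find an interior point where $u$ is injective and which lies in the thick region (the latter has nonempty interior for every $S \in (0,\infty)$), then perturb $J_\Q$ in a small neighbourhood of its image to obtain a nontrivial pairing; residual cokernel contributions localised on the Lagrangian boundary arcs are absorbed by perturbations of $\phi_\Q$ and $\psi_\Q$. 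Applying Sard-Smale to the projection $\cM^{\mathrm{univ}} \to \mathcal{P}$ then produces a Baire-generic set of regular $\Phi_\Q$.

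The dimension count follows the pattern established for the degenerate case in Section \ref{sec:moduli-space-degen}: the Fredholm index of the linearised problem on $\Ann^S(\Q)$ with the prescribed boundary conditions and asymptotic data at $x_\inp, x_\out$ equals $\deg x_\out - \deg x_\inp$, once one incorporates the $n$-dimensional variation of $q$ over the base $\Q$ (the $F_q$-boundary is a graded torus with trivial Maslov class, whose Euler-characteristic contribution on the annulus exactly balances the base dimension). Adding one for the modulus $S \in (0,\infty)$ yields the claimed $1 + \deg x_\out - \deg x_\inp$.

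The main obstacle is the universal surjectivity step, because the elaborate compatibility and gluing constraints inherited from the pairs barycentric subdivision and from Definition \ref{def:data_annulus_glued} substantially restrict the admissible perturbations. The saving observation is that on any compact subinterval of $(0,\infty)$ the interior of the thick region in each fibre is uniformly open and nonempty, so the standard somewhere-injectivity plus cutoff construction produces perturbations of $J_\Q$ supported there; an exhaustion of $(0,\infty)$ by such subintervals, combined with a diagonal choice of generic perturbation, then delivers transversality for all $S \in (0,\infty)$ simultaneously.
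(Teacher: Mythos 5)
Your argument is essentially what lies behind the paper's own two-line proof, which dispatches the dimension formula by citing Seidel's Fredholm index computation (Section~12c of \cite{seidel-Book}) and the smoothness claim by citing Floer--Hofer--Salamon (Theorem~5.1 of \cite{FHS}); your parametric Sard--Smale argument is the content of those references spelled out. Two points deserve correction, though neither breaks the argument. First, the assertion that ``residual cokernel contributions localised on the Lagrangian boundary arcs are absorbed by perturbations of $\phi_\Q$ and $\psi_\Q$'' is not how FHS-style transversality works and is not needed: any nonzero element of the cokernel of the linearised operator satisfies a formal adjoint equation, and by unique continuation it cannot vanish on an open interior set, so perturbations of $J_\Q$ near an interior somewhere-injective point already yield surjectivity of the universal linearisation; introducing perturbations of the moving Lagrangian boundary data only adds bookkeeping and risks disturbing the fixed asymptotics at $x_\inp$, $x_\out$. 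Second, the worry about the gluing constraints restricting perturbations over compact $S$-subintervals is overstated: Definition~\ref{def:data_annulus_glued} only requires $J_\Q$ to agree with a consistent perturbation (hence be supported in the thick part) in a neighbourhood of $S=\infty$ (and near $S=0$ to agree with the prescribed data up to a vanishing-order perturbation), so for $S$ in a compact subinterval of $(0,\infty)$ the interior of the annulus is unconstrained and the standard argument applies without appealing to the thick region at all. The exhaustion/Baire argument you invoke at the end is fine and is the usual way to get a generic set simultaneously for all $S$ in the open interval.
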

\begin{proof}
The formula for the virtual dimension is a special case of \cite[Section 12c]{seidel-Book}. The assertion that generic Floer data yield smooth moduli spaces follows from  \cite[Theorem  5.1]{FHS}.
\end{proof}

Denote the space of annuli such that the image of the boundary
component mapping to a fibre is null-homologous by   $\cM^{(0,\infty)}_{[0]}(x_\out ;
x_\inp)$.   Let $\Mbar^{[0,\infty]}_{[0]}(x_\out ; x_\inp)$ denote its Gromov-Floer compactification. To describe the boundary,  define $\Cont(x_\out; x_\inp) $ to be the moduli space of solutions to the Floer equation with one interior marked point lying on the interval $\bR \times \{1/2\}$. This moduli space can be thought of as the continuation moduli space for a constant Hamiltonian family, and is the product of a closed interval with $\Mbar(x_\out; x_\inp) $, unless $x_\out=x_\inp$, in which it case it is a point.
\begin{lem} \label{lem:annulus_cobordism}
Whenever  $\deg x_\out = \deg x_\inp $, $\Mbar^{[0,\infty]}_{[0]}(x_\out ; x_\inp) $ is a compact $1$-dimensional manifold if the data are chosen generically, and its boundary is stratified as follows:
  \begin{align}
& \Cont(x_\out; x_\inp) \\
&    \coprod_{x \in L \cap L'}\cM^{(0,\infty)}_{[0]}(x_\out; x) \times \cM(x;x_\inp) \\
& \coprod_{x \in L \cap L'}\cM(x_\out; x) \times \cM^{(0,\infty)}_{[0]}(x;x_\inp) \\
& \cM^{\infty}_{[0]}(x_\out ; x_\inp).
  \end{align}
\end{lem}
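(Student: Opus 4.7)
The plan is to prove this by the standard cobordism argument: the parametrised moduli space $\cM^{(0,\infty)}_{[0]}(x_\out;x_\inp)$ has expected dimension $1$ under the assumption $\deg x_\out = \deg x_\inp$, so for generic choice of $\Phi_\Q$ (obtained by perturbed gluing at both ends as in Definition \ref{def:data_annulus_glued}) it is a smooth $1$-manifold; its Gromov-Floer compactification $\Mbar^{[0,\infty]}_{[0]}$ is obtained by adjoining the four boundary strata listed. The codimension-$1$ boundary analysis consists of identifying all possible limits of a sequence $u_n \in \cM^{S_n}(x_\out;x_\inp)$ and verifying that no other degenerations occur.

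First I would dispose of sphere and disc bubbling: since $\pi_2(Q)=0$, $X$ contains no holomorphic spheres, and the tautological unobstructedness assumption \eqref{eq:everything_is_good_with_curves} together with the constraint \eqref{eq:strip_complex_structure_pullback} (imposed pointwise on the boundary components of the annulus through Definition \ref{def:data_annulus_glued}) forbids boundary bubbling on each of the three boundary components. This leaves three sources of non-compactness: $S_n \to 0$, $S_n \to \infty$, and breaking at an interior strip-like region where energy concentrates near one of the ends $\epsilon_\pm$ or along a neck in the interior of the annulus.

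The case $S_n \to 0$ is exactly the $S=0$ limit in the family $\Ann(\Q)$: the annulus collapses to a disc with two interior marked points lying on the interval $\bR \times \{1/2\}$, and the gluing condition in Definition \ref{def:data_annulus_glued} near $S=0$ identifies the limiting Floer problem with the constant continuation family considered in Section \ref{sec:floer-equation}. This yields $\Cont(x_\out;x_\inp)$. Breaking near the ends $z_\inp$ or $z_\out$ produces a $\cM(\cdot,\cdot)$ factor concatenated with a $\cM^{(0,\infty)}_{[0]}$ factor via the usual strip-breaking analysis, giving the two middle strata. Standard gluing theorems provide the inverse correspondence and hence give the local structure of a $1$-manifold with boundary at these three types of strata.

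The main obstacle, and the heart of the argument, is the analysis at $S_n \to \infty$. Here the annulus $\Ann^{S_n}$ degenerates, but by construction the Floer data $\Phi_\Q$ are obtained by perturbed gluing from $\Phi^\infty_{\vI\subset\vJ}$ over each neighbourhood $V_{\vI\subset\vJ}$ via the annulus gluing function $g^{\Ann}$. Over any point $q \in V_{\vI\subset\vJ}$, the collection of gluing parameters $g^{\Ann}_k(q,S_n) \to \infty$ as $S_n \to \infty$, so a subsequential limit decomposes the annulus into precisely the collection of strips parametrised by $\Ann^\infty(\sigma_{\vI\subset\vJ})$; the resulting maps satisfy the Floer equations of Section \ref{sec:floer-data-degen} for the data $\Phi^\infty_{\vI\subset\vJ}$. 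A dimension count using the virtual dimensions computed in Section \ref{sec:moduli-spac-assoc} together with $\deg x_\out = \deg x_\inp$ shows that only the top-dimensional cells $\vI = \{I\}$ with $|\vJ| = n+1$ contribute to the rigid limit, and the null-homologous boundary condition selects the subspace $\cM^\infty_{[0]}(x_\out;x_\inp)$. For the inverse correspondence, one applies a gluing theorem family-wise over $\Q$: given a rigid broken configuration over a point $q_0$, the perturbed gluing construction of $\Phi_\Q$ (Definition \ref{def:data_annulus_glued}), combined with the fact that $J_\Q$ vanishes to infinite order at $S=\infty$ away from the $R$-thick part, lets one apply a Newton iteration uniformly in $q \in V_{\vI\subset\vJ}$ to produce a unique pre-glued solution for all sufficiently large $S$. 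This gives the collar structure at the $\cM^\infty_{[0]}$ stratum and completes the identification of the boundary.
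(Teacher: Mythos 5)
Your proposal follows the paper's proof closely: both argue the expected dimension, exclude sphere bubbling via $\pi_2(Q)=0$ and disc bubbling via the boundary constraints on the almost complex structure, and identify the four boundary strata as $S=0$, $S=\infty$, and strip-breaking at $z_\inp$ or $z_\out$, with the local manifold-with-boundary structure at $S=\infty$ established via the gluing description of Floer data over the cover $\{V_{\vI\subset\vJ}\}$.

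Two small points to correct. First, your description of the $S \to 0$ degeneration is off: the annulus collapses to a disc with a \emph{single} interior marked point, namely the point where the boundary circle $\bR/4S\bZ \times \{0\}$ mapping to the fibre shrinks (which lies at $(0,1/2)$ in the strip model). That is consistent with the paper's definition of $\Cont(x_\out;x_\inp)$ as the moduli space with \emph{one} interior marked point on $\bR\times\{1/2\}$. Your phrase ``two interior marked points'' would not match $\Cont$. You also omit the ghost-disc ingredient: the $S=0$ boundary stratum is matched not just by identifying the limiting Floer problem but by interior gluing of an element of $\Cont(x_\out;x_\inp)$ with the family of constant (ghost) discs with boundary on fibres of $\pi$, and the unconstrained nature of the constant-disc family over $X$ is what makes this gluing effective. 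Second, you invoke Definition \ref{def:data_annulus_glued} for both ends, but that definition governs only the $S=\infty$ behaviour of the perturbation; the $S=0$ gluing/perturbation condition is a separate (though analogous) requirement stated in the surrounding paragraph, so the citation should be split.
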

\begin{proof}
Having excluded interior sphere bubbling by a topological assumption, and disc bubbling by the careful choice of almost complex structure along the boundary, the virtual dimension of all strata of the Gromov-Floer compactification is negative if $ \deg x_\out < \deg x_\inp $, so the regularity of the choices of Floer data implies that they are empty. If $\deg x_\out = \deg x_\inp $, the only strata which do not have negative virtual dimension are those given in the statement. The first and last stratum respectively correspond to $S=0$ and $S=\infty$, and the middle two correspond to breaking along the ends.

The proof that $\Mbar^{[0,\infty]}_{[0]}(x_\out ; x_\inp) $  is a manifold with boundary follows from a standard gluing argument. For $S=0$, this takes the form of interior gluing of the element of $\Cont(x_\out; x_\inp)  $  with the family of constant (ghost) discs on fibres of $\pi$. For $S=\infty$, there is a natural projection map
\begin{equation}
  \Ann^{\infty}_{[0]}(x_\out ; x_\inp) \to \Q.
\end{equation}
Choose an open subset $V_{\vI \subset \vJ}$ containing the image of a map $u$ in the left hand side.  The gluing description of Floer data on $\Ann(V_{\vI \subset \vJ})$ yields a gluing chart near this boundary point.
\end{proof}

\section{Floer cochains and morphisms of sheaves}
\label{sec:floer-coch-morph}

Let us fix, as in the previous section, a choice of triangulation $\Sigma$ of $Q$ which is sufficiently fine so that the results of Section \ref{sec:floer-theory-conv} apply.

\subsection{Relative $\Pin^+$ structures and orientation lines}

Let $L$ and $L'$ be graded Lagrangians as in Section \ref{sec:floer-theory}, equipped with choices of  $\Pin^+$ structures as in Section \ref{sec:floer-theory-with}. Replacing  $F_q$ by $L'$ in the discussion of Section  \ref{sec:floer-theory-with}, we obtain a rank-$1$ free abelian group
\begin{equation} \label{eq:generator_Floer_x-L-L'}
  \delta_x = |\ro_x| \otimes \nu_x
\end{equation}
which is the tensor product of (i) the orientation line on $\ro_x$, which is the determinant line of a Cauchy-Riemann operator on the upper half-plane associated to a path from $T_xL$ to $T_xL'$ in the Grassmannian of Lagrangians of $T_x X$, and (ii) the line $\nu_x$ whose trivialisations correspond to choices of $\Pin_+$ structures on the vector bundle over the interval corresponding to this path, extending the choices at the two endpoints.

For each vertex $i$ of the triangulation $\Sigma$ of $\Q$, fix a $\Pin^+$ structure on $T^*_{q_i}\Q \oplus E_{q_i}$. By the isomorphism of the tangent space of fibres with the cotangent space of the base, this induces such a structure on 
\begin{equation}
  T F_{i} \oplus \pi^* E|F_i ,
\end{equation}
where we write $F_i = F_{q_i}$ as in Section \ref{sec:covers-q}.
For each pair $i < j$ in $\Sigma$, fix a $\Pin^+$ structure on the restriction of $T^* \Q \oplus E$ to the corresponding edge $\sigma_{ij}$ of the triangulation of $\sigma$, which agrees with the $\Pin^+$ structure chosen on the ends. Given a triple $i < j < k$ in $\Sigma$, there is a canonical homotopy between $\sigma_{ik}$ and the concatenation of $\sigma_{ij}$ and $\sigma_{jk}$, which is associated to the simplex $\sigma_{ijk}$. Define a \v{C}ech cochain with coefficients in $\bZ_2$:
\begin{equation} \label{eq:cocycle_from_Pin_-}
  v_{ijk} = \begin{cases} 0 & \textrm{ if the induced } \Pin^+ \textrm{ structures agree} \\
1 & \textrm{ otherwise.}  \end{cases}
\end{equation}
\begin{lem} \label{lem:cech-cochain_pin_structure}
Equation \eqref{eq:cocycle_from_Pin_-} is a cocycle representing $w_2(\Q) \oplus w_2(E)$.
\end{lem}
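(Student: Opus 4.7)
The plan is to recognise $v_{ijk}$ as the standard \v{C}ech obstruction cocycle measuring the failure of the chosen $\Pin^+$ structures on the 1-skeleton of $\Sigma$ to extend over the 2-skeleton as a $\Pin^+$ structure on the bundle $V := T^{*}\Q \oplus E$, and then identify its cohomology class by the Whitney sum formula together with the orientability built into the setup.

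First I would verify that $\delta v = 0$. Given a 3-simplex $\sigma_{ijkl}$ with $i<j<k<l$ in $\Sigma$, the bundle $V$ is trivialisable on the contractible set $\sigma_{ijkl}$, so it admits a $\Pin^+$ structure restricting to the chosen $\Pin^+$ structure on each of the four boundary edges $\sigma_{ij}, \sigma_{ik}, \ldots, \sigma_{kl}$ (uniqueness modulo the $H^{1}(\sigma_{ijkl},\bZ_{2}) = 0$ torsor). Restricting this global extension to each of the four triangular faces yields a $\Pin^+$ structure on $\sigma_{fgh}$ that agrees with the concatenation of the edge $\Pin^+$ structures, so the obstructions $v_{fgh}$ sum to zero modulo $2$ on $\partial \sigma_{ijkl}$, giving $v_{jkl} + v_{ikl} + v_{ijl} + v_{ijk} \equiv 0 \pmod{2}$.

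Next I would identify $[v] \in H^2(\Q,\bZ_2)$ with the universal characteristic class obstructing a global $\Pin^+$ structure on $V$. The class $[v]$ is, by construction, independent of the choices of $\Pin^+$ structures on the 1-skeleton (changing these by a $1$-cochain $\lambda \in C^1(\Q,\bZ_2)$ changes $v$ by $\delta \lambda$), and depends only on the isomorphism class of $V$ as a bundle with orthogonal structure. By obstruction theory it is therefore the appropriate Stiefel--Whitney expression $w_2(V)$ (respectively $w_2(V) + w_1(V)^2$) depending on which double cover of $O$ the chosen $\Pin$ convention uses. Applying the Whitney sum formula together with $w_i(T^{*}\Q) = w_i(\Q)$ gives
\begin{equation*}
  w_2(V) = w_2(\Q) + w_1(\Q) \cup w_1(E) + w_2(E),
  \qquad
  w_1(V)^{2} = w_1(\Q)^{2} + w_1(E)^{2} \pmod{2}.
\end{equation*}

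Finally, I would invoke the orientability of the setup to collapse these expressions. The total space $X$ is symplectic and its Lagrangian fibres are tori, so the base $\Q$ is orientable and $w_1(\Q) = 0$. The bundle $E$ realising the class $w$ in Equation \eqref{eq:relative_Pin_conditions} may be replaced by $E \oplus \bR$ (which does not change $w_2(E)$) so as to be orientable; then $w_1(E) = 0$ as well. Both the $w_1(\Q)\cup w_1(E)$ cross term and the entire $w_1(V)^{2}$ correction vanish, so that regardless of the $\Pin^{\pm}$ convention the obstruction class reduces to
\begin{equation*}
  [v] = w_2(\Q) + w_2(E),
\end{equation*}
which is the claim of the lemma. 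The main point requiring care is to pin down the precise $\Pin$-convention implicit in Definition \eqref{eq:cocycle_from_Pin_-} so as to match the obstruction class correctly; once this and the orientability reductions above are in place, the cocycle identity and the identification of $[v]$ are routine obstruction theory.
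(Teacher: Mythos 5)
Your overall strategy --- recognise $v$ as the \v{C}ech obstruction cocycle and then identify its class by naturality and the axiomatic structure of $H^{*}(BO;\bZ_{2})$ --- is the standard proof of the result the paper cites from McLaughlin, and the verification that $\delta v = 0$ and that $[v]$ is independent of the edge data is fine. But the final ``orientability'' reductions contain a genuine error and an unjustified assertion, and they are exactly the point where the paper instead invokes Kirby--Taylor. First, $w_{1}$ is a stable class: $w_{1}(E\oplus\bR)=w_{1}(E)$, so replacing $E$ by $E\oplus\bR$ does \emph{not} make it orientable. (What one could try is to choose a different representative $E'$ with $w_{2}(E')=w$ and $w_{1}(E')=0$, but that existence requires an argument --- the cokernel of $H^{2}(\Q;\bZ)\to H^{2}(\Q;\bZ_{2})$ can be nontrivial --- and in any case the lemma is stated for a fixed $E$.) Second, the orientability of $\Q$ is asserted from ``the base of a Lagrangian torus fibration is orientable,'' which is not obviously true and is not among the paper's hypotheses: the monodromy of the integral affine structure lies in $\GL(n,\bZ)\ltimes\bR^{n}$, which contains orientation-reversing elements, and $X$ being orientable imposes no constraint since $T\Q\oplus T^{*}\Q$ is canonically oriented regardless.

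The cleaner route, and the one the paper takes, is to fix the convention once and for all: the paper works with $\Pin^{+}$ structures, and the obstruction to a $\Pin^{+}$ structure on a bundle $V$ is exactly $w_{2}(V)$, with no $w_{1}^{2}$ correction. This follows from the Kirby--Taylor identification of $\Pin^{+}$ structures on $V$ with $\Spin$ structures on $(\det V)^{\oplus 3}\oplus V$, together with the computation $w_{1}\bigl((\det V)^{\oplus 3}\oplus V\bigr)=0$ and $w_{2}\bigl((\det V)^{\oplus 3}\oplus V\bigr)=w_{2}(V)$. Applied to $V=T^{*}\Q\oplus E$ this gives $[v]=w_{2}(T^{*}\Q\oplus E)$ with no orientability hypotheses at all, which is the assertion of the lemma (reading the $\oplus$ there as Whitney sum). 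Your attempt to sidestep pinning down the $\Pin^{\pm}$ convention by killing $w_{1}$ is precisely the step that Kirby--Taylor replaces, and since your $w_{1}$-killing arguments fail, the gap is real.
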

\begin{proof}
The corresponding formula for orientable vector bundles is well-known, see e.g. \cite{McLaughlin}. The general case follows from the existence of a canonical isomorphism between the set of $\Pin^+$ structures on a bundle $E$ and $\Spin$ structures on $(\det E)^{\oplus 3} \oplus E$, see \cite{KT}.
\end{proof}

Assume we are given, for each $i \in \Sigma$, a Hamiltonian diffeomorphism $\phi_i$ so that $L_i \equiv \phi_i L$ is transverse to $F_q$ for all $q \in P_i$. Since $P_i$ is convex, there is a unique way of associating to each intersection point $x \in L_i \cap F_i$ an intersection point between $L_i$ and $F_q$ which we denote $x(q)$. The Cauchy-Riemann operator associated to  $x(q)$ and the relative $\Pin^+$ structure determine a local system over $P_{i}$ with fibre $\delta_{x(q)} $. Given a pair $i <j$, the assumption that $P_j$ is contained in $P_i$ therefore yields a map 
\begin{equation} \label{eq:iso_edge_i_k}
   \delta_{x} \to \delta_{x(q_j)}.
\end{equation}
Given an ordered triple $i< j < k$,  the map $ \delta_{x} \to \delta_{x(q_k)} $ defined by $\sigma_{ik}$ agrees with the composition
\begin{equation} \label{eq:compositoin_iso_edge}
 \delta_{x} \to \delta_{x(q_j)} \to \delta_{x(q_k)}
\end{equation} 
if and only if the $\Pin^+$ structure on the restriction of $T \Q \oplus E$ to the boundary of $\sigma_{ij}$ extends to the interior. From Lemma \ref{lem:cech-cochain_pin_structure}, we conclude:
\begin{cor} \label{cor:signs_given_by_v}
 The isomorphisms in Equations \eqref{eq:compositoin_iso_edge}  and \eqref{eq:iso_edge_i_k} differ by $(-1)^{v_{ijk}}$.  \qed
\end{cor}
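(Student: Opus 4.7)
The plan is to interpret both maps as isomorphisms of orientation lines produced by continuation along a local system on a $1$-dimensional simplex, and then use the simplex $\sigma_{ijk}$ itself as a $2$-dimensional interpolation whose behaviour at the level of $\Pin^+$ structures is precisely what $v_{ijk}$ records.

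First I would make precise the construction of the map \eqref{eq:iso_edge_i_k}: for each edge $\sigma$ of the triangulation, the family of Cauchy--Riemann operators associated to the path $q \mapsto x_i(q)$, together with the chosen $\Pin^+$ structure on the restriction of $T^{*}Q \oplus E$ to $\sigma$, determines a local system of rank-one free abelian groups whose fibre at $q$ is $\delta_{x_i(q)}$; parallel transport of this local system along $\sigma$ is the map \eqref{eq:iso_edge_i_k}. The composition \eqref{eq:compositoin_iso_edge} is the concatenation of the corresponding parallel transports along $\sigma_{ij}$ and $\sigma_{jk}$.

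Next I would use the canonical homotopy, realised by the $2$-simplex $\sigma_{ijk}$ itself, between the edge $\sigma_{ik}$ and the concatenation $\sigma_{ij}\ast\sigma_{jk}$. If the $\Pin^+$ structures on the three edges are induced by a common extension to the interior of $\sigma_{ijk}$---that is, if $v_{ijk}=0$---then the pullback of this family of Cauchy--Riemann operators to $\sigma_{ijk}$, equipped with its extended $\Pin^+$ structure, yields a local system of orientation lines over a contractible disk. Its restriction to the boundary is trivialised compatibly by the two paths, so the two maps coincide and differ by $(-1)^{0}=+1$. Otherwise, I would compare by choosing an arbitrary extension over $\sigma_{ijk}$ of the fixed $\Pin^+$ structure on $\sigma_{ik}$, which by definition of $v_{ijk}$ restricts to the chosen $\Pin^+$ structures on $\sigma_{ij}$ and $\sigma_{jk}$ if and only if $v_{ijk}=0$; if it does not, it differs from one of them by the non-trivial element of $H^{1}(\sigma;\bZ_2)$.

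The hard part is the last step, identifying the effect of this $\bZ_2$-discrepancy on the induced map of orientation lines with a sign of $-1$. This is the content of the defining property of $\nu_x$ in Equation \eqref{eq:generator_Floer_x}: the two extensions of a boundary $\Pin^+$ structure across a disk produce opposite generators in $\nu_x$, and therefore opposite identifications of the determinant line $|\ro_x|$ via the index theory of the associated Cauchy--Riemann operator. Granting this, the chosen extension provides a homotopy of $\Pin^+$-structured families of operators connecting \eqref{eq:iso_edge_i_k} for $\sigma_{ik}$ to \eqref{eq:compositoin_iso_edge} up to an overall sign equal to $(-1)^{v_{ijk}}$, which is exactly the assertion. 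Lemma \ref{lem:cech-cochain_pin_structure} is used implicitly only to guarantee that $v_{ijk}$ is the correct geometric invariant; its cocycle property plays no role at this stage.
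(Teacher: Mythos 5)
Your proposal correctly reconstructs the (implicit) argument the paper is making: the two maps are parallel transports of the local system $q\mapsto\delta_{x_i(q)}$ along the two sides of $\sigma_{ijk}$, they agree precisely when the glued $\Pin^+$ structure on $\partial\sigma_{ijk}$ extends over the interior, which by the very definition of $v_{ijk}$ (Equation \eqref{eq:cocycle_from_Pin_-}) happens exactly when $v_{ijk}=0$, and otherwise the discrepancy is the nontrivial $\bZ_2$-torsor flip, which by the definition of $\nu_x$ in Equation \eqref{eq:generator_Floer_x} acts on $\delta_x$ by $-1$. This is the same approach the paper has in mind; the statement is essentially immediate from the text preceding it, and you have filled in the omitted details correctly.

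One small remark: you attribute to Lemma \ref{lem:cech-cochain_pin_structure} the role of ``guaranteeing $v_{ijk}$ is the correct geometric invariant.'' In fact the corollary already follows from the definition of $v_{ijk}$ alone; the cocycle lemma is not needed here at all — it is used later to match the sign $(-1)^{v_{ijk}}$ against the twisting cocycle $\alpha^v$ in the definition of twisted sheaves. The paper's phrase ``From Lemma \ref{lem:cech-cochain_pin_structure}, we conclude'' is best read as pointing back to the surrounding discussion and the definition of $v_{ijk}$, not as invoking the cocycle/representative claim as an ingredient.
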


\subsection{Energy of strips and annuli}
\label{sec:other-auxil-choic}
Recall that we have a fixed choice of Lagrangian section $\tau_i$ over $P_i$ as in Section \ref{sec:twisting-cocycle}. The intersection of this section with $F_i$ equips this fibre with a basepoint.

 Since $L_i$ is assumed to be transverse to all fibres over $P_i$, we can identify the components of $L_i \cap X_{P_i}$ with the intersection points of $L_i $  and $F_i$.  Given an intersection point $x$, we pick, as in Section \ref{sec:areas-strips-flux}, a function
\begin{equation}
  g_x \co P_i \to \bR
\end{equation}
such that the section corresponding to $x$ is obtained by fiberwise addition of $dg_x$ to $\tau_i$. As in Section \ref{sec:areas-strips-flux}, this function determines a path along $F_i$ from the basepoint to $x$. Given a map $u$ from a strip to $X$, with one boundary component mapping to $F_i$  and converging at both ends to intersection points with $L_i$, we obtain a  homology class
\begin{equation}
  [ \partial u] \in H_1(F_i, \bZ)
\end{equation}
by concatenating the boundary of $u$ along $F_i$ to the two paths connecting the intersection points to the basepoint.

We shall need a notion of energy for continuation maps: assume that $u$ satisfies a holomorphic curve equation with moving Lagrangian boundary conditions along $\bR \times \{1 \}$ given by a Hamiltonian family $ \phi^s $, parametrised by $s \in \bR $. Let 
\begin{equation}
  H \co \bR \times X \to \bR 
\end{equation}
be the Hamiltonian generating this family, normalised so that integral of $H_s$ with respect to a fixed volume form vanishes. Define the energy of $u$ to be
\begin{equation}
    \cE(u) = \int_{B} u^{*}(\omega) - \int_{\bR}   H_{s}(u(s,1))  ds .
\end{equation}
The analogue of Lemma \ref{lem:areas-strips-flux-1} holds for such maps, i.e. whenever $u$ and $u'$ are holomorphic maps from a strip with boundary conditions given by the same path of Lagrangians along $\bR \times \{ 1\} $, and by $F_q$ and $F_{q'}$ along $\bR \times \{ 0\} $, we have
\begin{equation} \label{eq:energy_deformed_curve}
  \cE(u') - \cE(u) = \langle q' -q , [\partial u] \rangle + g_y(q) - g_x(q) + g_{x'}(q') - g_{y'}(q'),
\end{equation}
whenever $u$ and $u'$ are homotopic.  Here, $(x,y)$ are the limits of $u$ along the ends (which are intersection points of $F_q$ with the appropriate Lagrangians) and  $(x',y')$ the corresponding limits of $u'$.

A generalisation of this result to annuli shall be required: fix the normalised Hamiltonian $H \co  \Ann(\Q)  \to \bR$ generating a moving path of Lagrangians along the boundary of $\Ann(\Q) $. For a fibre in $ \Adams^{S}(x_\out; x_\inp) $, we obtain an expression for the energy
\begin{equation}
\cE(u) = \int_{\Ann^{S} } u^{*}(\omega) - \int_{s \in [0,4S]}   H_{(1,s)}(u(1,s)).
\end{equation}
Extending this map to $\Adams^{\infty}(x_\out; x_\inp) $ by the sum of the energies of each constituent strip  yields a real-valued function $\cE$ on $\Adams^{[0,\infty]}(x_\out; x_\inp)$, which is easily seen to be locally constant if the boundary condition $F_q$ is fixed.  If we change boundary conditions, Equation \eqref{eq:energy_deformed_curve} implies that the energy of (nearby) curves in $\Adams^{\infty}(x_\out; x_\inp) $ with boundary on $F_q$ and $F_{q'}$ differ by
$ \langle q' -q , \gamma \rangle$, 
where $\gamma$ is the homology class of the boundary in $H_1(F_q, \bZ)$. This implies that the energy is locally constant on $\Adams^{\infty}_{[0]}(x_\out; x_\inp) $. By expressing the difference of energy of nearby annuli as the integral of $\omega$ over an annulus in $X$ connecting the boundary conditions, this result extends to all annuli:
\begin{lem}
 $\cE$ is locally constant on $\Adams^{[0,\infty]}_{[0]}(x_\out; x_\inp) $. \qed 
\end{lem}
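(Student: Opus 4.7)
The plan is to patch together three observations: that $\cE$ is locally constant on the interior for each fixed base point $q$, that varying $q$ changes $\cE$ only by a pairing with the boundary homology class on the fibre, and that at both degenerate ends $S=0$ and $S=\infty$ the function extends continuously. Since the space $\Adams^{[0,\infty]}_{[0]}(x_\out; x_\inp)$ is stratified, to establish local constancy it suffices to prove continuity and to verify that $\cE$ is locally constant on each open stratum.

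First, I would fix a point $u_0 \in \cM^{S_0}(x_\out; x_\inp)$ with projection $q_0 \in \Q$, and consider a nearby curve $u \in \cM^S(x_\out; x_\inp)$ with projection $q$. Build an annulus $A \co [0,1] \times \Ann^S \to X$ (with $S$ close to $S_0$) whose two ends realise $u_0$ and $u$, and whose fibre boundary arc sweeps out a cylinder $C \subset X$ connecting the loop $\partial_0 u_0 \subset F_{q_0}$ to $\partial_0 u \subset F_q$. By the usual Stokes/symplectic area computation, writing $H$ for the normalised Hamiltonian generating the moving boundary conditions on $\{t=1\}$, the difference
\[
\cE(u) - \cE(u_0) = \int_A d(A^*\omega) + (\text{variation of the Hamiltonian term})
\]
reduces, once the Hamiltonian term is accounted for as in the derivation of Equation \eqref{eq:energy_deformed_curve} for strips, to the symplectic area of $C$. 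Since $C$ projects to a short path from $q_0$ to $q$ with boundary circles lying in nearby fibres, the T-duality pairing gives
\[
\int_C \omega = \langle q - q_0, [\partial_0 u_0] \rangle.
\]
The defining condition $u_0 \in \cM^{(0,\infty)}_{[0]}$ forces $[\partial_0 u_0] = 0$, so this vanishes and $\cE$ is locally constant on the open stratum.

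Next I would handle the boundary strata. At $S = \infty$, the preceding paragraph already established local constancy on $\cM^{\infty}_{[0]}(x_\out; x_\inp)$ by additivity over strips, using Equation \eqref{eq:energy_deformed_curve} component by component. Continuity across $S = \infty$ follows from the gluing chart supplied in the proof of Lemma \ref{lem:annulus_cobordism}: any curve in a small neighbourhood of a degenerate annulus is obtained by the gluing map $G^{\Ann}_{\vI \subset \vJ}$, and for this gluing the total $\omega$-integral equals the sum of the strip energies up to an error concentrated in the thick-thin transition regions that vanishes as the gluing parameters go to infinity. At $S = 0$, the boundary stratum consists of elements of $\Cont(x_\out; x_\inp)$ glued with a constant (ghost) disc on a fibre; the fibre boundary is literally contractible, so $[\partial_0 u] = 0$ holds automatically and the same symplectic-area computation gives continuity of $\cE$ across $S=0$.

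The main obstacle is the careful bookkeeping in the first step: the moving boundary condition along $\{t=1\}$, together with the base-point dependence of the Floer data arising from the $\psi_\Q$ component of $\Phi_\Q$, introduces a non-trivial Hamiltonian term whose variation must be shown to cancel against the pieces of $\int_A \omega$ contributed by the $L$- and $L'$-boundary arcs, leaving only the fibre-boundary cylinder $C$. This is exactly the content of the identity \eqref{eq:energy_deformed_curve} in the strip setting, and the generalisation to annuli is obtained by cutting the annulus at the marked points $z_\inp, z_\out$ into two strips and summing their contributions; the terms $g_{x_\inp}, g_{x_\out}$ at the punctures cancel pairwise because the intersection points $x_\inp, x_\out$ do not move with $q$.
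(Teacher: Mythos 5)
Your proposal is correct and follows essentially the same line of reasoning as the paper: the key step is that the difference of energies between nearby annuli reduces to the symplectic area $\langle q'-q, [\partial_0 u]\rangle$ of the cylinder swept out by the fibre boundary, which vanishes on the null-homologous component, with the degenerate strata at $S=0$ and $S=\infty$ handled by gluing. One small imprecision in your final paragraph: the functions $g_{x_\inp}, g_{x_\out}$ are simply not defined here --- the $g_x$ of Section \ref{sec:other-auxil-choic} are attached only to intersection points of a Lagrangian with a \emph{fibre}, whereas $x_\inp, x_\out \in L\cap L'$ --- so there is nothing to ``cancel pairwise'' at those punctures. If you want to reduce to Equation \eqref{eq:energy_deformed_curve} by cutting the annulus into two strips, the relevant observation is instead that the cuts have their fibre endpoints in common, so any auxiliary primitive chosen at those shared points cancels upon summing; equivalently (and this is the paper's phrasing), the fibre boundary is a closed loop with no asymptotic intersection points and the $L$-, $L'$-boundary data are $q$-independent, so no $g$-type corrections arise at all.
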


\subsection{From local to global}
\label{sec:from-local-global}

Let $K$ be a totally ordered subset of $\Sigma$, which we recall is a triangulation of $Q$ as produced in Section \ref{sec:unif-small-choic}.  Given $x_{\min K} \in \phi_{\min K} L \cap F_{\min K}$ and $x_{\max K} \in  \phi_{\max K} L \cap F_{\max K}$, recall that
\begin{equation}
  \cM_K ( x_{\max K}; x_{\min K} ) \equiv  \cM_{q_K,K} ( x_{\max K};  x_{\min K}(q_K)).
\end{equation}
There is a natural isomorphism
\begin{equation}
  |T_u \cM_K ( x_{\max K}; x_{\min K} )| \otimes \delta_{x_{\min K}(q_{\max K})} \equiv | \Adams_K| \otimes \delta_{x_{\max K}}.
\end{equation}
Assuming that $\deg( x_{\max K}) = \deg( x_{\min K}) + 2 - |K|$, rigidity yields an isomorphism
\begin{equation}
  \kappa_u \co \delta_{x_{\min K}} \to \delta_{x_{\max K}}
\end{equation}
by fixing (i) the isomorphism in Equation \eqref{eq:iso_edge_i_k} and (ii) the orientation of $\Adams_K$ arising from its description as a product of intervals, and the ordering on $K$.  Define
\begin{align}
  \scrF_K \co   \scrF(L,\min K) & \to   \scrF(L,\max K)[2-|K|] \\  \label{eq:component_differential_K}
\scrF_K| \delta_{x_{\min K}} & = \bigoplus_{x_{\max K}}  T^{ f_{\min K, \max K}} z^{df_{\min K, \max K} - dg_{\max K} + dg_{\min K}}_{\max K}  \\ \notag
& \qquad \sum_{u \in  \cM_K ( x_{\max K}; x_{\min K} )} T^{\cE(u) } z^{[\partial u]}_{\max K} \otimes \kappa_u.
\end{align}
The discussion given in Section \ref{sec:local-mirr-constr} readily extends to show that this map is convergent (see also \cite[Proposition 3.11]{A-ICM}).

There is a natural bijection between the boundary of the $1$-dimensional moduli spaces $\Mbar_K ( x_{\max K}; x_{\min K} ) $, given in Equations  \eqref{eq:boundary_continuation_strip_out}-\eqref{eq:boundary_continuation_skip}, and the terms of Equation \eqref{eq:a_oo_functor_equation}:
\begin{equation}
  \begin{array}{ll}
 \Mbar_{ K^\geq_{i}} \times \Mbar_{K^\leq_{i}} & \longleftrightarrow \scrF_{K^\geq_{i}} \times \scrF_{K^\leq_{i}}  \\
 \Mbar_{K \setminus \{ i \}}  &  \longleftrightarrow  \scrF_{K \setminus \{ i \}}  \\
\end{array}
\end{equation}
To conclude that Equation \eqref{eq:a_oo_functor_equation} holds, it suffices to show that the coefficient of each term is correct, i.e prove  the cancellation of the terms in Equation \eqref{eq:a_oo_functor_equation} which have as coefficient a fixed monomial.

Fix an energy $E$ and a homology class $\beta \in H^1(F_{q_{\max K}}, \bZ)$, and let $ \cM_{q_{\max K},K}^{E, \beta} ( x_{\max K}; x_{\min K} ) $ be the corresponding component of the moduli space.
\begin{lem} \label{lem:energy_contribution_boundary_strata_differential}
Up to sign, the contribution to Equation \eqref{eq:a_oo_functor_equation}  of every curve $u$ lying in a boundary stratum of $ \cM_{q_{\max K},K}^{E, \beta} ( x_{\max K}; x_{\min K} ) $ agrees with
\begin{equation}
 T^{ f_{\min K, \max K}(q_{\max K})} z^{df_{\min K, \max K} - dg_{\max K}( x_{\max K} ) + dg_{\min K}( x_{\min K}   ) }_{\max K} T^{\cE(u) } z^{[\partial u]}_{\max K} \otimes \kappa_u.
\end{equation}
\end{lem}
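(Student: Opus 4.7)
The plan is to treat each of the two families of boundary strata separately and show that the coefficient attached to a breaking configuration has exactly the monomial prescribed above; because the monomial depends only on $(E,\beta,x_{\min K},x_{\max K})$, this amounts to proving that the ``discrepancies'' introduced when one replaces a rigid curve $u$ by a concatenation $(u_1,u_2)$, or by a limit in a face where the $i$\th Adams coordinate reaches $1$, cancel out appropriately. The three quantities to track throughout are (i) the Novikov exponent produced by energy, (ii) the monomial in $H_1(F,\bZ)$ produced by the boundary class, and (iii) the transition data $f_{ij}$, $g_x$, $\Pin^+$ structures and the sign cocycle $v_{ijk}$.

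For a concatenation stratum $\Mbar_{K^\geq_i}(x_{\max K},z)\times \Mbar_{K^\leq_i}(z,x_{\min K})$, let me write $K_- = K^\leq_i$ and $K_+ = K^\geq_i$. Applying $\scrF_{K_-}$ then $\scrF_{K_+}$ produces the product of two prefactors of the form appearing in \eqref{eq:component_differential_K}, one based at $q_i$ and the other at $q_{\max K}$. Using Equation \eqref{eq:energy_deformed_curve} repeatedly along the path from $q_i$ to $q_{\max K}$, the combined Novikov exponent $T^{\cE(u_1)+\cE(u_2)}$ equals $T^{\cE(u)}$ up to the error term $\langle q_{\max K}-q_i,[\partial u_2]\rangle + g_z(q_i)-g_z(q_{\max K})+g_{x_{\min K}}(q_{\max K})-g_{x_{\min K}}(q_i)$; the first summand, together with the monomials $z^{[\partial u_1]}_{\max K}$ and the $q_i$-based $z^{[\partial u_2]}$ (transported to $q_{\max K}$ via \eqref{eq:affine_iso_nearby_points}), produces $z^{[\partial u]}_{\max K}$, while the remaining terms combine with the $g$-contributions coming from the definition of $\scrF_{K_\pm}$ in \eqref{eq:component_differential_K} to yield $dg_{\min K}-dg_{\max K}$. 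The remaining discrepancy is the purely combinatorial piece $f_{\min K,i}+f_{i,\max K}-f_{\min K,\max K}$ evaluated and differentiated at $q_{\max K}$: this is precisely the cocycle $\alpha_{K^\geq_i,K^\leq_i}$. The sign coming from the comparison of the two $\Pin^+$-induced isomorphisms in \eqref{eq:iso_edge_i_k}-\eqref{eq:compositoin_iso_edge} is $(-1)^{v_{ijk}}$ by Corollary \ref{cor:signs_given_by_v}, converting $\alpha$ into $\alpha^v$, so the total coefficient matches the right-hand side of \eqref{eq:a_oo_functor_equation} with the correct twist.

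For a skip stratum $\Mbar_{K\setminus\{i\}}$, the limiting curve $u$ lives in the same moduli problem (with Lagrangian data specialised at the $1$-end of the $i$\th Adams coordinate), with the same energy, the same boundary class, and with boundary conditions and basepoints $q_{\min K}$, $q_{\max K}$ unchanged. Hence the prefactor $T^{f_{\min K,\max K}(q_{\max K})}\, z^{df_{\min K,\max K}-dg_{\max K}+dg_{\min K}}_{\max K}\, T^{\cE(u)}\, z^{[\partial u]}_{\max K}$ is identical to the one coming from $\scrF_{K\setminus\{i\}}$, and the only non-trivial point is to verify that the boundary orientation of $\Adams_K$ along the face $\{r_i=1\}$, compared with the product orientation on $\Adams_{K\setminus\{i\}}$, produces precisely the sign $(-1)^{|K^\leq_i|+|a|}$ on the left-hand side of \eqref{eq:a_oo_functor_equation}; this is a direct application of the standard sign computation for cubical boundaries with the conventions of \cite{seidel-Book}.

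The main obstacle is bookkeeping in the first case: one must be careful to (a) perform the affine identification \eqref{eq:lambda_iso_nearby_points} in the correct direction when transporting $z^{[\partial u_2]}$ from $q_i$ to $q_{\max K}$, (b) apply \eqref{eq:energy_deformed_curve} consistently with the sign convention used to define $[\partial u]$ via the lifts $g_x$, and (c) align the orientation of $\Adams_K$ near the face $\{r_i=0\}$ with the product orientation on $\Adams_{K^\geq_i}\times\Adams_{K^\leq_i}$ so that the algebraic sign agrees with the $(-1)^{v_{ijk}}$ contributed by the $\Pin^+$ data. Once these are pinned down, every other term simply collates and the lemma follows.
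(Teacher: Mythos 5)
Your proof follows essentially the same route as the paper's: separate the boundary strata into the ``skip'' strata $\Mbar_{K\setminus\{i\}}$ (for which the claimed prefactor agrees at once because the fibre boundary condition, basepoints and transition functions are unchanged) and the ``concatenation'' strata $\Mbar_{K^\geq_i}\times\Mbar_{K^\leq_i}$ (for which one must compare energies of curves with boundary on $F_i$ against energies of curves with boundary on $F_{q_{\max K}}$ using Equation \eqref{eq:energy_deformed_curve}, and then absorb the discrepancies $f_{\min K,i}+f_{i,\max K}-f_{\min K,\max K}$ and the various $g_x$ differences into the cocycle $\alpha^v_{K^\geq_i,K^\leq_i}$). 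The paper compresses this bookkeeping into a citation of \cite[Lemma 3.2, Lemma 4.2]{A-ICM}, whereas you spell out the individual energy and $z$-monomial contributions by hand; your accounting of the cocycle $\alpha_{\min K, i, \max K}$ arising from the triple $f_{\min K,i}+f_{i,\max K}-f_{\min K,\max K}$ matches the paper's conventions. One point where you go beyond the lemma's statement: the sign comparison via $\Pin^+$ structures and $(-1)^{v_{ijk}}$ (Corollary \ref{cor:signs_given_by_v}), and the cubical orientation sign producing $(-1)^{|K^\leq_i|+|a|}$, are in the paper treated separately after the lemma, not within it; the lemma itself is only asserting equality of the $\Lambda$-coefficient monomials, with the sign cancellation postponed to Lemma \ref{lem:Floer_gives_sheaf}. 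Also, be careful about which cube face ($\{r_i=0\}$ vs.\ $\{r_i=1\}$) corresponds to the skip stratum: by Equations \eqref{eq:subtrajectories_skip_i}--\eqref{eq:subtrajectories_factor_i} the skip stratum sits over $\{r_i=0\}$, not $\{r_i=1\}$ as you write (there is a conflicting sentence after Equation \eqref{eq:boundary_continuation_skip} in the paper, but the Adams-map construction makes the $\{r_i=0\}$ identification the correct one), and this would matter if you were actually to carry out the orientation check you sketch. None of this affects the validity of the monomial comparison, which is the actual content of the lemma.
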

\begin{proof}
The only strata for which this does not follow immediately from the definition are those corresponding to $\Mbar_{ K^\geq_{i}} \times \Mbar_{K^\leq_{i}}  $, for $E$ corresponds to the energy of a holomorphic curve with boundary conditions $F_{\max K}$, while the map $\scrF_{K^\leq_{i}}  $ is defined using a moduli space with boundary conditions $F_{i}$.

In particular, the functions $f_{\min K, i}$ and $f_{i, \max K}$ both contribute to the composition $\scrF_{K^\geq_{i}} \times \scrF_{K^\leq_{i}}   $, whereas $f_{\min K, \max K}$ contributes to $\scrF_{K \setminus \{ i \}}$. By multiplying $\scrF_{K^\geq_{i}} \times \scrF_{K^\leq_{i}}   $ by the cocycle $\alpha_{\min K, i, \max K}$, we correct this discrepancy between strata (see Equation \eqref{eq:cocycle-alpha}, and \cite[Lemma 4.2]{A-ICM}).
\end{proof}
There is a final matter of signs to discuss: the strata  corresponding to $\Mbar_{ K^\geq_{i}} \times \Mbar_{K^\leq_{i}}  $ contribute terms in Equation \eqref{eq:a_oo_functor_equation} which are compositions of the two maps $ \scrF_{K^\geq_{i}} \times \scrF_{K^\leq_{i}} $. Let $(u_\geq, u_\leq)$ be elements of such a stratum, which respectively converge to intersection points  $(x_{\max K}, x_i )  $ and $(x_i, x_{\min K})  $ at the ends.  We must compare the composition $\kappa_{u_\geq} \circ \kappa_{u_\leq}$ with the map obtained as follows: apply a diffeomorphism to $u_\leq$ so that its boundary condition becomes $F_{\max K}$ then glue the two maps. Since we have to apply this diffeomorphism, the sign contribution of this stratum to Equation \eqref{eq:a_oo_functor_equation} entails comparing the composition
\begin{equation}
\delta_{x_{\min K}} \to \delta_{x_{\min K}(q_i)} \to \delta_{x_{\min K}(q_{\max K})}
\end{equation}
with the natural map from the left to the right given by parallel transport along $\sigma_{\min K, \max K}$. The sign computation from Corollary \ref{cor:signs_given_by_v} therefore implies: 
\begin{lem}[c.f. Lemma 4.2 of  \cite{A-ICM}] \label{lem:Floer_gives_sheaf}
The maps $\scrF_K$ define an $(\alpha^v)^{-1}$-twisted sheaf of perfect $\scrO_Y$-modules. \qed
\end{lem}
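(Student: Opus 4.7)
The plan is to verify, for each totally ordered $I\subset\Sigma$ and each pair $x_{\min I},x_{\max I}$ with $\deg(x_{\max I})=\deg(x_{\min I})+3-|I|$, that Equation \eqref{eq:a_oo_functor_equation} holds by interpreting its two sides as the signed count of boundary points of the $1$-dimensional moduli space $\Mbar_{q_{\max I},I}(x_{\max I};x_{\min I})$. This moduli space is already known to be a compact $1$-manifold with boundary by the regularity hypotheses, and its boundary decomposes via Equations \eqref{eq:boundary_continuation_strip_out}--\eqref{eq:boundary_continuation_skip}. Vanishing of this signed count, under the natural correspondence
\[
\Mbar_{I^\geq_i}\times\Mbar_{I^\leq_i}\longleftrightarrow \scrF_{I^\geq_i}\circ\scrF_{I^\leq_i},\qquad \Mbar_{I\setminus\{i\}}\longleftrightarrow \scrF_{I\setminus\{i\}},
\]
is precisely the content of Equation \eqref{eq:a_oo_functor_equation}. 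The final step is to argue that $\scrF_{ij}$ is a quasi-isomorphism, which is the standard fact that the continuation map associated to an interpolation of Floer data induces an isomorphism on Floer cohomology (here of $\phi_i L$ with $F_i$), and this is the datum that makes the resulting pre-sheaf actually a sheaf in the sense of Section \ref{sec:categ-assoc-cover}.

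The verification reduces to two independent checks: (i) the monomials in $T$ and $z_{\max I}$ on the two sides of Equation \eqref{eq:a_oo_functor_equation} agree for each boundary component, and (ii) the sign of each boundary point matches that of the corresponding algebraic term, up to the cocycle $\alpha^v_{I^\geq_i,I^\leq_i}$. For (i), Lemma \ref{lem:energy_contribution_boundary_strata_differential} together with the definition \eqref{eq:component_differential_K} of $\scrF_K$ already packages the computation: the boundary contribution corresponding to the stratum $\Mbar_{I^\geq_i}\times\Mbar_{I^\leq_i}$ carries a factor $T^{f_{\min I,\max I}(q_{\max I})}z^{\,df_{\min I,\max I}-dg_{\max I}+dg_{\min I}}_{\max I}$ times the exponential $T^{\cE}z^{[\partial u]}$, which upon comparing with the definition of the composite $\scrF_{I^\geq_i}\circ\scrF_{I^\leq_i}$ produces exactly the monomial discrepancy
\[
T^{\,f_{\min I,i}(q_i)+f_{i,\max I}(q_{\max I})-f_{\min I,\max I}(q_{\max I})}\cdot z^{\,d(f_{\min I,i}+f_{i,\max I}-f_{\min I,\max I})}_{\max I}
\]
which, after accounting for the sign from Equation \eqref{eq:cocycle_from_Pin_-}, is the cocycle $\alpha^v_{I^\geq_i,I^\leq_i}$. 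For the boundary stratum $\Mbar_{I\setminus\{i\}}$ there is no such discrepancy since the Floer data are inductively extended by gluing.

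The hard part will be step (ii), the sign analysis. Here one must combine three sources of signs: the product orientation on $\Adams_K$ coming from its description as a cube together with the ordering on $K$; the signs from the gluing of orientation lines $\delta_x$ along the boundary strata of $\Adams_K$; and the extra sign $(-1)^{v_{ijk}}$ from Corollary \ref{cor:signs_given_by_v} which records the failure of the chosen relative $\Pin^+$ structures to extend over the $2$-skeleton of $\Sigma$. The signs $(-1)^{|I^\leq_i|+|a|}$ appearing in Equation \eqref{eq:a_oo_functor_equation} are the standard Koszul signs dictated by the cube decomposition of $\Adams_I$ in Lemma \ref{lem:boundary_strata_adams}: the boundary stratum where the $i$\textsuperscript{th} coordinate equals $0$ or $1$ contributes $(-1)^{|I^\leq_i|}$, and the extra $(-1)^{|a|}$ comes from moving the differential $\kappa_u$ past the element $a$. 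Corollary \ref{cor:signs_given_by_v} identifies the composition defect of orientation line isomorphisms over a triangle $\sigma_{ijk}$ with $(-1)^{v_{ijk}}$; iterating this over a general totally ordered $I$ gives the product $\alpha^v_{I^\geq_i,I^\leq_i}/\alpha_{I^\geq_i,I^\leq_i}$ comparing signed composition to the twisted composition.

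Once (i) and (ii) are in hand, Equation \eqref{eq:a_oo_functor_equation} follows termwise, establishing that $(\scrF(L,i),\scrF_K)$ is a twisted pre-sheaf. To promote this to a sheaf, note that $\scrF_{ij}$ is by definition the continuation map determined by the Floer data $\Phi_{ij}$ interpolating between the local data at the vertices $i$ and $j$; a standard parametrised gluing and homotopy argument (applied inside $\scrJ^2$, within which the interpolation takes place) shows that this map is a chain homotopy equivalence, hence a quasi-isomorphism, completing the proof.
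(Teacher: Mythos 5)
Your proposal reconstructs the paper's own argument almost exactly: matching the boundary strata of the one-dimensional moduli space $\Mbar_{q_{\max I},I}(x_{\max I};x_{\min I})$ to the terms in Equation~\eqref{eq:a_oo_functor_equation}, invoking Lemma~\ref{lem:energy_contribution_boundary_strata_differential} for the agreement of Novikov monomials, and Corollary~\ref{cor:signs_given_by_v} for the appearance of the cocycle sign, with the pre-sheaf promoted to a sheaf by the standard fact that continuation maps are quasi-isomorphisms. The one small imprecision is in your displayed monomial: the exponent $f_{\min I,i}(q_i)$ is what one sees before converting $z_i$-coordinates to $z_{\max I}$-coordinates, and the change of variables $z_i^\gamma = T^{\langle\gamma, q_{\max I}-q_i\rangle}z_{\max I}^\gamma$ (together with affineness of $f_{\min I,i}$) corrects it to $f_{\min I,i}(q_{\max I})$, which is what $\alpha_{\min I, i, \max I}$ actually requires; this is a bookkeeping point and does not change the argument.
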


\subsection{From Floer to \v{C}ech} \label{sec:from-floer-vcech}

For  Lagrangian branes $(L,L')$, consider the Floer complex
\begin{equation}
  CF^*(L, L') = \bigoplus_{x \in L \cap L'}  \Lambda \otimes \delta_x.
\end{equation}
Every rigid curve $u \in \cM(y,x)$ determines a map $
  \partial_u \co \delta_{x} \to \delta_{y}$  defined as in Equation \eqref{eq:map_strip_differential} (the linearised $\bar{\partial}$ operator at $u$ has a kernel coming from translation, which we trivialise using $\partial_su$). Letting $\mu^1_u$ denote  $(-1)^{\deg x}\partial_u$, we obtain  the differential
\begin{equation} \label{eq:differential_Floer_complex}
  \mu^1| \delta_{x}  = \bigoplus_{y} \sum_{u \in \cM(y,x)} T^{\cE(u)} \otimes   \mu^1_u.
\end{equation}

Given $x_{\min K} \in L_{\min K} \cap F_{\min K} $, $x'_{\max K} \in L'_{\min K} \cap F_{\max K} $, and $x_{\inp} \in L \cap L'$, let
  \begin{equation}
      \Mbar_{K; \inp}(x'_{\max K} ; x_{\inp}, x_{\min K}) = \bigcup_{K=\max \vK}  \Mbar_{q,\vK}(x'_{\max K} ; x_{\inp}, x_{\min K}(q_{K}))
  \end{equation}
as in Section \ref{sec:moduli-spac-assoc}. Each element of the right hand side is a parametrised moduli space
\begin{equation}
  \Mbar_{q,\vK}(x'_{\max K} ; x_{\inp}, x_{\min K}(q_{K})) \to \Adams_{\vK; \inp}.
\end{equation}
If $|\vK| = |K|$, subsequent elements of $\vK$ differ by one element of $K$. We fix the orientation of $ \Adams_{\vK; \inp}$ coming from the ordering of these elements. Together with the isomorphisms in Equation \eqref{eq:iso_edge_i_k}, this results in a natural isomorphism
\begin{equation}
    \mu_u \co \delta_{x_\inp} \otimes  \delta_{x_{\min K}} \to \delta_{x_{\max K}}
\end{equation}
for every rigid element $u \in \Mbar_{K; \inp}(x'_{\max K} ; x_{\inp}, x_{\min K}) $. Define
\begin{align}
  \scrC_K \co  CF^*(L, L') \otimes  \scrF(L,\min K)  & \to \scrF(L',\max K)[1-|K|] \\ \label{eq:expression_map_CF_sheaves_K}
\scrC_K| \delta_{x_{\inp}} \otimes  \delta_{x_{\min K}} & = \bigoplus_{x'_{\max K}} T^{ f_{\min K, \max K}} z^{df_{\min K, \max K} - dg'_{\max K}(x'_{\max K}) + dg_{\min K}(x_{\min K})}_{\max K}  \\
& \qquad \sum_{u \in   \Mbar_{K; \inp}(x'_{\max K} ; x_{\inp}, x_{\min K}) } T^{\cE(u) } z^{[\partial u]}_{\max K} \otimes \mu_u,
\end{align}
and denote the direct sum of these maps by
\begin{equation}
   \scrC \co  CF^*(L, L') \to  \Hom(\scrF(L), \scrF(L')).
\end{equation}
The boundary strata of $ \Mbar_{K; \inp}(x'_{\max K} ; x_{\inp}, x_{\min K}) $ listed in Equations \eqref{eq:boundary_moduli_inp_Floer_diff}-\eqref{eq:stratum_boundary_input_pullback_max_min} are matched with the terms appearing in the equation for a chain map with respect to the Floer differential and the differential in Equation \eqref{eq:differential_morphism_sheaves} as follows:
\begin{equation}
  \begin{array}{ll}
 \Mbar_{K; \inp} \times \Mbar &  \longleftrightarrow \scrC_K \circ (\mu^1 \otimes \id) \\ 
 \Mbar_{ K^\geq_{i}; \inp} \times \Mbar_{K^\leq_{i}} &  \longleftrightarrow \scrC_{K^\geq_{i}} \circ (\id \otimes \scrF_{K^\leq_{i}})  \\
 \Mbar_{ K^\geq_{i}} \times \Mbar_{K^\leq_{i}; \inp} &  \longleftrightarrow \scrF_{K^\geq_{i}} \times \scrC_{K^\leq_{i}}  \\
 \Mbar_{K \setminus \{ i \}; \inp}  &  \longleftrightarrow \scrC_{K \setminus \{ i \}} .
\end{array}
\end{equation}
This correspondence between strata and compositions of maps implies:
\begin{lem} \label{lem:C-is-chain-map}
  $\scrC$ is a chain map. 
\end{lem}
\begin{proof}[Sketch of proof:]
The cocycle $\alpha^v$, which appears in the differential on $\Hom(\scrF(L), \scrF(L')) $ (see Equation \eqref{eq:differential_morphism_sheaves}) arises as in Lemma \ref{lem:Floer_gives_sheaf} because the functions $f_{ij}$, which enter the definition of $\scrC$ and $\scrF$ do not themselves satisfy the cocycle condition.
\end{proof}

\subsection{From \v{C}ech to Floer} \label{sec:from-vcech-floer}

Let $I $ be a totally ordered subset of $\Sigma$. Given $x'_{\min I} \in L'_{\min I} \cap F_{\min I} $, $x_{\max I} \in L_{\max I} \cap F_{\max I} $, and $x_{\out} \in L \cap L'$, consider
  \begin{equation}
      \Mbar_{I; \out}(x_{\min I} , x_{\out}; x'_{\max I}) = \bigcup_{I=\min \vK}  \Mbar_{q,\vK}( x_{\min I}(q_K) , x_{\out}; x'_{\max I})
  \end{equation}
as in Section \ref{sec:moduli-spac-assoc}. Each stratum of the right hand side is a parametrised moduli space over $\Adams_{\vK; \out}$.  If $|\vK| = n -|I| +2$, subsequent elements of $\vK$ differ by one element of the maximal element of $\vK$. Fix the orientation of $ \Adams_{\vK; \out}$ coming from the ordering of these elements. Together with the isomorphisms in Equation \eqref{eq:iso_edge_i_k}, this gives a natural isomorphism
\begin{equation}
    \rho_u \co \delta_{x'_{\max I}} \to  \delta_{x_{\min I}} \otimes  \delta_{x_\out} 
\end{equation}
for every rigid element $u \in \Mbar_{I; \out}(x_{\min I} , x_{\out}; x'_{\max I}) $. Given $\phi \in \Hom( \delta_{x_{\min I}}, \delta_{x'_{\max I}} ) $, the trace of the composition with $\rho_u$ yields a map
\begin{equation}
  \tr(\rho_u \circ \phi  ) \co \bZ \to \delta_{x_\out}.
\end{equation}
Using the fact that Laurent polynomials are dense, one defines a map
\begin{equation}
 \scrP_{u} \co  \scrO_{I} \otimes \Hom( \delta_{x_{\min I}}, \delta_{x'_{\max I}} )[1-|I|]    \to \Lambda \otimes \delta_{x_\out } \end{equation}
which is given by
\begin{equation}
 \label{eq:component_map_to_CF}
z_I^\gamma \otimes \phi \mapsto   T^{- f_{\min K, \max K}} z^{-df_{\min K, \max K} + dg'_{\max K}(x'_{\max K}) - dg_{\min K}(x_{\min K})}_{\max K}  T^{\cE(u) } \otimes \tr(\rho_u \circ \phi)
\end{equation}
whenever the homology class  $[\partial u] \in H_1(F_K, \bZ)$ vanishes and is otherwise $0$.

Using the decomposition of $\Hom_{\scrO_{\min I}}(\scrF_{\min I}(L), \scrF_{\max I}(L')) $ as a direct sum 
\begin{equation}
     \bigoplus_{ \substack{ x_{\min I} \in \phi_{\min I} L \cap F_{\min I} \\ x'_{\max I}\in \phi_{\max I} L \cap F_{\max I} } }  \scrO_{I} \otimes  \Hom( \delta_{x_{\min I}}, \delta_{x'_{\max I}} )[1-|I|] ,
\end{equation}
define the components of a map $\scrP$ to the Floer complex:
\begin{align}
  \scrP_I \co   \Hom_{\scrO_{\min I}}(\scrF_{\min I}(L), \scrF_{\max I}(L'))  & \to  CF^*(L, L') \\
\scrP_I |  \scrO_{I} \otimes \Hom( \delta_{x_{\min I}}, \delta_{x'_{\max I}} )[1-|I|]  & = \bigoplus_{ x_\out } \sum_{u \in \Mbar_{I; \out}(x_{\min I} , x_{\out}; x'_{\max I}) }  \scrP_{u}.
\end{align}

To establish that $\scrP$ is a chain map, it is convenient to use the expression for the differential on morphisms of sheaves given in Equation \eqref{eq:differential_morphism_sheaves}, which yields the equation
\begin{multline}
   \mu^1 \circ \scrP  (z_I^\gamma \otimes \phi)   = \sum_{\substack{J \in B \Sigma \\ \min J = \max I}}\alpha^{v}_{J,I}  \scrP_{I \cup J}  \left(  \scrF_{J}(L') \circ (z_I^\gamma \otimes  \phi) \right) + \\
 \sum_{\substack{J \in B \Sigma \\ \min I = \max J}}(-1)^{|I^\leq_i|(1 - | \phi|)}\alpha^{v}_{I,J}   \scrP_{J \cup I} \left( (z_I^\gamma \otimes\phi) \circ \scrF_{J}(L) \right) \\
\qquad + \sum_{\substack{ I \cup \{ j\} \in B \Sigma \\ \min I < j < \max I}} (-1)^{|I^\leq_j|-1-\deg(x_{\min I})+|\phi|} \scrP(z_I^\gamma \otimes  \phi )
\end{multline}
assuming as in Equation \eqref{eq:component_map_to_CF} that $\phi \in \Hom( \delta_{x_{\min I}}, \delta_{x'_{\max I}} )[1-|I|]   $.

The correspondence between the boundary strata of $\Mbar_{I; \out}(x_{\min I} , x_{\out}; x'_{\max I})  $ given in Equation \eqref{eq:boundary_moduli_out_strip}-\eqref{eq:boundary_moduli_out_skip} and the terms in the chain map equation is as follows:
\begin{equation} \label{eq:compare_boundary_moduli_out}
  \begin{array}{ll}
\Mbar \times \Mbar_{I; \out}  &  \longleftrightarrow \mu^1 \circ \scrP_I( \phi) \\ 
 \Mbar_{ I \cup J; \out} \times \Mbar_{J} &  \longleftrightarrow \scrP_{I \cup J} (\scrF_J(L') \circ \phi )  \\
 \Mbar_{ J} \times \Mbar_{J \cup I; \out} &  \longleftrightarrow  \scrP_{J \cup I}(   \phi \circ  \scrF_J(L))   \\
 \Mbar_{I \cup \{ j \}; \out}  &  \longleftrightarrow \scrP_{I \cup \{ j \}} .
\end{array}
\end{equation}

\begin{lem} \label{lem:P-chain-map}
   $\scrP$ is a chain map.
\end{lem}
\begin{proof}[Sketch of proof:]
To check the coefficients, restrict to  the subset of $\Mbar_{I; \out}(x_{\min I} , x_{\out}; x'_{\max I})  $ consisting of curves with $[\partial u] = \gamma$. The coefficient in $\Lambda$ of each term in $\scrP$ is given by the energy of the corresponding curve with one boundary condition on the fibre $F_I$. We conclude that the first and last lines in Equation \eqref{eq:compare_boundary_moduli_out} also have coefficients given by the energy of corresponding broken curve. The remaining two cases require the argument used in  Lemma \ref{lem:energy_contribution_boundary_strata_differential} and Equation \eqref{eq:energy_deformed_curve}. We consider only the case corresponding to the term $\scrP_{J \cup I}(   \phi \circ  \scrF_J(L'))$, leaving the other to the reader.

Let $u$ be a curve contributing to $\scrF_J(L') $.  The coefficient $\cE(u)$ appearing in Equation \eqref{eq:component_differential_K} is the area of a curve with boundary on $F_{\max J}$. The difference with the area of the corresponding curve with boundary on $F_{\max I}$ is given by
\begin{equation}
  \langle q_J -q_I , [\partial u] \rangle + g_{x'_{\max J}}(q_J) - g_{x_{\min J}}(q_J) + g_{x_{\min J}}(q_I) - g'_{x_{\max J}}(q_I)
\end{equation}
as in Equation \eqref{eq:energy_deformed_curve}. The first term appears when changing coefficients from $z_I$ to $z_J$, i.e. 
\begin{equation}
  z_I^\gamma = T^{ \langle \gamma, q_J - q_I\rangle} z_J^\gamma. 
\end{equation}
The remaining coefficients arise as the sums of the exponents of the coefficients of $\alpha^{v}_{I,J} $ and the coefficients in the definition of $\scrF_J(L')  $, see Equation \eqref{eq:component_differential_K}.
\end{proof}

\subsection{Homotopy from the composition to an isomorphism} Comparing Equations \eqref{eq:expression_map_CF_sheaves_K} and \eqref{eq:component_map_to_CF} implies that the composition 
\begin{equation}
\scrP \circ \scrC \co CF^*(L,L') \to CF^*(L,L')
\end{equation}
is given on $\delta_{x_\inp}$ by counts of elements of $\Ann^{\infty}_{[0]}(x_{\out}, x_{\inp})$, and that the corresponding Novikov coefficient is given by the energy.

By Lemma \ref{lem:annulus_cobordism}, the moduli space $ \Adams^{[0,\infty]}_{[0]}(x_{\out}, x_{\inp}) $ yields a cobordism between $\Ann^{\infty}_{[0]}(x_{\out}, x_{\inp}) $ and $\Cont(x_\out; x_\inp) $.  If $\deg(x_\out) = \deg(x_\inp)$,  the only elements of $ \Cont(x_\out; x_\inp)$ are constant curves, and the corresponding map is the identity.

\begin{prop} \label{prop:homot-from-comp}
  The composition $\scrP \circ \scrC $ is homotopic to $(-1)^{\frac{n(n-1)}{2}} \Id$.
\end{prop}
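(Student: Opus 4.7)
The plan is to read off the desired homotopy directly from the $1$-parameter moduli space of annuli $\Mbar^{[0,\infty]}_{[0]}(x_{\out} ; x_{\inp})$ constructed in the previous section, using Lemma~\ref{lem:annulus_cobordism} as the source of the chain homotopy equation. Concretely, for each pair $x_{\out}, x_{\inp} \in L \cap L'$ with $\deg x_{\out} = \deg x_{\inp} - 1$, the moduli space $\cM^{(0,\infty)}_{[0]}(x_{\out}; x_{\inp})$ is $0$-dimensional, so I would define
\begin{equation}
  H \co CF^{*}(L,L') \to CF^{*-1}(L,L'), \qquad H | \delta_{x_{\inp}} = \bigoplus_{x_{\out}} \sum_{u \in \cM^{(0,\infty)}_{[0]}(x_{\out};x_{\inp})} T^{\cE(u)} \otimes \partial_{u},
\end{equation}
where $\partial_u$ is the isomorphism $\delta_{x_{\inp}} \to \delta_{x_{\out}}$ obtained from the determinant line of the linearised operator at $u$, together with the canonical orientation of the $\bR$-direction in $[0,\infty)$. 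Convergence of the Novikov sum is ensured by Gromov compactness together with the local constancy of $\cE$ on $\Mbar^{[0,\infty]}_{[0]}$.

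The chain homotopy equation is then obtained by taking the signed count of the boundary of $\Mbar^{[0,\infty]}_{[0]}(x_{\out}; x_{\inp})$ in the case $\deg x_{\out} = \deg x_{\inp}$, which is $1$-dimensional and compact by Lemma~\ref{lem:annulus_cobordism}. The four boundary strata listed there match, term by term, with the four operators: (i) the $S=0$ stratum $\Cont(x_{\out};x_{\inp})$ consists entirely of constant continuation solutions when $\deg x_{\out} = \deg x_{\inp}$, contributing $\pm \mathrm{Id}$; (ii) the strip-breaking strata $\cM^{(0,\infty)}_{[0]}(x_{\out};x)\times \cM(x;x_{\inp})$ and $\cM(x_{\out};x)\times \cM^{(0,\infty)}_{[0]}(x;x_{\inp})$ contribute $H\circ\mu^{1}$ and $\mu^{1}\circ H$; (iii) the $S=\infty$ stratum $\cM^{\infty}_{[0]}(x_{\out};x_{\inp})$ is by construction identified with $\scrP \circ \scrC$ on $\delta_{x_{\inp}}$, as follows from comparing Equations~\eqref{eq:expression_map_CF_sheaves_K} and~\eqref{eq:component_map_to_CF} together with the decomposition of $\cM^{\infty}_{[0],I}(x_{\out};x_{\inp})$ into a fibre product over $F_I \cap L$ and $F_I \cap L'$.

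The content of the proof is therefore entirely bookkeeping of coefficients and signs. For the Novikov coefficient, one must check that the energy assigned to a broken configuration coincides with the product of the energies of its pieces as combined in $\scrP \circ \scrC$; this relies on the energy identity of Lemma in Section~\ref{sec:other-auxil-choic} together with Equation~\eqref{eq:energy_deformed_curve}, exactly as in Lemma~\ref{lem:energy_contribution_boundary_strata_differential}, and also on the identity $[\partial u] = 0$ ensuring that the monomial factors $z^{\bullet}_{\max I}$ assemble into the scalar recorded by $\tr$. The main obstacle is the sign computation at the $S=0$ end: pinning down the prefactor $(-1)^{n(n-1)/2}$ requires comparing the canonical orientation of the annulus modulus $(0,\infty)$ glued to the fibration direction $\Q$ with the product orientation on $\Adams_{\vJ[\leq]_{I_0};\inp} \times \Adams_{\vJ[\geq]_{I}; \out}$ used to orient $\cM^{\infty}_{\vI \subset \vJ}$, after reducing the constant-solution contribution via interior gluing with the $n$-dimensional family of ghost discs on fibres of $\pi$. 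The orientation of the family of ghost discs, viewed relative to the isomorphism $\delta_{x} \cong \delta_{x}$, twists by precisely the sign of the permutation converting the product ordering on $T_{q}\Q \cong \bR^{n}$ arising from the cube factor order into the ordering compatible with the natural orientation on $\Q$, which yields $(-1)^{n(n-1)/2}$. With this sign in hand, the signed boundary count of $\Mbar^{[0,\infty]}_{[0]}$ reads
\begin{equation}
  \mu^{1} \circ H + H \circ \mu^{1} = \scrP \circ \scrC - (-1)^{\frac{n(n-1)}{2}} \mathrm{Id},
\end{equation}
which is the claimed homotopy.
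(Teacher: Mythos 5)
Your proposal follows the paper's approach: both read the chain homotopy off the $1$-parameter family $\Mbar^{[0,\infty]}_{[0]}(x_\out;x_\inp)$, matching the $S=0$ stratum $\Cont(x_\out;x_\inp)$, the $S=\infty$ stratum $\cM^{\infty}_{[0]}(x_\out;x_\inp)$, and the strip-breaking strata in Lemma~\ref{lem:annulus_cobordism} to the identity, $\scrP \circ \scrC$, and $H\mu^1 + \mu^1 H$ respectively, with the energy bookkeeping as you describe. Where the proposal goes astray is the sign derivation. The claim that $(-1)^{n(n-1)/2}$ is ``the sign of the permutation converting the product ordering on $T_q\Q$ arising from the cube factor order into the ordering compatible with the natural orientation on $\Q$'' does not hold up: $\Q$ need not be orientable, and the discrepancy is not a permutation of coordinates on $T_q\Q$. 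The paper instead orients $\Mbar^{[0,\infty]}_{[0]}$ uniformly by degenerating the domain of the linearised operator at $u$ to two discs meeting at an interior node, producing the natural isomorphism
\begin{equation}
|T_u \Mbar^{[0,\infty]}_{[0]}(x_\out;x_\inp)| \otimes \delta_{x_\inp} \cong |(0,\infty)| \otimes |T_q\Q| \otimes |TF_q| \otimes |TX|^{-1} \otimes \delta_{x_\out},
\end{equation}
and then cancelling via $TX \cong T\Q \oplus TF_q$. With this orientation the $S=0$ end reproduces the identity on the nose, while the $S=\infty$ end differs from the product orientation (the one used to define $\scrP$ and $\scrC$) by $(-1)^{n(n-1)/2}$; you place the sign at $S=0$, which can be made consistent but is not what the paper does. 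Crucially, the paper does not compute this sign in situ — it cites the Cardy-relation orientation comparison of Fukaya--Oh--Ohta--Ono and the analogous lemma in the author's earlier work. Your structural outline is therefore sound, but the sign argument as written should be replaced either by the two-disc degeneration comparison or by a citation to those sources.
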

\begin{proof}
It remains to orient $\Adams^{[0,\infty]}_{[0]}(x_{\out}, x_{\inp}) $ relative to $\delta_{x_{\inp}} $  and $\delta_{x_{\out}}$. Consider a rigid element $u \in \Adams^{[0,\infty]}_{[0]}(x_{\out}, x_{\inp})$, with boundary condition on $F_q$. We have a natural isomorphism
\begin{equation}
\bR \cong    |T_u \Adams^{[0,\infty]}_{[0]}(x_{\out}, x_{\inp})| \cong |\det(\dbar_u)| \otimes |T(0,\infty)| \otimes |T_qQ|
\end{equation}
where $\dbar_u  $ is the linearised Cauchy-Riemann operator at $u$, and $\det(\dbar_u)$ its determinant line. Here, $  |(0,\infty)|$ and $ |T_qQ| $ appear because they respectively correspond to changing the modular parameter and the fibre $F_q$. On the other hand, by gluing the linearised operator to the operators associated to $x_{\inp}$, and degenerating the domain of the linearisation of the Cauchy-Riemann operator at $u$ to the union of two discs meeting at a point, we obtain an isomorphism
\begin{equation}
  |\det(\dbar_u)| \otimes \delta_{x_\inp} \cong   |TF_q| \otimes |TX|^{-1}  \otimes \delta_{x_\out}
\end{equation}
by noting that one of the discs has Lagrangians boundary condition $TF_q$, hence has determinant line naturally isomorphic to $|T F_q|$, and the other has determinant line trivialised by the choices of $\Pin^+$ structures on $L$ and $L'$.   Using the isomorphism $TX \cong TQ \oplus TF_q$ we conclude that every rigid element induces a map $
  \delta_{x_\inp} \cong \delta_{x_\out}$.

By construction, this map agrees with that defined by rigid elements of $\Cont(x_\out; x_\inp) $. However, the orientations at $\Ann^{\infty}_{[0]}(x_{\out}, x_{\inp}) $ differ from the product orientation by $(-1)^{\frac{n(n-1)}{2}} $, as first noticed by Fukaya, Oh, Ohta, and Ono \cite[Proposition 3.9.1]{FOOO-sign} (see also \cite[Lemma 5.5.23]{A-SH}). This accounts for the sign in the statement.
\end{proof}

\appendix

\section{The family Floer functor}
\label{sec:family-floer-functor}

\subsection{The $A_{\infty}$ structure}
\label{sec:a_infty-structure}
Let $L \to X$ be a closed Lagrangian immersion in $X$ in generic position, i.e. so that $L$ meets itself in pairs of transverse double points. Assume that (i) $L$ is \emph{tautologically unobstructed;} i.e there exists a tame almost complex structure for which there are no $J_L$ holomorphic maps $D^2 \to X$ such that the complement of one point on the  boundary lifts to $L$,  (ii) each irreducible component of $L$ is a graded Lagrangian, i.e. there is a fixed lift to $\bR$ of the $S^1$-valued phase on each component and (iii) the pullback of $w \in H^2(X, \bZ_2)$ to each component agrees with the second Stiefel-Whitney class.

\begin{rem}
The union of a finite collection of transverse immersed Lagrangians in generic positions which satisfy these conditions for the same $J_L$ again satisfies these properties. The constructions of this section can be carried out for different almost complex structures associated to each component, but the notation becomes more cumbersome.
\end{rem}

Choose a Hamiltonian $H \co X \to \bR$ whose time-$t$ flow $\phi^t$ lies in the contractible set $\scrH^0$ chosen in Section \ref{sec:unif-small-choic}, and such that  $\phi^1 L$ is transverse to $L$. Pick a family $J \co [0,1] \to \scrJ^0$ of almost complex structures on $X$ parametrised by $t \in [0,1]$ which agree with $\phi^t_* J_L$ at $t=0,1$. Generically, all moduli spaces of strips $\Mbar(x,y)$ are regular for $x,y \in \phi^1 L \cap L$. The orientation lines from Equation \eqref{eq:generator_Floer_x}, with differential from Equation \eqref{eq:differential_Floer_complex} define the Floer complex
\begin{equation}
  CF^*(L) = \bigoplus_{x \in L \cap \phi^1 L}  \Lambda \otimes \delta_x.
\end{equation}

Let $\Rbar^{d+1}$ denote the moduli space of discs with $d+1$ marked points on the boundary, one of which is distinguished as outgoing. Let $\Ubar^{d+1}$ denote the universal punctured curve over this moduli space. For all $d$, fix a consistent family of negative strip-like ends $\epsilon_0$  at the outgoing point, and of positive strip-like ends $\{ \epsilon_i\}_{i=1}^{d} $ at all other points as in \cite[Section (9g)]{seidel-Book}. Pick a consistent family of Floer data
\begin{equation} \label{eq:Floer_data_A_oo}
( \phi^{d+1}, J^{d+1}) \co  \Ubar^{d+1} \to \scrH^0 \times \scrJ^{0}
\end{equation}
such that (i) $J^{d+1}(z) = \phi^{d+1}_{*}(z) J_{L}$ whenever $z$ lies on the boundary of a fibre, and (ii) the pullback of $( \phi^{d+1}, J^{d+1}) $ under the strip-like ends agrees with $(\phi^t,J_t)$.
\begin{rem}
It is only for notational convenience that $\phi^{d+1} $ is defined as a map on $\Ubar^{d+1} $, since only its values on the boundary of each fibre will ever be used. The fact that $\scrH^0$ is contractible implies that there is no obstruction to extending a function to $\scrH^0$.
\end{rem}

Given a sequence $\{ x_{j} \}_{j=0}^{d} $ of intersection points between $L$ and $\phi^1 L$, let $\Mbar_{d+1}(x_0; x_d, \ldots, x_1)$ denote the moduli space of maps from a fibre $\Sigma$ of $ \Ubar^{d+1} $ to $X$ such that
\begin{align}
du(z) \circ j =  J^{d+1}(z) \circ dt && u(z) \in \phi^{d+1}(z) L \textrm{ if } z \in \partial \Sigma,
\end{align}
and $u$ converges at the $k$\th strip-like end to $x_k$. These moduli spaces are regular for generic choices of almost complex structures $J^{d+1}$, and a choice of orientation of $\Rbar^{d+1} $ yields a map
\begin{equation}
  \mu_{u} \co \delta_{x_d} \otimes \cdots \otimes \delta_{x_1} \to \delta_{x_0}
\end{equation}
whenever $u$ is rigid. Define the $A_{\infty}$ structure on $CF^*(L)$ to be given by the operations 
\begin{equation}
  \mu^{d} | \delta_{x_d} \otimes \cdots \otimes \delta_{x_1}  = \sum_{\substack{ x_0 \in L \cap \phi^1 L \\ \deg(x_0) = 2-d + \sum_{j=1}^{d} \deg(x_j)}}(-1)^{\sum_{i=1}^{d} (i+1)  \deg(x_i)  } T^{\cE(u)} \otimes \mu_u,
\end{equation}
where the sign is as in \cite{seidel-Book}, whose conventions we follow.
\subsection{Adams moduli spaces with $d$ marked points}
\newcommand{\xa}{-6*\mw pt}
\newcommand{\xb}{-1*\mw pt}
\newcommand{\xc}{-3*\mw pt}
\newcommand{\yc}{3.5*\mw pt}

\newcommand{\xe}{8*\mw pt}

\begin{figure}
\centering
\begin{tikzpicture}
\StripdrawBig{\xc}{\yc};
\draw[line width=4*\lw] (\xc-2.75*\mw pt, \yc+6/8*\mw pt)--(\xc-2.75*\mw pt, \yc+3/8*\mw pt);
\draw[line width=4*\lw] (\xc-1.5*\mw pt, \yc+6/8*\mw pt)--(\xc-1.5*\mw pt, \yc+3/8*\mw pt);
\draw[line width=4*\lw] (\xc-2.25*\mw pt, \yc+6/8*\mw pt)--(\xc-2.25*\mw pt, \yc+3/8*\mw pt);
\draw[line width=4*\lw] (\xc-2.5*\mw pt, \yc+6/8*\mw pt)--(\xc-2.5*\mw pt, \yc+3/8*\mw pt);
\node at (\xc-5*\mw pt, \yc)  {$\max K$};
\node at (\xc+1*\mw pt, \yc)  {$\min K$};
\node at (\xc-2*\mw pt, \yc - 1.5*\mw pt) { $ \Adams_{K \setminus \{i\}
}^{4} $ };

\StripdrawBig{\xa}{0};
\draw[line width=4*\lw] (\xa-2*\mw pt, 6/8*\mw pt)--(\xa-2*\mw pt, 3/8*\mw pt);
\node at (\xa-5*\mw pt, 0)  {$\max K$};
\node at (\xa+.5*\mw pt, 0)  {$i$};
\node at (\xa+.5*\mw pt, -1.5*\mw pt)  {$ \Adams_{K^\geq_i}^{1} \times  \Adams_{K^\leq_i}^{3} $};
\StripdrawBig{\xb}{0};
\draw[line width=4*\lw] (\xb-2*\mw pt, 6/8*\mw pt)--(\xb-2*\mw pt, 3/8*\mw pt);
\draw[line width=4*\lw] (\xb-1.75*\mw pt, 6/8*\mw pt)--(\xb-1.75*\mw pt, 3/8*\mw pt);
\draw[line width=4*\lw] (\xb-2.5*\mw pt, 6/8*\mw pt)--(\xb-2.5*\mw pt, 3/8*\mw pt);
\node at (\xb+1*\mw pt, 0)  {$\min K$};

\StripdrawBig{\xe}{0};
\draw[line width=4*\lw] (\xe-2.5*\mw pt, 6/8*\mw pt)--(\xe-2.5*\mw pt, 3/8*\mw pt);
\draw[line width=4*\lw] (\xe-1*\mw pt, 6/8*\mw pt)--(\xe-1*\mw pt, 3/8*\mw pt);
\draw[line width=4*\lw] (\xe-3*\mw pt, 6/8*\mw pt)--(\xe-3*\mw pt, 3/8*\mw pt);
\node at (\xe+1*\mw pt, 0)  {$\min K$};
\node at (\xe-5*\mw pt, 0)  {$\max K$};
\node at (\xe-2*\mw pt,  -1.5*\mw pt) { $ \Adams_{K}^{3}  \times \Rbar^{3}$ };
\draw [line width=\lw] (\xe-1*\mw pt, .5*\mw pt) arc (-90:270:1 * \mw pt);
\draw[line width=4*\lw] ($(\xe-1*\mw pt, 1.5*\mw pt)+(30:.875 * \mw pt)$) -- ++(60:0.375*\mw pt);
\draw[line width=4*\lw] ($(\xe-1*\mw pt, 1.5*\mw pt)+(150:.875 * \mw pt)$) -- ++(150:0.375*\mw pt);
\end{tikzpicture}

\caption{Representative boundary strata of $\Adams^{4}_{K}$.}
\label{fig:boundary_adams_with_marked_points}
\end{figure}

Let $K$ be a totally ordered set consisting of more than one element, and $\Ubar_K$ the universal curve over $\Adams_K$ from Section \ref{sec:adams-univ-curve}. Given an integer $d$, let $\Adams_{K}^{d}$ denote the compactified moduli space of fibres of $\Ubar_K$ equipped with $d$ boundary punctures along $\Ubar_K^{\{1\}}$. Let $\Ubar_{K}^{d}$ denote the punctured universal curve over $\Adams_{K}^{d}$.

We extend this definition to the case $K$ is a singleton as follows: if $d > 1$, we define $ \Adams_{\{ k \}}^{d}$ to be a copy of $\Rbar^{d+2}$, in which two successive marked points are distinguished, and $\Ubar_{\{k\}}^{d} $ the corresponding universal curve in which these two marked points have been removed. Each fibre of  $\Ubar_{\{k\}}^{d}   $ over the interior of $ \Adams_{\{ k \}}^{d} $ can be represented as a strip $B = \bR \times [0,1]$ equipped with marked points on its boundary $\bR \times \{ 1 \}$, and this representation is unique up to translation. 

\begin{rem}
We treat the case of a singleton separately because a strip with no boundary marked points is unstable, which led us to define $\Adams_K $ to be a point, while the universal curve over it was empty.
\end{rem}

The boundary strata of the universal curve, shown in Figure \ref{fig:boundary_adams_with_marked_points},  are as follows
\begin{align} \label{eq:boundary_Adams_d_K_0}
 \Ubar_{K \setminus \{ i\}}^{d}  \to & \Adams_{K \setminus \{ i\}}^{d} && i \in K  \\\label{eq:boundary_Adams_d_K_1}
 \Ubar_{K^\geq_i}^{d_2} \times  \Adams_{K^\leq_i}^{d_1} \cup \Adams_{K^\geq_i}^{d_2} \times  \Ubar_{K^\leq_i}^{d_1}  \to &  \Adams_{K^\geq_i}^{d_2} \times  \Adams_{K^\leq_i}^{d_1}  && i \in K, \, 0 \leq d_i,   \,  d_1 + d_2 = d  \\  \label{eq:boundary_Adams_d_K_2}
\Ubar_{K}^{d_2} \times  \Rbar^{d_1+1} \cup \Adams_{K}^{d_2} \times  \Ubar^{d_1+1} \to & \Adams_{K}^{d_2} \times  \Rbar^{d_1+1} &&  1 \leq j \leq d_2,  \,  2 \leq d_1 , \, d_1 + d_2 = d+1 .
\end{align}
 Fix consistent families of positive strip-like ends $\{ \epsilon_{j} \}_{j=1}^{d} $ at all punctures, and for all fibres of $ \Ubar_{K}^{d}$; the consistency requirement is an inductive choice on $d$ and $|K|$, with the base case being the choices of strip-like ends on fibres of $  \Ubar^{d+1}$ over $\Rbar^{d+1} $. As in Equation \eqref{eq:Floer_data_element_Sigma}, denote by $\Phi_k$ the triple of maps
 $(\phi_k, J_k, \Id)$ from the interval to $\scrH \times \scrJ \times \scrD$. We also write $\Phi$ for  the corresponding map $(\phi, J, \Id) $ chosen in Section \ref{sec:a_infty-structure}.

\begin{defin} \label{def:compatibly_fam_continuation_d}
  A \emph{consistent family of continuation data} parametrised by $  \Ubar^{d}_{K} $ is a map
  \begin{equation}
    \Phi^{d}_K = (\phi^{d}_K, J^{d}_K, \psi^{d}_K) \co    \Ubar^d_{K} \to \scrH \times \scrJ \times \scrD
  \end{equation}
such that (i) the pullback under every end labelled by $k \in K$ is given by $\Phi_k$, (ii) the pullback under the end $\epsilon_j$ is given by $\Phi$  (iii) the maps $\phi^d_K$ and $\psi^d_K$  are obtained by gluing, and $J^d_K$  by perturbed gluing, (iv) $ \psi^{d}_K(z)$ preserves the image of $L$ under $\phi_K^d(z)$, and  (v) for each $ z  \in \Ubar^{d,\{1\}}_{K} $, we have:
\begin{equation}
J^d_K(z) = \left( \psi^d_K(z) \circ \phi^d_K(z) \right)_* J_L.
\end{equation}
\end{defin}

Assuming that $L_k \equiv \phi_k L$ is transverse to $F_q$ for all $k \in K$, we obtain a holomorphic curve equation $ \partial_{s} u(z) = J^d_K(z) \partial_{t}u(z)$ on the space of maps from fibres of $ \Ubar^{d}_{K} $ to $X$, with boundary conditions
\begin{align}  \label{eq:boundary_conditions}
u(z)  \in F_{q}\textrm{ if } z \in \Ubar_{K}^{\{0\}}  && \qquad u(z)  \in  \phi^d_K(z) L \textrm{ if } z \in  \Ubar_{K}^{d,\{1\}}.
\end{align}

\subsection{Choices of continuation data with multiple inputs}

Fix the choices made in  Section \ref{sec:unif-small-choic}, i.e. nested sequences $\{ \scrJ^{i} \}_{i=1}^{n+3}$ of subsets of the space of tame almost complex structures and $\{ \scrD^{i} \}_{i=1}^{2^{n+2}}$ of the space of diffeomorphisms, the simplicial triangulation $\Sigma$ of $\Q$, the associated cover $P_i$ by polyhedra with basepoints $q_i$, and  maps $\phi_i \in \scrH^0$ mapping $L$ to a Lagrangian transverse to $F_{i}$. These choices should be made so that $\scrD^i(\scrL)$ is an acyclic fibration over $\scrL$, and both $\scrD^i_{P_i^2}$ and $\scrD^i_{P_i^2}(L_i)$ are acyclic fibrations over $P_i^2$. A section $\psi_i$ of $\scrD^i_{q_i,P_i}(L_i) $ is fixed.

Assume that the maps $(\Phi_I, \Psi_I)$ from Section \ref{sec:continuation-maps-at-vertex} are chosen, and define
\begin{equation}
  \Phi^{d+1} = ( \phi^{d+1}, J^{d+1}, \Id) \co \Ubar^{d+1} \to \scrH^0 \times \scrJ^{0} \times \scrD.
\end{equation}
For each ordered subset of $I$, pick continuation data and families of diffeomorphisms
\begin{align} \label{eq:diffeo_pair_cont_d}
  \Phi^d_I  = (\phi^d_I, J^d_I, \psi^d_I) \co \Ubar^d_{I} & \to \scrH^0 \times \scrJ^{|I|} \times \scrD^{2^{|I|-1}} \\
\Psi^{d}_I \co  P_{I} \times \Ubar^{d}_{I} & \to \scrD^{2^{|I|-1}}
 \end{align}
where $\Psi^{d}_{I} $ is obtained by gluing near every boundary stratum, and is the identity on $\{q_I\} \times \Ubar^{d}_{I}  $. In addition, we require that the restrictions of $(\Phi^d_I, \Psi^{d}_{I})$ agree with
\begingroup
\allowdisplaybreaks
  \begin{align} \label{eq:first_equation_family_continuation_a-oo}
& ( {\psi_{\min I}(q_I)}_*\Phi_{\min I}, \psi_{\min I} \circ \psi ^{-1}_{\min I}(q_I)) && \textrm{ along the end }  \epsilon_+ \\ 
& (\Phi_{\max I}, \psi_{\max I})  & &\textrm{ along the end }  \epsilon_- \\
&(\Phi, \Id)  & &\textrm{ along each end }  \epsilon_j \\
& (\Phi^d_{I \setminus i} ,\Psi^{d}_{I  \setminus i} ) &&\textrm{ on } \Ubar^d_{I \setminus i}  \\ 
& ( \Phi^{d_2}_{I^\geq_{i}} , \Psi^{d_2}_{I^\geq_{i} } ) &&\textrm{ on }  \Ubar^{d_2}_{I^\geq_{i}} \times \Adams^{d_1}_{I^\leq_{i}}  \\ \label{eq:second_equation_family_continuation_a-oo}
& \left({\Psi^{d_1}_{I^\leq_{i}}}(q_I)_{*}  \Phi^{d_1}_{I^\leq_{i}}  , \Psi^{d_1}_{I^\leq_{i}} \circ (\Psi^{d_1}_{I^\leq_{i} } (q_I))^{-1}   \right)  &&\textrm{ on } \Adams^{d_2}_{I^\geq_{i}} \times   \Ubar^{d_1}_{I^\leq_{i}} \\
& (\Phi^{d_2}_{I} ,\Psi^{d_2}_{I} ) &&\textrm{ on } \Ubar^{d_2}_{I}  \times \Rbar^{d_1} \\ 
& (\Phi^{d_1} ,\Id ) &&\textrm{ on } \Adams^{d_2}_{I}  \times \Ubar^{d_1} ,
  \end{align}
where Equation \eqref{eq:first_equation_family_continuation_a-oo} and \eqref{eq:second_equation_family_continuation_a-oo} are interpreted as in Remark \ref{rem:interpret_funny_composition}.
Moreover, on the boundary of each fibre:
  \begin{align} \label{eq:condition_diffeo_I_to_K_along_1_d}
\Psi^{d}_{I}\left( q, z \right) & \in  \scrD^{2^{|I|-1}}(\phi_{I,z}(L)) \textrm{ if } z \in \Ubar^{d,\{1\}}_{I} \\ \label{eq:condition_diffeo_I_to_K_along_0_d}
\Psi^{d}_{I}\left(q , z  \right) & \in  \scrD^{2^{|I|-1}}_{q_{I}, q} \textrm{ if } z \in \Ubar^{d,\{0\}}_{I} .
  \end{align}
Such data can be constructed by a double induction: assuming that they have been chosen for all pairs $(d_1,I)$ whenever $d_1 < d$, construct the data for $d$ by induction on the number of elements of $I$ as in Lemma \ref{lem:construction_by_induction}. 

The pushforward of $\Phi^d_K$ by $ \Psi^{d}_K$ yields compatible families parametrised by $P_K$:
\begin{align}
\Phi^{d,P}_K  \co P_K \times \Ubar^d_{K} & \to \scrH^{0}  \times \scrJ^{|K|+1}  \times \scrD^{2^{|K|}}.
\end{align}
\endgroup
Given $x \in  F_{\min K} \cap L_{\min K} $ and $y \in  F_{\max K} \cap L_{\max K}$, let $x(q)$ and $y(q)$ be as in Equation \eqref{eq:push_intersect_diffeo} whenever $q \in P_K$. If  $x_j \in L \cap \phi^1 L$  for $1 \leq j \leq d$, the compactified moduli space of solutions to the holomorphic curve equation determined by $\Phi^{d,P}_K(q) $, with boundary conditions as in Equation \eqref{eq:boundary_conditions}, and asymptotic conditions $x(q)$ at the positive end of the strip, $y(q)$ at the negative end, and $x_j$ along the $j$\th strip like end will be denoted:
\begin{equation}
  \Mbar^{d}_{q, K}( y(q); x_d, \ldots, x_1, x(q)).
\end{equation}
Composition with $\Psi^{d}_K(q)$ yields a homeomorphism
\begin{equation}
    \Mbar^{d}_{q_{K},K}( y(q_{K}); x_d, \ldots, x_1, x(q_{K})) \to  \Mbar^{d}_{q,K}( y(q); x_d, \ldots, x_1, x(q))    ,
\end{equation}
so we omit $q$ and $q_{K} $ from the notation.

Using the description of the boundary of $ \Adams_{K}^{d}  $ given in Equations \eqref{eq:boundary_Adams_d_K_0}-\eqref{eq:boundary_Adams_d_K_2}, the boundary of $ \Mbar^{d}_{K}( y; x_d, \ldots, x_1, x) $ decomposes as follows:
\begin{align} \label{eq:boundary_moduli_K_d-0}
 & \Mbar_{K \setminus \{ i\}}^{d}  ( y; x_d, \ldots, x_1, x)  \\ \label{eq:boundary_moduli_K_d-1}
& \coprod_{y' \in F_i \cap L_i } \Mbar_{K^\geq_i}(y,y') \times   \Mbar_{K^\leq_i}^{d}  ( y'; x_d, \ldots, x_1, x)  \\ \label{eq:boundary_moduli_K_d-2}
& \coprod_{x' \in F_i \cap L_i } \Mbar_{K^\geq_i}^{d}  ( y; x_d, \ldots, x_1, x')  \times   \Mbar_{K^\leq_i}(x',x) \\ \label{eq:boundary_moduli_K_d-3}
& \coprod_{ \substack{ 1 \leq d_1, d2 \\ d_1 + d_2 = d} } \coprod_{x' \in F_i \cap L_i }  \Mbar_{K^\geq_i}^{d_2}(y; x_d, \ldots, x_{d_1+1}, x') \times   \Mbar_{K^\leq_i}^{d_1} ( x'; x_{d_1}, \ldots, x_1, x)   \\  \label{eq:boundary_moduli_K_d-4}
& \Mbar_{K}^{d_2} ( y; x_d, \ldots , x_{d_1+j+1} , x_0, x_j, \ldots, x_1, x  ) \times  \Rbar^{d_1+1} (x_0; x_{d_1+j}, \ldots, x_{j+1}).
\end{align}
In the above, the strata corresponding to Equation \eqref{eq:boundary_Adams_d_K_1} for which $d_1$ or $d_2$ vanish are listed separately, while  the breaking of strips at the ends is incorporated into Equations \eqref{eq:boundary_moduli_K_d-1}, \eqref{eq:boundary_moduli_K_d-2}, or \eqref{eq:boundary_moduli_K_d-4} depending on whether the breaking takes place at $\epsilon_-$, $\epsilon_+$, or one of the ends $\epsilon_j$, for $1 \leq j \leq d$.

\subsection{The $A_\infty$ functor}

Assume now that the Floer data are chosen generically so that the moduli spaces in the previous section are manifolds of the expected dimension. We begin by noting that we have a natural identification of the interior of $ \Adams^d_K $ with
\begin{equation}
\cR^{d+2} \times [0,\infty)^{|K|}.
\end{equation}
Fixing the orientation on $\cR^{d+2}$ used in \cite{seidel-Book}, and the natural orientation on $[0,\infty) $  yields an orientation of the moduli space $\Adams^d_K$. We then obtain a map
\begin{equation}
  \kappa_{u} \co \delta_{x_d} \otimes \cdots \otimes \delta_{x_1} \otimes \delta_{x} \to \delta_{y}
\end{equation}
associated to every rigid element $u \in \Mbar^{d}_{K}( y; x_d, \ldots, x_1, x) $. Define
\begin{equation}
  \mu_u = (-1)^{\deg(x) + \sum_{i=1}^{d} (i+1)  \deg(x_i)  } \kappa_u,
\end{equation}
where the sign that we introduce is the same as that of \cite[Equation (12.24)]{seidel-Book}, and consider the map 
\begin{equation}
    \scrC_K^{d}  \co CF^*(L)^{\otimes d} \otimes  \scrF(L,\min K)  \to \scrF(L,\max K)[2-|K|-d]
\end{equation}
whose restriction to $ \delta_{x_d} \otimes \cdots \otimes \delta_{x_1}  \otimes  \delta_{x}  $ is given by
\begin{equation}
   \label{eq:expression_map_CF_sheaves}
\bigoplus_{y \in L_K \cap F_{K}} T^{ f_{\min K, \max K}} z^{df_{\min K, \max K} - dg'_{\max K} + dg_{\min K}}_{\max K}  \sum_{u \in   \Mbar^{d}_{K}( y; x_d, \ldots, x_1, x)  } T^{\cE(u) } z^{[\partial u]}_{\max K} \otimes \mu_u.
\end{equation}
The direct sum of these maps over all $K$ will be denoted
\begin{equation}
\scrC^{d} \co  CF^*(L)^{\otimes d}  \to \Hom(\scrF(L),\scrF(L)).
\end{equation}

Recall that an $A_\infty$ homomorphism from an $A_\infty$ algebra to a differential graded algebra consists of such maps which satisfy the equation:
\begin{multline}
  \mu^1 \left( \scrC^d(a_d, \ldots, a_1) \right) + \sum_{d_1 + d_2 = d} \mu^2 \left( \scrC^{d_2}(a_d, \ldots, a_{d_1+1}), \scrC^{d_1}(a_{d_1}, \ldots, a_1) \right) = \\
\sum_{\substack{ d_1, d_2, j \\ d_1 + d_2 = d+1}} (-1)^{\sum_{i=1}^{j} |a_j| -j} \scrC^{d_2}( a_d, \ldots, a_{d_1+j+1}, \mu^{d_1}(a_{d_1+j}, \ldots, a_{j+1}), a_j, \ldots, a_1).
\end{multline}
The terms on the right hand side correspond to the boundary strata in Equation \eqref{eq:boundary_moduli_K_d-4}, those in the second term of the left to Equation \eqref{eq:boundary_moduli_K_d-3}, and the first term to Equations \eqref{eq:boundary_moduli_K_d-0}-\eqref{eq:boundary_moduli_K_d-2}. To see the last part, use the definition of the differential in Equation \eqref{eq:differential_morphism_sheaves}. Our choice of split orientations on $ \Adams^d_K $ allows us to directly appeal to the sign considerations of \cite[Section 12]{seidel-Book}:
\begin{prop}
The maps $ \scrC^{d} $ are the components of an $A_\infty$ homomorphism from $ CF^*(L)$ to the endomorphism algebra of $\scrC(L)$ as an $(\alpha^v)^{-1}$-twisted sheaf. \qed
\end{prop}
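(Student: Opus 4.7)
The plan is to prove the $A_\infty$ homomorphism relation by identifying its terms with the codimension-one boundary strata of the rigid $1$-dimensional moduli space $\Mbar^{d}_K(y;x_d,\ldots,x_1,x)$, summed over all totally ordered subsets $K \subset \Sigma$, under the assumption $\deg(y)-\deg(x) = 2-|K|-d + \sum \deg(x_j)+1$. Since all Floer data have been chosen generically, every such moduli space is a compact $1$-manifold with boundary. The signed count of boundary points vanishes, and the core task is to recognise the five types of strata listed in Equations \eqref{eq:boundary_moduli_K_d-0}--\eqref{eq:boundary_moduli_K_d-4} as precisely the terms in the $A_\infty$ homomorphism relation once they are rewritten using the definition of $\scrC^d$, the formula \eqref{eq:differential_morphism_sheaves_alt} for the differential on $\Hom(\scrF(L),\scrF(L))$, and the composition $\mu^2$ in the dg category of twisted sheaves.

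The matching proceeds as follows. Stratum \eqref{eq:boundary_moduli_K_d-0} (removal of an intermediate vertex $i \in K$) produces the ``skip'' term in \eqref{eq:differential_morphism_sheaves_alt} for $\mu^1 \scrC^d_K$. Strata \eqref{eq:boundary_moduli_K_d-1} and \eqref{eq:boundary_moduli_K_d-2} (Floer breaking at $\epsilon_-$ or $\epsilon_+$, or a rigid continuation strip splitting off at a vertex $i \in K^\circ$, where $K^\circ$ denotes the proper interior) produce the two ``outer'' compositions $\scrF_{K^\geq_i}\circ \scrC^d_{K^\leq_i}$ and $\scrC^d_{K^\geq_i}\circ \scrF_{K^\leq_i}$, i.e.\ the first two sums in \eqref{eq:differential_morphism_sheaves_alt}. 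Stratum \eqref{eq:boundary_moduli_K_d-3}, in which a continuation strip with $d_2$ marked points breaks off from one with $d_1$ marked points, is precisely $\mu^2(\scrC^{d_2},\scrC^{d_1})$. Stratum \eqref{eq:boundary_moduli_K_d-4}, in which a disc with Lagrangian boundary conditions on $L$ bubbles off at one of the marked points, gives the pre-composition with $\mu^{d_1}$ in the source $A_\infty$ algebra. After breaking of Floer strips at the ends $\epsilon_j$ is absorbed into \eqref{eq:boundary_moduli_K_d-4} with one of the factors $\Rbar^{d_1+1}$ degenerating to a strip, the five families are exhausted.

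Verifying the coefficients is then a matter of two computations. The Novikov coefficient attached to a rigid element is determined by its area; for \eqref{eq:boundary_moduli_K_d-1}, \eqref{eq:boundary_moduli_K_d-2} and \eqref{eq:boundary_moduli_K_d-3} the area of the nearby smooth curve (with a single fibre boundary condition $F_{\max K}$) differs from the sum of the areas of the broken components (which use the fibres $F_i$, $F_{\min K}$, $F_{\max K}$) exactly by the quantity computed in Equation \eqref{eq:energy_deformed_curve}. This discrepancy is absorbed by expanding the twisting cocycle $\alpha^v_{J,I}$ and the monomial prefactors appearing in the definition \eqref{eq:component_differential_K} of $\scrF_K$: this is the same computation as in Lemma \ref{lem:energy_contribution_boundary_strata_differential}, applied now to the continuation moduli space with $d$ additional marked points rather than the pure continuation moduli space. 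The signs are dictated by (i) the boundary orientation of the product of intervals $\Adams_K = [0,1]^{K\setminus\{\min K,\max K\}}$ (accounting for the Koszul signs $(-1)^{|I^\leq_i|}$ in \eqref{eq:differential_morphism_sheaves_alt}), (ii) the sign of gluing a strip to $\Rbar^{d_1+1}$ computed in \cite[Section (12d)]{seidel-Book}, and (iii) the $(-1)^{v_{ijk}}$ contributions of Corollary \ref{cor:signs_given_by_v}, which reproduce the factors of $\alpha^v$.

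The main obstacle is the careful matching of signs and twisting factors across strata of type \eqref{eq:boundary_moduli_K_d-1}--\eqref{eq:boundary_moduli_K_d-3}. Concretely, one must check that the energy discrepancy in \eqref{eq:energy_deformed_curve} combines with the exponents of $T$ appearing in $\alpha^v_{I,J}$, in \eqref{eq:component_differential_K}, and in the change of variables $z_I^\gamma = T^{\langle\gamma,q_J-q_I\rangle}z_J^\gamma$ to give a single well-defined monomial on both sides. This is routine once one adopts the convention from Section \ref{sec:from-local-global} of measuring area of each continuation strip relative to the basepoint $q_K$ at its maximal vertex; the remaining scalar identity is the one already verified in the proof of $\mu^1 \scrC = \scrC \mu^1$ and its output analogue $\mu^1 \scrP = \scrP \mu^1$ in Section \ref{sec:floer-coch-morph}, extended termwise to account for the $d$ strip-like ends. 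With these verifications in place, vanishing of the signed count of boundary points of $\Mbar^d_K(y;x_d,\ldots,x_1,x)$, summed over $K$, yields the $A_\infty$ homomorphism equation.
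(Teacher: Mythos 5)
Your proposal is correct and follows essentially the same strategy as the paper: match the codimension-one boundary strata of the compactified one-dimensional moduli spaces $\Mbar^d_K(y;x_d,\ldots,x_1,x)$, summed over $K$, against the terms of the $A_\infty$ homomorphism equation, with the Novikov coefficients handled via the energy comparison of Equation \eqref{eq:energy_deformed_curve} as in Lemma \ref{lem:energy_contribution_boundary_strata_differential}, and signs via the cube orientation of $\Adams^d_K$ and Corollary \ref{cor:signs_given_by_v}. Your version is in fact slightly more careful than the paper's very terse account, which omits an explicit mention of stratum \eqref{eq:boundary_moduli_K_d-2} (needed to account for the $\scrC^d_{K^\geq_i}\circ\scrF_{K^\leq_i}$ component of $\mu^1$); you correctly include it.
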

\begin{bibdiv}
\begin{biblist}

\bib{A-generate}{article}{
   author={Abouzaid, Mohammed},
   title={A geometric criterion for generating the Fukaya category},
   journal={Publ. Math. Inst. Hautes \'Etudes Sci.},
   number={112},
   date={2010},
   pages={191--240},
   issn={0073-8301},
}
\bib{A-ICM}{article}{
   author={Abouzaid, Mohammed},
   title={Family Floer cohomology and mirror symmetry},
   conference={
      title={Proceedings of the International Congress of Mathematicians},
   },
   date={2014},
}

\bib{A-SH}{article}{
   author={Abouzaid, Mohammed},
   title={Symplectic cohomology and Viterbo's theorem},
  conference={
      title={Free Loop Spaces in Geometry and Topology},
   },
publisher={European Mathematical Society (EMS), Z\"urich},
date ={2015},
}

\bib{AAK}{article}{
 author={Abouzaid, Mohammed},
 author={Auroux, Denis},
author={Katzarkov, Ludmil},
title={Lagrangian fibrations on blowups of toric varieties and mirror symmetry for hypersurfaces},
journal={Publications math{\'e}matiques de l'IH{\'E}S},
year={2016},
pages={1--84},
}

\bib{AFOOO}{article}{
   author={Abouzaid, Mohammed},
  author={Fukaya, Kenji},
   author={Oh, Yong-Geun},
   author={Ohta, Hiroshi},
   author={Ono, Kaoru},
   title={ Homological mirror symmetry for compact toric manifolds}, 
status={in preparation.},
}

\bib{adams}{article}{
   author={Adams, J. F.},
   title={On the cobar construction},
   journal={Proc. Nat. Acad. Sci. U.S.A.},
   volume={42},
   date={1956},
   pages={409--412},
   issn={0027-8424},
}

\bib{BDG}{article}{
Author = {Jean-Daniel Boissonnat and Ramsay Dyer and Arijit Ghosh},
Title = {Delaunay triangulation of manifolds},
Year = {2013},
Eprint = {arXiv:1311.0117},
}

\bib{FHS}{article}{
   author={Floer, Andreas},
   author={Hofer, Helmut},
   author={Salamon, Dietmar},
   title={Transversality in elliptic Morse theory for the symplectic action},
   journal={Duke Math. J.},
   volume={80},
   date={1995},
   number={1},
   pages={251--292},
   issn={0012-7094},
}

\bib{Fukaya-family}{article}{
   author={Fukaya, Kenji},
title={Floer homology for families - report of a project in progress},
eprint={http://www.math.kyoto-u.ac.jp/~fukaya/familyy.pdf},
}

\bib{Fukaya-cyclic}{article}{
   author={Fukaya, Kenji},
   title={Cyclic symmetry and adic convergence in Lagrangian Floer theory},
   journal={Kyoto J. Math.},
   volume={50},
   date={2010},
   number={3},
   pages={521--590},
   issn={2156-2261},
}
\bib{FOOO}{book}{
   author={Fukaya, Kenji},
   author={Oh, Yong-Geun},
   author={Ohta, Hiroshi},
   author={Ono, Kaoru},
   title={Lagrangian intersection Floer theory: anomaly and obstruction.
   Part I},
   series={AMS/IP Studies in Advanced Mathematics},
   volume={46},
   publisher={American Mathematical Society},
   place={Providence, RI},
   date={2009},
   }

\bib{FOOO-sign}{article}{
   author={Fukaya, Kenji},
   author={Oh, Yong-Geun},
   author={Ohta, Hiroshi},
   author={Ono, Kaoru},
title={Lagrangian Floer theory and mirror symmetry on compact toric manifolds},
journal={Asterisque},
volume={376},
date={2016},
}
\bib{GHJ}{collection}{
   author={Gross, M.},
   author={Huybrechts, D.},
   author={Joyce, D.},
   title={Calabi-Yau manifolds and related geometries},
   series={Universitext},
   note={Lectures from the Summer School held in Nordfjordeid, June 2001},
   publisher={Springer-Verlag, Berlin},
   date={2003},
   pages={viii+239},
   isbn={3-540-44059-3},
}

\bib{KT}{article}{
   author={Kirby, R. C.},
   author={Taylor, L. R.},
   title={${\rm Pin}$ structures on low-dimensional manifolds},
   conference={
      title={Geometry of low-dimensional manifolds, 2},
      address={Durham},
      date={1989},
   },
   book={
      series={London Math. Soc. Lecture Note Ser.},
      volume={151},
      publisher={Cambridge Univ. Press},
      place={Cambridge},
   },
   date={1990},
   pages={177--242},
}

\bib{KS}{article}{
   author={Kontsevich, Maxim},
   author={Soibelman, Yan},
   title={Homological mirror symmetry and torus fibrations},
   conference={
      title={Symplectic geometry and mirror symmetry},
      address={Seoul},
      date={2000},
   },
   book={
      publisher={World Sci. Publ., River Edge, NJ},
   },
   date={2001},
   pages={203--263},
}

\bib{McLaughlin}{article}{
   author={McLaughlin, D. A.},
   title={Local formulae for Stiefel-Whitney classes},
   journal={Manuscripta Math.},
   volume={89},
   date={1996},
   number={1},
   pages={1--13},
   issn={0025-2611},
}
\bib{Mcduff-Salamon}{book}{
   author={McDuff, Dusa},
   author={Salamon, Dietmar},
   title={Introduction to symplectic topology},
   series={Oxford Mathematical Monographs},
   edition={2},
   publisher={The Clarendon Press, Oxford University Press, New York},
   date={1998},
   pages={x+486},
   isbn={0-19-850451-9},
}
\bib{PS}{article}{
   author={Phillips, Anthony},
   author={Stone, David},
   title={Lattice gauge fields, principal bundles and the calculation of
   topological charge},
   journal={Comm. Math. Phys.},
   volume={103},
   date={1986},
   number={4},
   pages={599--636},
   issn={0010-3616},
}

\bib{seidel-quartic}{article}{
   author={Seidel, Paul},
   title={Homological mirror symmetry for the quartic surface},
   journal={Mem. Amer. Math. Soc.},
   volume={236},
   date={2015},
   number={1116},
   pages={vi+129},
   issn={0065-9266},
   isbn={978-1-4704-1097-1},
}

\bib{seidel-Book}{book}{
   author={Seidel, Paul},
   title={Fukaya categories and Picard-Lefschetz theory},
   series={Zurich Lectures in Advanced Mathematics},
   publisher={European Mathematical Society (EMS), Z\"urich},
   date={2008},
   pages={viii+326},
   isbn={978-3-03719-063-0},
}

\bib{sheridan}{article}{
   author={Sheridan, Nick},
   title={Homological mirror symmetry for Calabi-Yau hypersurfaces in
   projective space},
   journal={Invent. Math.},
   volume={199},
   date={2015},
   number={1},
   pages={1--186},
   issn={0020-9910},
}

\bib{SS-cubic}{article}{
   author={Shtan{\cprime}ko, M. A.},
   author={Shtogrin, M. I.},
   title={Embedding cubic manifolds and complexes into a cubic lattice},
   language={Russian},
   journal={Uspekhi Mat. Nauk},
   volume={47},
   date={1992},
   number={1(283)},
   pages={219--220},
   issn={0042-1316},
   translation={
      journal={Russian Math. Surveys},
      volume={47},
      date={1992},
      number={1},
      pages={267--268},
      issn={0036-0279},
   },
}
\bib{tate}{article}{
   author={Tate, John},
   title={Rigid analytic spaces},
   journal={Invent. Math.},
   volume={12},
   date={1971},
   pages={257--289},
   issn={0020-9910},
}

\bib{thurston}{article}{
   author={Thurston, W. P.},
   title={Some simple examples of symplectic manifolds},
   journal={Proc. Amer. Math. Soc.},
   volume={55},
   date={1976},
   number={2},
   pages={467--468},
   issn={0002-9939},
}

\bib{tu}{article}{
   author={Tu, Junwu},
   title={On the reconstruction problem in mirror symmetry},
   journal={Adv. Math.},
   volume={256},
   date={2014},
   pages={449--478},
   issn={0001-8708},
}

\end{biblist}
\end{bibdiv}
\end{document}